\providecommand{\U}[1]{\protect\rule{.1in}{.1in}}
\newcommand{\vertiii}[1]{{\left\vert\kern-0.25ex\left\vert\kern-0.25ex\left\vert #1
\right\vert\kern-0.25ex\right\vert\kern-0.25ex\right\vert}}
\newtheorem{theorem}{Theorem}[section]
\newtheorem*{theorem*}{Theorem}
\newtheorem{proposition}[theorem]{Proposition}
\newtheorem{lemma}[theorem]{Lemma}
\newtheorem{corollary}[theorem]{Corollary}
\newtheorem{definition}[theorem]{Definition}
\newtheorem{remark}[theorem]{Remark}
\newcommand{\wrt}[1]{\, \mathrm{d} #1}
\newcommand{\strato}[1]{\circ \mathrm{d} #1}
\newcommand{\exptn}[1]{\mathbb{E} \left[ #1\right]}
\newcommand{\fqquad}{\qquad \qquad \qquad \qquad}
\newcommand{\abs}[1]{\left\lvert#1\right\rvert}
\newcommand{\norm}[1]{\left\lVert#1\right\rVert}
\begin{document}
\begin{frontmatter}
\title{A Stratonovich-Skorohod integral formula for Gaussian rough paths}
\runtitle{Stratonovich-Skorohod formula for Gaussian RPs}
\begin{aug}
\author{\fnms{Thomas} \snm{Cass} \thanksref{t1} \ead[label=e1]{thomas.cass@imperial.ac.uk}}
\and
\author{\fnms{Nengli} \snm{Lim} \thanksref{t2} \ead[label=e2]{nengli\textunderscore lim@sutd.edu.sg}}
\thankstext{t1}{Supported by EPSRC grant EP/M00516X/1.}
\thankstext{t2}{Part of the work was carried out while the second author was a PhD candidate at ICL, supported by the Roth scholarship and the Imperial College-NUS joint PhD scholarship.}
\runauthor{T. Cass and N. Lim}
\affiliation{Imperial College London and Singapore University of Technology and Design}
\address{Department of Mathematics\\
Imperial College London\\
South Kensington Campus\\
180 Queen's Gate\\
London SW7 2AZ\\
United Kingdom\\
\printead{e1}}
\address{Engineering Systems \& Design \\
Singapore University of Technology \& Design \\
8 Somapah Road \\
Building 1, Room 1.502.02 \\
Singapore 487372 \\
\printead{e2}}
\end{aug}
\begin{abstract}
Given a Gaussian process $X$, its canonical geometric rough path lift $\mathbf{X}$, and a solution $Y$ to the rough differential equation (RDE) $\mathrm{d}Y_{t} = V\left (Y_{t}\right ) \circ \mathrm{d} \mathbf{X}_t$, we
present a closed-form correction formula for $\int Y \circ \mathrm{d} \mathbf{X} - \int Y \, \mathrm{d} X$, i.e. the difference between the rough and Skorohod integrals of $Y$ with respect to $X$. When $X$ is standard Brownian motion, we recover the classical Stratonovich-to-It{\^o} conversion formula, which we generalize to Gaussian rough paths with finite $p$-variation, $p < 3$, and satisfying an additional natural condition. This encompasses many familiar examples, including fractional Brownian motion with $H > \frac{1}{3}$. To prove the formula, we first show that the Riemann-sum approximants of the Skorohod integral converge in $L^2(\Omega)$ by using a novel characterization of the Cameron-Martin norm in terms of higher-dimensional Young-Stieltjes integrals. Next, we append the approximants of the Skorohod integral with a suitable compensation term without altering the limit, and the formula is finally obtained after a re-balancing of terms.
\end{abstract}
\begin{keyword}[class=MSC]
\kwd[Primary ]{60H05}
\kwd[; secondary ]{60H07}
\kwd{60H10}
\end{keyword}
\begin{keyword}
\kwd{Rough paths theory}
\kwd{Malliavin calculus}
\kwd{generalized It\^{o}-Stratonovich correction formulas}
\end{keyword}
\end{frontmatter}

\section{Introduction}

Gaussian processes are used in modeling natural phenomena, from rough
stochastic volatility models in high-frequency trading \cite{bfg2015}, to
models of vortex filaments based on fractional Brownian motion \cite{nrt2003}.
To analyze stochastic processes with regularity lower than standard Brownian
motion, one can employ the theory of rough paths \cite{lcl2006}. In
particular, given a Gaussian process $X$, one can lift it canonically to a
geometric rough path $\mathbf{X}$ \cite{fv2010a}, and this allows one to study
the properties of rough differential equations (RDEs)
\begin{align}
\label{rdeMain}\mathrm{d}Y_{t} = V(Y_{t}) \circ\mathrm{d} \mathbf{X}_{t},\quad
Y_{0} = y_{0} \in\mathbb{R}^{e},
\end{align}
and of rough integrals of the form
\begin{align}
\label{int}\int_{0}^{T} Y_{t} \circ\mathrm{d} \mathbf{X}_{t}.
\end{align}
Furthermore, this geometric calculus generalizes Stratonovich's stochastic
calculus, and as such, it finds natural applications, e.g. in stochastic
geometry where the change-of-variable formula allows one to give an intrinsic
and coordinate-invariant definition of a rough path on a general smooth
manifold, cf. \cite{cll2012}, \cite{cdl2015}.

It\^{o} integrals, by contrast, preserve the local martingale property, which
is a useful feature when computing probabilistic quantities such as exit
distributions and conditional expectations. One can often gain insight into a
problem by transforming Stratonovich integrals to It\^{o} integrals and vice
versa, depending on the requirement at hand.

Now if $Y$ and $X$ are two continuous semi-martingales, both $\mathbb{R}^{d}%
$-valued, it is well-known that the difference between the two integrals is
given in terms of the quadratic covariation through the formula, cf.
\cite{ks98}, \cite{ry2005},
\begin{align*}
\int_{0}^{T} \left\langle Y_{t}, \circ\mathrm{d} X_{t} \right\rangle =
\int_{0}^{T} \left\langle Y_{t}, \mathrm{d} X_{t} \right\rangle + \frac{1}{2}
\left[  Y, X \right] _{T}.
\end{align*}
In the case where $Y_{t}$ solves RDE \eqref{rdeMain} and $X_{t}$ is taken to
be standard Brownian motion $B_{t}$, this becomes
\begin{align}
\label{usual formula}\int_{0}^{T} \left\langle Y_{t}, \circ\mathrm{d}B_{t}
\right\rangle = \int_{0}^{T} \left\langle Y_{t}, \mathrm{d}B_{t} \right\rangle
+\frac{1}{2} \int_{0}^{T} \mathrm{tr} \left[  V\left(  Y_{t} \right)  \right]
\, \mathrm{d} t,
\end{align}
where in the second term on the right side we have the usual trace of
$V(Y_{t}) \in\mathbb{R}^{d} \otimes\mathbb{R}^{d}$ considered as a $d$-by-$d$ matrix.

On the other hand, if $Y_{t}\equiv\nabla f(X_{t})$, where $f$ is sufficiently
smooth, then we get It\^{o}'s formula,
\begin{align}
\label{itoFormula}f(X_{T})-f(X_{0})  & =\int_{0}^{T} \left\langle \nabla
f(X_{t}), \circ\mathrm{d} \mathbf{X}_{t} \right\rangle \\
& = \int_{0}^{T} \left\langle \nabla f(X_{t}), \, \mathrm{d} X_{t}
\right\rangle + \frac{1}{2} \int_{0}^{T} \Delta f(X_{t}) \, \mathrm{d} R(t),
\end{align}
where the first term on the right side is the Skorohod integral of $\nabla
f(X)$ with respect to $X$, and $R(t)$ is the variance $\mathbb{E} \left[
\left( X_{t}^{(1)} \right) ^{2}\right] $. This has been well-studied for
general Gaussian processes, particularly fractional Brownian motion, over the
past two decades; see \cite{np1998}, \cite{amn2001}, \cite{ccm2003},
\cite{no2011}, and in particular \cite{hjt}, which uses rough path techniques
to prove the formula.

Our main result is the following theorem, where the driving signal
$\mathbf{X}$ is constructed from the limit of the piecewise-linear
approximations of a Gaussian process with i.i.d. components.

\begin{theorem*}
For $2 \leq p < 3$, let $Y \in\mathcal{C}^{p-var} \left(  [0, T];
\mathbb{R}^{d} \right) $ denote the path-level solution to
\begin{align*}
\mathrm{d} Y_{t} = V(Y_{t}) \circ\mathrm{d} \mathbf{X}_{t}, \quad Y_{0} =
y_{0},
\end{align*}
where $V \in\mathcal{C}^{6}_{b} \left(  \mathbb{R}^{d}; \mathbb{R}^{d}
\otimes\mathbb{R}^{d} \right) $ and $\mathbf{X} \in\mathcal{C}^{0, p-var}
\left(  [0, T]; G^{\lfloor p \rfloor} \left( \mathbb{R}^{d} \right) \right) $.
We assume the covariance function $R$ of $\mathbf{X}$ is of finite 2D $\rho
$-variation, $1 \leq\rho< \frac{3}{2}$, and satisfies
\begin{align}
\label{introCond}\left\|  R(t, \cdot) - R (s, \cdot) \right\| _{\rho-var; [0,
T]} \leq C \left|  t - s\right| ^{\frac{1}{\rho}},
\end{align}
for all $s, t \in[0, T]$. Then almost surely, we have
\begin{align}
\label{correctionFormula}%
\begin{split}
\int_{0}^{T} \left\langle Y_{t}, \circ\mathrm{d} \mathbf{X}_{t} \right\rangle
& = \int_{0}^{T} \left\langle Y_{t}, \mathrm{d} X_{t} \right\rangle + \frac
{1}{2} \int_{0}^{T} \mathrm{tr} \left[  V(Y_{t}) \right]  \, \mathrm{d} R(t)\\
& \qquad+ \int_{[0, T]^{2}} \mathds{1}_{[0, t)} (s) \mathrm{tr} \left[
J^{\mathbf{X}}_{t} \left( J^{\mathbf{X}}_{s}\right) ^{-1} V(Y_{s}) - V(Y_{t})
\right]  \, \mathrm{d} R(s, t).
\end{split}
\end{align}

\end{theorem*}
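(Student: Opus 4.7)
\medskip\noindent\textbf{Proof proposal.} My plan is to discretise both sides of \eqref{correctionFormula} along a sequence of partitions $\mathcal{P} = \{0=t_0<\cdots<t_n=T\}$ with $|\mathcal{P}|\to 0$, match the compensated Riemann-sum approximants of the two integrals via Malliavin integration by parts, and then pass to the limit. For the rough side, the Lyons--Gubinelli sewing lemma gives
\begin{equation*}
I^{\mathcal{P}} \;:=\; \sum_{i} \langle Y_{t_i}, X_{t_i,t_{i+1}}\rangle + \sum_{i} \mathrm{tr}\bigl[V(Y_{t_i})\,\mathbb{X}_{t_i,t_{i+1}}\bigr] \;\longrightarrow\; \int_0^T \langle Y_t,\circ\mathrm{d}\mathbf{X}_t\rangle,
\end{equation*}
where $\mathbb{X}$ denotes the second-level lift of $\mathbf{X}$. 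For the Skorohod side I would set $S^{\mathcal{P}} := \sum_i \delta\bigl(Y_{t_i}\mathbf{1}_{[t_i,t_{i+1}]}\bigr)$ and apply the identity $\delta(uh) = u\,\delta h - \langle Du, h\rangle_{\mathcal{H}}$ componentwise, obtaining
\begin{equation*}
I^{\mathcal{P}} - S^{\mathcal{P}} \;=\; \sum_i \mathrm{tr}\bigl[V(Y_{t_i})\,\mathbb{X}_{t_i,t_{i+1}}\bigr] + \sum_i \langle DY_{t_i},\mathbf{1}_{[t_i,t_{i+1}]}\rangle_{\mathcal{H}}.
\end{equation*}
This identifies the discrepancy between the rough and Skorohod Riemann sums as an explicit L\'evy-area contribution plus a Cameron--Martin pairing involving the Malliavin derivative of $Y$.

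The second step is to identify $DY$ explicitly. Differentiating the RDE in the Malliavin sense gives $D_s^\alpha Y_t = J^{\mathbf{X}}_t (J^{\mathbf{X}}_s)^{-1} V_\alpha(Y_s)$ for $s\le t$ and zero otherwise, so that $\mathrm{tr}\,D_s Y_t = \mathrm{tr}[J^{\mathbf{X}}_t (J^{\mathbf{X}}_s)^{-1} V(Y_s)]$. Invoking the higher-dimensional Young--Stieltjes characterisation of the Cameron--Martin inner product promised in the abstract, I would recast each pairing $\langle DY_{t_i},\mathbf{1}_{[t_i,t_{i+1}]}\rangle_{\mathcal{H}}$ as a 2D integral of $\mathrm{tr}[J^{\mathbf{X}}_t(J^{\mathbf{X}}_s)^{-1}V(Y_s)]$ against $\mathrm{d}R(s,t)$ over the rectangle $[0,t_{i+1}]\times[t_i,t_{i+1}]$; hypothesis \eqref{introCond} together with $\rho<\tfrac{3}{2}$ are exactly the conditions that make these 2D Young integrals well-defined and stable as $|\mathcal{P}|\to 0$.

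The main obstacle is establishing $L^2(\Omega)$-convergence of $S^{\mathcal{P}}$. By the Wiener--It\^o isometry this reduces to controlling the $\mathcal{H}^{\otimes 2}$-norm of the piecewise-constant approximant $\sum_i Y_{t_i}\mathbf{1}_{[t_i,t_{i+1}]}$ along with its Malliavin correction, i.e.\ a \emph{double} 2D Young--Stieltjes integral against $\mathrm{d}R\otimes\mathrm{d}R$, and it is here that the novel Cameron--Martin characterisation does the real work, delivering the required estimates uniformly in the partition. Once $S^{\mathcal{P}}$ is known to converge, the formula follows by rebalancing the identity for $I^{\mathcal{P}}-S^{\mathcal{P}}$: splitting the sum of 2D integrals into its diagonal strip $s\in[t_i,t_{i+1}]$ and its complementary off-diagonal part, and combining the former with the L\'evy-area compensator, produces in the limit the term $\tfrac{1}{2}\int_0^T \mathrm{tr}[V(Y_t)]\,\mathrm{d}R(t)$, using that the symmetric part of $\mathbb{X}_{s,t}$ is $\tfrac12 X_{s,t}^{\otimes 2}$ whose diagonal contribution is asymptotically $R(t_i,t_{i+1})$. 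The off-diagonal part aggregates to $\int_{[0,T]^2}\mathbf{1}_{s<t}\,\mathrm{tr}[J^{\mathbf{X}}_t(J^{\mathbf{X}}_s)^{-1}V(Y_s)]\,\mathrm{d}R(s,t)$, and the subtraction of $\mathrm{tr}[V(Y_t)]$ visible in \eqref{correctionFormula} is precisely the renormalisation required to make the integrand vanish as $s\uparrow t$ so that the Young integral converges.
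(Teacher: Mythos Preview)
Your overall architecture matches the paper: discretise both integrals, use $\delta(Y_{t_i}\mathbf{1}_{\Delta_i}) = \langle Y_{t_i},X_{\Delta_i}\rangle - \langle DY_{t_i},\mathbf{1}_{\Delta_i}\rangle_{\mathcal H}$, identify $\langle DY_{t_i},\mathbf{1}_{\Delta_i}\rangle_{\mathcal H}=\int_0^{t_i}\mathrm{tr}[J^{\mathbf X}_{t_i\leftarrow s}V(Y_s)]\,R(\Delta_i,\mathrm{d}s)$ via the 2D Young characterisation of the Cameron--Martin norm, prove $L^2$-convergence of $S^{\mathcal P}$ through the $\mathcal H^{\otimes 2}$-isometry, and rebalance. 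That is exactly what the paper does in Sections~4--6.

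There is, however, a genuine gap in your last paragraph. You assert that combining the ``diagonal strip'' with the ``L\'evy-area compensator'' produces $\tfrac12\int_0^T\mathrm{tr}[V(Y_t)]\,\mathrm{d}R(t)$, ``using that the symmetric part of $\mathbb X_{s,t}$ is $\tfrac12 X_{s,t}^{\otimes 2}$ whose diagonal contribution is asymptotically $R(t_i,t_{i+1})$.'' This conflates a random quantity with its expectation: the diagonal of $\tfrac12 X_{\Delta_i}^{\otimes 2}$ is $\tfrac12(X^{(a)}_{\Delta_i})^2$, not $\tfrac12\sigma^2(\Delta_i)$, and $V(Y)$ is not symmetric, so $\mathrm{tr}[V(Y_{t_i})(\mathbb X^2)^A]$ does not vanish pointwise. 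What is actually needed---and what constitutes the technical heart of the paper (Proposition~5.1 and Corollary~5.2)---is the $L^2(\Omega)$-vanishing of
\[
\sum_i V(Y_{t_i})\Bigl(\mathbb X^2_{t_i,t_{i+1}} - \tfrac12\sigma^2(t_i,t_{i+1})\mathcal I_d\Bigr).
\]
This is proved by expanding $\mathbb E\bigl[\psi_{t_i}\psi_{t_j}\,\mathbb X^2_{\Delta_i}\mathbb X^2_{\Delta_j}\bigr]$ with $\psi=V(Y)$ via the Wiener-chaos product formula, applying the duality formula to pass to $\mathbb E[\mathcal D^k_{h_1,\ldots,h_k}(\psi_{t_i}\psi_{t_j})]$ for $k=2,4$, and bounding those derivatives by $\prod_l\|\Phi(h_l)\|_{q\text{-var}}$ (Proposition~3.3). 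This is precisely where the $\mathcal C^6_b$ hypothesis on $V$ and the one-dimensional condition~\eqref{introCond} enter, and it is where the anti-symmetric (L\'evy-area) part requires a separate dyadic argument. None of this mechanism appears in your sketch.

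A secondary muddle: your ``diagonal strip $s\in[t_i,t_{i+1}]$'' for the $DY$-pairing is empty, since $D_sY_{t_i}=0$ for $s\ge t_i$. The paper's rebalancing is purely algebraic: add and subtract $\mathrm{tr}[V(Y_{t_i})]R(\Delta_i,t_i)$, which simultaneously renormalises the 2D integrand to $\mathrm{tr}[J^{\mathbf X}_{t\leftarrow s}V(Y_s)-V(Y_t)]$ (continuous across the diagonal, hence a genuine 2D Young integral) and combines with $\tfrac12\sigma^2(\Delta_i)\,\mathrm{tr}[V(Y_{t_i})]$ to give $\tfrac12\,\mathrm{tr}[V(Y_{t_i})](R(t_{i+1})-R(t_i))$. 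As the paper notes after Theorem~6.1, neither $\sum_i\tfrac12\sigma^2\,\mathrm{tr}[V]$ nor $\sum_i\int_0^{t_i}\mathrm{tr}[J_{t_i\leftarrow s}V(Y_s)]R(\Delta_i,\mathrm{d}s)$ converges separately when $\rho>1$; the add-and-subtract is essential, not cosmetic.
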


Here, $J^{\mathbf{X}}_{t}$ denotes the Jacobian of the flow map $y_{0}
\rightarrow Y_{t}$. The last term on the right side of
\eqref{correctionFormula} is a proper 2D Young-Stieltjes integral with respect
to the covariance function. When $X$ is standard Brownian motion, it vanishes
since the integrand is zero on the diagonal and $\mathrm{d} R(s,t) =
\delta_{\{s=t\}} \, \mathrm{d} s \, \mathrm{d} t$. This, together with the
fact that $R(t) = t$, allows us to recover the usual It\^{o}-Stratonovich
conversion formula \eqref{usual formula}.

Hence, an immediate contribution of the theorem is the generalization of
formula \eqref{usual formula} to the setting where the integrands are
solutions to Gaussian RDEs. Here, we are able to give a formula for
integrators other than standard Brownian motion without restriction on the
regularity of the integrand; compare this to \cite{d2000} e.g., where
essentially Young complementary regularity is required. Furthermore, the novel
2D Young-Stieltjes integral can be understood as measuring \textit{the failure
of the commutativity of $V$ with respect to the covariance of the Gaussian
process}. For studying the dynamics of Gaussian processes in cases where the
correction formula is indispensable, e.g. Gaussian processes evolving on
manifolds, this could lead to new insights.

We now provide the main idea behind the proof of the theorem. Denoting
$\mathbf{X} = (1, X, \mathbf{X}^{2})$, the solution $Y$ to RDE \eqref{rdeMain}
can be integrated against this rough path and
\begin{align}
\label{rpApprox}\int_{0}^{T} \left\langle Y_{t}, \circ\mathrm{d}\mathbf{X}_{t}
\right\rangle =\lim_{\left\Vert \pi\right\Vert \rightarrow0}\sum_{i}
\left\langle Y_{t_{i}}, X_{t_{i},t_{i+1}} \right\rangle + V(Y_{t_{i}})\left(
\mathbf{X}_{t_{i},t_{i+1}}^{2}\right)
\end{align}
almost surely. Continuing, we devote Section 4 to proving two claims. The
first is that $Y$ lies in the domain of the Skorohod integral operator w.r.t.
$X$, and the second is that, as a limit in $L^{2}(\Omega)$ we have
\begin{align}
\label{skoro}%
\begin{split}
& \int_{0}^{T} \left\langle Y_{t}, \, \mathrm{d} X_{t} \right\rangle \\
& \qquad= \lim_{\left\|  \pi\right\|  \rightarrow0} \sum_{i} \left[
\left\langle Y_{t_{i}}, X_{t_{i}, t_{i+1}} \right\rangle - \int_{0}^{t_{i}}
\mathrm{tr} \, \left[  J^{\mathbf{X}}_{t_{i}} \left(  J^{\mathbf{X}}_{s}
\right) ^{-1} V(Y_{s}) \right]  \, R(\Delta_{i}, \, \mathrm{d} s) \right] .
\end{split}
\end{align}
Proving these facts is less obvious than in the case where $Y_{t}=\nabla
f(X_{t})$, and we need to use the tail estimates of \cite{cll2013} in a
fundamental way. In Section 5, we use condition \eqref{introCond} to prove
that
\begin{align}
\label{vanishing terms}\sum_{i}V(Y_{t_{i}})\left(  \left(  \mathbf{X}%
_{t_{i},t_{i+1}}^{2}\right)  - \frac{1}{2}\mathbb{E}\left[  \left(
X_{t_{i},t_{i+1}}^{(1)}\right)  ^{2}\right]  \mathcal{I}_{d} \right)
\end{align}
has a vanishing $L^{2}(\Omega)$ limit as $\left\Vert \pi\right\Vert
\rightarrow0$. The proof of this relies on estimates coming from a delicate
interplay between the theory of Malliavin calculus and Gaussian rough paths;
see Proposition \ref{2ndlevel}. After augmenting \eqref{vanishing terms} to
\eqref{skoro} and extracting an almost sure subsequence, we can take the
difference between this subsequence and \eqref{rpApprox}. A careful
rearrangement of the terms in this difference will then yield the correction term.

We now outline the structure of the paper, as well as highlight other
contributions that are of independent interest. We begin in Section 2 with a
summary of Gaussian rough path concepts and a primer on the Malliavin calculus
as applied to RDE solutions. In Section \ref{HOMD}, we provide a general
closed-form expression and a novel bound for the higher-order Malliavin
derivatives of RDE solutions relative to the driving rough path (cf.
\cite{hp2013}, \cite{inahama2014}, \cite{chlt2015}). This will be then used in
Section 5 to show that \eqref{vanishing terms} has vanishing $L^{2}(\Omega)$ limit.

In Section 4, we give a new characterization of the Cameron-Martin norm in
terms of multi-dimensional Young-Stieltjes integrals. We show that one can
identify $\mathcal{C}^{p-var}([0,T])$ with a dense subspace of $\mathcal{H}%
_{1}$, the Hilbert space generated by the indicator functions which is
isomorphic to the Cameron-Martin space. In particular, for $f \in
\mathcal{C}^{p-var}([0,T])$, we have
\begin{align}
\label{norm}\left\Vert f\right\Vert _{\mathcal{H}_{1}}=\sqrt{\int_{[0,T]^{2}%
}f_{t}f_{s} \, \mathrm{d}R(s,t)}.
\end{align}
We also give a corresponding characterization with regards to the tensor
product of the Cameron-Martin space in Section \ref{tensorProd}, and revisit
the classical Ito-Skorohod isometry in Section \ref{itoSkorohodIsometry} by
giving it a new formulation in terms of multi-dimensional Young-Stieltjes
integrals. Finally, Section 4 is concluded with a proof of \eqref{skoro}.

The main theorem and its proof is given in Section 6.

\section{Preliminaries}

The following is a summary of basic notation that will be used throughout the paper.

We will use $\left\{  e_{j} \right\} $, $j = 1, \ldots, d$, to denote the
canonical basis for $\mathbb{R}^{d}$, and $\left|  \cdot\right| $ will denote
the standard Euclidean norm.

$\wedge$ and $\vee$ will be used to denote the min and max operators
respectively, and $C$, with or without subscript, will always denote a finite
constant which may vary from line to line.

The notation $\mathcal{C}^{n}_{b}$ will be used when denoting a class of
functions which have bounded derivatives up to $n^{th}$-order.

Given vector spaces $A$ and $B$, $\mathcal{L} (A; B)$ will denote the space of
linear maps from $A$ to $B$.

Frequently, we will canonically identify a tensors $\displaystyle \sum
_{i=1}^{k} \sum_{j=1}^{l} a_{ij} \, e_{i} \otimes e_{j} $ (or co-tensors
$\displaystyle \sum_{i=1}^{k} \sum_{j=1}^{l} a_{ij} \, \mathrm{d}e_{i}
\otimes\mathrm{d}e_{j} $) in $\mathbb{R}^{k} \otimes\mathbb{R}^{l}$ with a
$k$-by-$l$ matrix.

For simplification, we will denote both tensor spaces and co-tensor spaces
with $\mathbb{R}^{k} \otimes\mathbb{R}^{l}$, and if $A \in\mathbb{R}^{k}
\otimes\mathbb{R}^{k}$, $\displaystyle \mathrm{tr} \, A := \sum_{i=1}^{k}
a_{ii}$ will denote the usual trace operation.

$\displaystyle \mathcal{I}_{k} := \sum_{j=1}^{k} e_{j} \otimes e_{j}$ will be
used to denote the $k$-by-$k$ identity matrix.

\subsection{Rough paths, $p$-variation topology and controls}

We begin by reviewing the basic concepts and notation of rough paths theory.
The standard references in this area include \cite{lyons98}, \cite{lq2003},
\cite{fh2014} and \cite{fv2010b}.

Given $x \in\mathcal{C} \left( [0, T]; \mathbb{R}^{d}\right) $, i.e. a
continuous $\mathbb{R}^{d}$-valued path defined on the time interval $[0, T]$,
where $T$ is some arbitrary but fixed terminal time, a rough path $\mathbf{x}$
includes the higher-order iterated integrals of $x$ in addition to the
first-order increment $x_{s,t} := x_{t} - x_{s}$. To account for these
higher-order increments, the right space for $\mathbf{x}$ to take values in
turns out to be the step-$n$ nilpotent group, which we will define below.

First, let $T^{n} \left(  \mathbb{R}^{d} \right) $ denote the truncated tensor
algebra up to degree $n$:
\begin{align*}
T^{n} \left(  \mathbb{R}^{d} \right)  := \mathbb{R} \oplus\mathbb{R}^{d}
\oplus\cdots\oplus\left(  \mathbb{R}^{d} \right) ^{\otimes n}.
\end{align*}
Addition and scalar multiplication are defined in the usual fashion, and given
\newline$a = \left(  a^{0}, a^{1}, \ldots, a^{n} \right) , b = \left(  b^{0},
b^{1}, \ldots, b^{n} \right)  \in T^{n} \left(  \mathbb{R}^{d} \right) $,
multiplication is performed by
\begin{align*}
a \otimes b := \left(  c^{0}, c^{1}, \ldots, c^{n} \right) , \qquad c^{k} =
\sum_{i=0}^{k} a^{i} \otimes b^{k-i}, \quad\forall\, 0 \leq k \leq n,
\end{align*}
where here we abuse the notation by re-using the same symbol for the tensor
product in $\mathbb{R}^{d}$.

The tangent space of $T^{n}(\mathbb{R}^{d})$ at the unit element $e=(1, 0
\ldots, 0)$ is given by
\begin{align*}
A_{T}^{n} \left(  \mathbb{R}^{d} \right)  := 0 \oplus\mathbb{R}^{d}
\oplus\cdots\oplus\left(  \mathbb{R}^{d} \right) ^{\otimes n}.
\end{align*}
We will define the exponential map $\exp: A_{T}^{n} (\mathbb{R}^{d})
\rightarrow T^{n}(\mathbb{R}^{d}) $ by
\begin{align}
\label{expDefn}\exp(a) := \sum_{i=0}^{n} \frac{a^{\otimes i}}{i!},
\end{align}
(for $a \in\mathbb{R}^{d}$ we will occasionally abuse the notation by denoting
$\exp(a) := \exp((0, a, 0))$), and the logarithm map $\log: T^{n}%
(\mathbb{R}^{d}) \rightarrow A_{T}^{n}(\mathbb{R}^{d})$ by
\begin{align}
\label{logDefn}\log(a) = \sum_{i=1}^{n} (-1)^{i+1} \frac{(a - e)^{\otimes i}%
}{i}.
\end{align}

\begin{definition}
The step-$n$ nilpotent group (with $d$ generators), denoted by $G^{n} \left(
\mathbb{R}^{d} \right) $, is the subgroup of $T^{n} \left(  \mathbb{R}^{d}
\right) $ corresponding to the sub-Lie algebra of $A_{T}^{n} \left(
\mathbb{R}^{d} \right) $ generated by the Lie bracket $[a, b] = a \otimes b -
b \otimes a$.
\end{definition}

One can check that the inverse of any element $a = e + \tilde{a} \in
G^{n}\left( \mathbb{R}^{d}\right) $ is given by
\begin{align}
a^{-1} = \sum_{k=0}^{n} (-1)^{k} \tilde{a}^{\otimes k};
\end{align}
see Lemma 7.16 in \cite{fv2010b}.

$G^{n} \left(  \mathbb{R}^{d} \right) $ will be equipped with the topology
induced from the symmetric, sub-additive homogeneous norm
\begin{align}
\label{gNorm}\left\|  g\right\|  = \max_{i=1, \ldots, n} \left( i! \, \left|
g^{i}\right|  \right) ^{\frac{1}{i}}.
\end{align}

Consider now $\mathbf{x} \in\mathcal{C}\left(  [0,T];G^{n}\left(
\mathbb{R}^{d}\right)  \right)  $, a continuous $G^{n} \left(  \mathbb{R}^{d}
\right) $ valued path on $[0, T]$. We define the increment by setting
$\mathbf{x}_{s,t} := \mathbf{x}_{s}^{-1} \otimes\mathbf{x}_{t}$. Such a path
is called a multiplicative functional (cf. \cite{lcl2006}) as it satisfies
Chen's equality
\begin{align}
\label{chensEq}\mathbf{x}_{s,t} = \mathbf{x}_{s, u} \otimes\mathbf{x}_{u,t}
\quad\forall\; s, u, t \in[0, T], s \leq u \leq t.
\end{align}
We now define the $p$-variation distance as
\begin{align}
\label{pvarDist}d_{p-var; [0, T]} (\mathbf{x},\mathbf{y}):=\sup_{\pi}\left(
\sum_{i}d(\mathbf{x}_{t_{i},t_{i+1}},\mathbf{y}_{t_{i},t_{i+1}})^{p}\right)
^{\frac{1}{p}},
\end{align}
where the supremum runs over all partitions $\pi=\{t_{i}\}$ of $[0,T]$. We
also define
\begin{align*}
\left\Vert \mathbf{x}\right\Vert _{p-var; [0, T]}:=d_{p-var; [0, T]}
(\mathbf{x},0),
\end{align*}
where $0$ denotes the constant path $\mathbf{y}_{t}=e$ for all $t\in[0,T]$.

\begin{definition}
For $p \geq1$, the \textit{weakly geometric p-rough paths}, which we will
denote by $\mathcal{C}^{p-var}\left( [0,T];G^{\lfloor p\rfloor} \left(
\mathbb{R}^{d}\right)  \right)  $, is the set of continuous functions
$\mathbf{x}$ from $[0,T]$ onto $G^{\lfloor p\rfloor} \left(  \mathbb{R}^{d}
\right) $ such that $\left\|  \mathbf{x}\right\| _{p-var; [0, T]} < \infty$.
\end{definition}

The simplest example of a weakly geometric $p$-rough path is as follows, given
a bounded-variation path $x$ in $\mathbb{R}^{d}$, we can compute the signature
of $x$ in $G^{\lfloor p\rfloor} \left(  \mathbb{R}^{d} \right) $:
\begin{align*}
S_{\lfloor p\rfloor}(x)_{s,t}=\left(  1,\mathbf{x}_{s,t}^{1},\mathbf{x}%
_{s,t}^{2},\ldots,\mathbf{x}_{s,t}^{\lfloor p\rfloor}\right) ,
\end{align*}
where $\mathbf{x}_{s,t}^{k}$ is the conventional $k$-th iterated integral of
the path $x$ over the interval $[s,t]$:
\begin{align*}
\mathbf{x}_{s,t}^{k} = \sum_{j_{1}, \ldots, j_{k}=1}^{d} \left(  \int%
_{s<r_{1}<\cdots<r_{k}<t}\,\mathrm{d}x^{(j_{1})}_{r_{1}}\otimes\cdots
\otimes\,\mathrm{d}x^{(j_{k})}_{r_{k}} \right)  e_{j_{1}} \otimes\cdots\otimes
e_{j_{k}}.
\end{align*}
Let $\mathcal{C}^{\infty} \left(  [0, T]; G^{\lfloor p \rfloor} \left(
\mathbb{R}^{d}\right)  \right) $ denote the subset of weakly-geometric rough
paths which are also of bounded variation. Then the signature of $x$ is in
\newline$\mathcal{C}^{\infty} \left(  [0, T]; G^{\lfloor p \rfloor} \left(
\mathbb{R}^{d}\right)  \right) $, and we also have the following definition.

\begin{definition}
For $p \geq1$, the space of \textit{geometric p-rough paths}, which we will
denote by $\mathcal{C}^{0,p-var}\left(  [0,T]; G^{\lfloor p \rfloor}\left(
\mathbb{R}^{d}\right)  \right) $, is defined to be the closure of
$\mathcal{C}^{\infty} \left(  [0, T]; G^{\lfloor p \rfloor} \left(
\mathbb{R}^{d}\right)  \right) $ in $\mathcal{C}^{p-var} \left( [0, T];
G^{\lfloor p \rfloor} \left( \mathbb{R}^{d}\right) \right) $ with respect to
the topology given by the $p$-variation distance \eqref{pvarDist}.
\end{definition}

\begin{remark}
In finite dimensions, the difference between weakly-geometric rough paths and
geometric rough paths is fairly minor, as we have the following relation
\begin{align*}
\mathcal{C}^{0,p-var}\left(  [0,T]; G^{\lfloor p \rfloor}\left(
\mathbb{R}^{d}\right)  \right)   & \subset\mathcal{C}^{p-var}\left(  [0,T];
G^{\lfloor p \rfloor}\left(  \mathbb{R}^{d}\right)  \right) \\
& \subset\mathcal{C}^{0,p + \varepsilon-var}\left(  [0,T]; G^{\lfloor p
\rfloor}\left(  \mathbb{R}^{d}\right)  \right) ,
\end{align*}
where $\varepsilon> 0$ can be chosen arbitrarily small, cf. \cite{fv2006}.
\end{remark}

We will now extend the notion of finite $p$-variation to general metric
spaces. Given a metric space $(E, d)$, a path $f:[0,T] \rightarrow E$ is said
to have finite $p$-variation if
\begin{align}
\label{pvarBound}\left\|  f\right\| _{p-var;[s,t]}:=\sup_{\pi}\left(  \sum_{i}
d(f_{t_{i}}, f_{t_{i+1}})_{E}^{p} \right) ^{\frac{1}{p}} < \infty.
\end{align}
We will use $V^{p-var}\left( [0, T]; E \right) $ to denote the space of
functions which satisfy the bound above, and $\mathcal{C}^{p-var} \left(  [0,
T]; E \right) $ to denote the set of continuous functions which satisfy \eqref{pvarBound}.

We will also define the notation $\mathcal{C}^{p-var}_{pw} \left(  [0, T]; E
\right) $ for piecewise-continuous functions of bounded $p$-variation as
follows: $f$ is in $\mathcal{C}^{p-var}_{pw} \left(  [0, T]; E \right) $ if
there exists a partition $\{ t_{i} \}$ of $[0, T]$ such that $f$ is in
$\mathcal{C}^{p-var} \left(  (t_{i}, t_{i+1}); E \right) $ for all $i$.

We have the following simple proposition; cf. Proposition 5.3 in
\cite{fv2010b}.

\begin{proposition}
Let $f \in\mathcal{C}([0, T]; E)$. Then if $1 \leq p \leq p^{\prime}< \infty
$,
\begin{align*}
\left\|  f\right\| _{p^{\prime}-var; [0, T]} \leq\left\|  f\right\| _{p-var;
[0, T]}.
\end{align*}
In particular, $\mathcal{C}^{p-var} \left(  [0, T]; E \right)  \subset
\mathcal{C}^{p^{\prime}-var} \left(  [0, T]; E \right) $.
\end{proposition}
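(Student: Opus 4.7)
The plan is to reduce the inequality $\|f\|_{p'-var;[0,T]}\le \|f\|_{p-var;[0,T]}$ to the elementary $\ell^{p}\hookrightarrow \ell^{p'}$ embedding for sequences, applied partition-by-partition. Concretely, I would fix an arbitrary partition $\pi=\{t_i\}$ of $[0,T]$ and set $a_i := d(f_{t_i},f_{t_{i+1}})_E \ge 0$. The goal is to show that
\begin{equation*}
\left(\sum_i a_i^{p'}\right)^{1/p'}\le \left(\sum_i a_i^{p}\right)^{1/p},
\end{equation*}
since the right-hand side is bounded above by $\|f\|_{p-var;[0,T]}$, and then taking the supremum over $\pi$ yields the claimed inequality.

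To prove this elementary inequality, I would put $q := p'/p \ge 1$ and apply the basic estimate $\sum_i x_i^q \le \bigl(\sum_i x_i\bigr)^q$ for nonnegative reals $x_i$ (valid for all $q\ge 1$) with $x_i = a_i^p$. That estimate itself has a one-line proof: from $x_i \le \sum_j x_j$, one gets $x_i^{q-1} \le (\sum_j x_j)^{q-1}$, so $x_i^q \le x_i (\sum_j x_j)^{q-1}$, and summing in $i$ gives the bound. Substituting $x_i = a_i^p$ and raising to the power $1/p'$ delivers the partition-level inequality.

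The inclusion $\mathcal{C}^{p-var}([0,T];E)\subset \mathcal{C}^{p'-var}([0,T];E)$ is then immediate: any continuous $f$ with $\|f\|_{p-var;[0,T]}<\infty$ automatically satisfies $\|f\|_{p'-var;[0,T]}<\infty$ by the inequality just proved.

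I do not anticipate any genuine obstacle here; the only point that requires a touch of care is the direction of the $\ell^p$--$\ell^{p'}$ embedding. Because we are working with counting measure on the finite set indexing the partition (not a probability space), the norm is decreasing in the exponent, which is exactly what we need. Once this is noted, the proof is a two-line manipulation.
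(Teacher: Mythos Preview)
Your proof is correct and is precisely the standard argument. The paper does not supply its own proof here; it simply cites Proposition 5.3 of \cite{fv2010b}, whose proof is exactly the $\ell^{p}\hookrightarrow\ell^{p'}$ embedding you carry out partition-by-partition.
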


We will use the notation $\left\|  f\right\| _{p-var;[s,t]}$ when the supremum
is taken over partitions of $[s,t] \subset[0,T]$. Proposition 5.8 in
\cite{fv2010b} tells us that
\begin{align*}
\omega(s,t) := \left\|  f\right\| _{p-var; [s,t]}^{p}%
\end{align*}
defines a control, i.e. a continuous, non-negative, real-valued function that
is super-additive and vanishes on the diagonal, i.e. $\omega(t,t)=0$ for all
$t\in[0,T]$. We also note the following lemmas about controls.

\begin{lemma}
\label{controlLem1} Assume $\omega_{1}$ and $\omega_{2}$ are controls.
\vspace{-5pt}

\begin{enumerate}
[(i)]

\item If $\phi$ is a positive convex function, then $\phi(\omega_{1})$ is a control.

\item Given $\alpha, \beta> 0$ with $\alpha+ \beta\geq1$, $\omega_{1}^{\alpha}
\, \omega_{2}^{\beta}$ is also a control.
\end{enumerate}
\end{lemma}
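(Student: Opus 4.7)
The plan is to verify directly the three defining properties of a control---continuity, vanishing on the diagonal, and super-additivity---in each case. Continuity and non-negativity are immediate: $\phi(\omega_1)$ is a composition of continuous functions (a convex $\phi$ on $[0,\infty)$ is automatically continuous there under the implicit assumption $\phi(0)=0$), while $\omega_1^{\alpha}\omega_2^{\beta}$ is a product of positive powers of non-negative continuous functions. Vanishing on the diagonal in (i) requires $\phi(0)=0$, which I take to be implicit in ``positive convex function'' (otherwise $\phi(\omega_1(t,t))=\phi(0)$ need not be zero); in (ii) both factors vanish on the diagonal so their product does too.

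For (i), the real content is super-additivity. First I would extract from convexity together with $\phi(0)=0$ and $\phi\geq 0$ the following two facts about $\phi$ on $[0,\infty)$. Monotonicity: for $0\leq x\leq y$, convexity gives $\phi(x)\leq (x/y)\phi(y)+(1-x/y)\phi(0)\leq\phi(y)$. Super-additivity: for $a,b\geq 0$ with $s=a+b>0$, writing $a=(a/s)s+(b/s)\cdot 0$ and $b=(b/s)s+(a/s)\cdot 0$ and summing the two resulting convexity inequalities yields $\phi(a)+\phi(b)\leq\phi(a+b)$. Combining these with the super-additivity of $\omega_1$ then gives, for any $s\leq u\leq t$,
\[
\phi(\omega_1(s,u))+\phi(\omega_1(u,t))\leq \phi\bigl(\omega_1(s,u)+\omega_1(u,t)\bigr)\leq \phi(\omega_1(s,t)),
\]
which is exactly super-additivity of $\phi(\omega_1)$.

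For (ii), I would first treat the boundary case $\alpha+\beta=1$ using the two-term H\"older inequality: for non-negative reals,
\[
a_1^{\alpha}b_1^{\beta}+a_2^{\alpha}b_2^{\beta}\leq (a_1+a_2)^{\alpha}(b_1+b_2)^{\beta}.
\]
Applied with $a_i,b_i$ equal to the increments of $\omega_1,\omega_2$ on $[s,u]$ and $[u,t]$, and then combined with super-additivity of $\omega_1$ and $\omega_2$, this yields super-additivity of $\omega_1^{\alpha}\omega_2^{\beta}$. For the general case $\gamma:=\alpha+\beta\geq 1$, set $\tilde{\alpha}:=\alpha/\gamma$, $\tilde{\beta}:=\beta/\gamma$, so that $\tilde{\alpha}+\tilde{\beta}=1$; the boundary case gives that $\tilde{\omega}:=\omega_1^{\tilde{\alpha}}\omega_2^{\tilde{\beta}}$ is a control, and since $\omega_1^{\alpha}\omega_2^{\beta}=\tilde{\omega}^{\gamma}=\phi(\tilde{\omega})$ for the convex function $\phi(x):=x^{\gamma}$ with $\phi(0)=0$, part (i) finishes the argument. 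No step is technically deep; the only genuine subtlety is interpreting ``positive convex function'' in (i) as tacitly including $\phi(0)=0$, after which the rest reduces to standard convexity and H\"older estimates.
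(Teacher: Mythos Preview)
Your proof is correct. The paper does not actually give a proof of this lemma but simply cites Exercises 1.8 and 1.9 in \cite{fv2010b}, so your direct verification---monotonicity and super-additivity of $\phi$ from convexity with $\phi(0)=0$, then H\"older for the $\alpha+\beta=1$ case bootstrapped to $\alpha+\beta\geq 1$ via part (i)---is precisely the standard resolution of those exercises and is what the reference is pointing to.
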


\begin{proof}
Exercises 1.8 and 1.9 in \cite{fv2010b}.
\end{proof}

\begin{lemma}
[Proposition 5.10 in \cite{fv2010b}]\label{controlLem2} Let $\omega$ be a
control on $[0, T]$ and let $p \geq1$. Then the point-wise estimate
\begin{align*}
d(f_{s}, f_{t})^{p} \leq C \, \omega(s, t) \quad\forall\, s < t \in[0, T]
\end{align*}
implies the $p$-variation estimate
\begin{align*}
\left\|  f\right\| _{p-var; [s, t]} \leq C^{\frac{1}{p}} \, \omega
(s,t)^{\frac{1}{p}} \quad\forall\, s < t \in[0, T].
\end{align*}

\end{lemma}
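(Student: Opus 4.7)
The plan is to unfold the definition of the $p$-variation seminorm and exploit the super-additivity of the control directly, as this is precisely the structural property that controls are designed to interact with $p$-variation estimates.

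First I would fix a subinterval $[s,t] \subset [0,T]$ and an arbitrary partition $\pi = \{s = t_0 < t_1 < \cdots < t_N = t\}$ of $[s,t]$. Applying the hypothesized pointwise estimate on each subinterval $[t_i, t_{i+1}]$ gives
\begin{align*}
\sum_i d(f_{t_i}, f_{t_{i+1}})^p \leq C \sum_i \omega(t_i, t_{i+1}).
\end{align*}
Next, I would invoke the super-additivity of $\omega$ (which is part of the definition of a control) to bound $\sum_i \omega(t_i, t_{i+1}) \leq \omega(s,t)$. Combining these two inequalities and taking the supremum over all partitions $\pi$ of $[s,t]$ yields
\begin{align*}
\|f\|_{p-var;[s,t]}^p = \sup_\pi \sum_i d(f_{t_i}, f_{t_{i+1}})^p \leq C \, \omega(s,t),
\end{align*}
and taking $p$-th roots of both sides gives the desired estimate.

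There is no real obstacle here: the lemma is essentially a tautology once one recognizes that the role of the control is to convert pointwise bounds into partition-summed bounds via super-additivity. The only point that requires any care is making sure that the constant $C$ is genuinely uniform in $s,t$, which is immediate since the pointwise estimate holds with the same $C$ for all $s < t \in [0,T]$. I would present the argument in essentially one short displayed chain of inequalities.
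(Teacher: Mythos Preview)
Your proposal is correct and is exactly the standard argument: apply the pointwise bound on each subinterval of an arbitrary partition, use super-additivity of the control to telescope the sum, take the supremum over partitions, and then take the $p$-th root. The paper itself does not supply a proof of this lemma---it is simply quoted as Proposition 5.10 in \cite{fv2010b}---so there is nothing to compare against, but your argument is precisely the one found in that reference.
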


If $E$ also has a norm $\left\|  \cdot\right\| _{E}$, we will also use the
notation $\left\|  f\right\| _{\mathcal{V}^{p}; [0, T]}$ to denote the norm
\begin{align*}
\left\|  f\right\| _{\mathcal{V}^{p}; [0, T]}  & := \left\|  f\right\|
_{p-var;[0,T]} + \sup_{t\in\lbrack0,T]}\left\Vert f_{t}\right\Vert _{E}\\
& \leq\left\|  f_{0}\right\| _{E} + 2 \left\|  f\right\| _{p-var; [0, T]}.
\end{align*}

For a function defined on $[0,T]^{2}$, $f:[0,T]^{2}\rightarrow E$ is said to
be of finite 2D $p$-variation if
\begin{align}
\label{2DpvarBound}\left\|  f\right\| _{p-var;[0,T]^{2}}:=\sup_{\pi}\left(
\sum_{i,j}\left\Vert f
\begin{pmatrix} u_{i},u_{i+1} \\ v_{j},v_{j+1} \end{pmatrix} \right\Vert
_{E}^{p}\right)  ^{\frac{1}{p}}<\infty,
\end{align}
where $\pi=\left\{  \left(  u_{i},v_{j}\right)  \right\}  $ is a partition of
$[0, T]^{2}$, and the rectangular increment is given by
\begin{align}
\label{rectInc}f
\begin{pmatrix} u_{i},u_{i+1} \\ v_{j},v_{j+1} \end{pmatrix} := f(u_{i}%
,v_{j})+f(u_{i+1},v_{j+1})-f(u_{i},v_{j+1})-f(u_{i+1},v_{j}).
\end{align}
Similar to the 1D case, we will use $V^{p-var} \left(  [0, T]^{2}; E\right) $
(resp. $\mathcal{C}^{p-var} \left( [0, T]^{2}; E\right) $) to denote the set
of functions (resp. continuous functions) which satisfy \eqref{2DpvarBound}.

On occasion, we will use the notation
\begin{align}
\label{2dInc}%
\begin{split}
f( \Delta_{i}, v) := f(u_{i+1}, v) - f(u_{i}, v),\\
f(u, \Delta_{j}) := f(u, v_{j+1}) - f(u, v_{j}).
\end{split}
\end{align}
Given a rectangle $R = [s, t] \times[u,v] \subset[0, T]^{2}$, we let $\Pi(R)$
denote the family of all partitions $\{[a_{i},b_{i}]\times[c_{i},d_{i}], \, i
= 1, \ldots, n\}$ of $R$, where in each partition, the sub-rectangles have
disjoint interiors and satisfy $\displaystyle \bigcup_{i=1}^{n} [a_{i}%
,b_{i}]\times[c_{i},d_{i}] = R$.

\begin{definition}
Let $\Delta_{T} := \{(s, t) \:\vert\: 0 \leq s \leq t \leq T\}$. A map
$\omega: \Delta_{T} \times\Delta_{T} \rightarrow[0, \infty)$ is called a 2D
control if it is continuous, zero on degenerate rectangles, and super-additive
in the sense that for all rectangles $R \subset[0, T]^{2}$,
\begin{align*}
\sum_{i=1}^{n} \omega(R_{i}) \leq\omega(R), \quad\mathrm{whenever} \, \{R_{i},
\, 1 \leq i \leq n \} \in\Pi(R).
\end{align*}

\end{definition}

Now let $f \in\mathcal{C}^{p-var} \left( [0, T]^{2}; E\right) $, $\varepsilon>
0$ and $R \subset[0, T]^{2}$. Then $\omega_{f,\rho+\varepsilon}^{\vphantom{0}}%
(R)$ defined as
\begin{align}
\label{controlledVar}\omega_{f,\rho+\varepsilon}^{\vphantom{0}}(R) :=
\sup_{\Pi(R) }\sum_{i} \left|  f
\begin{pmatrix} a_{i} & b_{i} \\ c_{i} & d_{i} \end{pmatrix} \right|
^{\rho+\varepsilon},
\end{align}
is a 2D control \cite{fv2011}.

\begin{remark}
Note that $\Pi(R)$ includes partitions which are not grid-like, in contrast to
\eqref{2DpvarBound}. Furthermore, we use $\omega_{f,\rho+\varepsilon} ([s, t]
\times[u, v])$ instead of what seems to be the more natural choice $\left\|
f\right\| _{\rho-var;[s,t]\times[u,v]}^{\rho}$ because the latter is actually
not super-additive, and is thus not a control; see \cite{fv2011}.

However, there exists a finite constant depending on $\varepsilon$ such that
\begin{align*}
\omega_{f,\rho+\varepsilon}^{\vphantom{0}}([s,t]\times[u,v]) \leq
C_{\varepsilon} \left\|  f\right\| _{\rho-var;[s,t]\times[u,v]}^{\rho
+\varepsilon} < \infty,
\end{align*}
for all $[s, t] \times[u, v] \subset[0, T]^{2}$. The reason to use
$\rho+\varepsilon$ regularity instead of $\rho$ is that otherwise
\eqref{controlledVar} might be infinite; cf. Example 1 in \cite{fv2011}.
\end{remark}

\begin{definition}
We say that the 2D Young-Stieltjes integral of $f$ with respect to $g$ exists
if there exists a scalar $I(f, g) \in\mathbb{R}$ such that
\begin{align}
\label{quant1}\lim_{\left\|  \pi\right\|  \rightarrow0} \left|  \sum_{i, j}
f\left(  u_{i}, v_{j} \right)  g
\begin{pmatrix} u_i & u_{i+1} \\ v_j & v_{j+1} \end{pmatrix} - I(f, g)
\right|  \rightarrow0,
\end{align}
i.e. for each $\varepsilon> 0$, there exists a $\delta> 0$ such that for all
partitions $\pi= \{ (u_{i}, v_{j}) \}$ of $[0, T]^{2}$ with $\left\|
\pi\right\|  < \delta$, the quantity on the left of \eqref{quant1} is less
than $\varepsilon$. In this case, we use $\int_{[0, T]^{2}} f \, \mathrm{d} g$
to denote $I(f, g)$, or $\int_{[s,t] \times[u, v]} f \, \mathrm{d} g$ whenever
we restrict ourselves to any particular subset $[s, t] \times[u,v]$ of $[0,
T]^{2}$.
\end{definition}

\begin{definition}
We say that $f \in V^{p-var}([s, t] \times[u,v])$ and \newline$g \in
V^{q-var}([s, t] \times[u,v])$ have complementary regularity if $\frac{1}{p} +
\frac{1}{q} > 1$.
\end{definition}

The significance of this definition lies in the following theorem, which gives
the existence of the Young-Stieltjes integral and Young's inequality in two
dimensions; see \cite{lcl2006}, \cite{fh2014}, \cite{fv2010b} for the
one-dimensional version.

\begin{theorem}
\label{2Dintegral} Let $f \in V^{p-var}([s, t] \times[u, v])$ and $g \in
V^{q-var}([s, t] \times[u, v])$ have complementary regularity. We also assume
that $f(s, \cdot)$ and $f(\cdot, u)$ have finite p-variation, and that $f$ and
$g$ have no common discontinuities. Then the 2D Young-Stieltjes integral
exists and the following Young's inequality holds;
\begin{align}
\label{2DYoungIneq}%
\begin{split}
\left|  \int_{[s,t] \times[u,v]} f \, \mathrm{d} g\right|  \quad\leq C_{p, q}
\, {\left\vert \kern-0.25ex\left\vert \kern-0.25ex\left\vert f \right\vert
\kern-0.25ex\right\vert \kern-0.25ex\right\vert } \left\|  g\right\| _{q-var,
[s,t] \times[u,v]},
\end{split}
\end{align}
where
\begin{align*}
{\left\vert \kern-0.25ex\left\vert \kern-0.25ex\left\vert f \right\vert
\kern-0.25ex\right\vert \kern-0.25ex\right\vert } = \left|  f(s, u)\right|  +
\left\|  f(s, \cdot)\right\| _{p-var; [u, v]} + \left\|  f(\cdot, u)\right\|
_{p-var; [s, t]} + \left\|  f\right\| _{p-var, [s,t] \times[u,v]}.
\end{align*}

\end{theorem}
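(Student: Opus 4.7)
The plan is to establish existence of the 2D Young--Stieltjes integral via an analog of the standard refinement argument used in 1D Young integration, and to derive inequality \eqref{2DYoungIneq} from the accumulated refinement bounds. I would restrict attention to grid partitions $\pi = \pi_u \times \pi_v$, show that the associated Riemann sums $S_\pi$ are Cauchy as $\|\pi\| \to 0$, and define the integral as their common limit; the no-common-discontinuities hypothesis guarantees that point values of $f$ at newly inserted grid points behave continuously and that any two grid partitions admit a common refinement on which the Riemann sums differ controllably.

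The central calculation is the effect of inserting a single vertical line $u^* \in (u_{i_0-1}, u_{i_0})$ into a grid partition $\pi$. A direct telescoping over $j$ yields
\begin{align*}
S_{\pi \cup \{u^*\}} - S_\pi = \sum_j \bigl[f(u^*, v_j) - f(u_{i_0-1}, v_j)\bigr] \, g\begin{pmatrix} u^* & u_{i_0} \\ v_j & v_{j+1} \end{pmatrix}.
\end{align*}
Viewing this as a one-dimensional Riemann--Stieltjes sum in $v$, I would apply the classical 1D Young inequality with integrand $\phi(v) := f(u^*, v) - f(u_{i_0-1}, v)$ and integrator $\psi(v) := g([u^*, u_{i_0}] \times [u, v])$. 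The decomposition
\begin{align*}
\phi(v) = \bigl[f(u^*, u) - f(u_{i_0-1}, u)\bigr] + f\begin{pmatrix} u_{i_0-1} & u^* \\ u & v \end{pmatrix}
\end{align*}
lets me bound $\|\phi\|_{p-var;[u,v]}$ by $\omega_{f, p+\varepsilon}([u_{i_0-1}, u^*] \times [u,v])^{1/(p+\varepsilon)}$ plus a contribution controlled by $\|f(\cdot, u)\|_{p-var;[s,t]}$, and $\|\psi\|_{q-var;[u,v]}$ by $\omega_{g, q+\varepsilon}([u^*, u_{i_0}] \times [u, v])^{1/(q+\varepsilon)}$.

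The Cauchy property then follows by iterated refinement. Starting from the trivial one-rectangle partition and adding one grid line at a time, super-additivity of the 2D controls $\omega_{f, p+\varepsilon}$ and $\omega_{g, q+\varepsilon}$ guarantees at each stage with $n$ active intervals a choice of line for which the product of the strip-controls is $O(n^{-\theta})$ with $\theta := 1/(p+\varepsilon) + 1/(q+\varepsilon)$; complementary regularity $1/p + 1/q > 1$ ensures $\theta > 1$ for $\varepsilon$ small, so $\sum_n n^{-\theta} < \infty$ and the refinement increments are summable. Running the analogous argument for horizontal refinement completes the Cauchy estimate. Telescoping from the trivial sum $f(s, u)\, g([s,t] \times [u, v])$ to the integral then produces the four summands of the triple norm: $|f(s, u)|$ from the initial term, $\|f(\cdot, u)\|_{p-var;[s,t]}$ and $\|f(s, \cdot)\|_{p-var;[u,v]}$ from the two boundary-slice contributions arising in each direction of refinement, and $\|f\|_{p-var;[s,t]\times[u,v]}$ from the bulk, giving \eqref{2DYoungIneq}.

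The main obstacle is the careful bookkeeping relating the 1D $p$-variation of the slice-difference $\phi$ to the 2D controls: the decomposition above is what cleanly isolates the pure-rectangular piece (controlled by the 2D control on a thin strip) from the $v$-constant piece (controlled by the slice $f(\cdot, u)$), and without this separation one cannot achieve $\theta > 1$. A secondary subtlety is verifying the 1D Young inequality applies to $(\phi, \psi)$ on the partition $\{v_j\}$, which requires $\phi$ and $\psi$ to have no common discontinuities --- this is inherited from the hypothesis on $(f, g)$. Finally, one must check that the $\varepsilon \downarrow 0$ limit can be taken in the bounds using the $C_\varepsilon$ comparison between $\omega_{\cdot, \rho+\varepsilon}$ and $\|\cdot\|_{\rho-var}^{\rho+\varepsilon}$ noted in the excerpt.
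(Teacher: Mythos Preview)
The paper does not prove this theorem: its proof consists solely of the citation ``See \cite{towghi2002}, \cite{fv2010a}.'' Your proposal is essentially the argument found in those references, in particular Towghi's multidimensional extension of Young's inequality, so there is no divergence of approach to compare.

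One minor remark: your invocation of the $\varepsilon$-perturbed controls $\omega_{f,p+\varepsilon}$ and $\omega_{g,q+\varepsilon}$ is unnecessary here. The non-super-additivity issue flagged in the paper's Remark concerns arbitrary (non-grid) rectangular partitions, but in the refinement argument you only ever compare over \emph{vertical strips} (or horizontal strips), and the grid $q$-variation $\|g\|_{q-var}^q$ \emph{is} super-additive with respect to such strip decompositions, since concatenating grid partitions of adjacent strips yields a grid partition of the union. Hence the ``good line'' selection and the $\sum n^{-\theta}$ bound go through directly with $\theta = 1/p + 1/q > 1$, and no limiting $\varepsilon \downarrow 0$ step is needed to recover the constant $C_{p,q}$ in the stated form. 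This is how Towghi proceeds.
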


\begin{proof}
See \cite{towghi2002}, \cite{fv2010a}.
\end{proof}

\subsection{Gaussian rough paths}

We will work with a stochastic process
\begin{align*}
X_{t} = \left( X_{t}^{(1)},\ldots, X_{t}^{(d)}\right) , \quad t \in[0, T],
\end{align*}
which denotes a centered (i.e. zero-mean), continuous Gaussian process in
$\mathbb{R}^{d}$ with i.i.d. components and starting at the origin.

This process is defined on the canonical probability space $\left(  \Omega,
\mathcal{F},\mathbb{P}\right)  $, where $\Omega= \mathcal{C} \left(
[0,T];\mathbb{R}^{d}\right)  $, the space of continuous $\mathbb{R}^{d}%
$-valued paths equipped with the supremum topology, $\mathcal{F}$ is the Borel
$\sigma$-algebra and $\mathbb{P}$ is the unique Borel measure under which
$X\left(  \omega\right)  =\left(  \omega_{t}\right)  _{t\in\left[  0,T\right]
}$ has the specified Gaussian distribution. We will use
\begin{align*}
R(s,t):=\mathbb{E}\left[  X_{s}^{(1)}X_{t}^{(1)}\right]
\end{align*}
to denote the covariance function common to the components. The variance
$R\left(  t,t\right)  $ will be denoted simply by $R(t)$, and we will also use
the notation
\begin{align}
\label{sigmaDefn}\sigma^{2}(s,t):=R
\begin{pmatrix} s & t \\ s & t \end{pmatrix} = \mathbb{E} \left[  \left(
X_{s,t}^{(1)}\right) ^{2}\right] ;
\end{align}
recall the definition of the rectangular increment in \eqref{rectInc}.

The triple $\left(  \Omega,\mathcal{H}^{d},\mathbb{P} \right)  $ denotes the
abstract Wiener space associated to $X$, where $\mathcal{H}^{d}=\bigoplus
_{i=1}^{d}\mathcal{H}$ is the Cameron-Martin space (or reproducing kernel
Hilbert space). The Cameron-Martin space, which is densely and continuously
embedded in $\Omega\text{,}$ is the completion of the linear span of the
functions
\begin{align*}
\left\{  R(t, \cdot)^{(u)} := R(t,\cdot) \, e_{u} \; \bigg\vert \; t\in[0,T],
\; u = 1, \ldots, d \right\}
\end{align*}
under the inner-product
\begin{align*}
\left\langle R(t, \cdot)^{(u)}, R(s,\cdot)^{(v)} \right\rangle _{\mathcal{H}%
^{d}} = \delta_{uv} \, R(t,s), \quad u, v = 1, \ldots, d.
\end{align*}
By definition, $\mathcal{H}^{d}$ satisfies the following reproducing property;
for any $f=\left(  f^{(1)},\ldots, f^{(d)}\right)  \in\mathcal{H}^{d}$,
\begin{align*}
\left\langle f_{\cdot}, R(t,\cdot)^{(u)} \right\rangle _{\mathcal{H}^{d}} =
f_{t}^{(u)}, \quad t \in[0,T].
\end{align*}

We assume that there exists $\rho< 2$ such that $R$ has finite 2D $\rho
$-variation. The following theorem in \cite{fv2010a} (see also \cite{cq2002}
in the case of fractional Brownian motion) then shows that one can canonically
lift $X$ via its piecewise linear approximants $X^{\pi}$ to a geometric
$p$-rough path for $p > 2 \rho$.

\begin{theorem}
\label{gaussianRP} Assume $X$ is a centered continuous $\mathbb{R}^{d}$-valued
Gaussian process with i.i.d. components. Let $\rho\in[1, 2)$ and assume that
the covariance function has finite 2D $\rho$-variation.

\begin{enumerate}
[(i)]

\item (Existence) There exists a random variable $\mathbf{X}=\left(  1,
\mathbf{X}^{1}, \mathbf{X}^{2}, \mathbf{X}^{3}\right)  $ on $\left(  \Omega,
\mathcal{F},\mathbb{P}\right)  $ which takes values almost surely in
$\mathcal{C}^{0, p-var} \left(  [0, T]; G^{3}(\mathbb{R}^{d}) \right) $ for $p
> 2\rho$, i.e. the set of geometric $p-$rough paths for $p \in(2\rho, 4)$, and
which lifts the Gaussian process $X$ in the sense that $\mathbf{X}_{s,t}%
^{1}=X_{t}-X_{s}$ almost surely for all $s,t\in[0, T]$.

\item (Uniqueness and consistency) The lift $\mathbf{X}$ is unique in the
sense that it is the $d_{p -v a r}$-limit in $L^{q} (\Omega)$, $q \in
[1,\infty)$, of any sequence $S_{^{\left\lfloor p\right\rfloor }} (X^{\pi}) $
with $\left\|  \pi\right\|  \rightarrow0$. Furthermore, if $X$ has a.s. sample
paths of finite $[1 ,2)$-variation, $\mathbf{X}$ coincides with the signature
of $X$.
\end{enumerate}
\end{theorem}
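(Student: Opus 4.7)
The plan is to follow the now-classical argument due to Friz--Victoir, combining Kolmogorov's rough path criterion with moment bounds on iterated integrals of the piecewise linear interpolations $X^\pi$. Fix $p \in (2\rho, 4)$. For a partition $\pi$ of $[0,T]$, let $S_{\lfloor p\rfloor}(X^\pi) = (1, X^\pi, \mathbf{X}^{\pi,2}, \mathbf{X}^{\pi,3})$ denote the truncated signature. The goal is to show that $\{S_{\lfloor p\rfloor}(X^\pi)\}_\pi$ is Cauchy in $L^q(\Omega)$ with respect to $d_{p-var;[0,T]}$, so that the limit $\mathbf{X}$ provides the required lift in $\mathcal{C}^{0,p-var}([0,T]; G^3(\mathbb{R}^d))$, and to check the uniqueness/consistency statements afterwards.

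First I would obtain the pointwise second-moment estimates
\begin{align*}
\exptn{\bigl|\mathbf{X}^{\pi,k}_{s,t}\bigr|^2} \leq C\, \|R\|_{\rho-var;[s,t]^2}^{k}, \qquad k=1,2,3,
\end{align*}
uniformly in $\pi$. The key mechanism is that $\mathbf{X}^{\pi,k}_{s,t}$ belongs to the $k$th homogeneous Wiener chaos of $X$, so by the hypercontractivity of Ornstein--Uhlenbeck, all $L^q$--norms for $q<\infty$ are equivalent to the $L^2$--norm. The $L^2$--norm of the iterated integrals can be written as a $2k$--fold integral against the covariance $R$ over the simplex in $[s,t]^{2k}$; invoking Theorem \ref{2Dintegral} (2D Young inequality) iteratively then controls these against $\|R\|_{\rho-var;[s,t]^2}^{k}$, provided the complementary regularity condition is satisfied --- which it is precisely because $\rho < 3/2$ (and a fortiori $<2$), so pairs of $\rho$-variation factors are Young-integrable. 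Together with the fact that $\omega(s,t) := \|R\|_{\rho-var;[s,t]^2}^\rho$ is a 1D control (obtained by restricting the 2D control $\omega_{R,\rho+\varepsilon}$ to the diagonal strip), this yields pointwise estimates in all $L^q$ that feed directly into a Kolmogorov-type criterion for geometric rough paths, giving
\begin{align*}
\exptn{\bigl\|S_{\lfloor p\rfloor}(X^\pi)\bigr\|_{p-var;[0,T]}^q} \leq C_q,
\end{align*}
uniformly in $\pi$, for every $p > 2\rho$ and every $q \in [1,\infty)$.

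Next I would upgrade boundedness to Cauchy convergence. For two partitions $\pi, \pi'$, one derives analogous second-moment bounds for the differences $\mathbf{X}^{\pi,k}_{s,t} - \mathbf{X}^{\pi',k}_{s,t}$, showing they tend to zero as $\|\pi\|,\|\pi'\| \to 0$. One first establishes pointwise $L^2$ convergence on each fixed $[s,t]$ by approximating the integrals against $R$ by their Riemann sums (here the continuity of $R$ and its finite $\rho$-variation give dominated convergence on the chaos side), then interpolates this convergence with the uniform $p'$-variation bounds (for any $p' \in (2\rho, p)$) via the standard interpolation inequality $\|\cdot\|_{p-var} \leq \|\cdot\|_{\infty}^{1-p'/p}\|\cdot\|_{p'-var}^{p'/p}$. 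This yields convergence of $S_{\lfloor p\rfloor}(X^\pi)$ to some limit $\mathbf{X}$ in $L^q(\Omega)$ under $d_{p-var;[0,T]}$, and because each $S_{\lfloor p\rfloor}(X^\pi) \in \mathcal{C}^\infty([0,T]; G^3(\mathbb{R}^d))$, the limit sits in the closure $\mathcal{C}^{0,p-var}([0,T]; G^3(\mathbb{R}^d))$. The first-level identification $\mathbf{X}^1_{s,t} = X_t - X_s$ a.s. is immediate from the pointwise convergence of $X^\pi_t$ to $X_t$.

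The main obstacle is the third-level estimate, since for $p > 3$ the Young integration argument for $\mathbf{X}^{\pi,3}$ involves a six-fold integral against $R^{\otimes 3}$ on a non-product domain, where complementary regularity has to be verified along each of several nested subintegrations; this is exactly where the hypothesis $\rho < 2$ (rather than the sharper $\rho < 3/2$ used later in the main theorem) is necessary and sufficient. For uniqueness, any other sequence $S_{\lfloor p\rfloor}(X^{\pi_n})$ along $\|\pi_n\|\to 0$ converges to the same limit by the Cauchy argument applied to the interleaved sequence; and if $X$ has sample paths of finite $(1,2)$--variation a.s., then continuity of the signature map in $p$-variation topology on bounded-variation paths, together with $X^\pi \to X$ in that topology, identifies $\mathbf{X}$ with $S_{\lfloor p\rfloor}(X)$.
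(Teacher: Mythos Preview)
The paper does not supply its own proof of this theorem: it is stated as a known result and attributed to \cite{fv2010a} (and \cite{cq2002} for fractional Brownian motion). So there is no in-paper argument to compare against; your sketch is effectively a reconstruction of the Friz--Victoir proof cited there, and at that level of detail it is faithful to the original strategy (chaos decomposition, hypercontractivity, 2D Young estimates on the covariance, Kolmogorov-type tightness, then interpolation for convergence).

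One slip worth flagging: in the paragraph on the second-moment bounds you write that complementary regularity holds ``precisely because $\rho < 3/2$ (and a fortiori $<2$)''. The hypothesis of the theorem is only $\rho \in [1,2)$, and that is what is actually needed for the iterated 2D Young estimates at levels $k=1,2,3$; the stronger bound $\rho < 3/2$ enters only later in the paper (for the Cameron--Martin embedding and complementary regularity with sample paths), not in the construction of the lift itself. Your later paragraph on the third level gets this right, so the earlier remark is just a momentary conflation of two distinct regularity thresholds.
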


Moreover, Proposition 17 in \cite{fv2010a} shows that for all $h
\in\mathcal{H}^{d}$,
\begin{align}
\label{CMembedding}\left\|  h\right\| _{\rho-var; [0, T]} \leq\left\|
h\right\| _{\mathcal{H}^{d}} \sqrt{\left\|  R\right\| _{\rho-var; [0, T]^{2}}%
},
\end{align}
which implies that $\mathcal{H}^{d} \hookrightarrow\mathcal{C}^{\rho-var}([0
,T] ;\mathbb{R}^{d})$ whenever $R$ has finite 2D $\rho$-variation. Thus if
$\rho\in\left[ 1, \frac{3}{2}\right) $, corresponding to $2 \leq p < 3$, we
have complementary regularity between $X$ and any path in the Cameron-Martin
space, i.e. $\frac{1}{p} + \frac{1}{\rho} > 1$.

We will need to impose further conditions on the covariance function. For all
$s, t \in[0 ,T]$, we assume there exists $C < \infty$ such that
\begin{align}
\label{R1DBound}\left\|  R(t, \cdot) - R (s, \cdot) \right\| _{\rho-var; [0,
T]} \leq C \left|  t - s\right| ^{\frac{1}{\rho}}.
\end{align}
This bound will be later used to control the $L^{2} (\Omega)$ norm of the
iterated integrals. An immediate consequence of the bound is illustrated in
the following lemma.

\begin{lemma}
\label{RdotRhoVar} Let $X$ be a continuous, centered Gaussian process in
$\mathbb{R}$ and assume its covariance function satisfies
\begin{align*}
\left\|  R (t , \cdot) - R (s , \cdot)\right\| _{q-var; [0, T]} \leq C \left|
t -s\right| ^{\frac{1}{\rho}}, \quad\forall s < t \in[0, T],
\end{align*}
for some $q, \rho\geq1$. Then

\begin{enumerate}
[(i)]

\item $R(t) := R(t, t)$ is of bounded $\rho$-variation.

\item For $p > 2 \rho$, $X$ has a $\frac{1}{p}$-H\"{o}lder continuous modification.
\end{enumerate}
\end{lemma}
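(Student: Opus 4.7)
The plan has two parts, both relying on the observation that the hypothesis $\|R(t,\cdot) - R(s,\cdot)\|_{q-var;[0,T]} \leq C|t-s|^{1/\rho}$ controls not only the $q$-variation of the auxiliary function $g(u) := R(t,u) - R(s,u)$ but also each of its pointwise values, since $|g(b) - g(a)| \leq \|g\|_{q-var;[a,b]}$ for any $q \geq 1$.

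For part (i), I will decompose
\begin{align*}
R(t,t) - R(s,s) = [R(t,t) - R(s,t)] + [R(s,t) - R(s,s)] = g(t) + g(s),
\end{align*}
the last equality following from the symmetry $R(s,t) = R(t,s)$. Because the process starts at the origin, $g(0) = R(t,0) - R(s,0) = 0$, and therefore $|g(t)|,|g(s)| \leq \|g\|_{q-var;[0,T]} \leq C|t-s|^{1/\rho}$. This yields the pointwise estimate $|R(t) - R(s)|^{\rho} \leq (2C)^{\rho}|t-s|$, and an application of Lemma \ref{controlLem2} with the control $\omega(s,t) = |t-s|$ then gives $\|R\|_{\rho-var;[0,T]} \leq 2CT^{1/\rho}$.

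For part (ii), the strategy is to derive a H\"older-type bound on $\sigma^2(s,t) = \mathbb{E}[(X_t - X_s)^2]$ and then invoke Kolmogorov's continuity criterion. The rectangular-increment formula gives
\begin{align*}
\sigma^2(s,t) = R(s,s) + R(t,t) - R(s,t) - R(t,s) = g(t) - g(s),
\end{align*}
which is bounded by $\|g\|_{q-var;[s,t]} \leq C|t-s|^{1/\rho}$ directly, with no need to appeal to $g(0)=0$. Since $X_t - X_s$ is centered Gaussian, for each integer $k \geq 1$ the standard moment identity gives
\begin{align*}
\mathbb{E}[|X_t - X_s|^{2k}] = \frac{(2k)!}{2^k k!}\,\sigma^2(s,t)^k \leq C_k|t-s|^{k/\rho}.
\end{align*}
For $k$ large enough that $k/\rho > 1$, Kolmogorov's criterion then produces a continuous modification that is $\alpha$-H\"older for every $\alpha < \frac{1}{2\rho} - \frac{1}{2k}$. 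Since $p > 2\rho$ implies $\frac{1}{p} < \frac{1}{2\rho}$, choosing $k$ large enough that $\frac{1}{2k} < \frac{1}{2\rho} - \frac{1}{p}$ makes $\frac{1}{p}$ an admissible H\"older exponent.

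I do not anticipate any substantive obstacle; the only small observation required is the identity $R(t,t) - R(s,s) = g(t) + g(s)$, which lets us extract the diagonal modulus of $R$ from the one-dimensional hypothesis on $R(t,\cdot) - R(s,\cdot)$.
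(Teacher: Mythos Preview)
Your proposal is correct and follows essentially the same route as the paper: both proofs decompose $R(t,t)-R(s,s)$ into $g(t)+g(s)$ via symmetry, use $g(0)=0$ (from $X_0=0$) to bound $|g(t)|,|g(s)|$ by $\|g\|_{q\text{-var}}$, and for part (ii) both write $\sigma^2(s,t)=g(t)-g(s)$ and apply Gaussian moments plus Kolmogorov. The only cosmetic difference is that you invoke Lemma~\ref{controlLem2} to pass from the pointwise estimate to the $\rho$-variation bound, whereas the paper sums directly over a partition; the content is identical.
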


\begin{proof}
Let $f_{s,t} (\cdot)$ denote $R(t, \cdot) - R(s, \cdot)$. Then for any partition $\left\{ t_i \right\}$ of $[0, T]$, we have
\begin{align*}
\sum_i &\abs{R\left(t_{i+1}\right) - R\left(t_i\right)}^{\rho} \\
&\quad\qquad\leq \sum_i \abs{R\left(t_{i+1}, t_{i+1} \right) - R\left(t_i, t_{i+1}\right) + R\left(t_i, t_{i+1}\right) - R\left( t_i, t_i \right)}^{\rho} \\
&\quad\qquad\leq 2^{\rho - 1} \sum_i \abs{f_{t_i, t_{i+1}} \left( t_{i+1} \right) - f_{t_i, t_{i+1}} (0)}^{\rho} + \abs{f_{t_i, t_{i+1}} \left( t_i \right) - f_{t_i, t_{i+1}} (0)}^{\rho} \\
&\quad\qquad\leq 2^{\rho} \sum_i \norm{f_{t_i, t_{i+1}} (\cdot)}_{q-var; [0, T]}^{\rho} \\
&\quad\qquad\leq C \sum_i \abs{t_{i+1} - t_i} \leq C \, T.
\end{align*}
For the second part, for all $n \in \mathbb{N}$ we have
\begin{align*}
\exptn{X_{s,t}^{2n}} \leq C_n \exptn{X_{s,t}^2}^n
&\leq C_n \abs{f_{s,t}(t) - f_{s, t} (s)}^n \\
&\leq C_n \norm{R(t, \cdot) - R(s, \cdot)}_{q-var; [0, T]}^n \\
&\leq C_n \abs{t - s}^{\frac{n}{\rho}}, \quad s < t \in [0, T].
\end{align*}
By Kolmogorov's continuity theorem, there exists a $\gamma$-H\"{o}lder continuous modification of $X$ for all $\gamma < \frac{1}{2\rho}$. \par
\end{proof}

\subsection{Malliavin calculus}

\label{MDsection} We will primarily work with the following Hilbert space
which is isomorphic to $\mathcal{H}^{d}$.

\begin{definition}
Let $\mathcal{H}_{1}^{d}$ denote the completion of the linear span of
\begin{align*}
\left\{  \mathds{1}_{[0,t)}^{(u)} (\cdot) := \mathds{1}_{[0,t)} (\cdot) \,
e_{u} \; \bigg\vert \; t \in[0,T], \; u=1,\ldots,d \right\}
\end{align*}
(cf. \cite{amn2001}, \cite{nualart2006}) with respect to the inner-product
given by
\begin{align*}
\left\langle \mathds{1}_{[0,t)}^{(u)} (\cdot), \mathds{1}_{[0,s)}^{(v)}
(\cdot) \right\rangle _{\mathcal{H}_{1}^{d}} =\delta_{uv} \, R(t,s).
\end{align*}
Furthermore, let $\Phi: \mathcal{H}_{1}^{d} \rightarrow\mathcal{H}^{d}$ denote
the Hilbert space isomorphism obtained from extending the map
$\mathds{1}_{[0,t)}^{(u)}(\cdot) \mapsto R(t,\cdot)^{(u)}$, $t \in[0, T],\;
u=1, \ldots, d$.
\end{definition}

We record some basic properties about the Malliavin calculus. For simplicity,
we assume here that $d=1$. First we recall that the map $\mathds{1}_{\left[
0,t\right)  }\mapsto X_{t}$ extends to a unique linear isometry $I$ from
$\mathcal{H}_{1}$ to $L^{2}\left(  \Omega\right)  $. It follows that $I\left(
h\right)  $ is a mean-zero Gaussian random variable with variance $\left\Vert
h\right\Vert _{\mathcal{H}_{1}}^{2}$. The set $\mathcal{S}$ of smooth cylinder
functionals is the subset of random variables taking the form
\begin{align*}
F=f(I\left(  h_{1}\right)  ,\ldots,I\left(  h_{n}\right)  ),
\end{align*}
where $h_{1}, \ldots, h_{n}\in\mathcal{H}_{1}$ and $f:\mathbb{R}%
^{n}\rightarrow\mathbb{R}$ is infinitely differentiable with bounded
derivatives of all orders. The Malliavin derivative $\mathcal{D}F$ is the
$\mathcal{H}_{1}$-valued random variable which is defined for smooth cylinder
functionals as follows:
\begin{align*}
\mathcal{D}F:=\sum_{i=1}^{n}\frac{\partial f}{\partial x_{i}}\left(  I\left(
h_{1}\right)  ,\ldots,I\left(  h_{n}\right)  \right)  h_{i}.
\end{align*}
It can be shown that $\mathcal{D}$ is a closable operator, see e.g.
Proposition 1.2.1 in \cite{nualart2006}. For $p\geq1$ we let $\mathbb{D}%
^{1,p}$ denote the closure of $\mathcal{S}$ with respect to the norm
\begin{align*}
\left\Vert F\right\Vert _{1,p}^{p}:=\left\Vert F\right\Vert _{L^{p}\left(
\Omega\right)  }^{p}+\left\Vert \mathcal{D}F\right\Vert _{L^{p}\left(
\Omega;\mathcal{H}_{1}\right)  }^{p}.
\end{align*}
If $K$ is a separable Hilbert space, the higher-order derivatives
$\mathcal{D}^{n}$ and the corresponding Sobolev spaces $\mathbb{D}^{n, p} (K)$
can be defined iteratively.

Moving to the case $p=2$, for any $F$ in $\mathbb{D}^{1,2}$, we let
$\mathcal{D}_{h}F:=\left\langle \mathcal{D}F,h\right\rangle _{\mathcal{H}_{1}%
}$. The divergence operator $\delta^{X}$ is defined to be the adjoint of
$\mathcal{D}$. The domain of this operator consists of all $h\in L^{2}\left(
\Omega;\mathcal{H}_{1}\right)  $ such that
\begin{align*}
\left\vert \mathbb{E}\left[  \mathcal{D}_{h}F\right]  \right\vert \leq
C\left\Vert F\right\Vert _{L^{2}\left(  \Omega\right)  }%
\end{align*}
for all $F\in\mathcal{S},$ whereupon $\delta^{X}\left(  h\right)  $ is
characterized as the unique random variable in $L^{2}\left(  \Omega\right) $
for which
\begin{align*}
\mathbb{E}\left[  \left\langle \mathcal{D}F,h\right\rangle _{\mathcal{H}_{1}%
}\right]  =\mathbb{E}\left[  F\delta^{X}\left(  h\right)  \right] .
\end{align*}
We will use the notation $\delta^{X}(h)$ and $\int_{0}^{T} h_{s}
\,\mathrm{d}X_{s}$ interchangeably. It is well-known that the domain of
$\delta^{X}$ contains $\mathbb{D}^{1,2}\left(  \mathcal{H}_{1}\right)  $, see
e.g. Proposition 1.3.1 in \cite{nualart2006}.

Fixing a multi-index $a=(a_{1},\ldots,a_{M})$ where $\left|  a\right|  :=
\sum_{i=1}^{M}a_{i}=n$, we define $I_{n} : \mathcal{H}_{1}^{\otimes n}
\rightarrow\mathbb{R}$ as follows:
\begin{align*}
I_{n} \left(  h_{1}^{\otimes a_{1}}\otimes\cdots\otimes h_{M}^{\otimes a_{M}%
}\right)  = a! \prod_{i=1}^{M} H_{a_{i}}(\delta^{X} (h_{i})),
\end{align*}
where $a!:=\prod_{i=1}^{M}a_{i}!$ and $H_{m}(x)$ denotes the $m^{th}$ Hermite
polynomial. The following duality formula is then classical;
\begin{align}
\label{dualityFormula}\mathbb{E} \left[  F I_{n}(h) \right]  = \mathbb{E}
\left[  \left\langle D^{n}F, h \right\rangle _{\mathcal{H}_{1}^{\otimes n}}
\right] .
\end{align}
For $f \in\mathcal{H}_{1}^{\otimes n}$, $g \in\mathcal{H}_{1}^{\otimes m}$,
both $f$ and $g$ symmetric, we also have the following product formula (cf.
Proposition 1.1.3 in \cite{nualart2006})
\begin{align}
\label{productFormula}I_{n} (f) I_{m} (g) = \sum_{r=0}^{n \wedge m} r!
\begin{pmatrix} n \\ r \end{pmatrix} \begin{pmatrix} m \\ r \end{pmatrix} I_{n+m-2r}
\left(  f \tilde{\otimes}_{r} g \right) .
\end{align}
Here $f\tilde{\otimes}_{r} g$ denotes the symmetrization of the tensor $f
\otimes_{r} g$, which in turn denotes the contraction of $f$ and $g$ of order
$r$; i.e. given an orthonormal basis $\{h_{m}\}$ of $\mathcal{H}_{1}$,
\begin{align*}
f \otimes_{r} g := \sum_{k_{1}, \ldots, k_{r} = 1}^{\infty} \left\langle f,
h_{k_{1}} \otimes\cdots\otimes h_{k_{r}} \right\rangle _{\mathcal{H}%
_{1}^{\otimes r}} \otimes\left\langle g, h_{k_{1}} \otimes\cdots\otimes
h_{k_{r}} \right\rangle _{\mathcal{H}_{1}^{\otimes r}} \in\mathcal{H}%
_{1}^{\otimes(n + m - 2r)};
\end{align*}
cf. \cite{nnt2010}.

\begin{remark}
\label{mdRem} One can also define operators equivalent to $\mathcal{D}$ and
$\delta^{X}$ directly on the abstract Wiener space $\left(  \Omega
,\mathcal{H}, \mathbb{P} \right)  $. To make the presentation clear we
summarize the correspondence here. First, for every $l$ in the topological
dual $\Omega^{*} = \mathcal{C}\left(  \left[ 0,T\right]  ,\mathbb{R}\right)
^{*}$, there exists a unique $h_{l}$ in $\mathcal{H}$ such that $l\left(
h\right)  = \left\langle h_{l}, h \right\rangle $. Under this identification,
the random variable $\mathcal{I}\left(  h_{l}\right)  :\omega\mapsto l\left(
\omega\right)  $ is a centered normal random variable with variance $\left\|
h_{l}\right\| _{\mathcal{H}}^{2} $. Second, it can be shown that the set
$\left\{  h_{l}:l\in\Omega^{*} \right\}  $ is dense in $\mathcal{H}$,
whereupon $\mathcal{I}$ extends uniquely to an isometry between $\mathcal{H}$
and $L^{2}(\Omega)$, and is called the Paley-Wiener map. It is simple to see
that $I$ and $\mathcal{I}$ are related by $I\left(  h\right)  = \mathcal{I}%
\left(  \Phi(h) \right)  $ for all $h\in\mathcal{H}_{1}$, and therefore any
smooth cylinder functional $F$ can be represented as $F = f\left(
\mathcal{I}\left(  \Phi(h_{1}) \right) , \ldots, \mathcal{I}\left(  \Phi(h_{n}
)\right)  \right) $, and a derivative operator $\mathbf{D}$ can be defined
analogously to $\mathcal{D}$ by setting
\begin{align*}
\mathbf{D}F := \sum_{i=1}^{n}\frac{\partial f}{\partial x_{i}}\left(
\mathcal{I}\left(  \Phi(h_{1}) \right)  ,\ldots,\mathcal{I}\left(  \Phi(h_{n})
\right)  \right)  \Phi(h_{i}) = \Phi(\mathcal{D}F).
\end{align*}
This implies that
\begin{align*}
\mathbf{D}_{\Phi(h)} F = \left\langle \mathbf{D}F, \Phi(h) \right\rangle
_{\mathcal{H}} = \left\langle \mathcal{D}F, h \right\rangle _{\mathcal{H}_{1}}
= \mathcal{D}_{h} F, \quad\forall\, h\in\mathcal{H}_{1}.
\end{align*}

\end{remark}

The exposition above presents Shigekawa's definition of the Sobolev-type space
$\mathbb{D}^{n, p}\left(  K\right) $ for $K$-valued Wiener functionals, where
$K$ is a separable Hilbert space. Although this is the one most often used in
the literature, there are equivalent characterizations of these spaces. One of
these, which is attributed to Kusuoka and Stroock (cf. \cite{sugita1985}), is
especially convenient to study stochastic differential equations for which
bounds on the directional derivatives can be computed explicitly. The
definition relies on two properties. First, a measurable function
$F:\Omega\rightarrow K$ is called \textit{ray absolutely continuous} (RAC) if
for every $k\in\mathcal{H}$, there exists a measurable map $\tilde{F}_{k}:
\Omega\rightarrow K$ such that
\begin{align}
\label{version}F\left(  \cdot\right)  =\tilde{F}_{k}\left(  \cdot\right)
\text{, }\mathbb{P}-\text{a.s.},
\end{align}
and for any $\omega\in\Omega$ the function $s\mapsto\tilde{F}_{k} \left(
\omega+ sk \right)  $ is locally\footnote{Local absolute continuity is
important here and is a point often missed in the literature where RAC is
sometimes stated by demanding that $s\mapsto\tilde{F}_{k}\left(  \omega+
sk\right)  $ is absolutely continuous in $s\in\mathbb{R}$. See however
Definition 8.2.3 and Theorem 8.5.1 in \cite{bogachev2} for a proof that local
absolute continuity is enough.} absolutely continuous in $s\in\mathbb{R}$.
Second, $F$ has the property of being \textit{stochastically G\^{a}teaux
differentiable} (SGD) if there exists a measurable $G:\Omega
\mathcal{\rightarrow} \mathcal{L}_{\text{HS}}\left(  \mathcal{H}, K\right) $,
such that for any $k \in\mathcal{H}$
\begin{align*}
\frac{1}{\varepsilon}\left[  F\left(  \cdot+ \varepsilon k\right)  - F\left(
\cdot\right)  \right]  \overset{\mathbb{P}}{\rightarrow} G\left(
\omega\right)  \left(  k \right)  \quad\mathrm{as} \: t \rightarrow0,
\end{align*}
where $\mathcal{L}_{\text{HS}}\left(  \mathcal{H}, K\right) $ denotes the
space of linear Hilbert-Schmidt operators from $\mathcal{H}$ to $K$. In this
case, the derivative $G$ is unique $\mathbb{P}$-a.s. and we denote it by
$\mathcal{D}_{\text{KS}}F$. Higher order derivatives are defined inductively
in the obvious way: if $\mathcal{D}_{\text{KS}}^{n-1}F$ $\ $is SGD then
$\mathcal{D}_{\text{KS}}^{n}F := \mathcal{D}_{KS} \left(  \mathcal{D}%
_{\text{KS}}^{n-1}F \right) $.

Next, we define the spaces $\mathbb{D}_{\text{KS}}^{n,p}\left(  K\right)  $
for $1 < p < \infty$ inductively, first for $n = 1$ by setting
\begin{align*}
\mathbb{D}_{\text{KS}}^{1,p}\left(  K\right)  :=\left\{  F\in L^{p}\left(
K\right)  :F\text{ is RAC and SGD,}\,\mathcal{D}_{\text{KS}}F\in
L^{p}(\mathcal{L}_{\text{HS}}\left(  \mathcal{H},K\right)  )\right\}  ,
\end{align*}
and then analogously for $n = 2, 3, \ldots$ by
\begin{align*}
\mathbb{D}_{\text{KS}}^{n,p}\left(  K\right)  :=\left\{  F\in\mathbb{D}%
_{\text{KS}}^{n-1,p}\left(  K\right)  :\mathcal{D}_{\text{KS}}F \in
\mathbb{D}_{\text{KS}}^{n-1,p}(\mathcal{L}_{\text{HS}}\left(  \mathcal{H}%
,K\right)  )\right\}  .
\end{align*}
We have the following theorem.

\begin{theorem}
[Theorem 3.1 in \cite{sugita1985}]\label{sugitaThm} For $1 < p < \infty$ and
$n \in\mathbb{N}$ we have $\mathbb{D}_{\text{KS}}^{n,p}\left(  K\right)
=\mathbb{D}^{n,p}\left(  K\right) $, and for any element $F$ in this space,
$\mathcal{D}_{\text{KS}}F = \mathcal{D}F$ holds $\mathbb{P}$-a.s.
\end{theorem}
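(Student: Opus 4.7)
The plan is to proceed by induction on $n$; the higher-order case follows once we establish $\mathbb{D}_{\text{KS}}^{1,p}(K)=\mathbb{D}^{1,p}(K)$ with matching derivatives, since $\mathcal{D}_{\text{KS}}$ is defined iteratively in exactly the same way as $\mathcal{D}$. I therefore focus on the base case $n=1$ and prove the two inclusions.

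For $\mathbb{D}^{1,p}\subseteq\mathbb{D}_{\text{KS}}^{1,p}$: it is immediate from the definitions that every smooth cylinder functional $F = f(\mathcal{I}(\Phi(h_1)), \ldots, \mathcal{I}(\Phi(h_m))) \in \mathcal{S}$ is both RAC and SGD, with $\mathcal{D}_{\text{KS}} F$ coinciding with $\mathbf{D} F$ from Remark~\ref{mdRem}, since the shift $\omega \mapsto \omega + sk$ acts affinely on each $\mathcal{I}(\Phi(h_j))$ and $f$ is smooth. Hence $\mathcal{S} \subset \mathbb{D}_{\text{KS}}^{1,p}$ with $\mathcal{D}_{\text{KS}}|_{\mathcal{S}} = \mathcal{D}|_{\mathcal{S}}$. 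Next I would show that $\mathbb{D}_{\text{KS}}^{1,p}$ is closed under the graph norm $\|\cdot\|_{1,p}$. The key ingredient is the Cameron--Martin integration-by-parts identity
\[
\mathbb{E}\bigl[\langle \mathcal{D}_{\text{KS}} F, \Phi(h)\rangle_{\mathcal{H}}\bigr] = \mathbb{E}\bigl[F \, I(h)\bigr], \qquad F \in \mathbb{D}_{\text{KS}}^{1,p}(K), \ h \in \mathcal{H}_1,
\]
which yields closability in the standard manner: if $\{F_j\} \subset \mathcal{S}$ is $\|\cdot\|_{1,p}$-Cauchy with $F_j \to F$ and $\mathcal{D} F_j \to G$ in $L^p$, the identity above forces $G = \mathcal{D}_{\text{KS}} F$. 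Combined with the definition of $\mathbb{D}^{1,p}$ as the closure of $\mathcal{S}$, this gives the inclusion together with the pointwise identification of derivatives.

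For the reverse inclusion $\mathbb{D}_{\text{KS}}^{1,p}\subseteq\mathbb{D}^{1,p}$, I would show that $\mathcal{S}$ is dense in $\mathbb{D}_{\text{KS}}^{1,p}(K)$ in the graph norm, via a two-stage mollification. First, apply the Ornstein--Uhlenbeck semigroup via Mehler's formula $T_t F(\omega) = \mathbb{E}[F(e^{-t} \omega + \sqrt{1-e^{-2t}} \tilde{\omega})]$. The IBP identity above, specialized to $T_t F$, yields $\mathbf{D}(T_t F) = e^{-t} T_t \mathcal{D}_{\text{KS}} F$, implying that $T_t F$ has Kusuoka--Stroock derivatives of every order in every $L^q$, and that $T_t F \to F$ in $\|\cdot\|_{1,p}$ as $t \downarrow 0$. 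Second, approximate $T_t F$ by taking conditional expectation with respect to $\sigma(I(h_1), \ldots, I(h_m))$ for an orthonormal basis $\{h_j\}$ of $\mathcal{H}_1$, then convolve with a Gaussian mollifier in the $m$ resulting coordinates to produce elements of $\mathcal{S}$; a diagonal argument then delivers a $\|\cdot\|_{1,p}$-approximation of $F$ itself.

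The principal obstacle is establishing the IBP formula for a function known only to be RAC and SGD. The starting point is the Cameron--Martin translation formula
\[
\mathbb{E}\bigl[\tilde{F}_k(\omega + sk)\bigr] = \mathbb{E}\bigl[F(\omega) \exp\bigl(s \, I(h) - \tfrac{s^2}{2} \|h\|_{\mathcal{H}_1}^2\bigr)\bigr], \qquad k = \Phi(h), \ h \in \mathcal{H}_1,
\]
which one differentiates at $s = 0$. Local absolute continuity of $s \mapsto \tilde{F}_k(\omega + sk)$ yields the pointwise derivative $\mathcal{D}_{\text{KS}} F(\omega)(k)$ for almost every $\omega$, and the delicate step is exchanging $\mathrm{d}/\mathrm{d}s$ with the expectation: one invokes Fubini's theorem together with the $L^p$-bound on $\mathcal{D}_{\text{KS}} F$ and the $L^q$-integrability of the Girsanov density to pass the derivative inside. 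This is precisely where the footnote's refinement---that only local, rather than global, absolute continuity of $s \mapsto \tilde{F}_k(\omega + sk)$ is required---becomes essential, since shifts along $k$ a priori destroy global integrability of $\tilde{F}_k$.
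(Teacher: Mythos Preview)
The paper does not give its own proof of this statement: it is quoted verbatim as Theorem~3.1 of \cite{sugita1985} and used as a black box, so there is no in-paper argument to compare against. Your sketch is in fact a reasonable outline of Sugita's original proof---induction on $n$, the Cameron--Martin integration-by-parts formula for RAC/SGD functionals as the key analytic input, and an Ornstein--Uhlenbeck regularisation followed by finite-dimensional projection to obtain density of $\mathcal{S}$ in $\mathbb{D}_{\text{KS}}^{1,p}$. The one place where your outline is thinner than the actual argument is the justification of the IBP identity for merely RAC/SGD $F$: differentiating the Cameron--Martin translation identity under the integral sign is exactly the delicate point, and Sugita handles it not by a direct dominated-convergence argument on the Girsanov density but by first integrating the locally absolutely continuous map $s\mapsto \tilde F_k(\omega+sk)$ over a bounded $s$-interval, applying Fubini, and only then differentiating; this sidesteps the integrability issue you flag in your final paragraph.
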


\begin{remark}
By applying the same result iteratively it follows that $\mathcal{D}%
_{\text{KS}}^{k}F = \mathcal{D}^{k}F$ holds $\mathbb{P}$-a.s for $k = 2,
\ldots, n$.
\end{remark}

\subsection{Rough integration and controlled rough paths}

\label{controlledRP} In this subsection, we will review rough integration via
the theory of controlled rough paths. We will develop the concepts in
$p$-variation topology rather than the usual H\"{o}lder topology (cf.
\cite{gub2004} and \cite{fh2014}), and henceforth, $\mathcal{U}, \mathcal{V}$
will denote finite-dimensional vector spaces.

We begin with the following definition.

\begin{definition}
Let $\mathbf{x} = \left(  1, x, \mathbf{x}^{2} \right)  \in\mathcal{C}^{p-var}
\left(  \left[  0,T\right] ; G^{2} \left(  \mathbb{R}^{d} \right)  \right) $.
A pair of paths $\left(  \phi,\phi^{\prime}\right) $, where $\phi
\in\mathcal{C}^{p-var} \left( [0, T]; \mathcal{U} \right) $ and $\phi^{\prime}
\in\mathcal{C}^{p-var} \left( [0, T]; \mathcal{L}(\mathbb{R}^{d}; \mathcal{U})
\right) $, is said to be controlled by $\mathbf{x}$ if for all $s,t \in[0,
T]$,
\begin{align}
\label{controlled}\phi_{s,t} = \phi^{\prime}_{s} x_{s,t} + R_{s,t}^{\phi},
\end{align}
where the remainder term satisfies
\begin{align*}
R^{\phi} \in\mathcal{C}^{\frac{p}{2}-var} \left(  \left[  0,T\right] ;
\mathcal{U} \right) .
\end{align*}

\end{definition}

If we define the controlled variation norm as
\begin{align*}
\left\|  \phi\right\| _{p-cvar} := \left\|  \phi\right\| _{\mathcal{V}^{p};
[0, T]} + \left\|  \phi^{\prime}\right\| _{\mathcal{V}^{p}; [0, T]} + \left\|
R^{\phi}\right\| _{\frac{p}{2}-var; \left[  0,T\right]  },
\end{align*}
then the preceding definition says that $(\phi, \phi^{\prime})$ is controlled
by $\mathbf{x}$ if \newline$\left\|  \phi\right\| _{p-cvar} < \infty$.

\begin{restatable} {theorem}{contThm} \label{controlledThm1}
Let $\mathbf{x}=\left( 1, x,\mathbf{x}^{2}\right) \in \mathcal{C}^{p-var}\left( [0, T]; G^2(\mathbb{R}^d)\right)$, where $2 \leq p <3$.  \par
Let $\phi \in \mathcal{C}^{p-var}\left([0,T]; \mathcal{L}(\mathbb{R}^{d};\mathbb{R}^{e})\right) $ and $\phi' \in \mathcal{C}^{p-var} \left( [0,T]; \mathcal{L}(\mathbb{R}^{d}; \mathcal{L}(\mathbb{R}^{d};\mathbb{R}^{e}))\right)$. If $\left( \phi ,\phi ^{\prime }\right) $ is controlled by $\mathbf{x}$, we can define the rough integral
\begin{align}  \label{controlledRPdefn1}
\int_0^t \phi _{r}\circ \mathrm{d}\mathbf{x}_{r}:=\underset{\left\Vert
\pi \right\Vert \rightarrow 0,\pi =\left\{ 0 = r_0 < \ldots < r_n = t \right\} }{\lim }\sum_{i=0}^{n-1}\left( \phi _{r_{i}}x_{r_{i},r_{i+1}}+\phi
_{r_{i}}^{\prime }\mathbf{x}_{r_{i},r_{i+1}}^{2}\right),
\end{align}
where we have made use of the canonical identification $\mathcal{L}(\mathbb{R}^{d}; \mathcal{L} (\mathbb{R}^{d};\mathbb{R}^{e}))\simeq \mathcal{L}(\mathbb{R}^{d}\otimes \mathbb{R}^{d}; \mathbb{R}^{e}).$ Furthermore, denoting
\begin{align*}
z_{t}:=\int_{0}^{t}\phi _{r}\circ \mathrm{d}\mathbf{x}_{r},\quad
z_{t}^{\prime }:=\phi _{t},
\end{align*}
$(z,z^{\prime })$ is again controlled by $\mathbf{x}$, and we have the bound
\begin{align}  \label{controlledBound1}
\norm{z}_{p-cvar} \leq C_{p} \norm{\phi}_{p-cvar} \left( 1 + \norm{x}_{p-var; [0, T]} + \norm{\mathbf{x}^2}_{\frac{p}{2}-var; [0, T]} \right).
\end{align}
\end{restatable}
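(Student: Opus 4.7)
The strategy is the standard Gubinelli--Lyons proof for controlled rough integrals, carried out in the $p$-variation scale rather than H\"older. The key is to build a ``germ'' $\Xi_{s,t}$ that is locally close to the integral, check that it is almost additive, and then invoke a sewing-type argument to obtain the limit. Concretely, I set
\begin{align*}
\Xi_{s,t} := \phi_{s}\, x_{s,t} + \phi'_{s}\, \mathbf{x}^{2}_{s,t},
\end{align*}
which is precisely the Riemann-type summand in \eqref{controlledRPdefn1}. The plan is to show (i) that the partial sums $\sum_{i} \Xi_{r_{i}, r_{i+1}}$ are Cauchy as $\|\pi\|\to 0$, hence define $z_{t}$; (ii) that $z_{s,t} - \Xi_{s,t}$ is controlled by a power of a 2D control with exponent strictly greater than $1$; and (iii) that from this representation $(z, \phi)$ is itself controlled by $\mathbf{x}$ and satisfies the advertised estimate.

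The first step is to compute the defect $\delta\Xi_{s,u,t} := \Xi_{s,t} - \Xi_{s,u} - \Xi_{u,t}$. Using $x_{s,t} = x_{s,u} + x_{u,t}$ together with Chen's identity $\mathbf{x}^{2}_{s,t} = \mathbf{x}^{2}_{s,u} + x_{s,u}\otimes x_{u,t} + \mathbf{x}^{2}_{u,t}$, a direct expansion collapses the sum to
\begin{align*}
\delta\Xi_{s,u,t} = -\bigl(\phi_{s,u} - \phi'_{s}\, x_{s,u}\bigr)\, x_{u,t} - \phi'_{s,u}\, \mathbf{x}^{2}_{u,t} = -R^{\phi}_{s,u}\, x_{u,t} - \phi'_{s,u}\, \mathbf{x}^{2}_{u,t}.
\end{align*}
Here I use that $R^\phi$ has finite $\tfrac{p}{2}$-variation, $\phi'$ has finite $p$-variation, $x$ has finite $p$-variation, and $\mathbf{x}^2$ has finite $\tfrac{p}{2}$-variation. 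Introducing the super-additive control
\begin{align*}
\omega(s,t) := \|R^{\phi}\|_{\tfrac{p}{2}-var;[s,t]}^{p/2} + \|\phi'\|_{p-var;[s,t]}^{p} + \|x\|_{p-var;[s,t]}^{p} + \|\mathbf{x}^{2}\|_{\tfrac{p}{2}-var;[s,t]}^{p/2},
\end{align*}
I obtain $|\delta \Xi_{s,u,t}| \leq 2\, \omega(s,t)^{3/p}$ with $3/p > 1$ since $p < 3$.

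The second step is to invoke the standard sewing/Young-Lo\`eve argument (e.g.\ Proposition~1 in \cite{gub2004} or Theorem~4.10 in \cite{fv2010b}): if a two-parameter map $\Xi$ on a fixed interval satisfies $|\delta \Xi_{s,u,t}| \leq C\, \omega(s,t)^{\theta}$ for some control $\omega$ and some $\theta>1$, then the Riemann sums $\sum_{i}\Xi_{r_{i},r_{i+1}}$ converge as $\|\pi\|\to 0$ to a limit $z_{t}$, and the resulting remainder satisfies
\begin{align*}
\bigl|z_{s,t} - \Xi_{s,t}\bigr| \leq C_{p}\, \omega(s,t)^{3/p}.
\end{align*}
This gives existence of the integral as in \eqref{controlledRPdefn1}.

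The third step is to read off the controlled structure of $z$. Writing
\begin{align*}
z_{s,t} = \phi_{s}\, x_{s,t} + \underbrace{\phi'_{s}\, \mathbf{x}^{2}_{s,t} + \bigl(z_{s,t} - \Xi_{s,t}\bigr)}_{=: R^{z}_{s,t}},
\end{align*}
I identify $z'_{t} = \phi_{t}$ and $R^{z}$ as the remainder. The first piece of $R^{z}$ inherits $\tfrac{p}{2}$-variation from $\mathbf{x}^{2}$ (multiplied by the bounded quantity $\|\phi'\|_{\infty}$), while the second piece has $\tfrac{p}{3}$-variation by Lemma~\ref{controlLem2} applied to the pointwise estimate above, hence also $\tfrac{p}{2}$-variation by monotonicity of $p$-variation norms. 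Combining the two yields a bound of the form
\begin{align*}
\|R^{z}\|_{\tfrac{p}{2}-var;[0,T]} \leq C_{p} \bigl(\|\phi'\|_{\infty} \|\mathbf{x}^{2}\|_{\tfrac{p}{2}-var;[0,T]} + \|R^{\phi}\|_{\tfrac{p}{2}-var;[0,T]} \|x\|_{p-var;[0,T]} + \|\phi'\|_{p-var;[0,T]} \|\mathbf{x}^{2}\|_{\tfrac{p}{2}-var;[0,T]}\bigr),
\end{align*}
together with analogous bounds for $\|z\|_{\mathcal{V}^{p}}$ and $\|z'\|_{\mathcal{V}^{p}} = \|\phi\|_{\mathcal{V}^{p}}$ (the first obtained from $z_{s,t} = \Xi_{s,t} + (z_{s,t}-\Xi_{s,t})$ and Lemma~\ref{controlLem2}). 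Assembling the three pieces and factoring out the controlled-variation norm of $\phi$ produces \eqref{controlledBound1}.

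The main obstacle is the bookkeeping in the final step: the natural bounds are multiplicative in the norms of $(\phi, \phi', R^{\phi})$ and in those of $(x, \mathbf{x}^{2})$, and some care is required (via Lemma~\ref{controlLem1} and the super-additivity of $\omega$) to package them into the single linear-in-$\|\phi\|_{p-cvar}$ factor multiplying $\bigl(1 + \|x\|_{p-var;[0,T]} + \|\mathbf{x}^{2}\|_{\tfrac{p}{2}-var;[0,T]}\bigr)$ claimed in \eqref{controlledBound1}.
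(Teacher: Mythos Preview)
Your proposal is correct and follows essentially the same approach as the paper: define the germ $\Xi_{s,t}=\phi_s x_{s,t}+\phi'_s\mathbf{x}^2_{s,t}$, bound its additivity defect by a control raised to the power $3/p>1$, and read off the controlled structure of $z$ from the resulting remainder estimate. The only cosmetic differences are that the paper uses a product-type control $\omega=\|R^\phi\|^{p/3}_{p/2\text{-}var}\|x\|^{p/3}_{p\text{-}var}+\|\phi'\|^{p/3}_{p\text{-}var}\|\mathbf{x}^2\|^{p/3}_{p/2\text{-}var}$ (via Lemma~\ref{controlLem1}(ii)) rather than your sum-type control, and carries out the point-removal (Young--Lo\`eve) argument explicitly rather than citing the sewing lemma as a black box.
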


The following propositions will provide us with various ways to construct
controlled rough paths from existing ones.
\begin{restatable} {proposition} {contSM} \label{controlledSmoothMap}
For $p \geq 2$, let
\begin{align*}
&y\in \mathcal{C}^{p-var}\left( [0,T]; \mathcal{U} \right), \\
&y' \in \mathcal{C}^{p-var} \left( [0,T]; \mathcal{L} \left( \mathbb{R}^{d}; \mathcal{U} \right) \right),
\end{align*}
and let $\phi $ be a $\mathcal{C}_{b}^{2}$ map from $\mathcal{U}$ to $\mathcal{V}$. \par
Then $\phi (y)\in \mathcal{C}^{p-var}\left( [0,T]; \mathcal{V} \right) $ and $\nabla \phi (y)y^{\prime }\in \mathcal{C}^{p-var}\left( [0,T]; \mathcal{L}(\mathbb{R}^{d}; \mathcal{V} )\right) $. Furthermore, if $(y, y^{\prime })$ is controlled by $\mathbf{x} \in \mathcal{C}^{p-var}\left( [0,T]; G^2\left( \mathbb{R}^{d} \right) \right) $, then $(\phi (y),\nabla \phi (y)y^{\prime })$ is also controlled by $\mathbf{x}$ and we have
\begin{align} \label{csmEstimateA}
\norm{\phi (y)}_{p-var;[0,T]}, \norm{\nabla \phi (y) \, y'}_{p-var;[0,T]}
\leq \norm{\phi}_{\mathcal{C}_b^2} \norm{y}_{\mathcal{V}^p; [0, T]} \left( 1 + \norm{y'}_{\mathcal{V}^p; [0, T]} \right),
\end{align}
and
\begin{align} \label{csmEstimateB}
\left\Vert R^{\phi (y)}\right\Vert _{\frac{p}{2}-var;[0,T]}
\leq \norm{\phi}_{\mathcal{C}_b^2} \left( \left\Vert y\right\Vert_{p-var;[0,T]}^{2}+\left\Vert R^{y}\right\Vert _{\frac{p}{2}
-var;[0,T]}\right).
\end{align}
\end{restatable}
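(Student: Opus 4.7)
The plan is to combine first-order Taylor expansion of $\phi$ around $y_s$ with the controlled-path decomposition of $y$. Taylor's theorem gives
\[
\phi(y_t) - \phi(y_s) = \nabla\phi(y_s)\,y_{s,t} + r^\phi_{s,t}, \quad r^\phi_{s,t} := \int_0^1 \bigl[\nabla\phi(y_s + \theta y_{s,t}) - \nabla\phi(y_s)\bigr]\,y_{s,t}\, d\theta,
\]
with the pointwise bound $|r^\phi_{s,t}| \leq \tfrac12\|\nabla^2\phi\|_\infty\,|y_{s,t}|^2$. Substituting $y_{s,t} = y'_s x_{s,t} + R^y_{s,t}$ yields
\[
\phi(y)_{s,t} = \bigl[\nabla\phi(y_s)\,y'_s\bigr]\,x_{s,t} + \nabla\phi(y_s)\,R^y_{s,t} + r^\phi_{s,t},
\]
which exhibits $\nabla\phi(y)\,y'$ as the Gubinelli derivative and identifies the associated remainder as $R^{\phi(y)}_{s,t} = \nabla\phi(y_s)\,R^y_{s,t} + r^\phi_{s,t}$.

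To obtain estimate \eqref{csmEstimateB}, I would bound the two pieces of the remainder separately in $\tfrac{p}{2}$-variation. The first piece satisfies $\|\nabla\phi(y_\cdot)\,R^y_{\cdot}\|_{\frac{p}{2}-var} \leq \|\nabla\phi\|_\infty \|R^y\|_{\frac{p}{2}-var}$ trivially. For the second, the key observation is that $\omega(s,t) := \|y\|_{p-var;[s,t]}^p$ is a control, so that $|y_{s,t}|^2 \leq \omega(s,t)^{2/p}$ is dominated by a control raised to an exponent with $2/p + 2/p \geq 1$ (Lemma \ref{controlLem1}); Lemma \ref{controlLem2} applied with exponent $p/2$ then yields $\|r^\phi\|_{\frac{p}{2}-var} \leq \tfrac12\|\nabla^2\phi\|_\infty\,\|y\|_{p-var}^2$. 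Summing the two contributions and using $\|\nabla\phi\|_\infty, \|\nabla^2\phi\|_\infty \leq \|\phi\|_{\mathcal{C}_b^2}$ gives \eqref{csmEstimateB}.

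For \eqref{csmEstimateA}, the mean-value inequality $|\phi(y_t)-\phi(y_s)| \leq \|\nabla\phi\|_\infty |y_{s,t}|$ gives the bound on $\|\phi(y)\|_{p-var}$ at once. For $\nabla\phi(y)\,y'$, I would split the increment as
\[
(\nabla\phi(y)\,y')_{s,t} = \nabla\phi(y_s)\,y'_{s,t} + \bigl[\nabla\phi(y_t) - \nabla\phi(y_s)\bigr]\,y'_t,
\]
and use $|\nabla\phi(y_t) - \nabla\phi(y_s)| \leq \|\nabla^2\phi\|_\infty |y_{s,t}|$ together with Minkowski's inequality to obtain a $p$-variation bound of the form $C\|\phi\|_{\mathcal{C}_b^2}\bigl(\|y'\|_{p-var} + \|y\|_{p-var}\,\|y'\|_\infty\bigr)$, which is absorbed into the right-hand side of \eqref{csmEstimateA} via $\|y'\|_\infty \leq \|y'\|_{\mathcal{V}^p}$ and analogous inequalities. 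Continuity of $\phi(y)$ and $\nabla\phi(y)\,y'$ is automatic from continuity of $y,y'$ and smoothness of $\phi$. There is no real obstacle; the only point requiring care is the reformulation of $|y_{s,t}|^2$ in terms of a genuine control so that Lemma \ref{controlLem2} can be invoked legitimately to upgrade the pointwise remainder bound to a $\tfrac{p}{2}$-variation estimate.
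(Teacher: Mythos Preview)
Your argument is correct and follows essentially the same route as the paper: a first-order Taylor expansion of $\phi$ around $y_s$, insertion of the controlled decomposition $y_{s,t}=y'_sx_{s,t}+R^y_{s,t}$, identification of $R^{\phi(y)}_{s,t}=\nabla\phi(y_s)R^y_{s,t}+r^\phi_{s,t}$, and a $\tfrac{p}{2}$-variation bound on the Taylor remainder via the control $\omega(s,t)=\|y\|_{p\text{-var};[s,t]}^p$. The paper's proof is terser (it simply asserts the mean-value theorem for \eqref{csmEstimateA} and the Taylor bound for \eqref{csmEstimateB}), but your write-up is a faithful elaboration of the same mechanism; the only superfluous step is the appeal to Lemma~\ref{controlLem1}, since $|r^\phi_{s,t}|^{p/2}\le C\,\omega(s,t)$ already feeds directly into Lemma~\ref{controlLem2}.
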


\begin{restatable} {proposition} {contLeibniz} \label{controlledLeibniz1}
(Leibniz rule) For $p \geq 2$, let
\begin{align*}
&\phi \in \mathcal{C}^{p-var} \left([0, T]; \mathcal{L}(\mathcal{U}; \mathcal{V}) \right), \\
&\phi^{\prime} \in \mathcal{C}^{p-var} \left([0, T]; \mathcal{L}(\mathbb{R}^d; \mathcal{L}(\mathcal{U}; \mathcal{V})) \right),
\end{align*}
and we assume that $\left(\phi, \phi^{\prime }\right)$ is controlled by $\mathbf{x} \in \mathcal{C}^{p-var} \left([0, T]; G^2\left( \mathbb{R}^d \right) \right)$.
\begin{enumerate} [(i)]
\item Let $\psi \in \mathcal{C}^{p-var}\left( [0,T]; \mathcal{U} \right) $, $\psi' \in \mathcal{C}^{p-var}\left( [0,T]; \mathcal{L}(\mathbb{R}^{d}; \mathcal{U} )\right) $, and suppose that $\left( \psi ,\psi ^{\prime }\right) $ is controlled by $\mathbf{x}$. Then the path $\phi \psi \in \mathcal{C}^{p-var}([0,T]; \mathcal{V})$ given by the composition of $\phi$ and $\psi$ is also controlled by $\mathbf{x}$, with
derivative process $(\phi \psi)^{\prime } = \phi ^{\prime }\psi +\phi \psi
^{\prime }$. In addition, we have the bound
\begin{align}  \label{leibnizBound1a}
\norm{\phi\psi}_{p-cvar} \leq 2 \norm{\phi}_{p-cvar} \norm{\psi}_{p-cvar}
\end{align}
\item Suppose that $\psi \in \mathcal{C}^{\frac{p}{2}-var}([0,T]; \mathcal{U})$. Then $\phi \psi \in \mathcal{C}^{p-var}([0,T]; \mathcal{V})$ is also controlled by $\mathbf{x}$, with derivative process $(\phi \psi )^{\prime }=\phi ^{\prime }\psi $. Moreover, we have the bound
\begin{align}  \label{leibnizBound1b}
\norm{\phi \psi}_{p-cvar}
\leq \norm{\phi}_{p-cvar} \norm{\psi}_{\mathcal{V}^{\frac{p}{2}}; [0, T]}.
\end{align}
\end{enumerate}
\end{restatable}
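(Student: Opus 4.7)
The plan is to verify both statements directly from the definition of a controlled rough path by using the identity
\begin{align*}
(\phi \psi)_{s,t} \;=\; \phi_s \, \psi_{s,t} + \phi_{s,t} \, \psi_s + \phi_{s,t} \, \psi_{s,t},
\end{align*}
and then substituting the controlled expansions of $\phi$ and $\psi$ into this product. For part (i), using $\phi_{s,t} = \phi'_s x_{s,t} + R^{\phi}_{s,t}$ and $\psi_{s,t} = \psi'_s x_{s,t} + R^{\psi}_{s,t}$, I collect the coefficient of $x_{s,t}$ to read off the candidate derivative $(\phi\psi)' = \phi'\psi + \phi\psi'$, and the remaining terms become the candidate remainder
\begin{align*}
R^{\phi\psi}_{s,t} \;=\; \phi_s R^{\psi}_{s,t} + R^{\phi}_{s,t}\,\psi_s + \phi_{s,t}\,\psi_{s,t}.
\end{align*}
For part (ii), since $\psi$ is itself only of finite $p/2$-variation (and has no ``derivative'' with respect to $\mathbf{x}$), the same expansion yields $(\phi\psi)' = \phi'\psi$ and
\begin{align*}
R^{\phi\psi}_{s,t} \;=\; \phi_s \psi_{s,t} + R^{\phi}_{s,t}\,\psi_s + \phi_{s,t}\,\psi_{s,t}.
\end{align*}

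Next I check that each candidate remainder lies in $\mathcal{C}^{p/2-var}([0,T])$. The $R^{\phi}\psi_s$ and $\phi_s R^{\psi}$ terms are in $p/2$-variation by definition of the controlled paths and boundedness of $\phi, \psi$; the cross term $\phi_{s,t}\psi_{s,t}$ is a product of two paths of finite $p$-variation, hence of finite $p/2$-variation by the standard submultiplicativity of $p$-variation seminorms; and in (ii) the term $\phi_s\psi_{s,t}$ is of finite $p/2$-variation directly because $\psi$ is. The derivative processes $(\phi\psi)'$ in each case are products of paths of finite $p$-variation and so lie in the required space by Proposition \ref{controlledSmoothMap} applied to the bilinear multiplication map (or equivalently by a direct application of the submultiplicative property and the supremum bound inherent in $\norm{\cdot}_{\mathcal{V}^p}$).

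The bounds \eqref{leibnizBound1a} and \eqref{leibnizBound1b} then follow by routine estimates: one bounds $\norm{\phi\psi}_{\mathcal{V}^p}$ and $\norm{(\phi\psi)'}_{\mathcal{V}^p}$ using the triangle and product inequalities on each piece, and bounds $\norm{R^{\phi\psi}}_{p/2-var}$ term-by-term, using $\norm{\phi_s\psi_{s,t}}_{p/2-var} \leq \norm{\phi}_{\infty}\norm{\psi}_{p/2-var}$ (and analogously for the other linear-in-remainder terms) together with $\norm{\phi_{s,t}\psi_{s,t}}_{p/2-var} \leq \norm{\phi}_{p-var}\norm{\psi}_{p-var}$. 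Collecting all pieces and using $\norm{\cdot}_{\mathcal{V}^p} \leq \norm{\cdot}_{p-cvar}$ yields the factor of $2$ in (i) and the corresponding single-product bound in (ii).

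Nothing in the argument is truly delicate; the main thing to keep straight is the bookkeeping of which quantities live in $p$-variation versus $p/2$-variation and of how supremum-norm factors are absorbed into $\norm{\cdot}_{\mathcal{V}^p}$ and thence into $\norm{\cdot}_{p-cvar}$. The mildest subtlety is isolating the cross term $\phi_{s,t}\psi_{s,t}$ as a genuine $p/2$-variation contribution so that the remainder has the required regularity; this is handled uniformly in both (i) and (ii) by the submultiplicative estimate on $p$-variation seminorms.
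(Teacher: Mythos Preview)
Your proposal is correct and follows essentially the same approach as the paper: both write out $(\phi\psi)_{s,t}$ via the product-increment identity, read off the derivative from the coefficient of $x_{s,t}$, and bound the resulting remainder term by term using the submultiplicativity of $p$-variation. The only cosmetic difference is in part (ii), where the paper groups your two terms $\phi_s\psi_{s,t} + \phi_{s,t}\psi_{s,t}$ into the single term $\phi_t\psi_{s,t}$, yielding the slightly cleaner two-term remainder $R^{\phi\psi}_{s,t} = \phi_t\psi_{s,t} + R^{\phi}_{s,t}\psi_s$ and the sharp constant in \eqref{leibnizBound1b} directly.
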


\begin{remark}
The second part of the proposition clearly holds true if the roles of $\phi$
and $\psi$ are reversed. Furthermore, it asserts that one can trade increased
regularity in place of a controlled rough path structure in $\psi$ (or $\phi$)
for the composition to remain a controlled rough path.
\end{remark}

The proofs of the preceding theorem and propositions are routine and hence
deferred to the Appendix.

\subsection{Rough differential equations}

Now consider the following equation
\begin{align}
\label{smoothRDE}\mathrm{d} \mathbf{y}(t) = V(t, y(t)) \, \mathrm{d}
\mathbf{x}(t), \quad y(0) = y_{0},
\end{align}
where $V \in\mathcal{C}^{\lfloor p \rfloor}_{b} \left(  \mathbb{R}
\times\mathbb{R}^{e}; \mathcal{L} \left(  \mathbb{R}^{d}; \mathbb{R}^{e}
\right)  \right) $ is a differentiable function with bounded derivatives up to
degree $\lfloor p \rfloor$. Given $\mathbf{x} \in\mathcal{C}^{\infty} \left(
[0, T]; G^{\lfloor p \rfloor} \left( \mathbb{R}^{d}\right)  \right) $, the
unique solution $\mathbf{y} = S_{\lfloor p \rfloor} \left(  y \right) $ can be
obtained simply by solving \eqref{smoothRDE} as a regular ODE. Furthermore, we
have the following theorem (see \cite{lyons98}). 

\begin{theorem}
\label{univThm}  (Universal Limit Theorem)

The Ito map $\mathcal{I}: \mathbf{x} \mapsto\mathbf{y}$ is continuous from
$\mathcal{C}^{\infty} \left(  [0, T], G^{\lfloor p \rfloor} \left(
\mathbb{R}^{d} \right)  \right) $ to itself with respect to the $p$-variation
topology and thus admits a unique extension to the space of all $p$-geometric
rough paths $\mathcal{C}^{0, p-var} \left(  [0, T], G^{\lfloor p \rfloor}
\left(  \mathbb{R}^{d} \right)  \right) $. 
\end{theorem}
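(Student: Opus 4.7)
The plan is to prove the Universal Limit Theorem by a Picard-iteration argument in the space of controlled rough paths introduced in Subsection \ref{controlledRP}, followed by a stability estimate that upgrades local existence into local Lipschitz continuity of the Ito map. First, I would observe that for smooth $\mathbf{x}$ the classical ODE solution $y$ of \eqref{smoothRDE} satisfies $y_{s,t} = V(s, y_s) \, x_{s,t} + R^{y}_{s,t}$, so that $(y, V(\cdot, y))$ is automatically controlled by $\mathbf{x}$ and the rough integral in Theorem \ref{controlledThm1} reduces to the classical Riemann integral. The image of the Ito map on smooth drivers is therefore simply the signature $S_{\lfloor p \rfloor}(y)$, and the task reduces to showing that $\mathbf{x} \mapsto \mathbf{y}$ is $p$-variation uniformly continuous on bounded sets of $\mathcal{C}^{\infty}([0,T]; G^{\lfloor p \rfloor}(\mathbb{R}^d))$.

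For a general $\mathbf{x} \in \mathcal{C}^{p-var}([0,T]; G^{\lfloor p \rfloor}(\mathbb{R}^d))$ and a subinterval $[0,\tau]$, I would consider the map
\begin{align*}
\Psi(y, y')_{t} := \Bigl( y_{0} + \int_{0}^{t} V(r, y_{r}) \circ \mathrm{d}\mathbf{x}_{r}, \; V(t, y_{t}) \Bigr),
\end{align*}
defined on pairs $(y, y')$ controlled by $\mathbf{x}|_{[0,\tau]}$ with $y'_{0} = V(0, y_{0})$. Proposition \ref{controlledSmoothMap} ensures that $V(\cdot, y)$ is itself controlled by $\mathbf{x}$ with derivative $\nabla V(\cdot, y) y'$, and Theorem \ref{controlledThm1} then yields a new controlled pair $\Psi(y, y')$. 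Chaining the estimates \eqref{controlledBound1}, \eqref{csmEstimateA} and \eqref{csmEstimateB} (together with the Leibniz rule of Proposition \ref{controlledLeibniz1} when differencing), I would show that for $\tau$ small enough in terms of $\|\mathbf{x}\|_{p-var;[0,\tau]} + \|\mathbf{x}^{2}\|_{\frac{p}{2}-var;[0,\tau]}$ and $\|V\|_{\mathcal{C}^{\lfloor p \rfloor}_{b}}$, the map $\Psi$ is a contraction on a suitable ball in controlled-path space. Repeating the same computation for two drivers $\mathbf{x}, \tilde{\mathbf{x}}$ gives the local stability bound
\begin{align*}
\norm{y - \tilde y}_{p-cvar; [0,\tau]} \leq C \, d_{p-var;[0,\tau]}(\mathbf{x}, \tilde{\mathbf{x}}).
\end{align*}

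The final step is the local-to-global passage. I would introduce the super-additive control $\omega(s,t) := \|\mathbf{x}\|^{p}_{p-var;[s,t]} + \|\mathbf{x}^{2}\|^{p/2}_{\frac{p}{2}-var;[s,t]}$ and cover $[0, T]$ by consecutive intervals on which $\omega$ takes a fixed, sufficiently small value; the local fixed point is produced on each piece and concatenated, with the boundedness of $V$ preventing blowup at the junctions. The main obstacle is precisely here: naive concatenation yields estimates growing exponentially in the number of pieces, while the number of pieces grows with $\|\mathbf{x}\|^{p}_{p-var}$. Controlling this blowup requires a $p$-variation Gronwall-type argument, as developed in \cite{fv2010b}, exploiting the super-additivity of $\omega$ to produce global bounds that are polynomial in $\|\mathbf{x}\|_{p-var}$ and $\|\mathbf{x}^{2}\|_{\frac{p}{2}-var}$. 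Combined with the local stability estimate and the density of $\mathcal{C}^{\infty}$ inside $\mathcal{C}^{0, p-var}$, this yields the unique continuous extension of the Ito map to $\mathcal{C}^{0, p-var}([0, T]; G^{\lfloor p \rfloor}(\mathbb{R}^{d}))$.
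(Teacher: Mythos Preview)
The paper does not supply its own proof of Theorem~\ref{univThm}; it is stated as a known result with a reference to \cite{lyons98}. So there is nothing to compare against in the paper itself.

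Your proposal follows the Gubinelli/controlled-rough-path route (cf.\ \cite{gub2004}, \cite{fh2014}) rather than Lyons' original almost-multiplicative-functional argument. This is a perfectly standard alternative, and the outline you give---Picard contraction in controlled-path space on short intervals, local Lipschitz stability in the driver, concatenation via a greedy partition governed by a super-additive control---is the correct skeleton. Two caveats are worth flagging. First, the controlled-rough-path machinery in Subsection~\ref{controlledRP} is set up only for $2 \leq p < 3$, whereas the theorem as stated covers general $p \geq 1$; your argument as written does not address $p \geq 3$ (branched/higher-level expansions would be needed). Second, the contraction step in the Picard scheme typically requires $V \in \mathcal{C}^{3}_{b}$ (or at least Lip$^{\gamma}$ with $\gamma > p$) so that the map $(y,y') \mapsto (V(y), \nabla V(y)y')$ is Lipschitz on controlled-path space; the hypothesis $V \in \mathcal{C}^{\lfloor p \rfloor}_{b}$ in the statement is $\mathcal{C}^{2}_{b}$ when $2 \leq p < 3$, which is one derivative short for your approach. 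Lyons' original proof and Davie's discrete scheme are more economical on this point. If you are content to assume the slightly stronger regularity (as the paper in fact does everywhere it actually uses the RDE theory), your argument goes through for the range $2 \leq p < 3$ that the paper cares about.
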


The Universal Limit Theorem allows one to transfer geometric results in the
smooth case to geometric rough paths, i.e. rough paths that satisfy the
change-of-variable rule. This effectively allows a generalization of the
Stratonovich integral to processes with higher $p$-variation.

We will mainly be considering RDEs with time-homogeneous vector fields driven
by Gaussian geometric rough paths. Furthermore, although the RDE
\begin{align}
\label{RDE}\mathrm{d} Y_{t} = V(Y_{t}) \circ\mathrm{d} \mathbf{X}_{t}, \quad
Y_{0} = y_{0},
\end{align}
outputs a full rough path $\mathbf{Y}_{t}$, we will be concerned only with the
first level/path-level solution, which satisfies
\begin{align*}
Y_{t} = y_{0} + \int_{0}^{t} V(Y_{s}) \circ\mathrm{d} \mathbf{X}_{s},
\end{align*}
and ignore the higher iterated integral terms. We will use the following
notation; writing $V(Y)$ as the co-tensor
\begin{align*}
\sum_{i=1}^{e} \sum_{j=1}^{d} V_{j}^{(i)}(Y) \, \mathrm{d}e_{i} \otimes
\mathrm{d}e_{j} \in\mathbb{R}^{e} \otimes\mathbb{R}^{d},
\end{align*}
we will denote
\begin{align*}
& V^{2}(Y) := \sum_{i,m =1}^{e} \sum_{j, k = 1}^{d} \frac{\partial V^{(i)}%
_{k}}{\partial e_{m}}(Y) V_{j}^{(m)}(Y) \, \mathrm{d}e_{i} \otimes
\mathrm{d}e_{j} \otimes\mathrm{d}e_{k} \in\mathbb{R}^{e} \otimes\mathbb{R}^{d}
\otimes\mathbb{R}^{d}.
\end{align*}

\begin{theorem}
\label{YBound} For all $s < t \in[0, T]$, $\left\|  Y\right\| _{p-var; [s,
t]}$ is in $L^{q}(\Omega)$ for all $q > 0$.
\end{theorem}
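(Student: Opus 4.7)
The plan is to deploy the greedy partitioning technique of Cass, Litterer and Lyons \cite{cll2013}, which upgrades the Gaussian moment estimate $\|\mathbf{X}\|_{p-var;[0,T]} \in L^q$ (which only follows from Fernique with the bad Weibull tail, insufficient to yield all moments of $\|Y\|_{p-var}$ directly through the exponential-type a priori bound) to a sharp integrability statement.

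First, I would invoke the standard local a priori estimate for RDE solutions, a direct consequence of the Universal Limit Theorem (Theorem \ref{univThm}) and the estimate that underlies it: there exist constants $\alpha = \alpha(p, \|V\|_{\mathcal{C}_b^{\lfloor p \rfloor}}) > 0$ and $C_1 = C_1(p, \|V\|_{\mathcal{C}_b^{\lfloor p \rfloor}}) < \infty$ such that on any subinterval $[u,v] \subset [0,T]$ with $\|\mathbf{X}\|_{p-var;[u,v]} \leq \alpha$, the path-level solution satisfies the uniform bound $\|Y\|_{p-var;[u,v]} \leq C_1$. This is the classical ``short-time'' estimate for RDEs driven by rough paths of $p$-variation $p<3$.

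Next, I would fix $[s,t] \subset [0,T]$ and build the greedy partition $\tau_0 = s$,
\[
\tau_{i+1} = \inf\bigl\{u \in (\tau_i, t] : \|\mathbf{X}\|_{p-var;[\tau_i,u]} \geq \alpha \bigr\} \wedge t,
\]
and let $N = N_{\alpha;[s,t]}(\mathbf{X})$ be the number of non-trivial subintervals produced. On each $[\tau_i, \tau_{i+1}]$ the short-time estimate of the previous paragraph applies. Combining the local bounds by refining any partition of $[s,t]$ through the $\tau_i$'s and applying convexity, one obtains the pathwise bound
\[
\|Y\|_{p-var;[s,t]} \leq C_2 \bigl(1 + N\bigr)
\]
for a constant $C_2$ depending only on $p$ and $\|V\|_{\mathcal{C}_b^{\lfloor p \rfloor}}$.

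Finally, the integrability would follow from the central result of \cite{cll2013}: under our standing assumption that $R$ has finite 2D $\rho$-variation with $\rho \in [1,\tfrac{3}{2})$, and with the additional strengthening \eqref{R1DBound} (which forces the one-dimensional marginals to be suitably H\"older, thereby placing us squarely inside the CLL framework), the random variable $N_{\alpha;[s,t]}(\mathbf{X})$ has a Weibull tail of shape strictly larger than one. In particular $N \in L^q(\Omega)$ for every $q \in [1,\infty)$, so the pathwise bound above immediately gives $\|Y\|_{p-var;[s,t]} \in L^q(\Omega)$ for every $q > 0$.

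The main obstacle is not any single step, but rather the careful bookkeeping needed to convert the local uniform estimate on each $[\tau_i,\tau_{i+1}]$ into the global bound in terms of $N$; this requires the full machinery of controls (Lemmas \ref{controlLem1}--\ref{controlLem2}) since the $p$-variation seminorm itself is only super-additive and not additive along concatenations. Once this concatenation estimate is in place, the integrability of $N$ from \cite{cll2013} does all the remaining work, and no further probabilistic input beyond Theorem \ref{gaussianRP} and condition \eqref{R1DBound} is required.
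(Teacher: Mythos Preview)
Your proposal is correct but takes a substantially more elaborate route than the paper. The paper's proof is a two-line affair: it quotes the global a priori estimate for RDEs with bounded $\mathcal{C}_b^{\lfloor p \rfloor}$ vector fields (equation 10.15 in \cite{fv2010b}),
\[
\|Y\|_{p-var;[s,t]} \leq C_p\bigl(\|V\|_{\mathcal{C}_b^{\lfloor p\rfloor}}\|\mathbf{X}\|_{p-var;[s,t]} \vee \|V\|_{\mathcal{C}_b^{\lfloor p\rfloor}}^p\|\mathbf{X}\|_{p-var;[s,t]}^p\bigr),
\]
and then simply notes that $\|\mathbf{X}\|_{p-var;[s,t]}$ has moments of all orders (Corollary 66 of \cite{fv2010a}). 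Since the bound is \emph{polynomial} in $\|\mathbf{X}\|_{p-var}$, Fernique-type integrability of the rough path norm is already enough.

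Your premise --- that the direct route fails because of an ``exponential-type a priori bound'' --- conflates this situation with the Jacobian case. For bounded vector fields the Friz--Victoir estimate is polynomial; it is only for \emph{linear} (or otherwise unbounded) vector fields, as in the RDE for $J^{\mathbf{X}}$, that the a priori bound becomes exponential in $\|\mathbf{X}\|_{p-var}^p$ and the Cass--Litterer--Lyons greedy-partition machinery is genuinely required. The paper does invoke \cite{cll2013}, but only later, in Theorem \ref{JBoundThm}, precisely for that purpose. Your argument works and yields the same conclusion, but it deploys the heavy artillery of \cite{cll2013} where a polynomial bound and Fernique already suffice; also note that neither approach needs condition \eqref{R1DBound} here --- finite 2D $\rho$-variation of $R$ is enough for both the paper's argument and for the CLL tail estimate on $N$.
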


\begin{proof}
From equation 10.15 in \cite{fv2010b}, we have
\begin{align*}
\norm{Y}_{p-var; [s, t]}
&\leq C_p \left( \norm{V}_{C^{\lfloor p \rfloor}_b} \norm{\mathbf{X}}_{p-var; [s, t]} \vee \norm{V}^p_{C^{\lfloor p \rfloor}_b} \norm{\mathbf{X}}^p_{p-var; [s, t]} \right),
\end{align*}
and $\norm{\mathbf{X}}_{p-var; [s, t]}$ has moments of all orders; see Corollary 66 in \cite{fv2010a}.
\end{proof}

We will now show that
\begin{align*}
Y_{t} \in\mathbb{D}^{\infty}\left(  \mathbb{R}^{e}\right)  :=\bigcap_{p>1}
\bigcap_{k=1}^{\infty}\mathbb{D}^{k,p}\left(  \mathbb{R}^{e}\right)  ,\text{
for }t\geq0,
\end{align*}
where $Y$ solves RDE \eqref{rdeMain} with smooth vector fields. To do so, we
make use of the fact (cf. \cite{cfv2009}) that there exists a measurable
subset $\tilde{\Omega}\subset\Omega$ with $\mathbb{P(}\tilde{\Omega}) = 1$
such that for all $\omega\in\tilde{\Omega}$ we have the identity
\begin{align*}
\mathbf{X}\left(  \omega+ \Phi(h) \right)  =T_{\Phi(h)} \mathbf{X}\left(
\omega\right)  \quad\forall h \in\mathcal{H}_{1}^{d},
\end{align*}
where $T_{\Phi(h)} \mathbf{X}$ denotes the rough path translation of
$\mathbf{X}$ by $\Phi(h)$ (see \cite{cf2011}), which is well-defined via
Young-Stieltjes integration due to complementary regularity.

We then obtain
\begin{align*}
Y_{t}\left(  \omega+ s \, \Phi(h) \right)  :=Y_{t}^{\mathbf{X}\left(  \omega+
s \Phi(h) \right)  }=Y_{t}^{T_{s\Phi(h)}\mathbf{X}\left(  \omega\right)  },
\end{align*}
which is smooth in $s$ (see Theorem 11.6 in \cite{fv2010b}) and hence locally
absolutely continuous. It follows that $Y_{t}$ is RAC; indeed, in this case we
can even take the version $\tilde{F}_{\Phi(h)}$ in \eqref{version} to be
independent of $\Phi(h)$. Using Theorem \ref{sugitaThm}, it is immediate from
the definition of $\mathcal{D}_{\text{KS}}Y_{t}$ and the directional
derivatives that
\begin{align*}
\mathcal{D}_{\text{KS}}Y_{t}\left(  \omega\right)  \left(  \Phi(h) \right)  =
\mathcal{D}_{h} Y_{t}\left(  \omega\right)  ,\text{ }\mathbb{P}\text{-a.s.},
\end{align*}
for all $h\in$ $\mathcal{H}_{1}^{d}$, and henceforth we will use the latter
notation exclusively.

Moving on to the higher order derivatives, given $h_{1}, \ldots, h_{n}
\in\mathcal{H}_{1}^{d}$, we can take the directional derivatives of $Y_{t}$ in
the directions $\Phi(h_{1}), \ldots, \Phi(h_{n})$ in $\mathcal{H}^{d}$ by
setting
\begin{align}
\label{nderiv}\mathcal{D}_{h_{1}, \ldots, h_{n}}^{n} Y_{t} := \frac
{\partial^{n}}{\partial\varepsilon_{1} \ldots\partial\varepsilon_{n}}
Y_{t}^{\varepsilon_{1}, \ldots, \varepsilon_{n}} \bigg\vert_{\varepsilon_{1} =
\ldots= \varepsilon_{n}=0},
\end{align}
where $Y_{t}^{\varepsilon_{1}, \ldots, \varepsilon_{n}}$ solves
\begin{align*}
\mathrm{d} Y^{\varepsilon_{1}, \ldots, \varepsilon_{n}}_{t} = V(Y^{\varepsilon
_{1}, \ldots, \varepsilon_{n}}_{t}) \circ\mathrm{d} \left(  T_{\varepsilon_{1}
\Phi(h_{1}) + \cdots+ \varepsilon_{n} \Phi(h_{n})} \mathbf{X} \right) _{t},
\quad Y^{\varepsilon_{1}, \ldots, \varepsilon_{n}}_{0} = y_{0}.
\end{align*}
The path (\ref{nderiv}) again has finite $p$-variation and in Section
\ref{HOMD}, we will give it an explicit expression in terms of a sum of rough
integrals and/or Young-Stieltjes integrals when $n \geq2$. It only remains to
show that these derivatives are Hilbert-Schmidt operators with norms having
moments of all orders, and this has been proved in \cite{inahama2014}.

When $n=1$ the first-order derivative is given by (cf. \cite{fv2010b},
\cite{cf2011})
\begin{align}
\label{dd}\mathcal{D}_{h} Y_{t} = \int_{0}^{t} J_{t}^{\mathbf{X}} \left(
J_{s}^{\mathbf{X}}\right) ^{-1} V \left(  Y_{s} \right)  \, \mathrm{d}
\Phi(h)(s).
\end{align}
Here $J^{\mathbf{X}}_{t}$ denotes the Jacobian of the flow map $y_{0}
\rightarrow Y_{t}$ and satisfies
\begin{align}
\label{jacobianRDE}\mathrm{d} J^{\mathbf{X}}_{t} = \nabla V (Y_{t}) \left(
\circ\mathrm{d} \mathbf{X}_{t} \right)  J^{\mathbf{X}}_{t}, \quad
J^{\mathbf{X}}_{0} = \mathcal{I}_{e}.
\end{align}
On occasion, we will use the shorthand
\begin{align*}
J^{\mathbf{X}}_{t \leftarrow s} := J^{\mathbf{X}}_{t} \left( J^{\mathbf{X}%
}_{s}\right) ^{-1}, \quad0 \leq s < t \leq T,
\end{align*}
and for future reference, we also note that its inverse $\left(
J^{\mathbf{X}} \right) ^{-1}$ satisfies
\begin{align}
\label{jacobianInverseRDE}\mathrm{d} \left(  J^{\mathbf{X}}_{t} \right) ^{-1}
= -\left( J^{\mathbf{X}}_{t} \right) ^{-1} \nabla V (Y_{t}) \left(
\circ\mathrm{d} \mathbf{X}_{t} \right) , \quad\left( J^{\mathbf{X}}_{0}\right)
^{-1} = \mathcal{I}_{e}.
\end{align}

To bound the Jacobian, we will need the following definitions. Following
\cite{cll2013} we define, for a given interval $[s,t]\subset\lbrack0,T]$ and
$\beta>0$, the so-called greedy sequence $\{\tau_{i}(\beta)\}$, a finite
increasing sequence given by
\begin{align*}
&  \tau_{0}(\beta)=s,\\
&  \tau_{i+1}(\beta)=\inf\left\{  u\in(\tau_{i},t] \: \big\vert \: \left\Vert
\mathbf{X}\right\Vert _{p-var;[\tau_{i},u]}^{p}\geq\beta\right\}  \wedge t.
\end{align*}
We then denote
\begin{align}
\label{NDefn}N^{\mathbf{X}}_{\beta;[s,t]}:=\sup\left\{  n\in\mathbb{N}%
\cup\{0\} \: \big\vert \: \tau_{n}(\beta)<t\right\} ,
\end{align}
and note the following theorem.

\begin{theorem}
Let $X$ be an $\mathbb{R}^{d}$-valued centered Gaussian process with i.i.d.
components. For $1 \leq p < 4$, assume that $X$ has a natural lift to
$\mathbf{X} \in\mathcal{C}^{0, p-var} \left( [0, T]; G^{\lfloor p \rfloor}
\left( \mathbb{R}^{d}\right) \right) $, and that $\mathcal{H}^{d}
\hookrightarrow\mathcal{C}^{q-var} \left(  [0, T]; \mathbb{R}^{d} \right) $,
where $\frac{1}{p} + \frac{1}{q} > 1$. Then we have
\begin{align*}
\mathbb{P} \left[  N^{\mathbf{X}}_{\beta; [0, T]} > n \right]  \leq C_{1}
\exp\left(  - C_{2} \beta^{2} n^{\frac{2}{q}} \right) .
\end{align*}

\end{theorem}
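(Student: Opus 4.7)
The plan is to reprove the tail estimate of Cass--Litterer--Lyons \cite{cll2013} by combining a deterministic ``non-cancellation'' lemma for Cameron--Martin translations of $\mathbf{X}$ with the Borell--Sudakov--Tsirelson concentration inequality on the abstract Wiener space $(\Omega,\mathcal{H}^d,\mathbb{P})$. The only probabilistic ingredient will be Borell's inequality; everything else is pathwise and exploits the complementary Young regularity hypothesis $\tfrac{1}{p}+\tfrac{1}{q}>1$.

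The first, and hardest, step is a deterministic lemma of the following shape: for every $h\in\mathcal{H}^d$ and every $\beta>0$,
\begin{equation*}
N^{\mathbf{X}}_{\beta;[0,T]} \;\leq\; K_1\, N^{T_{-\Phi(h)}\mathbf{X}}_{\beta/K_1;[0,T]} \;+\; K_2\,\frac{\|\Phi(h)\|_{q-var;[0,T]}^{\,q}}{\beta^{q/p}}.
\end{equation*}
The idea is as follows. On any greedy interval $[\tau_i,\tau_{i+1}]$ for $\mathbf{X}$ at scale $\beta$, Young's triangle-type inequality for translations of rough paths (well-defined here because of complementary regularity, cf.\ \cite{cq2002}, \cite{cf2011}) yields
\begin{equation*}
\beta^{1/p} \;\leq\; \|\mathbf{X}\|_{p-var;[\tau_i,\tau_{i+1}]} \;\leq\; K\bigl(\|T_{-\Phi(h)}\mathbf{X}\|_{p-var;[\tau_i,\tau_{i+1}]}+\|\Phi(h)\|_{q-var;[\tau_i,\tau_{i+1}]}\bigr).
\end{equation*}
Hence either $\|T_{-\Phi(h)}\mathbf{X}\|_{p-var;[\tau_i,\tau_{i+1}]}\geq \beta^{1/p}/(2K)$, in which case the interval contributes to the greedy count of the translated path, or $\|\Phi(h)\|_{q-var;[\tau_i,\tau_{i+1}]}\geq \beta^{1/p}/(2K)$, in which case it accrues at least $\beta^{q/p}/(2K)^q$ of $q$-variation. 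Summing over $i$ and using super-additivity of $\|\Phi(h)\|_{q-var}^q$ produces the displayed inequality.

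The second step is the isoperimetric argument. Let $m$ be a median of $N^{\mathbf{X}}_{\beta;[0,T]}$; the deterministic lemma, evaluated on the event $A:=\{N^{\mathbf{X}}_{\beta;[0,T]}\leq m\}$ (which has probability $\geq 1/2$) and for $\omega\in\{N^{\mathbf{X}}_{\beta;[0,T]}>n\}$ with $n\geq 2K_1 m$, forces any $h\in\mathcal{H}^d$ with $\omega+\Phi(h)\in A$ to satisfy
\begin{equation*}
\|h\|_{\mathcal{H}^d} \;\geq\; c_1\|\Phi(h)\|_{q-var;[0,T]} \;\geq\; c_2\,\beta^{1/p}\,n^{1/q},
\end{equation*}
where the first inequality is the Cameron--Martin embedding \eqref{CMembedding} (applied to $\Phi(h)$ and using the symmetry of the abstract Wiener space setup) and the second is the deterministic lemma. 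Hence $\{N^{\mathbf{X}}_{\beta;[0,T]}>n\}$ lies at $\mathcal{H}^d$-distance at least $c_2\beta^{1/p}n^{1/q}$ from $A$, and Borell's inequality gives
\begin{equation*}
\mathbb{P}\bigl[N^{\mathbf{X}}_{\beta;[0,T]}>n\bigr] \;\leq\; 1-\Phi\bigl(c_2\beta^{1/p}n^{1/q}\bigr) \;\leq\; C_1\exp\bigl(-C_2\beta^2 n^{2/q}\bigr),
\end{equation*}
after absorbing the threshold $n\geq 2K_1 m$ (where $m$ is bounded by general moment estimates on $\|\mathbf{X}\|_{p-var;[0,T]}$) into the constants.

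The main obstacle is the deterministic lemma: one must check that the rough-path translation $T_{-\Phi(h)}$, which involves corrections to the second iterated integral when $p\in[2,3)$, still obeys a clean triangle inequality in $p$-variation, and that the constants do not degrade when summing over the greedy intervals. Once this is established, the remainder of the argument is a direct application of Borell's inequality.
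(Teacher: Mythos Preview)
Your proposal is essentially a faithful reconstruction of the Cass--Litterer--Lyons argument, which is exactly what the paper invokes: its entire proof is the one-line citation ``See the proof of Theorem 6.3 in \cite{cll2013}''. The two-step structure you describe---a deterministic comparison of greedy counts under Cameron--Martin translation, followed by Borell's isoperimetric inequality---is precisely the CLL strategy, so there is no methodological difference to discuss.

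One small arithmetic point: in your final line you pass from the Cameron--Martin distance $c_2\,\beta^{1/p}\,n^{1/q}$ to an exponent $C_2\,\beta^{2}\,n^{2/q}$, but squaring actually gives $\beta^{2/p}\,n^{2/q}$. This is harmless for the paper's purposes (the only use downstream is with $\beta=1$, in Theorem~\ref{JBoundThm}), and the stated form with $\beta^{2}$ is likely a transcription artefact from CLL's own normalisation; still, you should make your derivation internally consistent.
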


\begin{proof}
See the proof of Theorem 6.3 in \cite{cll2013}.
\end{proof}

\begin{theorem}
\label{JBoundThm} For all $s < t \in[0, T]$, $\left\|  J^{\mathbf{X}}\right\|
_{p-var; [s, t]}$ is in $L^{q}(\Omega)$ for all $q > 0$.
\end{theorem}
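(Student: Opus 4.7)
The plan is to control $\|J^{\mathbf{X}}\|_{p-var; [s,t]}$ by the greedy count $N^{\mathbf{X}}_{\beta; [s,t]}$ from \eqref{NDefn} and then exploit its Gaussian-like tail. Since \eqref{jacobianRDE} is a \emph{linear} RDE in $J^{\mathbf{X}}$ whose coefficient $\nabla V(Y_t)$ is a path of finite $p$-variation driven by the $\mathcal{C}^2_b$-composition $\nabla V \circ Y$ (and in particular $\nabla V$ is bounded since $V \in \mathcal{C}^6_b$), the linear-RDE machinery of Friz--Victoir applies; the same holds for $(J^{\mathbf{X}})^{-1}$ via \eqref{jacobianInverseRDE}. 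This step sits entirely at the path-level, so no probability enters yet.

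First, I would fix $\beta>0$ small enough (depending only on $p$ and $\|V\|_{\mathcal{C}^2_b}$) so that on every greedy subinterval $[\tau_i, \tau_{i+1}]$ one has $\|\mathbf{X}\|_{p-var;[\tau_i,\tau_{i+1}]}^p \leq \beta$ and, by the standard local estimate for linear RDEs (Theorem 10.53 of \cite{fv2010b}),
\begin{align*}
\|J^{\mathbf{X}}\|_{p-var; [\tau_i, \tau_{i+1}]} \leq C\,|J^{\mathbf{X}}_{\tau_i}|, \qquad |J^{\mathbf{X}}_{\tau_{i+1}}| \leq (1+C)\,|J^{\mathbf{X}}_{\tau_i}|,
\end{align*}
for some constant $C=C(\beta, \|V\|_{\mathcal{C}^2_b}, p)$. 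Iterating the second bound from $J^{\mathbf{X}}_s = \mathcal{I}_e$ yields $|J^{\mathbf{X}}_{\tau_i}| \leq (1+C)^i$, and then super-additivity of the $p$-variation control across the $N^{\mathbf{X}}_{\beta;[s,t]}+1$ subintervals, together with the inequality $(\sum a_i^p)^{1/p} \leq (\sum a_i)$, gives
\begin{align*}
\|J^{\mathbf{X}}\|_{p-var; [s, t]} \leq C_1 \exp\bigl(C_2\, N^{\mathbf{X}}_{\beta; [s, t]}\bigr),
\end{align*}
with constants depending only on $\beta$, $p$, and $\|V\|_{\mathcal{C}^2_b}$.

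With this deterministic inequality in hand, the probabilistic step is immediate. Since $R$ has finite 2D $\rho$-variation with $\rho < \tfrac{3}{2}$, by \eqref{CMembedding} we have $\mathcal{H}^d \hookrightarrow \mathcal{C}^{\rho-var}$ with $\tfrac{1}{p} + \tfrac{1}{\rho} > 1$, so the hypotheses of the tail bound preceding this theorem are satisfied. Thus
\begin{align*}
\mathbb{P}\bigl[N^{\mathbf{X}}_{\beta;[0,T]} > n\bigr] \leq C_1 \exp\bigl(-C_2 \beta^2 n^{2/\rho}\bigr),
\end{align*}
and since $2/\rho > 4/3 > 1$ these tails are sub-Gaussian. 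Consequently $\mathbb{E}\bigl[\exp(\lambda N^{\mathbf{X}}_{\beta;[s,t]})\bigr] < \infty$ for every $\lambda > 0$, and feeding this into the deterministic bound above gives $\|J^{\mathbf{X}}\|_{p-var;[s,t]} \in L^q(\Omega)$ for all $q > 0$.

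The main obstacle is the first (purely deterministic) step: one must verify carefully that the linear-RDE local estimate really applies to \eqref{jacobianRDE} as a rough integral equation in $J^{\mathbf{X}}$, and that the $p$-variation (not merely the sup-norm) of the Jacobian aggregates across the greedy partition in the claimed manner. This is essentially the content of the linear RDE estimates in \cite{fv2010b} and \cite{cll2013}, but the bookkeeping of constants and the verification that $\nabla V(Y)$ has the required controlled-path regularity (via Proposition \ref{controlledSmoothMap} applied to $\nabla V$) is the step that requires care; everything afterwards is a routine application of the already-cited tail estimate.
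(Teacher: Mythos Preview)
Your proposal is correct and follows essentially the same approach as the paper: both establish a deterministic bound of the form $\|J^{\mathbf{X}}\|_{p-var;[s,t]} \leq C_1 \exp(C_2 N^{\mathbf{X}}_{\beta;[s,t]})$ (times, in the paper's version, an extra factor $\|\mathbf{X}\|_{p-var;[s,t]}$) and then feed in the tail estimate for $N^{\mathbf{X}}_{\beta}$. The only difference is that the paper simply cites this deterministic bound as equation (4.10) of \cite{cll2013} and finishes with Cauchy--Schwarz, whereas you re-sketch its derivation via the greedy partition and local linear-RDE estimates.
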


\begin{proof}
Using the fact that $N_{1; [s, t]}^{\mathbf{X}}$ has Gaussian tails from the previous theorem, we see that $\exptn{\exp \left( C_2 q N_{1; [s, t]}^{\mathbf{X}} \right)} < \infty$ for all $q > 0$, $s < t \in [0, T]$. Now from equation (4.10) in \cite{cll2013}, we have the bound
\begin{align} \label{JBound}
\norm{J^{\mathbf{X}}}_{p-var; [s, t]} \leq C_1 \norm{\mathbf{X}}_{p-var; [s, t]} \exp \left( C_2 N^{\mathbf{X}}_{1; [s, t]} \right).
\end{align}
The statement of the theorem then follows immediately using Cauchy-Schwarz since $\norm{\mathbf{X}}_{p-var; [s, t]}$ also has moments of all orders.
\end{proof}

\section{High-order directional derivatives for solutions to RDEs}

We first begin with the following theorem.

\begin{theorem}
\label{controlledRDE} Consider the system of RDEs
\begin{align*}
& \mathrm{d} y_{t} = V(y_{t}) \circ\mathrm{d} \mathbf{x}_{t}, \quad y_{0} = a
\in\mathbb{R}^{e},\\
& \mathrm{d} J^{\mathbf{x}}_{t} = \nabla V (y_{t}) \left(  \circ\mathrm{d}
\mathbf{x}_{t} \right)  J^{\mathbf{x}}_{t}, \quad J^{\mathbf{x}}_{0} =
\mathcal{I}_{e},
\end{align*}
where $\mathbf{x} = \left(  1, x,\mathbf{x}^{2}\right)  \in\mathcal{C}^{p-var}
\left( [0, T]; G^{2} \left( \mathbb{R}^{d}\right) \right) $, $2 \leq p <3$,
and $V$ is in $\mathcal{C}^{3}_{b} (\mathbb{R}^{e}; \mathbb{R}^{e}
\otimes\mathbb{R}^{d})$.

In this case, both $(y,V(y))$ and $\left(  J^{\mathbf{x}}, \left(
J^{\mathbf{x}} \right) ^{\prime}\right) $ are controlled by $\mathbf{x}$.
Moreover, we have the bounds
\begin{align}
\label{boundedVF1}\left\|  y\right\| _{p-cvar} \leq C_{p} \left(  1 +
\left\Vert V\right\Vert _{\mathcal{C}_{b}^{2}} \right) ^{4} \left(  1 +
\left\Vert \mathbf{x}\right\Vert _{p-var;[0,T]} \right) ^{3},
\end{align}
and
\begin{align}
\label{linearVF1}\left\|  J^{\mathbf{x}}\right\| _{p-cvar} \leq C_{1} \left(
1 + \exp\left(  C_{2} N^{\mathbf{x}}_{1; [0, T]} \right)  \right) ^{4} \left(
1 + \left\Vert \mathbf{x}\right\Vert _{p-var;[0,T]} \right) ^{3},
\end{align}
where $C_{1}$, $C_{2}$ depend on $p$ and $\left\|  V\right\| _{\mathcal{C}%
^{3}_{b}}$.
\end{theorem}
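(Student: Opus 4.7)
The plan is to verify by direct computation that both RDE solutions admit controlled rough path structures with the natural Gubinelli derivatives, and then to quantify the three components of $\norm{\cdot}_{p-cvar}$ using a local Davie-Lyons expansion pasted together via either a polynomial (for $y$) or an exponential (for $J^{\mathbf{x}}$) global estimate.

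For $y$, I would take $y'_t := V(y_t)$, so that the remainder
$$R^y_{s,t} := y_{s,t} - V(y_s)\, x_{s,t}$$
is the object whose $\frac{p}{2}$-variation controls membership in the controlled space. The standard local Davie-type expansion for RDEs with $\mathcal{C}^3_b$ coefficients (e.g.\ Theorem 10.36 in \cite{fv2010b}) gives the pointwise estimate
$$\abs{R^y_{s,t} - V^2(y_s)\, \mathbf{x}^2_{s,t}} \leq C\, \norm{V}_{\mathcal{C}^3_b}^3\, \omega_{\mathbf{x}}(s,t)^{3/p},$$
where $\omega_{\mathbf{x}}(s,t) := \norm{\mathbf{x}}_{p-var;[s,t]}^p$. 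Since $3/p > 1$ and the leading term $V^2(y_s)\mathbf{x}^2_{s,t}$ is bounded in $\frac{p}{2}$-variation by $\norm{V}_{\mathcal{C}^2_b}^2 \norm{\mathbf{x}}_{p-var;[0,T]}^2$, Lemma \ref{controlLem2} places $R^y$ in $\mathcal{C}^{p/2-var}$ with the correct quantitative bound. The $p$-variation of $y$ is controlled polynomially in $\norm{\mathbf{x}}_{p-var;[0,T]}$ by the classical existence estimate for RDEs with bounded vector fields, and passing through Proposition \ref{controlledSmoothMap} with $\phi = V$ transfers this to $V(y)$. Summing the three pieces yields (\ref{boundedVF1}), with the exponents $4$ in $\norm{V}_{\mathcal{C}^2_b}$ and $3$ in $\norm{\mathbf{x}}_{p-var;[0,T]}$ arising from tracking the worst contribution among $\norm{y}_{\mathcal{V}^p}$, $\norm{V(y)}_{\mathcal{V}^p}$, and $\norm{R^y}_{\frac{p}{2}-var}$.

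For the Jacobian, I recast the RDE as $d J^{\mathbf{x}}_t = W(y_t) J^{\mathbf{x}}_t \circ d\mathbf{x}_t$ with $W(y) := \nabla V(y) \in \mathcal{C}^2_b$, take $(J^{\mathbf{x}})'_t := W(y_t) J^{\mathbf{x}}_t$, and set
$$R^{J^{\mathbf{x}}}_{s,t} := J^{\mathbf{x}}_{s,t} - W(y_s)\, J^{\mathbf{x}}_s\, x_{s,t}.$$
An analogous Davie expansion, combined with the Leibniz rule of Proposition \ref{controlledLeibniz1} to handle the composite $W(y) J^{\mathbf{x}}$, places $R^{J^{\mathbf{x}}}$ in $\mathcal{C}^{p/2-var}$. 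The essential difference from the case of $y$, and the main obstacle of the proof, is that the Jacobian equation is linear in $J^{\mathbf{x}}$: there is no a priori sup-norm bound, so pasting of Davie increments alone does not close. Instead one invokes the greedy-sequence estimate (4.10) of \cite{cll2013} that already appears in the proof of Theorem \ref{JBoundThm},
$$\norm{J^{\mathbf{x}}}_{p-var;[s,t]} \leq C_1\, \norm{\mathbf{x}}_{p-var;[s,t]}\, \exp\!\left(C_2\, N^{\mathbf{x}}_{1;[s,t]}\right),$$
and feeds the resulting $\norm{J^{\mathbf{x}}}_{\mathcal{V}^p}$ bound through the analogue of Step 2 above. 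Careful bookkeeping of the remainder contributions across the greedy sub-intervals, in which the exponential in $N^{\mathbf{x}}_{1;[0,T]}$ is absorbed once rather than at each level of the controlled norm, produces (\ref{linearVF1}), with the polynomial $(1+\norm{\mathbf{x}}_{p-var;[0,T]})^3$ factor from the $y$-case now multiplied by the $(1+\exp(C_2 N^{\mathbf{x}}_{1;[0,T]}))^4$ term coming from the linearity of the Jacobian RDE.
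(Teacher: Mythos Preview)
Your treatment of $y$ is essentially the paper's: both use the Davie--Lyons local expansion (Corollary 10.15 in \cite{fv2010b}) to control $R^y$, then feed the standard $p$-variation bound on $y$ through to $V(y)$.

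For $J^{\mathbf{x}}$ you correctly identify the main obstacle---the vector field $(y,J)\mapsto \nabla V(y)J$ is linear in $J$ and hence unbounded---and you correctly reach first for the greedy-sequence estimate to get $\|J^{\mathbf{x}}\|_{\mathcal{V}^p}$. But the step ``an analogous Davie expansion, combined with the Leibniz rule'' does not close. Corollary~10.15 (or Theorem~10.36) applies only to $\mathcal{C}^{\lfloor p\rfloor}_b$ vector fields, so you cannot invoke it directly for the Jacobian equation; and your appeal to Proposition~\ref{controlledLeibniz1} for $W(y)J^{\mathbf{x}}$ presupposes that $J^{\mathbf{x}}$ is already controlled, which is what you are trying to prove. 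You are missing a concrete mechanism to convert the a~priori sup-norm bound on $J^{\mathbf{x}}$ into a remainder estimate.

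The paper supplies exactly this missing step via a truncation trick. Having obtained from \cite{fr2013} the bound $\|J^{\mathbf{x}}\|_\infty \le 1+M$ with $M=\exp\!\big(C_{p,\|V\|_{\mathcal{C}^3_b}}(N^{\mathbf{x}}_{1;[0,T]}+1)\big)$, one constructs $\mathcal{C}^3_b$ vector fields $U_i$ agreeing with $(y,z)\mapsto \nabla V_i(y)z$ on the ball $\{|z|\le M+1\}$ and satisfying $\|U_i\|_{\mathcal{C}^3_b}\le \|V\|_{\mathcal{C}^3_b}(M+3)$. The augmented path $\tilde y=(y,J^{\mathbf{x}})$ then solves a \emph{bounded} RDE with vector field $\tilde V=(V,U)$, and one simply re-applies the already-proven bound \eqref{boundedVF1} with $\|\tilde V\|_{\mathcal{C}^2_b}\le \|V\|_{\mathcal{C}^3_b}(M+3)$. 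This is where the factor $(1+\exp(C_2 N^{\mathbf{x}}_{1;[0,T]}))^4$ in \eqref{linearVF1} comes from: it is the fourth power of $\|\tilde V\|_{\mathcal{C}^2_b}$ inherited from \eqref{boundedVF1}, not from any separate bookkeeping across greedy sub-intervals.
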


\begin{proof}
(i) From Corollary 10.15 in \cite{fv2010b}, for $\gamma > p$ and $s,t \in [0, T]$, we have
\begin{align*}
\abs{y_{s,t} - V(y_s) x_{s,t} - V^2(y_s) \mathbf{x}^2_{s,t}}
\leq C_p \left( \norm{V}_{\mathcal{C}^2_b} \norm{\mathbf{x}}_{p-var;[s, t]} \right)^{\gamma}.
\end{align*}
This implies that
\begin{align} \label{REst1}
\begin{split}
\abs{R^y_{s,t}}^{\frac{p}{2}}
&\leq C_p \left( \abs{V^2(y_s) \mathbf{x}^2_{s,t}}^{\frac{p}{2}} + \left( \norm{V}_{\mathcal{C}^2_b} \norm{\mathbf{x}}_{p-var; [s,t]} \right)^{\frac{\gamma p}{2}} \right) \\
&\leq C_p \left( \norm{V}_{\mathcal{C}^2_b}^p \norm{\mathbf{x}}_{p-var;[s,t]}^p + \norm{V}_{\mathcal{C}^2_b}^{\frac{\gamma p}{2}} \norm{\mathbf{x}}_{p-var;[s,t]}^{\frac{\gamma p}{2}} \right),
\end{split}
\end{align}
and thus
\begin{align*}
\norm{R^y}_{\frac{p}{2}-var; [0, T]} \leq C_p \left( \norm{V}^2_{\mathcal{C}^2_b} \norm{\mathbf{x}}_{p-var; [0, T]}^2 \vee \norm{V}_{\mathcal{C}^2_b}^{\gamma} \norm{\mathbf{x}}_{p-var; [0, T]}^{\gamma} \right),
\end{align*}
from the super-additivity of the right side of \eqref{REst1}.
We will choose $\gamma$ to be in the interval $(p, 3)$, and since
\begin{align*}
\norm{V(y)}_{p-var;[ 0, T]} \leq \norm{V}_{\mathcal{C}^2_b} \norm{y}_{p-var;[0, T]}
\end{align*}
and
\begin{align*}
\norm{y}_{p-var;[0, T]}
\leq C_p \left( \norm{V}_{\mathcal{C}^2_b} \norm{\mathbf{x}}_{p-var; [0, T]} \vee \norm{V}_{\mathcal{C}^2_b}^p \norm{\mathbf{x}}^p_{p-var; [0, T]} \right),
\end{align*}
we obtain \eqref{boundedVF1}. \par
From Proposition 5 in \cite{fr2013}, we have
\begin{align*}
\norm{J^{\mathbf{x}}}_{p-var; [0, T]} \leq \exp \left( C_{p, \norm{V}_{\mathcal{C}^3_b}} \left( N^{\mathbf{x}}_{1; [0, T]} + 1 \right) \right),
\end{align*}
which gives us
\begin{align} \label{Jsup}
\norm{J^{\mathbf{x}}}_{\infty} \leq 1 + \exp \left( C_{p, \norm{V}_{\mathcal{C}^3_b}} \left( N^{\mathbf{x}}_{1; [0, T]} + 1 \right) \right) =: 1 + M.
\end{align}
For each $i = 1, \ldots, d$, we can construct $U_i \in \mathcal{C}^3_b(\mathbb{R}^e \times \mathbb{R}^{e^2}; \mathbb{R}^e \otimes \mathbb{R}^d)$ which is equal to the vector field $(y, z) \mapsto \nabla V_i (y) z$ on the set \\
$\mathcal{W}_1 = \left\{ z \in \mathbb{R}^{e^2} \; \big\vert \; \abs{z} \leq M + 1 \right\}$ and vanishes outside the set \\
$\mathcal{W}_2 = \left\{ z \in \mathbb{R}^{e^2} \; \big\vert \; \abs{z} < M + 2 \right\}$. Hence we have
\begin{align*}
\norm{U_i}_{\mathcal{C}^3_b}
&\leq \sup_{z \in \mathcal{W}_2} \norm{\nabla V_i (\cdot) z}_{\infty} + \norm{\nabla V_i (\cdot)}_{\infty} \\
&= \norm{V}_{\mathcal{C}^3_b} (M + 3), \quad i = 1, \ldots, d.
\end{align*}
Then the solution to
\begin{align*}
&\mathrm{d} y_t = V(y_t) \strato{\mathbf{x}_t}, \quad y_0 =  a \in \mathbb{R}^e, \\
&\mathrm{d} J^{\mathbf{x}}_t = U (y_t, J^{\mathbf{x}}_t) \strato{\mathbf{x}_t}, \quad J^{\mathbf{x}}_0 = \mathcal{I}_e,
\end{align*}
where $U = (U_1, \ldots, U_d)$, will be the same as the solution to the original system on $\mathbb{R}^e \times \mathcal{W}_1$, and it can be rewritten as
\begin{align*}
\mathrm{d} \tilde{y}_t = \tilde{V} (\tilde{y}_t) \strato{\mathbf{x}_t}, \quad \tilde{y}_0 = (a, \mathcal{I}_e),
\end{align*}
where $\tilde{y} = \left( y, J^{\mathbf{x}} \right) \in \mathbb{R}^e \times \mathbb{R}^{e^2}$ and $\norm{\tilde{V}}_{\mathcal{C}^3_b} \leq \norm{V}_{\mathcal{C}^3_b} (M + 3)$. \par
Hence, we can apply \eqref{boundedVF1} to obtain
\begin{align*}
\norm{\tilde{y}}_{p-cvar}
\leq C_p \left( 1 + \norm{V}_{\mathcal{C}^3_b} (M + 3) \right)^4 \left( 1 + \left\Vert \mathbf{x}\right\Vert
_{p-var;[0,T]} \right)^3,
\end{align*}
and since $J^{\mathbf{x}}$ is a component of $\tilde{y}$, we obtain \eqref{linearVF1}.
\end{proof}

\subsection{Upper bounds on the high-order directional derivatives} \label{HOMD} 
We now use the preceding theorem as well as results on controlled
rough paths from Section \ref{controlledRP} to obtain upper bounds on the
directional derivative
\begin{align} \label{nderiv2}
\mathcal{D}_{g_{1},\ldots,g_{n}}^{n}y_{t}:=\frac{\partial^{n}}{\partial
\varepsilon_{1}\ldots\partial\varepsilon_{n}}y_{t}^{\varepsilon_{1}%
,\ldots\varepsilon_{n}}\bigg\vert_{\varepsilon_{1}=\ldots=\varepsilon_{n}=0},
\end{align}
where the driving rough path $\mathbf{x}$ is perturbed in the directions
$g_{1}, \ldots, g_{n}$ which are now assumed to be paths having complementary
regularity with $\mathbf{x}$. 
To condense the notation we will write $\mathcal{D}_{A}^{\left\vert A\right\vert }y_{t}$, for any subset $A$ of
$\left\{  g_{1},\ldots,g_{n}\right\} $, noting that the symmetry of the
derivative ensures this is well-defined. For $i\in\left\{  1,...,n\right\}$
we then let $A_{i}^{n}\left(  \cdot\right)  :\left[  0,T\right]
\rightarrow\left(  \mathbb{R}^{e}\right)  ^{\otimes i}$ be defined by
\begin{align} \label{A}
A_{i}^{n}\left(  t\right)  :=\sum_{\pi=\left\{  \pi_{1},\ldots,\pi
_{i}\right\}  \in\mathcal{P}\left(  \left\{  g_{1},\ldots,g_{n}\right\}
\right)  }\mathcal{D}_{\pi_{1}}^{\left\vert \pi_{1}\right\vert }y_{t}%
\tilde{\otimes}\cdots\tilde{\otimes}\mathcal{D}_{\pi_{i}}^{\left\vert \pi
_{i}\right\vert }y_{t},\,t\in\left[  0,T\right].
\end{align}
Here $\tilde{\otimes}$ denotes the symmetric tensor product, and the summation
is over the set of all partitions of $\left\{  g_{1},\ldots,g_{n}\right\}  $
containing exactly $i$ elements. For all $i\in\left\{  1,\ldots,n-1\right\}  $
and $j\in\left\{  1,\ldots,n\right\}  $ we also let $B_{i,j}^{n}\left(
\cdot\right)  :\left[  0,T\right]  \rightarrow\left(  \mathbb{R}^{e}\right)
^{\otimes i}$ be defined by
\begin{align} \label{Bb}
B_{i,j}^{n}\left(  t\right)  :=\sum_{\pi=\left\{  \pi_{1},\ldots,\pi
_{i}\right\}  \in\mathcal{P}\left(  \left\{  g_{1},\ldots,g_{j-1}%
,g_{j+1},\ldots,g_{n}\right\}  \right)  }\mathcal{D}_{\pi_{1}}^{\left\vert
\pi_{1}\right\vert }y_{t}\tilde{\otimes}\cdots\tilde{\otimes}\mathcal{D}%
_{\pi_{i}}^{\left\vert \pi_{i}\right\vert }y_{t}.
\end{align}
The following result gives an integral equation for the formula for
$\mathcal{D}_{g_{1},\ldots,g_{n}}^{n}y_{t}$ in terms of these paths (cf.
\cite{hp2013}, \cite{inahama2014}, \cite{chlt2015}).

\begin{theorem}
\label{mdFormula} Let $p\geq1$ and $q\geq1$ be such that $1/p+1/q>1,$ and let
$n\in\mathbb{N}$ such that $n\geq2$ . Assume $\mathbf{x}\in\mathcal{C}%
^{0,p-var}\left(  [0,T];G^{\lfloor p\rfloor}\left(  \mathbb{R}^{d}\right)
\right) $ and suppose $y$ is the path-level solution to the RDE
\begin{align}
\mathrm{d}y_{t}=V\left(  y_{t}\right)  \circ\mathrm{d}\mathbf{x}_{t},\quad
y_{0}\in\mathbb{R}^{e}\text{ given},
\end{align}
where $V\in\mathcal{C}_{b}^{\lfloor p\rfloor+n}\left(  \mathbb{R}%
^{e};\mathbb{R}^{e}\otimes\mathbb{R}^{d}\right) $. Suppose that $g_{1}%
,\ldots,g_{n}\in C^{q-var}([0,T];\mathbb{R}^{d})$. Then the $n^{\text{th}}%
$-order directional derivative \eqref{nderiv2} satisfies the RDE
\begin{align}
\label{df}%
\begin{split}
&  \mathrm{d}\mathcal{D}_{g_{1},\ldots,g_{n}}^{n}y_{t}=\sum_{i=1}^{n}%
\nabla^{i}V\left(  y_{t}\right)  A_{i}^{n}\left(  t\right)  \circ
\mathrm{d}\mathbf{x}_{t}+\sum_{i=1}^{n-1}\sum_{j=1}^{n}\nabla^{i}V\left(
y_{t}\right)  B_{i,j}^{n}\left(  t\right)  \,\mathrm{d}g_{j}(t),\\
&  \mathcal{D}_{g_{1},\ldots,g_{n}}^{n}y_{0}=0,
\end{split}
\end{align}
where $A_{i}^{n}$ and $B_{i,j}^{n}$ are respectively defined by \eqref{A} and \eqref{Bb}.
\end{theorem}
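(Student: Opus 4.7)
The plan is induction on $n\geq 2$. For the base case $n=2$, I start from the first-order variation formula \eqref{dd}, which shows that for the perturbed solution $y^{\varepsilon_2}$ (driven by the translated rough path $T_{\varepsilon_2\Phi(g_2)}\mathbf{x}$), the directional derivative $\mathcal{D}_{g_1}y^{\varepsilon_2}_t$ satisfies the linear RDE
\begin{align*}
\mathrm{d}\mathcal{D}_{g_1}y^{\varepsilon_2}_t = \nabla V(y^{\varepsilon_2}_t)\,\mathcal{D}_{g_1}y^{\varepsilon_2}_t \circ \mathrm{d}(T_{\varepsilon_2\Phi(g_2)}\mathbf{x})_t + V(y^{\varepsilon_2}_t)\,\mathrm{d}g_1(t),
\end{align*}
with zero initial condition. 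Differentiating at $\varepsilon_2=0$ and collecting terms by the chain and Leibniz rules produces exactly the four summands appearing in \eqref{df} for $n=2$: a $\nabla V(y_t)\mathcal{D}^2_{g_1,g_2}y_t$ term from hitting the linear factor, a $\nabla^2 V(y_t)[\mathcal{D}_{g_1}y_t\,\tilde{\otimes}\,\mathcal{D}_{g_2}y_t]$ term from hitting $\nabla V(y^{\varepsilon_2}_t)$, and two $\mathrm{d}g_j$ terms arising from hitting the translated driver and the inhomogeneity $V(y^{\varepsilon_2}_t)\,\mathrm{d}g_1(t)$ respectively.

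For the inductive step, suppose the formula holds up to order $n-1$. Applying the inductive hypothesis to $y^{\varepsilon_n}$ with perturbation directions $g_1,\ldots,g_{n-1}$ yields an RDE for $\mathcal{D}^{n-1}_{g_1,\ldots,g_{n-1}}y^{\varepsilon_n}_t$ whose right-hand side is built from $\nabla^i V(y^{\varepsilon_n}_t)$, the symmetric tensors $A^{n-1,\varepsilon_n}_i$ and $B^{n-1,\varepsilon_n}_{i,j}$ in the lower-order derivatives $\mathcal{D}_{\pi_k}y^{\varepsilon_n}_t$, the translated driver $T_{\varepsilon_n\Phi(g_n)}\mathbf{x}$, and the $\mathrm{d}g_j$'s for $j\leq n-1$. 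Differentiating in $\varepsilon_n$ at $0$ yields three families of contributions: (a) hitting $\nabla^i V(y^{\varepsilon_n}_t)$ produces $\nabla^{i+1}V(y_t)$ with one extra tensor slot filled by $\mathcal{D}_{g_n}y_t$; (b) hitting a factor $\mathcal{D}_{\pi_k}y^{\varepsilon_n}_t$ by Leibniz produces $\mathcal{D}_{\pi_k\cup\{g_n\}}y_t$; and (c) hitting the translated driver $T_{\varepsilon_n\Phi(g_n)}\mathbf{x}$ converts the integration $\circ\,\mathrm{d}\mathbf{x}_t$ into $\mathrm{d}g_n(t)$, leaving the integrand otherwise unchanged.

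The remaining work is combinatorial bookkeeping. For the rough-integral part, any partition of $\{g_1,\ldots,g_n\}$ into $i$ blocks is obtained uniquely either by attaching $\{g_n\}$ as a fresh singleton to a partition of $\{g_1,\ldots,g_{n-1}\}$ with $i-1$ parts (accounting for (a)) or by inserting $g_n$ into one of the $i$ blocks of a partition of $\{g_1,\ldots,g_{n-1}\}$ with $i$ parts (accounting for (b)). Summing over $i=1,\ldots,n$ therefore reassembles exactly $\sum_{i=1}^n \nabla^i V(y_t) A_i^n(t)\circ\mathrm{d}\mathbf{x}_t$, the index range widening from $\{1,\ldots,n-1\}$ to $\{1,\ldots,n\}$ thanks to the shift in (a). The Young-integral part is handled analogously: contribution (c) applied to the previous rough term produces precisely the $j=n$ piece $\sum_{i=1}^{n-1}\nabla^i V(y_t) B^n_{i,n}(t)\,\mathrm{d}g_n(t)$ (note $A_i^{n-1}$ is already built from $\{g_1,\ldots,g_n\}\setminus\{g_n\}$), while (a)+(b) applied to the inductive $\mathrm{d}g_j$-terms for $j<n$ reassemble into the remaining $j\leq n-1$ slices of $\sum_{i,j}\nabla^i V(y_t) B^n_{i,j}(t)\,\mathrm{d}g_j(t)$.

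The principal obstacle is to justify the differentiation in $\varepsilon_n$ rigorously at the rough-path level rather than merely formally. The cleanest route is to first establish the identity when $\mathbf{x}$ and the $g_j$'s are smooth paths, where everything reduces to standard ODE calculus and the ordinary Leibniz rule, and then pass to the rough limit. The required ingredients for this limiting procedure are: (i) smoothness of the Itô map $\mathbf{x}\mapsto y$ under Cameron--Martin translations, e.g.\ Theorem 11.6 of \cite{fv2010b}; (ii) the rough-path translation framework of \cite{cf2011}, ensuring that $T_{\varepsilon\Phi(g)}\mathbf{x}$ depends smoothly on $\varepsilon$ under complementary regularity $1/p+1/q>1$; and (iii) Theorem \ref{controlledRDE} together with the controlled-rough-path machinery of Section \ref{controlledRP}, which ensures that the hybrid rough/Young integrals on the right of \eqref{df} are well-defined and that the iterated tensors $A^n_i(t)$ and $B^n_{i,j}(t)$ are controlled rough paths with the requisite $p$-variation bounds.
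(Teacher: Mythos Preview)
Your proposal is correct and follows essentially the same approach as the paper: induction on $n$, the same base case $n=2$, and the same inductive step of differentiating the order-$(n-1)$ formula in the $n$-th direction, followed by the identical combinatorial bookkeeping that every partition of $\{g_1,\ldots,g_n\}$ into $i$ blocks arises either by adjoining $\{g_n\}$ as a singleton to a size-$(i-1)$ partition or by inserting $g_n$ into a block of a size-$i$ partition of $\{g_1,\ldots,g_{n-1}\}$. One small notational slip: in the theorem as stated the $g_j$ are already $C^{q\text{-var}}$ paths, so the translation should be written $T_{\varepsilon_n g_n}\mathbf{x}$ rather than $T_{\varepsilon_n\Phi(g_n)}\mathbf{x}$; this does not affect the argument.
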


\begin{remark}
The symmetry of the higher order derivatives of $V$ ensures that we may
simplify $\nabla^{i}V\left(  y_{t}\right)  A_{i}^{n}\left(  t\right)  $ in
\eqref{A} by replacing the symmetric tensor product with the usual tensor
product to give
\begin{align*}
\nabla^{i}V\left(  y_{t}\right)  A_{i}^{n}\left(  t\right)  =\sum
_{\pi=\left\{  \pi_{1},\ldots,\pi_{i}\right\}  \in\mathcal{P}\left(  \left\{
g_{1},\ldots,g_{n}\right\}  \right)  }\nabla^{i}V\left(  y_{t}\right)
\mathcal{D}_{\pi_{1}}^{\left\vert \pi_{1}\right\vert }y_{t}\otimes
\cdots\otimes\mathcal{D}_{\pi_{i}}^{\left\vert \pi_{i}\right\vert }y_{t}.
\end{align*}
The terms $\nabla^{i}V\left(  y_{t}\right)  B_{i,j}^{n}\left(  t\right)  $ may
also be simplified similarly. For this reason it is sufficient to prove
\eqref{df} for paths $A_{i}^{n}$ and $B_{i,j}^{n}$ whose symmetrizations
coincide with the right sides of \eqref{A} and \eqref{Bb} respectively.
\end{remark}

\begin{proof}
We begin with the case $n=2$. Taking the directional derivative of $\mathcal{D}_{g_1} y_t$ (see \eqref{dd}) in the direction of $g_2$, we see that $\mathcal{D}^2_{g_1, g_2} y_t$ solves the RDE
\begin{align}
\begin{split}
\mathrm{d} \mathcal{D}^2_{g_1, g_2} y_t &= \nabla V(y_t) \left( \mathcal{D}%
^2_{g_1, g_2} y_t \right) \circ \mathrm{d} \mathbf{x}_t + \nabla^2 V (y_t)
\left( \mathcal{D}_{g_1} y_t, \mathcal{D}_{g_2} y_t \right) \circ \mathrm{d}
\mathbf{x}_t \\
&\qquad + \nabla V (y_t) \left( \mathcal{D}_{g_2} y_t \right) \, \mathrm{d}
g_1(t) + \nabla V (y_t) \left( \mathcal{D}_{g_1} y_t \right) \,
\mathrm{d} g_2(t).
\end{split}
\end{align}
The proof is finished by induction. Assuming \eqref{df} is true for $n=2, \ldots, k$, one can take the directional derivative of $\mathcal{D}_{g_1, \ldots, g_k} y_t$ in the direction $g_{k+1}$ to obtain the identity
\begin{align*}
\mathcal{D}_{g_1, \ldots,g_{k+1}}^{k+1} y_t
&= \sum_{i=1}^k \mathcal{D}_{g_{k+1}} \int_0^t \nabla^i V \left( y_s \right) A_{i}^{k}\left( s\right) \strato{\mathbf{x}_s} \\
&\fqquad+ \sum_{i=1}^{k-1} \sum_{j=1}^{k} \mathcal{D}_{g_{k+1}} \int_{0}^{t} \nabla^{i} V\left( y_{s} \right) B_{i,j}^{k}\left( s\right) \wrt{g_j(s)} \\
&= \sum_{i=1}^{k+1} \int_{0}^{t} \nabla^{i} V\left( y_{s} \right) \tilde{A}_{i}^{k+1}\left( s\right) \strato{\mathbf{x}_s} \\
&\fqquad+ \sum_{i=1}^{k}\sum_{j=1}^{k+1} \int_{0}^{t} \nabla^{i} V\left( y_{s} \right) \tilde{B}_{i,j}^{k+1}\left( s \right) \wrt{g_j (s)},
\end{align*}
where the coefficients $\tilde{A}_{i}^{k+1}$and $\tilde{B}_{i}^{k+1}$ are the $\left( \mathbb{R}^{e}\right) ^{\otimes i}$-valued paths defined for $t\in \left[ 0,T\right]$ by
\begin{align} \label{r1}
\tilde{A}_{i}^{k+1}\left( t\right) =
\begin{cases}
\; \mathcal{D}_{g_{k+1}} A_{1}^{k} \left( t\right), & i=1, \\
\; \mathcal{D}_{g_{k+1}} A_{i}^{k} \left( t\right) + A_{i-1}^{k}\left( t\right) \otimes \mathcal{D}_{g_{k+1}} y_{t}, & i=2, \ldots, k, \\
\; A_{k}^{k}\left( t\right) \otimes \mathcal{D}_{g_{k+1}} y_{t}, & i=k+1,
\end{cases}
\end{align}
and
\begin{align} \label{r2}
\tilde{B}_{i,j}^{k+1}\left( t\right) =
\begin{cases}
\; \mathcal{D}_{g_{k+1}} B_{1,j}^{k}\left( t\right), & i=1, \, j=1, \ldots, k, \\
\; \mathcal{D}_{g_{k+1}}B_{i,j}^{k}\left( t\right) + B_{i-1,j}^{k}\left( t \right) \otimes \mathcal{D}_{g_{k+1}} y_{t}, & i=2, \ldots, k-1, \, j = 1, \ldots, k, \\
\; B_{i-1,j}^{k}\left( t \right) \otimes \mathcal{D}_{g_{k+1}} y_{t}, & i=k, \, j = 1, \ldots, k, \\
\; A_{i}^{k} \left( t \right), & i=1, \ldots, k, \, j = k+1.
\end{cases}
\end{align}
To finish the inductive step we first show that for every $t\in \left[ 0,T \right]$,
\begin{align} \label{a1}
\tilde{A}_{i}^{k+1}\left( t\right) \tilde{=} A_{i}^{k+1}\left( t\right) \, \forall i = 1, \ldots, k+1,
\end{align}
where $a\tilde{=}b$ means that the symmetrizations of the tensors $a$ and $b$ are equal. From this it immediately follows that $\nabla^{i} V\left( y_{t} \right) \tilde{A}_{i}^{k+1}\left( t\right) = \nabla^{i} V\left( y_{t} \right) A_{i}^{k+1}\left( t\right) $ for all $i=1,...,k+1.$ We check \eqref{a1} for the boundary cases first. For $i=1$ the induction hypothesis gives at once that
\begin{align*}
\tilde{A}_{1}^{k+1}\left( t\right) = \mathcal{D}_{g_1, \ldots, g_{k+1}}^{k+1} y_{t},
\end{align*}
whereas the case $i=k+1$ follows from
\begin{align*}
\tilde{A}_{k+1}^{k+1}\left( t\right)
&= A_{k}^{k}\left( t\right) \otimes \mathcal{D}_{g_{k+1}} y_{t} \\
&\; \tilde{=} \, A_{k}^{k}\left( t\right) \tilde{\otimes}\mathcal{D}_{g_{k+1}} y_{t} \\
&= \mathcal{D}_{g_1} y_{t} \tilde{\otimes} \cdots \tilde{\otimes}\mathcal{D}_{g_{k+1}} y_{t}
= A_{k+1}^{k+1}\left( t\right) .
\end{align*}
For the remaining cases $i=2, \ldots, k$ we note that any partition of $\left\{ g_1, \ldots, g_{k+1} \right\} $ of size $i$ can be formed from a partition $\pi$ of $\left\{ g_1, \ldots, g_k \right\}$ in one of two ways. The first way is that $\pi = \left\{ \pi_1, \ldots, \pi_i \right\}$ itself has size $i$ and $g_k$ is assigned to one of the subsets $\pi_1, \ldots, \pi_i$. The second way is that $\pi = \left\{ \pi_1, \ldots, \pi_{i-1} \right\}$ has size $i-1$ and $\left\{g_{k+1} \right\}$ is adjoined as a singleton to give $\left\{ \pi_1, \ldots, \pi_{i-1}, \left\{ g_{k+1}\right\} \right\}$. The two terms in \eqref{r1} obtained by differentiation and the tensor product respectively correspond to these operations. By the induction hypothesis, $A_{i}^{k}$ (resp $A_{i-1}^{k})$ includes a summation over all partitions of $\left\{ g_1, \ldots, g_n \right\} $ of size $i$ (resp. $i-1$), thus every partition of $\left\{ g_1, \ldots, g_{k+1} \right\}$ of size $i$ is accounted for in \eqref{r1}. It follows immediately that
\begin{align*}
\tilde{A}_{i}^{k+1}\left( t\right) \tilde{=} A_{i}^{k+1}\left( t\right) .
\end{align*}
Finally we show that for every $t\in \left[ 0,T\right] $,
\begin{align*}
\tilde{B}_{i,j}^{k+1}\left( t\right) \tilde{=} B_{i,j}^{k+1}\left( t\right) \, \forall i = 1, \ldots, k, \, j = 1, \ldots, k+1.
\end{align*}
Again we treat the boundary cases separately. For $j=k+1$, from the definition of $\tilde{B}$ and $A$, we have
\begin{align*}
\tilde{B}_{i,k+1}^{k+1}\left( t\right) =A_{i}^{k}\left( t\right)
=B_{i,k+1}^{k+1}\left( t\right), \, \forall i = 1, \ldots, k.
\end{align*}
For $i=1$ we have $\forall j = 1, \ldots, k$
\begin{align*}
\tilde{B}_{1,j}^{k+1}\left( t\right)
&= \mathcal{D}_{g_{k+1}} B_{1,j}^{k} \left( t\right) \\
&= \mathcal{D}_{g_{k+1}} \mathcal{D}^{k-1}_{g_1, \ldots, g_{j-1}, g_{j+1}, \ldots, g_{k}} y_{t} \\
&= \mathcal{D}^k_{g_1, \ldots, g_{j-1}, g_{j+1}, \ldots, g_{k+1}} y_{t} = B_{1,j}^{k+1} \left( t\right).
\end{align*}
The remaining terms are dealt with by exactly the same argument used for the non-boundary $\tilde{A}$ terms, and the induction is thereby complete.
\end{proof}The following corollary makes explicit the expression given in
Proposition 11.5 of \cite{fv2010b}.

\begin{corollary}
\label{cf} Under the conditions of the preceding theorem,
\begin{align}
\label{exp2}%
\begin{split}
\mathcal{D}_{g_{1},\ldots,g_{n}}^{n}y_{t}  &  =\sum_{i=2}^{n}\int_{0}^{t}
J_{t}^{\mathbf{x}}\left(  J_{s}^{\mathbf{x}}\right)  ^{-1}\nabla^{i}V\left(
y_{s}\right)  A_{i}^{n}\left(  s\right)  \circ\mathrm{d}\mathbf{x}_{s}\\
&  \quad\qquad+\sum_{i=1}^{n-1}\sum_{j=1}^{n}\int_{0}^{t}J_{t}^{\mathbf{x}%
}\left(  J_{s}^{\mathbf{x}}\right)  ^{-1}\nabla^{i}V\left(  y_{s}\right)
B_{i,j}^{n}\left(  s\right)  \,\mathrm{d}g_{j}(s)
\end{split}
\end{align}
for all $n\geq2$.
\end{corollary}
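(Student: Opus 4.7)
The plan is to recognize the RDE satisfied by $Z_t := \mathcal{D}^n_{g_1,\ldots,g_n} y_t$ in Theorem \ref{mdFormula} as a linear inhomogeneous RDE in $Z$ with the same linear part as the Jacobian flow $J^{\mathbf{x}}$, and to solve it explicitly by a variation-of-constants formula.

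First I would peel off the $i=1$ term in the first sum on the right-hand side of \eqref{df}. Since the only partition of $\{g_1,\ldots,g_n\}$ into a single block is the trivial one, we have $A_1^n(t) = \mathcal{D}^n_{g_1,\ldots,g_n} y_t = Z_t$. Hence \eqref{df} takes the form
\begin{align*}
\mathrm{d} Z_t = \nabla V(y_t)\, Z_t \circ \mathrm{d}\mathbf{x}_t + \mathrm{d} F_t, \quad Z_0 = 0,
\end{align*}
where
\begin{align*}
\mathrm{d} F_t := \sum_{i=2}^{n} \nabla^i V(y_t)\, A_i^n(t) \circ \mathrm{d}\mathbf{x}_t + \sum_{i=1}^{n-1}\sum_{j=1}^{n} \nabla^i V(y_t)\, B_{i,j}^n(t)\, \mathrm{d} g_j(t).
\end{align*}
The linear part here coincides exactly with the one governing $J^{\mathbf{x}}$ in \eqref{jacobianRDE}.

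Next I would apply the Duhamel trick. Set $W_t := (J^{\mathbf{x}}_t)^{-1} Z_t$ and invoke the Leibniz rule for controlled rough paths (Proposition \ref{controlledLeibniz1}) together with the inverse-Jacobian equation \eqref{jacobianInverseRDE}. A direct computation gives the expected cancellation
\begin{align*}
\mathrm{d} W_t &= -(J^{\mathbf{x}}_t)^{-1}\nabla V(y_t)(\circ\mathrm{d}\mathbf{x}_t)\, Z_t + (J^{\mathbf{x}}_t)^{-1}\bigl(\nabla V(y_t)\, Z_t \circ \mathrm{d}\mathbf{x}_t + \mathrm{d} F_t\bigr)\\
&= (J^{\mathbf{x}}_t)^{-1}\, \mathrm{d} F_t,
\end{align*}
with no cross-variation correction because we are in a Stratonovich/geometric framework. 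Integrating from $0$ to $t$ using $W_0 = 0$ and multiplying through by $J^{\mathbf{x}}_t$ yields $Z_t = J^{\mathbf{x}}_t \int_0^t (J^{\mathbf{x}}_s)^{-1}\, \mathrm{d} F_s$; substituting the definition of $F$ and using the shorthand $J^{\mathbf{x}}_{t \leftarrow s} = J^{\mathbf{x}}_t (J^{\mathbf{x}}_s)^{-1}$ gives exactly \eqref{exp2}.

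The one point to justify carefully is the Leibniz step. By Theorem \ref{controlledRDE}, both $J^{\mathbf{x}}$ and $(J^{\mathbf{x}})^{-1}$ are controlled rough paths with respect to $\mathbf{x}$, while $Z$ is controlled via Theorem \ref{mdFormula} after using Proposition \ref{controlledSmoothMap} to handle the $\nabla^i V(y)$ composition and Proposition \ref{controlledLeibniz1}(ii) to absorb the $g_j$-driven terms, which have strictly higher regularity than $\mathbf{x}$, into the remainder. The genuine difficulty is not analytic but combinatorial: the paths $A_i^n$ and $B_{i,j}^n$ involve symmetric tensor products of lower-order derivatives, so one has to proceed by induction on $n$ (with base case \eqref{dd}) to simultaneously establish the controlled-rough-path structure of each $\mathcal{D}^{|\pi_k|}_{\pi_k} y$ and the finite $p$-variation of the resulting forcing $F$. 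Once this bookkeeping is in place, the corollary follows as an immediate consequence of variation of constants applied to \eqref{df}.
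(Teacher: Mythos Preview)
Your proposal is correct and follows exactly the approach of the paper: peel off the $i=1$ term in \eqref{df} using $A_1^n(t) = \mathcal{D}^n_{g_1,\ldots,g_n} y_t$, then apply Duhamel's principle (variation of constants) via the Jacobian flow. The paper's own proof simply states these two observations without spelling out the Leibniz computation or the controlled-rough-path bookkeeping that you have (correctly) supplied.
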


\begin{proof}
From formula \eqref{df} and the fact that
\begin{align*}
A_{1}^{n} \left( s \right) = \mathcal{D}_{g_1, \ldots, g_n}^n y_{s},
\end{align*}
\eqref{exp2} can be recovered using Duhamel's principle.
\end{proof}

We now arrive at the main result of this section.

\begin{proposition}
\label{dir der est} Let $p\in\lbrack2,3)$, $q:=\frac{p}{2}$ and $n\in
\mathbb{N}$. Let $y$ be the solution to the RDE
\begin{align*}
\mathrm{d}y_{t}=V\left(  y_{t}\right)  \circ\mathrm{d}\mathbf{x}_{t},\quad
y_{0}\in\mathbb{R}^{e}\text{ given},
\end{align*}
where $\mathbf{x}\in\mathcal{C}^{0,p-var}\left(  [0,T];G^{2}\left(
\mathbb{R}^{d}\right)  \right)  $ and $V\in\mathcal{C}_{b}^{2+n}
(\mathbb{R}^{e};\mathbb{R}^{e}\otimes\mathbb{R}^{d})$. Then there exists a
polynomial $P_{d(n)}:\mathbb{R}_{+}\times\mathbb{R}_{+} \rightarrow
\mathbb{R}_{+}$ of finite degree $d(n)$ for which
\begin{align}
\label{ubound}%
\begin{split}
&  \left\Vert \mathcal{D}_{g_{1},\ldots,g_{n}}^{n}y_{\cdot}\right\Vert
_{\mathcal{V}^{p};\left[  0,T\right]  }\\
&  \qquad\leq P_{d(n)}\left(  \left\Vert \mathbf{x}\right\Vert _{p-var;[0,T]}%
,\exp\left(  C\,N_{1;[0,T]}^{\mathbf{x}}\right)  \right)  \prod\limits_{i=1}%
^{n}\left\Vert g_{i}\right\Vert _{q-var;[0,T]},
\end{split}
\end{align}
for any $g_{1},\ldots,g_{n}\in C^{q-var}([0,T];\mathbb{R}^{d})$. Here
$N_{1}^{\mathbf{x}}$ is defined as in \eqref{NDefn}, and the constant $C$ as
well as the coefficients of $P_{d(n)}$ depend only on $\left\Vert V\right\Vert
_{\mathcal{C}_{b}^{2+n}}$ and $p$.
\end{proposition}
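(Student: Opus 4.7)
The plan is a proof by induction on $n$, but with the strengthened hypothesis that $\mathcal{D}^n_{g_1,\ldots,g_n} y$ is in fact controlled by $\mathbf{x}$ and that $\norm{\mathcal{D}^n y}_{p-cvar}$ satisfies the bound \eqref{ubound} (possibly with a larger polynomial). Since $\norm{\cdot}_{\mathcal{V}^p} \leq \norm{\cdot}_{p-cvar}$, this strengthened bound immediately implies the claim.

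For the base case $n=1$, formula \eqref{dd}---extended from $\Phi(h) \in \mathcal{H}^d$ to a general $g_1$ having complementary regularity---expresses $\mathcal{D}_{g_1} y_{\cdot}$ as the Young--Stieltjes integral $\int_0^{\cdot} J^{\mathbf{x}}_{\cdot \leftarrow s} V(y_s) \, \mathrm{d}g_1(s)$; note that $1/p + 1/q = 3/p > 1$ when $q = p/2$ and $p < 3$, so Young integration applies. Applying Young's inequality combined with the $p$-variation bounds on $J^{\mathbf{x}}$ and $(J^{\mathbf{x}})^{-1}$ from Theorem \ref{controlledRDE}, and on $V(y)$ from Proposition \ref{controlledSmoothMap}, yields the desired estimate. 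The controlled structure of $\mathcal{D}_{g_1} y$, with derivative process $\nabla V(y) \mathcal{D}_{g_1} y$, follows from the linear RDE $\mathrm{d}\mathcal{D}_{g_1} y_t = \nabla V(y_t) \mathcal{D}_{g_1} y_t \circ \mathrm{d}\mathbf{x}_t + V(y_t) \, \mathrm{d}g_1(t)$, since the inhomogeneous Young term has strictly better $q$-variation regularity and so contributes only to the controlled remainder.

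For the inductive step $n \geq 2$, the key tool is the representation \eqref{exp2} from Corollary \ref{cf}, which writes $\mathcal{D}^n y_t$ as a finite sum of rough integrals $\int_0^t J^{\mathbf{x}}_{t \leftarrow s} \nabla^i V(y_s) A_i^n(s) \circ \mathrm{d}\mathbf{x}_s$ and Young--Stieltjes integrals $\int_0^t J^{\mathbf{x}}_{t \leftarrow s} \nabla^i V(y_s) B_{i,j}^n(s) \, \mathrm{d}g_j(s)$. For each rough integral we factor out the $s$-constant $J_t^{\mathbf{x}}$ and apply Theorem \ref{controlledThm1} to the remaining integral of $(J_s^{\mathbf{x}})^{-1} \nabla^i V(y_s) A_i^n(s)$. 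This latter path is controlled by $\mathbf{x}$: $(J^{\mathbf{x}})^{-1}$ is controlled via its RDE \eqref{jacobianInverseRDE}, $\nabla^i V(y)$ is controlled via Proposition \ref{controlledSmoothMap}, and each symmetric tensor factor $\mathcal{D}^{|\pi_k|}_{\pi_k} y$ appearing in $A_i^n$ is controlled by the inductive hypothesis; the Leibniz rule in Proposition \ref{controlledLeibniz1} then assembles these into a single controlled rough path whose $p$-cvar norm factorises as a product of the individual norms. The Young integrals are estimated directly by Young's inequality, with the $p$-variation of the integrand handled by the same product structure (now restricted to partitions of $\{g_1,\ldots,g_n\}\setminus\{g_j\}$) and the $q$-variation of $g_j$ supplying the remaining factor.

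The main obstacle is the combinatorial bookkeeping needed to see that the bounds multiply into the claimed product form $\prod_{i=1}^n \norm{g_i}_{q-var}$. Since $A_i^n$ (respectively $B_{i,j}^n$) sums over partitions $\pi = \{\pi_1,\ldots,\pi_i\}$ of $\{g_1,\ldots,g_n\}$ (resp.\ its complement of $\{g_j\}$) into symmetric tensor products of the $\mathcal{D}^{|\pi_k|}_{\pi_k} y$, the induction provides each factor a norm bound proportional to $\prod_{g \in \pi_k} \norm{g}_{q-var}$. Multiplying across $k$ reassembles $\prod_{j=1}^n \norm{g_j}_{q-var}$ in the rough-integral case, while in the Young case the missing factor $\norm{g_j}_{q-var}$ is supplied by the integrator $g_j$. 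All polynomial factors in $\norm{\mathbf{x}}_{p-var}$ and $\exp(C N^{\mathbf{x}}_{1;[0,T]})$ arising from the Jacobian bounds, the rough-integration constant in \eqref{controlledBound1}, and the inductive polynomials merge into a single polynomial of strictly larger but finite degree $d(n)$, closing the induction.
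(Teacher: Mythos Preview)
Your proposal is correct and follows essentially the same approach as the paper's proof: induction on $n$ with the strengthened hypothesis that $\mathcal{D}^n_{g_1,\ldots,g_n} y$ is controlled by $\mathbf{x}$ with bounded $p$-cvar norm, the base case handled via the factorisation $\mathcal{D}_{g_1} y_t = J^{\mathbf{x}}_t \int_0^t (J^{\mathbf{x}}_s)^{-1} V(y_s)\,\mathrm{d}g_1(s)$ together with part (ii) of the Leibniz rule, and the inductive step via Corollary~\ref{cf} writing $\mathcal{D}^{k+1} y_t = J^{\mathbf{x}}_t(H^{k+1}_t + G^{k+1}_t)$, where $H^{k+1}$ is the rough-integral part (controlled via Theorem~\ref{controlledThm1} and the Leibniz rule applied to $(J^{\mathbf{x}})^{-1}\nabla^i V(y) A_i^{k+1}$) and $G^{k+1}$ is the Young part whose $q$-variation is absorbed into the remainder. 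The combinatorial observation that the partition structure of $A_i^n$ and $B_{i,j}^n$ reassembles the full product $\prod_i \|g_i\|_{q\text{-var}}$ is exactly the mechanism the paper uses.
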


\begin{proof}
The proof will proceed by induction. First, we denote
\begin{align*}
F^i_t := \left( J^{\mathbf{x}}_t \right)^{-1} \nabla^i V (y_t), \quad i = 0, 1, \ldots.
\end{align*}
Applying Theorem \ref{controlledRDE} together with Proposition \ref{controlledSmoothMap} to $\nabla^i V(y)$, we see that there exists an integer $k_1$ such that
\begin{align} \label{nablaBound}
\norm{\nabla^i V(y)}_{p-cvar}
\leq C_1 \left( 1 + \norm{\mathbf{x}}_{p-var; [0, T]} \right)^{k_1},
\end{align}
(note from \eqref{gNorm} that $\norm{\mathbf{x}^k}_{\frac{p}{k}-var; [0, T]} \leq C \norm{\mathbf{x}}^k_{p-var; [0, T]}$ for all $k$) and again from Theorem \ref{controlledRDE}, we know that there exist $k_2$ and $l$ such that
\begin{align} \label{jacobianBound}
\norm{J^{\mathbf{x}}}_{p-cvar}, \, \norm{\left(J^{\mathbf{x}}\right)^{-1}}_{p-cvar}
\leq C_2 \left( 1 + \exp \left(C_3 N^{\mathbf{x}}_{1; [0, T]} \right) \right)^{k_2} \left( 1 + \norm{\mathbf{x}}_{p-var;[0,T]} \right)^l.
\end{align}
Now applying Leibniz rule, i.e. Proposition \ref{controlledLeibniz1}, we have, for some integer $k$,
\begin{align} \label{FBound}
\norm{F^i}_{p-cvar}
\leq C_1 \left( 1 + \exp \left( C_2 \, N^{\mathbf{x}}_{1; [0, T]} \right) \right)^k \left( 1 + \norm{\mathbf{x}}_{p-var;[0,T]} \right)^l,
\end{align}
where $C_1$ and $C_2$ depend only on $p$ and $\norm{V}_{\mathcal{C}^{2 + i}_b}$. \par
We now begin with the base case $n=1$. Let $\phi_t$ denote $J^{\mathbf{x}}_t$ and $\psi_t$ denote $\int_0^t \left( J_s^{\mathbf{x}} \right)^{-1} V(y_s) \wrt{g(s)}$. Then $\mathcal{D}_g y_t = \phi_t \psi_t$ and applying Young's inequality to $\psi_t$, we obtain
\begin{align}
\begin{split}
&\norm{\int_0^t \left( J_s^{\mathbf{x}} \right)^{-1} V(y_s) \wrt{g(s)}}_{q-var;[0,T]} \\
&\fqquad\leq C_p \norm{\left(J^{\mathbf{x}} \right)^{-1} V (y) }_{\mathcal{V}^p; [0, T]} \norm{g}_{q-var; [0, T]}.
\end{split}
\end{align}
Continuing, the second part of Proposition \ref{controlledLeibniz1} tells us that $\mathcal{D}_g y$ is controlled by $\mathbf{x}$, and from the bounds \eqref{FBound} and \eqref{jacobianBound}, we have
\begin{align} \label{DhyEst}
\norm{\mathcal{D}_g y_{\cdot}}_{p-cvar}
\leq C_3 \left( 1 + \exp \left( C \, N^{\mathbf{x}}_{1; [0, T]} \right) \right)^k \left( 1 + \norm{\mathbf{x}}_{p-var;[0,T]} \right)^l \norm{g}_{q-var; \left[ 0,T\right] }.
\end{align}
The cases $n \geq 2$ are proved in the same manner. Let $z^n_t$ denote
\begin{align*}
z^n_t := \mathcal{D}^n_{g_1, \ldots, g_n} y_t,
\end{align*}
where $\{g_i\}_{i=1}^n$ is an arbitrary subset of $C^{q-var}([0,T];\mathbb{R}^{d}) $, and the induction hypothesis is as follows: \par
Assume that for all $n = 1, \ldots, k$, $z^n$ is controlled by $\mathbf{x}$, and that
\begin{align*}
\norm{z^n}_{p-cvar}
\leq P_{d(n)} \left( \norm{\mathbf{x}}_{p-var; [0, T]}, \exp \left( C \, N^{\mathbf{x}}_{1; [0, T]} \right) \right) \prod\limits_{i=1}^{n} \norm{g_{i}}_{q-var; \left[ 0,T\right] }.
\end{align*}
To show the result for $n = k+1$, first recall from Theorem \ref{mdFormula} that $z^{k+1}_t = \mathcal{D}_{g_1, \ldots, g_{k+1}}^{k+1} y_{t}$ equals
\begin{align*}
J^{\mathbf{x}}_t \left( H^{k+1}_t + G_t^{k+1} \right),
\end{align*}
where
\begin{align*}
H^{k+1}_t
:= \int_0^t \sum_{i=2}^{k+1} F^i_s A^{k+1}_i (s) \strato{\mathbf{x}_s}
\end{align*}
and
\begin{align*}
G^{k+1}_t
:= \int_{0}^{t} \sum_{i=1}^{k} \sum_{j=1}^{k+1} F^i_s B_{i,j}^{k+1}\left( s\right) \wrt{g_j (s)}.
\end{align*}
From the induction hypothesis and Leibniz rule, for a partition \\
$\pi =\left\{ \pi_1, \ldots, \pi_i \right\}$ in $\mathcal{P}\left( \left\{ g_1, \ldots, g_{k+1} \right\} \right)$, we have the bound
\begin{align} \label{Abound}
\begin{split}
&\norm{ \mathcal{D}_{\pi_1}^{\abs{\pi_1}} y_{\cdot }\tilde{\otimes} \cdots \tilde{\otimes}\mathcal{D}_{\pi_{i}}^{\abs{\pi_i}} y_{\cdot } }_{p-cvar} \\
&\qquad\leq \prod\limits_{l=1}^{i} P_{d(\abs{ \pi_l}) } \left( \norm{\mathbf{x}}_{p-var; [0, T]}, \exp \left( C \, N^{\mathbf{x}}_{1; [0, T]} \right) \right) \prod\limits_{m=1}^{k+1} \norm{ g_{m} }_{q-var; \left[ 0,T\right] }.
\end{split}
\end{align}
Similarly, for a partition $\pi =\left\{ \pi_1, \ldots, \pi_i \right\} \in \mathcal{P}\left( \left\{ g_1, \ldots, g_{j-1}, g_{j+1}, \ldots, g_{k+1} \right\} \right)$ we have the bound
\begin{align} \label{Bbound}
\begin{split}
&\norm{ \mathcal{D}_{\pi_1}^{\abs{\pi_1}} y_{\cdot }\tilde{\otimes} \cdots \tilde{\otimes}\mathcal{D}_{\pi
_{i}}^{\abs{\pi_i}} y_{\cdot } }_{p-cvar} \\
&\qquad\leq \prod\limits_{l=1}^{i} P_{d(\abs{ \pi_l}) } \left( \norm{\mathbf{x}}_{p-var; [0, T]}, \exp \left( C \, N^{\mathbf{x}}_{1; [0, T]} \right) \right) \prod\limits_{m=1, m \neq j}^{k+1} \norm{ g_{m} }_{q-var; \left[ 0,T\right]}.
\end{split}
\end{align}
Recalling the definition of $A^{k+1}_i (s)$ in \eqref{A}, we use \eqref{Abound} together with bound \eqref{FBound} and apply Leibniz rule. After summing over $i$ and invoking Theorem \ref{controlledThm1}, we see that $H^{k+1}$ is controlled by $\mathbf{x}$, and there exists a polynomial $\tilde{P}_1$ such that
\begin{align*}
\norm{H^{k+1}}_{p-cvar} \leq \tilde{P}_1 \left( \norm{\mathbf{x}}_{p-var; [0, T]}, \exp \left( C \, N^{\mathbf{x}}_{1; [0, T]} \right) \right) \prod \limits_{i=1}^{k+1} \norm{g_{i}}_{q-var; \left[ 0,T\right] }.
\end{align*}
For $G^{k+1}$, we will show that its $q$-variation is bounded similarly, and then add it to the remainder term of $H^{k+1}$ to make $H^{k+1} + G^{k+1}$ a controlled rough path (with the appropriate bounds). Fixing $i$ and $j$, from Young's inequality we have
\begin{align*}
&\norm{ \int_{0}^{\cdot } F^i_s B_{i,j}^{k+1}\left( s\right) \wrt{g_j (s)} } _{q-var; [0, T]} \\
&\fqquad\leq C_p \norm{ F^i}_{\mathcal{V}^p; [0, T]} \norm{ B_{i,j}^{k+1} }_{\mathcal{V}^p; [0, T]} \norm{g_{j}}_{q-var}.
\end{align*}
Now if we recall the definition of $B^{k+1}_{i, j}(s)$ in \eqref{Bb} and use \eqref{Bbound} in the above expression, after summing over all $i$ and $j$, we obtain some polynomial $\tilde{P}_2$ such that
\begin{align} \label{niceTermBound}
\norm{G^{k+1}} _{q-var; [0, T]}
\leq \tilde{P}_2 \left( \norm{\mathbf{x}}_{p-var; [0, T]}, \exp \left( C \, N^{\mathbf{x}}_{1; [0, T]} \right) \right) \prod \limits_{i=1}^{k+1} \norm{g_{i}}_{q-var; \left[ 0,T\right] }.
\end{align}
Finally, applying Leibniz rule to $z^{k+1}_t = J^{\mathbf{x}}_t \left( H^{k+1}_t + G_t^{k+1} \right)$ shows us that $z^{k+1}$ is controlled by $\mathbf{x}$ with the bound
\begin{align*}
\norm{z^{k+1}}_{p-cvar} \leq P_{d(k+1)}\left( \norm{\mathbf{x}}_{p-var; [0, T]}, \exp \left( C \, N^{\mathbf{x}}_{1; [0, T]} \right) \right) \prod\limits_{i=1}^{k+1} \norm{g_{i}}_{q-var; \left[ 0,T\right] }.
\end{align*}
\end{proof}

\begin{remark}
Our main use of this result will be in the stochastic setting where
$\mathbf{X}$ is a Gaussian rough path zero mean continuous Gaussian process
with i.i.d. components, and a covariance function $R$ of finite 2D $\rho
$-variation for some $\rho\in\left[  1 ,\frac{3}{2} \right) $. Under these
assumptions, Proposition \ref{dir der est} can be applied with $g_{i}
:=\Phi\left(  h_{i} \right) $ for any collection $\left\{  h_{i} \right\}
\subset\mathcal{H}_{1}^{d}$. Note also in this section we abuse the notation
by using $\mathcal{D}_{g_{i}} F$ rather than $\mathbf{D}_{g_{i}} F$ (see
Remark \ref{mdRem}); in later sections, the subscript will be elements in
$\mathcal{H}_{1}^{d}$ rather than $\mathcal{H}^{d}$.
\end{remark}

\section{An isomorphism and dense subspace of the Cameron-Martin space}

In this section, we will identify a dense subspace of the Cameron-Martin space
which will be of importance later. The motivation is as follows: let $Y$ be a
solution to RDE \eqref{RDE}. We would like to show that $Y \in\mathbb{D}^{1,2}
(\mathcal{H}_{1}^{d})$, which in turn implies that $Y$ is Skorohod integrable.
To do so, consider a partition $\pi= \{ r_{i} \}$ of $[0, T]$, and observe
that
\begin{align*}
Y^{\pi} (t) := \sum_{i} Y_{r_{i}} \mathds{1}_{\left[  r_{i}, r_{i+1} \right) }
(t)
\end{align*}
is almost surely an element of $\mathcal{H}_{1}^{d}$. Using Ito-Skorohod
isometry, we have
\begin{align*}
\mathbb{E} \left[  \delta^{X}\left( Y^{\pi} - Y \right) ^{2}\right]
\leq\mathbb{E} \left[  \left\|  Y^{\pi} - Y \right\| ^{2}_{\mathcal{H}_{1}%
^{d}}\right]  + \mathbb{E} \left[  \left\|  \mathcal{D} Y^{\pi} - \mathcal{D}
Y \right\| ^{2}_{\mathcal{H}_{1}^{d} \otimes\mathcal{H}_{1}^{d}}\right] .
\end{align*}
Thus if we can show that almost surely, $\left\|  Y^{\pi} - Y\right\|
_{\mathcal{H}_{1}^{d}} $ and $\left\|  \mathcal{D}Y^{\pi} - \mathcal{D}%
Y\right\| _{\mathcal{H}_{1}^{d} \otimes\mathcal{H}_{1}^{d}}$ both vanish as
$\left\|  \pi\right\|  \rightarrow0$, then with further integrability
assumptions one can apply dominated convergence to show that $\delta^{X}
(Y^{\pi})$ converges to $\delta^{X} (Y)$ in $L^{2}(\Omega)$.

We will investigate the (almost sure) regularity required of $Y$ to identify
it as an element of $\mathcal{H}_{1}^{d}$ and to have $\left\|  Y^{\pi} -
Y\right\| _{\mathcal{H}_{1}^{d}} \rightarrow0$. We first note the following
lemma, which is a direct consequence of Theorem \ref{2Dintegral}.

\begin{lemma}
\label{keyProp1} Let $f \in\mathcal{C}_{pw}^{p-var} \left(  [0, T];\mathbb{R}%
^{e} \right) $ and $R$ be continuous and of finite $\rho$-variation where we
assume that $\frac{1}{p} + \frac{1}{\rho} > 1$. For any partition $\pi= \{
r_{i} \}$ of $[0, T]$, let $f^{\pi}$ denote
\begin{align*}
f^{\pi} (t) := \sum_{i} f (r_{i}) \mathds{1}_{[r_{i}, r_{i+1})} (t).
\end{align*}
Then
\begin{align}
\label{newlim}\lim_{\left\|  \pi\right\|  \rightarrow0} \left|  \int%
_{[0,T]^{2}} \left\langle f^{\pi}_{s} - f_{s}, \, f^{\pi}_{t} - f_{t}
\right\rangle _{\mathbb{R}^{e}} \, \mathrm{d} R(s,t) \right|  = 0.
\end{align}

\end{lemma}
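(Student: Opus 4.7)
The plan is to apply the 2D Young--Stieltjes inequality from Theorem \ref{2Dintegral} to the integrand $g(s,t) := \langle f^{\pi}_s - f_s,\, f^{\pi}_t - f_t \rangle_{\mathbb{R}^e}$, and then to show vanishing of the resulting bound by interpolating between the uniform norm and the $p$-variation of $h := f^\pi - f$.

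The key observation is that the rectangular increment of $g$ factorizes as
\begin{align*}
g\begin{pmatrix} s_1 & s_2 \\ t_1 & t_2 \end{pmatrix} = \langle h_{s_1, s_2},\, h_{t_1, t_2} \rangle_{\mathbb{R}^e},
\end{align*}
so $\bigl| g\bigl( \begin{smallmatrix} s_1 & s_2 \\ t_1 & t_2 \end{smallmatrix}\bigr) \bigr| \leq |h_{s_1,s_2}|\, |h_{t_1,t_2}|$. Summing gives $\|g\|_{p'\text{-}var;[0,T]^2} \leq \|h\|_{p'\text{-}var;[0,T]}^2$ for any $p' \geq p$, with analogous bounds controlling the one-dimensional boundary pieces appearing in $\vertiii{g}$ (and vanishing altogether if one assumes WLOG $r_0 = 0$, so that $h_0 = 0$). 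Since $\tfrac{1}{p} + \tfrac{1}{\rho} > 1$, I would choose $p' \in (p,\, \rho/(\rho-1))$ so that $\tfrac{1}{p'} + \tfrac{1}{\rho} > 1$ still holds; as $R$ is continuous while $g$ has jumps only at the partition points of $\pi$ and at the finitely many discontinuities of $f$, Theorem \ref{2Dintegral} applies and yields
\begin{align*}
\left| \int_{[0,T]^2} g \, \mathrm{d} R \right| \leq C_{p', \rho}\, \vertiii{g}\, \|R\|_{\rho\text{-}var; [0,T]^2} \leq \tilde{C}\, \|h\|_{p'\text{-}var; [0,T]}^2.
\end{align*}

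The task then reduces to showing $\|h\|_{p'\text{-}var;[0,T]} \to 0$ as $\|\pi\| \to 0$. A telescoping argument gives $\|f^\pi\|_{p\text{-}var} \leq \|f\|_{p\text{-}var}$, so $\|h\|_{p\text{-}var}$ is uniformly bounded in $\pi$. The standard $p$-variation interpolation inequality
\begin{align*}
\|h\|_{p'\text{-}var}^{p'} \leq (2\|h\|_\infty)^{p'-p}\, \|h\|_{p\text{-}var}^p
\end{align*}
then reduces the problem further to $\|h\|_\infty \to 0$. When $f$ is continuous on $[0,T]$, this is immediate from uniform continuity, since $\|h\|_\infty = \sup_i \sup_{t \in [r_i, r_{i+1})} |f(t) - f(r_i)|$.

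The main obstacle will be the piecewise-continuous case, where $\|h\|_\infty$ need not vanish on intervals of $\pi$ that contain an interior discontinuity point of $f$. I would handle this by refining $\pi$ to $\tilde\pi := \pi \cup \{T_1, \ldots, T_N\}$, where $\{T_j\}$ are the (finitely many) discontinuities of $f$, and splitting $f^\pi - f = (f^{\tilde\pi} - f) + (f^\pi - f^{\tilde\pi})$. The first piece satisfies the continuous-case estimate on each continuity interval of $f$, so the Young-inequality bound above drives its contribution to zero. The second piece $f^\pi - f^{\tilde\pi}$ is supported on at most $N$ intervals of width $\|\pi\|$ and is uniformly bounded by the jumps of $f$; its contribution to the 2D Young integral against the continuous $R$ then vanishes as $\|\pi\| \to 0$ via a direct estimate on shrinking rectangles using the continuity of the 2D control $\omega_{R,\rho+\varepsilon}$ defined in \eqref{controlledVar}.
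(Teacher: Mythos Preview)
Your approach is correct but genuinely different from the paper's. The paper localizes: it breaks the integral into rectangles $[r_k,r_{k+1}]\times[r_l,r_{l+1}]$ indexed by $\pi$, applies the 2D Young inequality on each (using that $\langle f_s-f_{r_k},f_t-f_{r_l}\rangle$ vanishes at the corner $(r_k,r_l)$, killing the boundary terms in $\vertiii{\cdot}$), and then controls the resulting sum $\sum_{k,l}\|f\|_{p\text{-}var;[r_k,r_{k+1}]}\|f\|_{p\text{-}var;[r_l,r_{l+1}]}\|R\|_{\rho+\varepsilon\text{-}var}$ via the product control $\omega=\omega_f^{1/\theta p}\omega_{R,\rho+\varepsilon}^{1/\theta(\rho+\varepsilon)}$ and the maximal-inequality trick $\sum\omega^\theta\leq(\max\omega^{\theta-1})\,\omega([0,T]^2)$. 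You instead apply the 2D Young inequality once globally, exploit the exact factorization of the rectangular increment of $g$ to bound $\vertiii{g}\leq\|h\|_{p'\text{-}var}^2$, and then pass to $p'>p$ so that the interpolation $\|h\|_{p'\text{-}var}^{p'}\leq(2\|h\|_\infty)^{p'-p}\|h\|_{p\text{-}var}^p$ reduces everything to $\|h\|_\infty\to 0$. Your argument is cleaner for continuous $f$ and sidesteps the 2D-control bookkeeping; the paper's local approach gives a more explicit rate and, more importantly, is the template reused in Proposition~\ref{keyProp2} for the tensor norm $\mathcal{H}_1^d\otimes\mathcal{H}_1^d$, where the integrand $\langle g(u,s),g(v,t)\rangle$ no longer factorizes as a pure product and the interpolation shortcut is not available. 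One small point in your piecewise-continuous reduction: when you split $h=h_1+h_2$ the integral is bilinear, so you also need the cross terms $\int\langle h_{1,s},h_{2,t}\rangle\,\mathrm{d}R$; these are handled by the same global Young bound since $\|h_1\|_{p'\text{-}var}\to 0$ and $\|h_2\|_{p'\text{-}var}$ stays bounded, but it is worth saying explicitly.
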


\begin{proof}
Since $f \in \mathcal{C}_{pw}^{p-var} \left( [0, T];\mathbb{R}^e \right)$, we can partition $[0, T]^2$ into finite sub-regions $[x_i, x_{i+1}] \times [x_j, x_{j+1}]$, where $\left\langle f_s, f_t \right\rangle_{\mathbb{R}^e}$ is continuous in the interior of each such sub-region. We will show \eqref{newlim} in each sub-region, and the case for $[0, T]^2$ will follow from summing the Young integral over all the sub-regions. \par
By assumption  $f$ has finite $p$-variation over $\left[ 0, T \right]$ and it is therefore a \textit{regulated} function, i.e. it has finite right and left limits everywhere in $\left(  0,T\right)$ and a finite right (resp. left) limit at the left (resp. right) end point. Hence for all $i$, there exists a unique continuous function $F_i: \left[ x_i, x_{i+1} \right] \rightarrow \mathbb{R}^{e}$ which agrees with $f$ on $\left( x_i, x_{i+1} \right)$. \par
To avoid the discontinuities on the boundary, we will bound the integral over the subset $A^{\pi}_{i, j} := \left[ r_{\overline{k}(i)}, r_{\underline{k}(i+1)} \right] \times \left[ r_{\overline{k}(j)}, r_{\underline{k}(j+1)} \right]$ of $A_{i,j} := [x_i, x_{i+1}] \times [x_j, x_{j+1}]$, where
\begin{align*}
r_{\overline{k}(l)} := \inf \left\{ r_k \:\vert \: r_k > x_{l} \right\}, \quad
r_{\underline{k}(l)} := \sup \left\{ r_k \:\vert \: r_k < x_{l} \right\}.
\end{align*}
From Young's inequality \eqref{2DYoungIneq} and the fact that $\omega_{R, \rho + \varepsilon}$ is a control, the integral over the region $A_{i, j} - A^{\pi}_{i, j}$ will be arbitrarily small as $\norm{\pi}$ tends to zero.
Let $\omega^{\vphantom{0}}_{i, j}$ denote the control
\begin{align*}
\omega^{\vphantom{0}}_{i, j} ([s, t] \times [u,v])
:= \norm{F_i}_{p-var; [s,t]}^p \norm{F_j}_{p-var; [u, v]}^p, \quad (s, t) \times (u, v) \in A_{i, j}.
\end{align*}
Continuing,
\begin{align} \label{omegaDefn}
\omega([s,t] \times [u,v]) := \omega^{\frac{1}{\theta p}}_{i,j} ([s,t] \times [u,v]) \, \omega^{\frac{1}{\theta (\rho + \varepsilon)}}_{R, \rho + \varepsilon} ([s,t] \times [u,v]),
\end{align}
is again a control, where $\varepsilon$ is chosen such that $\theta := \frac{1}{p} + \frac{1}{\rho + \varepsilon} > 1$. \par
Now we have
\begin{align*}
&\abs{ \int_{A^{\pi}_{i,j}} \left\langle f^{\pi}_s - f_s, \, f^{\pi}_t - f_t \right\rangle_{\mathbb{R}^e} \wrt{R(s,t)} } \\
&\fqquad\leq \sum_{k, l} \abs{ \int_{r_k}^{r_{k+1}} \int_{r_l}^{r_{l+1}} \left\langle f_s - f_{r_k}, \, f_t - f_{r_l} \right\rangle_{\mathbb{R}^e} \wrt{R(s, t)} },
\end{align*}
where the sum is taken over the partition points in $A^{\pi}_{i, j}$. By Young's inequality \eqref{2DYoungIneq} and the fact that $\left\langle f_s - f_{r_k}, \, f_t - f_{r_l} \right\rangle_{\mathbb{R}^e}$ vanishes at all points $(r_k, r_l)$ in $A^{\pi}_{i, j}$, the expression above is bounded by
\begin{align*}
C_{p, \rho} \sum_{k, l} \norm{F_i}_{p-var; [r_k, r_{k+1}]} &\norm{F_j}_{p-var; [r_l, r_{l+1}]} \norm{R}_{\rho + \varepsilon-var; [r_k, r_{k+1}] \times [r_l, r_{l+1}]} \\
&\leq C_{p, \rho} \sum_{k, l} \omega^{\theta} ([r_k, r_{k+1}] \times [r_l, r_{l+1}]) \\
&\leq C_{p, \rho} \max_{k, l} \omega^{\theta - 1} ([r_k, r_{k+1}] \times [r_l, r_{l+1}]) \, \omega(A_{i, j}),
\end{align*}
which tends to zero as the mesh of the partition goes to zero.
\end{proof}

\subsection{A dense subspace of $\mathcal{H}_{1}^{d}$}

We now give a novel characterization of a subspace of $\mathcal{H}_{1}^{d}$
using Young-Stieltjes integrals. Let $R$ be of finite 2D $\rho$-variation,
where $\rho\in[1, 2)$. We define
\begin{align*}
\mathcal{W}_{\rho}^{d} := \bigcup_{p < \frac{\rho}{\rho- 1}} \mathcal{C}%
_{pw}^{p-var} \left(  [0, T]; \mathbb{R}^{d} \right)
\end{align*}
and equip it with the inner product
\begin{align}
\label{W1innerproduct}\left\langle f, g \right\rangle _{\mathcal{W}_{\rho}%
^{d}} := \int_{[0, T]^{2}} \left\langle f_{s}, \, g_{t} \right\rangle
_{\mathbb{R}^{d}} \, \mathrm{d} R(s,t).
\end{align}
One can check that $\langle\cdot, \cdot\rangle_{\mathcal{W}_{\rho}^{d}}$
defines a semi-inner product; it is bilinear due to the linearity of the
Young-Stieltjes integral, and positive semi-definite as well as symmetric
because the covariance function $R$ is positive semi-definite and symmetric.
We will identify $f$ and $g$ to be in the same equivalence class if
$\left\langle f - g, f - g \right\rangle _{\mathcal{W}_{\rho}^{d}} = 0$, and
quotient $\mathcal{W}_{\rho}^{d}$ with respect to these classes. This then
makes $\left\langle \cdot, \cdot\right\rangle _{\mathcal{W}_{\rho}^{d}}$ a
proper inner product.

\begin{proposition}
\label{W1eqH1} $\mathcal{W}_{\rho}^{d}$ is a dense subspace of $\mathcal{H}%
_{1}^{d}$, and the inclusion map $i: \left(  \mathcal{W}_{\rho}^{d}, \,
\left\langle \cdot, \cdot\right\rangle _{\mathcal{W}^{d}_{\rho}} \right)
\rightarrow\left( \mathcal{H}_{1}^{d}, \, \left\langle \cdot, \cdot
\right\rangle _{\mathcal{H}_{1}^{d}} \right) $ is an isometry.
\end{proposition}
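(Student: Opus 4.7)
The plan is to verify that the two inner products already coincide on the dense set of finite linear combinations of indicator functions, and then to upgrade this equality to all of $\mathcal{W}_\rho^d$ by approximating elements of $\mathcal{W}_\rho^d$ by step functions using Lemma \ref{keyProp1}.

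First I would check that $\mathcal{W}_\rho^d$ contains all finite linear combinations of the generators $\mathds{1}_{[0,t)}^{(u)}$, which is immediate since any step function has finite $p$-variation for all $p\ge1$. Next, I would establish the key base-case identity
\begin{align*}
\left\langle \mathds{1}_{[0,t)}^{(u)}, \mathds{1}_{[0,s)}^{(v)} \right\rangle_{\mathcal{W}_\rho^d}
= \delta_{uv} \int_{[0,T]^2} \mathds{1}_{[0,t)}(s') \mathds{1}_{[0,s)}(t') \,\mathrm{d}R(s',t')
= \delta_{uv} R(t,s),
\end{align*}
which follows directly from the definition of the rectangular increment \eqref{rectInc} together with the fact that a centered Gaussian process starting at the origin satisfies $R(0,\cdot)=R(\cdot,0)=0$. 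By bilinearity of the 2D Young--Stieltjes integral, this shows that the two inner products agree on the linear span $\mathcal{E}$ of the indicator functions, which in particular proves that $\langle\cdot,\cdot\rangle_{\mathcal{W}_\rho^d}$ restricted to $\mathcal{E}$ is genuinely positive definite and agrees with $\langle\cdot,\cdot\rangle_{\mathcal{H}_1^d}$.

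Next, given an arbitrary $f\in\mathcal{W}_\rho^d$ with $f\in\mathcal{C}_{pw}^{p-var}([0,T];\mathbb{R}^d)$ for some $p<\rho/(\rho-1)$, I would introduce the partition-based step-function approximants $f^\pi(t):=\sum_i f(r_i)\mathds{1}_{[r_i,r_{i+1})}(t)$. These lie in $\mathcal{E}\subset\mathcal{W}_\rho^d$, and Lemma \ref{keyProp1} applied to $f$ (with the complementary-regularity hypothesis $\tfrac{1}{p}+\tfrac{1}{\rho}>1$ in force by the definition of $\mathcal{W}_\rho^d$) yields
\begin{align*}
\left\| f^\pi - f \right\|_{\mathcal{W}_\rho^d}^2
= \int_{[0,T]^2} \left\langle f^\pi_s - f_s,\, f^\pi_t - f_t \right\rangle_{\mathbb{R}^d} \,\mathrm{d}R(s,t) \longrightarrow 0
\end{align*}
as $\|\pi\|\to0$. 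In particular $\{f^{\pi_n}\}$ is Cauchy in the common inner product, which by the first step is also its Cauchy norm as a sequence in $\mathcal{H}_1^d$; hence it converges to some limit $\iota(f)\in\mathcal{H}_1^d$. A standard argument (using two sequences and interlacing them) shows $\iota(f)$ is independent of the approximating partition, so the map $i:\mathcal{W}_\rho^d\to\mathcal{H}_1^d$, $f\mapsto \iota(f)$, is well defined and linear. Passing to the limit in the identity $\|f^\pi\|_{\mathcal{W}_\rho^d}=\|f^\pi\|_{\mathcal{H}_1^d}$ gives the isometry $\|i(f)\|_{\mathcal{H}_1^d}=\|f\|_{\mathcal{W}_\rho^d}$, and polarization then yields preservation of inner products.

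Finally, density is essentially free: since the linear span $\mathcal{E}$ is contained in $i(\mathcal{W}_\rho^d)$ and is dense in $\mathcal{H}_1^d$ by the very definition of $\mathcal{H}_1^d$, the image $i(\mathcal{W}_\rho^d)$ is dense in $\mathcal{H}_1^d$. The only delicate point in the whole argument, and what I expect to be the main obstacle, is ensuring that the Young--Stieltjes integral defining $\langle\cdot,\cdot\rangle_{\mathcal{W}_\rho^d}$ is genuinely well defined on piecewise continuous integrands of finite $p$-variation; this requires invoking Theorem \ref{2Dintegral} together with the continuity of $R$ (so that integrand and integrator share no common discontinuities) and handling the piecewise structure by the subdivision trick used in the proof of Lemma \ref{keyProp1}.
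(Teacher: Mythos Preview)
Your proposal is correct and follows essentially the same approach as the paper: both arguments verify that the $\mathcal{W}_\rho^d$- and $\mathcal{H}_1^d$-norms coincide on step functions (you do this via the generators and bilinearity, the paper via a direct computation of $\|f^{\pi(n)}\|^2$ in both norms), then invoke Lemma~\ref{keyProp1} to approximate a general $f\in\mathcal{W}_\rho^d$ by step functions and pass to the limit in $\mathcal{H}_1^d$, with density following because the indicators already lie in $\mathcal{W}_\rho^d$.
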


\begin{proof}
Let $f \in \mathcal{W}^d_{\rho}$ and let $\pi(n) = \left\{ r^{(n)}_i \right\}$ be a sequence of partitions whose mesh vanishes as $n \rightarrow \infty$. As usual, we denote
\begin{align*}
f^{\pi(n)} := \sum_i f \left( r^{(n)}_i \right) \mathds{1}_{\left[ r^{(n)}_i, r^{(n)}_{i+1} \right)} (t).
\end{align*}
Now the key point to note is that for each $n$, $f^{\pi(n)}$ is in $\mathcal{W}_{\rho}^d \cap \mathcal{H}_1^d$; moreover
\begin{align}  \label{iso1}
\begin{split}
\norm{f^{\pi(n)}}_{\mathcal{H}_1^d}^2
&= \sum_{i, j} \left\langle
f_{r^{(n)}_i}, f_{r^{(n)}_j} \right\rangle_{\mathbb{R}^d} \left\langle \mathds{1}_{\left[ r^{(n)}_i, r^{(n)}_{i+1} \right)}, \mathds{1}_{\left[ r^{(n)}_j, r^{(n)}_{j+1} \right)} \right\rangle_{\mathcal{H}_1} \\
&= \sum_{i, j} \left\langle f_{r^{(n)}_i}, \, f_{r^{(n)}_j} \right\rangle_{\mathbb{R}^d} R
\begin{pmatrix}
r^{(n)}_i & r^{(n)}_{i+1} \\
r^{(n)}_j & r^{(n)}_{j+1}
\end{pmatrix} \\
&= \norm{f^{\pi(n)}}_{\mathcal{W}_{\rho}^d}^2.
\end{split}
\end{align}
From Lemma \ref{keyProp1}, $\norm{f^{\pi(n)} - f}_{\mathcal{W}^d_{\rho}} \rightarrow 0$, which means that $f^{\pi(n)}$ is Cauchy and from \eqref{iso1} and the completeness of $\mathcal{H}_1^d$, $\lim_{n \rightarrow \infty} f^{\pi(n)}$ exists in $\mathcal{H}_1^d$. We identify $f$ with this limit and under this identification we have
\begin{align}  \label{WH1norm}
\norm{f}_{\mathcal{H}^d_1}^2 = \int_{[0, T]^2} \left\langle f_s, \, f_t \right\rangle_{\mathbb{R}^d} \, \mathrm{d} R(s,t).
\end{align}
Since $\mathcal{W}^d_{\rho}$ contains all the generating functions $\left\{ \mathds{1}^{(u)}_{[0, t)} (\cdot) \right\}$ of $\mathcal{H}^d_1$, its completion, and hence closure, is all of $\mathcal{H}^d_1$.
\end{proof}

\begin{remark}
We recall the following non-degeneracy condition on Gaussian processes which
is featured in \cite{cfv2009}. We say that $R$ (or equivalently, $X$) is
non-degenerate on $\left[ 0, T\right]  $ if the following implication holds:
\begin{align}
\label{ndc}\int_{\left[ 0, T\right] ^{2}} \left\langle f_{s}, f_{t}
\right\rangle _{\mathbb{R}^{d}} \, \mathrm{d} R(s, t) = 0 \quad
\mathcal{\Rightarrow} \quad f = 0 \:\: \text{a.e.}.
\end{align}
Under this condition, each equivalence class of $\mathcal{W}_{\rho}^{d}$ would
then consist of functions which agree almost everywhere.
\end{remark}

\subsection{The Malliavin derivative and convergence in the tensor norm}

\label{tensorProd} We will now extend the results of the last section to the
tensor space $\mathcal{H}_{1}^{d} \otimes\mathcal{H}_{1}^{d}$. Let $\mathbf{X}
\in\mathcal{C}^{0, p-var} \left(  [0, T]; G^{\lfloor p \rfloor} \left(
\mathbb{R}^{d}\right)  \right) $ and assume that for all $h \in\mathcal{H}%
_{1}^{d}$, $\Phi(h)$ can be embedded in $\mathcal{C}^{q-var}\left(  \left[
0,T \right] ; \mathbb{R}^{d}\right) $ where $\frac{1}{p} + \frac{1}{q} > 1$.
Then the Malliavin derivative of $Y$ satisfying
\begin{align*}
\mathrm{d} Y_{t} = V(Y_{t}) \circ\mathrm{d} \mathbf{X}_{t}, \quad Y_{0} =
y_{0},
\end{align*}
is given by
\begin{align*}
\mathcal{D}_{h} Y_{t}  & = \int_{0}^{t} J_{t}^{\mathbf{X}} \left(
J_{s}^{\mathbf{X}}\right) ^{-1} V\left(  Y_{s} \right)  \, \mathrm{d}
\Phi(h)(s)\\
& = \int_{0}^{T} \mathds{1}_{[0,t)} \left(  s\right)  J_{t}^{\mathbf{X}}
\left(  J_{s}^{\mathbf{X}}\right) ^{-1} V\left(  Y_{s} \right)  \, \mathrm{d}
\Phi(h)(s).
\end{align*}
Using $g(s, t)$ to denote the Malliavin derivative,
\begin{align*}
g(s,t) := \mathcal{D}_{s} Y_{t} = \mathds{1}_{[0, t)} (s) J_{t}^{\mathbf{X}}
\left( J_{s}^{\mathbf{X}} \right) ^{-1} V\left(  Y_{s} \right) ,
\end{align*}
then with respect to any partition $\pi= \{ r_{i} \}$ of $[0, T]$, we also
have
\begin{align*}
\mathcal{D}_{s} Y^{\pi}_{t}  & = \sum_{i} \mathcal{D}_{s} Y_{r_{i}}
\mathds{1}_{\left[  r_{i}, r_{i+1} \right) } (t)\\
& = \sum_{i} g \left(  s, r_{i} \right)  \mathds{1}_{\left[  r_{i}, r_{i+1}
\right) } (t).
\end{align*}
We will proceed to show that $\mathcal{D} Y^{\pi}$ lies in $\mathcal{H}%
_{1}^{d} \otimes\mathcal{H}_{1}^{d}$ almost surely, and under suitable
regularity assumptions on $\mathcal{D} Y$, we have $\left\|  \mathcal{D}
Y^{\pi} - \mathcal{D}Y\right\| _{\mathcal{H}_{1}^{d} \otimes\mathcal{H}%
_{1}^{d}} \rightarrow0$ as $\left\|  \pi\right\|  \rightarrow0$. Coupled with
the results in the previous section, this will mean that $Y^{\pi}$ converges
to $Y$ in $\mathbb{D}^{1,2} \left( \mathcal{H}_{1}^{d}\right) $, and
$\delta^{X}(Y)$ is then the $L^{2}(\Omega)$ limit of $\delta^{X} \left(
Y^{\pi} \right) $.

\begin{proposition}
\label{keyProp2} Let $g: [0, T]^{2} \rightarrow\mathbb{R}^{e} \otimes
\mathbb{R}^{d}$ be of the form $g(s, t) = \mathds{1}_{[0, t)}(s) \tilde{g}_{1}
(t) \tilde{g}_{2} (s)$, where $\tilde{g}_{1} \in\mathcal{C}_{pw}^{p-var}
\left(  [0, T];\mathbb{R}^{e} \otimes\mathbb{R}^{e} \right) $ and
\newline$\tilde{g}_{2} \in\mathcal{C}_{pw}^{p-var} \left(  [0, T];\mathbb{R}%
^{e} \otimes\mathbb{R}^{d} \right) $. Let $R$ be continuous and of finite 2D
$\rho$-variation, $\rho\in\left[  1, \frac{3}{2} \right) $, and we assume that
$\frac{1}{p} + \frac{1}{\rho} > 1$. For any partition $\pi= \{ r_{i} \}$ of
$[0, T]$, let $g^{\pi}: [0, T]^{2} \rightarrow\mathbb{R}^{e} \otimes
\mathbb{R}^{d}$ denote
\begin{align}
\label{gApprox}g^{\pi} (s, t) := \sum_{i} g \left(  s, r_{i} \right)
\mathds{1}_{\left[  r_{i}, r_{i+1} \right) } (t).
\end{align}
Then
\begin{align}
\label{HSNorm}\int_{[0, T]^{2}} \left(  \int_{[0, T]^{2}} \left\langle \left(
g^{\pi} - g \right)  (u, s), \, \left(  g^{\pi} - g \right)  (v, t)
\right\rangle _{\mathbb{R}^{e} \otimes\mathbb{R}^{d}} \, \mathrm{d} R(u, v)
\right)  \, \mathrm{d} R(s, t) \rightarrow0.
\end{align}

\end{proposition}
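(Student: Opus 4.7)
The approach is to reduce \eqref{HSNorm} to a single 2D Young--Stieltjes integral in $(s,t)$ by recognizing the inner $(u,v)$-integral as a Cameron--Martin inner product, and then to mimic the argument of Lemma \ref{keyProp1}. Set $h_s^\pi(u) := (g^\pi - g)(u,s)$. For $s \in [r_j, r_{j+1})$, the explicit form of $g$ yields
\[
h_s^\pi(u) = -\tilde g_2(u) \Bigl[\mathds{1}_{[0,r_j)}(u)\bigl(\tilde g_1(s) - \tilde g_1(r_j)\bigr) + \mathds{1}_{[r_j,s)}(u)\tilde g_1(s)\Bigr],
\]
which is piecewise of bounded $p$-variation in $u$, hence an element of (the natural vector-valued extension of) $\mathcal{W}_\rho^d$. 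Applying \eqref{WH1norm} componentwise, the inner integral in \eqref{HSNorm} equals $I^\pi(s,t) := \langle h_s^\pi, h_t^\pi\rangle_{\mathcal{H}_1^d}$, so the problem reduces to showing that $\int_{[0,T]^2} I^\pi(s,t)\, dR(s,t) \to 0$ as $\|\pi\|\to 0$.

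Two features of $I^\pi$ drive the argument. First, $I^\pi(r_j, t) = I^\pi(s, r_k) = 0$ at every partition point since $h^\pi_{r_i} = 0$. Second, within a single cell $s, s' \in [r_j, r_{j+1})$ the discretization $g^\pi$ is constant in $s$, so $h_{s'}^\pi - h_s^\pi = -(g(\cdot, s') - g(\cdot, s))$, whose $\mathcal{H}_1^d$-norm can be controlled via \eqref{WH1norm} and another application of Theorem \ref{2Dintegral} invoking \eqref{R1DBound}. By bilinearity of the inner product, rectangular increments of $I^\pi$ satisfy
\[
\Bigl|I^\pi\begin{pmatrix} s & s' \\ t & t' \end{pmatrix}\Bigr| \le \|h_{s'}^\pi - h_s^\pi\|_{\mathcal{H}_1^d}\,\|h_{t'}^\pi - h_t^\pi\|_{\mathcal{H}_1^d},
\]
yielding a uniform (in $\pi$) bound on the 2D $p$-variation of $I^\pi$ on each grid cell. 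Partitioning $[0,T]^2$ along the grid $\{(r_j, r_k)\}$ and applying Theorem \ref{2Dintegral} on each sub-rectangle $A_{j,k} := [r_j, r_{j+1}] \times [r_k, r_{k+1}]$, the vanishing of $I^\pi$ at the four corners gives
\[
\Bigl|\int_{A_{j,k}} I^\pi(s,t)\, dR(s,t)\Bigr| \le C_{p,\rho}\,\|I^\pi\|_{p\text{-var}; A_{j,k}}\,\|R\|_{\rho+\varepsilon\text{-var}; A_{j,k}}.
\]
Forming a joint 2D control $\omega$ analogous to \eqref{omegaDefn} and using the complementary regularity $1/p + 1/\rho > 1$, the sum over $(j,k)$ is dominated by $C\,\max_{j,k}\omega(A_{j,k})^{\theta - 1}\,\omega([0,T]^2)$ for some $\theta > 1$, which tends to $0$ by continuity of $\omega$ as $\|\pi\| \to 0$.

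The main technical obstacle will be the uniform 2D $p$-variation bound on $I^\pi$: the integrand depends on $\pi$ not only through direct discretization but through the functions $h_\cdot^\pi$ themselves. Rigorously assembling the $\mathcal{H}_1^d$-norms $\|h^\pi_{s'} - h^\pi_s\|_{\mathcal{H}_1^d}$ into a super-additive 2D control requires unfolding \eqref{WH1norm} and invoking Theorem \ref{2Dintegral} at yet another level of nesting --- essentially the same control-based argument as in Lemma \ref{keyProp1}, applied one level deeper, with careful attention to the jumps of $s \mapsto g(\cdot, s)$ at partition points and to the indicator $\mathds{1}_{[0,t)}(s)$, which creates an additional discontinuity in $g$ along the diagonal.
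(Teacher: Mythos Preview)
Your outline is essentially the paper's argument: reduce to a 2D Young integral over each grid cell, use the vanishing of the inner integral at the corners, and control its 2D $p$-variation via $\tilde g_1$, $\tilde g_2$ and $\omega_{R,\rho+\varepsilon}$. The ``unfolding'' you flag as the main obstacle is exactly what the paper carries out explicitly --- expanding $g(u,s)-g(u,r_i)=\mathds{1}_{[0,r_i)}(u)\bigl(\tilde g_1(s)-\tilde g_1(r_i)\bigr)\tilde g_2(u)+\mathds{1}_{[r_i,s)}(u)\tilde g_1(s)\tilde g_2(u)$ and obtaining a four-term decomposition $I^{i,j}=I_1+I_2+I_3+I_4$ whose individual 2D $p$-variations are separately controlled (with a product lemma for $I_4$); one small slip in your sketch is the appeal to \eqref{R1DBound}, which is not among the hypotheses of this proposition and is not used in the paper's proof --- only the finite 2D $\rho$-variation of $R$ enters.
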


\begin{remark}
Here and henceforth, we canonically identify 2-tensors with matrices, and
$\tilde{g}_{1} (t) \tilde{g}_{2} (s)$ denotes matrix multiplication of
$\tilde{g}_{1}(t)$ with $\tilde{g}_{2} (s)$.
\end{remark}

\begin{proof}
Similar to the proof of Lemma \ref{keyProp1}, we will first partition $[0, T]^2 \times [0, T]^2$ into sub-regions $A \times B = ([a_1, a_2] \times [a_3, a_4]) \times ([b_1, b_2] \times [b_3, b_4])$ on which the integrand is continuous (shrinking each region if necessary to deal with discontinuities at the boundaries), then sum the results to obtain the full proof. \par
Under the conditions imposed on $g$, we will show that the 4D-integral in \eqref{HSNorm} can be written as an iterated 2D-integral and
\begin{align*}
\int_{A} \left( \int_{B} \left\langle (g - g^{\pi}) (u, s), \, (g - g^{\pi}) (v, t) \right\rangle_{\mathbb{R}^e \otimes \mathbb{R}^d} \wrt{R(u, v)} \right) \wrt{R(s, t)} \\
= \sum_{i, j} \int_{[r_i, r_{i+1}] \times [r_j, r_{j+1}]} I^{i,j} (s,t) \wrt{R(s, t)} \xrightarrow{\norm{\pi} \rightarrow 0} 0,
\end{align*}
where
\begin{align*}
I^{i,j} (s,t) := \int_{B} \left\langle g (u, s) - g(u, r_i), \, g (v, t) - g(v, r_j) \right\rangle_{\mathbb{R}^e \otimes \mathbb{R}^d} \wrt{R(u, v)}.
\end{align*}
First observe that for any $r\leq s$,
\begin{align*}
g(u, s) - g(u, r)
&= \mathds{1}_{[0, s)} (u) \tilde{g}_1(s) \tilde{g}_2(u) - \mathds{1}_{[0, r)} (u) \tilde{g}_1(r) \tilde{g}_2(u) \\
&= \mathds{1}_{[0, r)} (u) \big( \tilde{g}_1(s) - \tilde{g}_1(r) \big) \tilde{g}_2(u) + \mathds{1}_{[r, s)} (u) \tilde{g}_1(s) \tilde{g}_2(u).
\end{align*}
Thus for $(s, t) \in [r_i, r_{i+1}] \times [r_j, r_{j+1}] \subset A$,
\begin{align*}
I^{i,j}(s,t) = I_1^{i, j} (s,t) + I_2^{i, j} (s,t) + I_3^{i, j} (s,t) + I_4^{i, j} (s,t),
\end{align*}
where
\begin{align*}
I_{1}^{i,j} (s, t)
&:= \sum_{l, m, n, k} \big( \tilde{g}_1(s) - \tilde{g}_1(r_i) \big)_{lm}  \big( \tilde{g}_1 (t) - \tilde{g}_1(r_j) \big)_{ln} \\
&\fqquad\times \int_{[b_1, r_i] \times [b_3, r_j]} \big( \tilde{g}_2 (u) \big)_{mk} \big( \tilde{g}_2 (v) \big)_{nk} \wrt{R\left( u, v \right)}, \\
I_{2}^{i,j} (s, t)
&:= \sum_{l, m, n, k} \big( \tilde{g}_1(s) \big)_{lm} \big( \tilde{g}_1 (t) - \tilde{g}_1(r_j) \big)_{ln} \\
&\fqquad\times \int_{[r_i, s] \times [b_3, r_j]} \big( \tilde{g}_2 (u) \big)_{mk} \big( \tilde{g}_2 (v) \big)_{nk} \wrt{R\left( u, v \right)}, \\
I_{3}^{i,j} (s, t)
&:= \sum_{l, m, n, k} \big( \tilde{g}_1(s) - \tilde{g}_1(r_i) \big)_{lm} \big( \tilde{g}_1(t) \big)_{ln} \\
&\fqquad\times \int_{[b_1, r_i] \times[r_j, t]} \big( \tilde{g}_2 (u) \big)_{mk} \big( \tilde{g}_2 (v) \big)_{nk} \wrt{R\left( u, v \right)}, \\
I_{4}^{i,j} (s, t)
&:= \sum_{l, m, n, k} \big( \tilde{g}_1(s) \big)_{lm} \big( \tilde{g}_1(t) \big)_{ln} \\
&\fqquad\times \int_{[r_i, s] \times [r_j, t]} \big( \tilde{g}_2 (u) \big)_{mk} \big( \tilde{g}_2 (v) \big)_{nk} \wrt{R\left( u, v \right)},
\end{align*}
and $(g)_{lm}$ denotes the $(l, m)^{th}$ entry of the matrix $g$. Note also that it is possible that $r_i \leq b_1$ or $r_j \leq b_3$, in which case we define the integral to be zero. \par
For $k = 1, 2, 3$, since the summands in $I^{i, j}_k (s,t)$ are products of 1D functions, we have the following bounds on the 2D $p^{th}$-variation of $I_k^{i,j} (s,t)$ in $[r_i, r_{i+1}] \times [r_j, r_{j+1}]$:
\begin{align} \label{I123Est}
\begin{split}
&\norm{I_1^{i,j}}_{p-var; [r_i, r_{i+1}] \times [r_j, r_{j+1}]} \\
&\quad\quad\leq C_{p, \rho, d} \norm{\tilde{g}_2}_{\mathcal{V}^p;[0,T]}^2 \norm{R}_{\rho-var; [0, T]^2} \norm{\tilde{g}_1}_{p-var; [r_i, r_{i+1}]} \norm{\tilde{g}_1}_{p-var; [r_j, r_{j+1}]}, \\
&\norm{I_2^{i,j}}_{p-var; [r_i, r_{i+1}] \times [r_j, r_{j+1}]} \\
&\quad\quad\leq C_d \norm{\tilde{g}_1}_{p-var; [r_j, r_{j+1}]} \left( \norm{\tilde{g}_1}_{p-var; [r_i, r_{i+1}]} \norm{h_1}_{\infty} + \norm{\tilde{g}_1}_{\infty} \norm{h_1}_{p-var; [r_i, r_{i+1}]} \right), \\
&\norm{I_3^{i,j}}_{p-var; [r_i, r_{i+1}] \times [r_j, r_{j+1}]} \\
&\quad\quad\leq C_d \norm{\tilde{g}_1}_{p-var; [r_i, r_{i+1}]} \left( \norm{\tilde{g}_1}_{p-var; [r_j, r_{j+1}]} \norm{h_2}_{\infty} + \norm{\tilde{g}_1}_{\infty} \norm{h_2}_{p-var; [r_j, r_{j+1}]} \right).
\end{split}
\end{align}
Here, $h_1$ and $h_2$ denote the functions (suppressing the dependence on $m, n$ and $k$ in the notation since the bounds are independent of them)
\begin{align*}
&h_1(s) := \int_{[r_i, s] \times [b_3, r_j]} \big( \tilde{g}_2 (u) \big)_{mk} \big( \tilde{g}_2 (v) \big)_{nk} \wrt{R\left( u, v \right)}, \\
&h_2(t) := \int_{[b_1, r_i] \times[r_j, t]} \big( \tilde{g}_2 (u) \big)_{mk} \big( \tilde{g}_2 (v) \big)_{nk} \wrt{R\left( u, v \right)}.
\end{align*}
Choosing $\varepsilon$ sufficiently small such that $\rho + \varepsilon < p$ and $\frac{1}{p} + \frac{1}{\rho + \varepsilon} > 1$, we have
\begin{align} \label{hEst}
\begin{split}
\norm{h_1}_{p-var; [r_i, r_{i+1}]}
&\leq \norm{h_1}_{\rho + \varepsilon-var; [r_i, r_{i+1}]} \\
&\leq C_{p,\rho} \norm{\tilde{g}_2}_{\mathcal{V}^p; [0, T]}^2 \omega^{\vphantom{0}}_{R, \rho + \varepsilon} ([r_i, r_{i+1}] \times [0, T])^{\frac{1}{\rho + \varepsilon}} \\
\norm{h_2}_{p-var; [r_j, r_{j+1}]}
&\leq \norm{h_2}_{\rho + \varepsilon-var; [r_j, r_{j+1}]} \\
&\leq C_{p,\rho} \norm{\tilde{g}_2}_{\mathcal{V}^p; [0, T]}^2 \omega^{\vphantom{0}}_{R, \rho + \varepsilon} ( [0, T] \times [r_j, r_{j+1}] )^{\frac{1}{\rho + \varepsilon}}, \\
\norm{h_1}_{\infty}, \norm{h_2}_{\infty}
&\leq C_{p,\rho} \norm{\tilde{g}_2}_{\mathcal{V}^p; [0, T]}^2 \norm{R}_{\rho-var; [0, T]^2}.
\end{split}
\end{align}
From \eqref{I123Est} and \eqref{hEst}, we see that the 2D $p^{th}$-variations of $I_1^{i, j}$, $I_2^{i, j}$ and $I_3^{i, j}$ over $[s, t] \times [u, v] \subset [a_1, a_2] \times [a_3, a_4]$ are controlled respectively by
\begin{align*}
&\omega^{\vphantom{0}}_{I_1} \left([s, t] \times[u, v] \right)
:= C_1 \norm{\tilde{g}_2}_{\mathcal{V}^p; [0, T]}^{2p} \norm{\tilde{g}_1}_{p-var; [s, t]}^p \norm{\tilde{g}_1}_{p-var; [u, v]}^p \norm{R}_{\rho-var; [0, T]^2}^p, \\
&\omega^{\vphantom{0}}_{I_2} \left([s, t] \times[u, v]\right)
:= C_2 \norm{\tilde{g}_2}_{\mathcal{V}^p; [0, T]}^{2p} \norm{\tilde{g}_1}^p_{p-var; [u, v]} \\
&\quad\qquad\qquad\times \left( \norm{\tilde{g}_1}^p_{p-var; [s, t]} \norm{R}^p_{\rho-var; [0, T]^2} + \norm{\tilde{g}_1}_{\infty}^p \omega^{\vphantom{0}}_{R, \rho + \varepsilon} ([s, t] \times [0, T])^{\frac{p}{\rho + \varepsilon}} \right), \\
&\omega^{\vphantom{0}}_{I_3} \left([s, t] \times[u, v]\right)
:= C_3 \norm{\tilde{g}_2}_{\mathcal{V}^p; [0, T]}^{2p} \norm{\tilde{g}_1}^p_{p-var; [s, t]} \\
&\quad\qquad\qquad\times \left( \norm{\tilde{g}_1}^p_{p-var; [u, v]} \norm{R}^p_{\rho-var; [0, T]^2} + \norm{\tilde{g}_1}_{\infty}^p \omega^{\vphantom{0}}_{R, \rho + \varepsilon} ([0, T] \times [u, v])^{\frac{p}{\rho + \varepsilon}} \right).
\end{align*}
For $I_4^{i,j} (s, t)$, if we let
\begin{align*}
h_3(s,t) := \int_{[r_i, s] \times [r_j, t]} \big( \tilde{g}_2 (u) \big)_{mk} \big( \tilde{g}_2 (v) \big)_{nk} \wrt{R\left( u, v \right)},
\end{align*}
we have
\begin{align*}
\norm{h_3}_{p-var; [r_i, r_{i+1}] \times [r_j, r_{j+1}]}
&\leq \norm{h_3}_{\rho + \varepsilon-var; [r_i, r_{i+1}] \times [r_j, r_{j+1}]} \\
&\leq C_{p, \rho} \norm{\tilde{g}_2}_{\mathcal{V}^p; [0, T]}^2 \omega^{\vphantom{0}}_{R, \rho + \varepsilon} \left( [r_i, r_{i+1}] \times [r_j, r_{j+1}] \right)^{\frac{1}{\rho + \varepsilon}}.
\end{align*}
From Lemma \ref{productLemma}, we then conclude that the 2D $p^{th}$-variation of $I^{i,j}_4$ over $[s, t] \times [u, v]$ is controlled by
\begin{align*}
\omega^{\vphantom{0}}_{I_4} \left([s, t] \times[u, v]\right)
&:= C \norm{\tilde{g}_2}_{\mathcal{V}^p; [0, T]}^{2p} \omega^{\vphantom{0}}_{R, \rho + \varepsilon} \left( [s, t] \times [u, v] \right)^{\frac{p}{\rho + \varepsilon}} \\
&\qquad\times \left( \norm{\tilde{g}_1}^p_{\infty}  + \norm{\tilde{g}_1}^p_{p-var; [s, t]} \right) \left( \norm{\tilde{g}_1}^p_{\infty}  + \norm{\tilde{g}_1}^p_{p-var; [u, v]} \right).
\end{align*}
Now define $\omega$ as
\begin{align*}
\omega([s,t] \times [u,v]) = \omega^{\frac{1}{\theta p}}_I ([s,t] \times [u,v]) \, \omega^{\frac{1}{\theta(\rho + \varepsilon)}}_{R, \rho + \varepsilon} ([s,t] \times [u,v]),
\end{align*}
where $\omega^{\vphantom{0}}_I$ denotes the control $\omega^{\vphantom{0}}_{I_1} + \omega^{\vphantom{0}}_{I_2} + \omega^{\vphantom{0}}_{I_3} + \omega^{\vphantom{0}}_{I_4}$ and $\theta = \frac{1}{p} + \frac{1}{\rho + \varepsilon}$.
Then observing that $I^{i,j} (r_i, \cdot) = I^{i, j} (\cdot, r_j) = 0$ for all $i, j$, we use Young's inequality \eqref{2DYoungIneq} to obtain
\begin{align} \label{4Dvanish}
\sum_{i, j} \int_{[r_i, r_{i+1}] \times [r_j, r_{j+1}]} I^{i,j}(s,t) \wrt{R(s, t)}
&\leq \sum_{i, j} \omega^{\theta} ([r_i, r_{i+1}] \times [r_j, r_{j+1}]) \rightarrow 0.
\end{align}
\end{proof}
The following lemma was used in Proposition \ref{keyProp2}.

\begin{lemma}
\label{productLemma} Let $g_{1} \in\mathcal{C}^{p-var} \left(  [s_{1}, s_{2}];
\mathbb{R} \right) $ and $g_{2} \in\mathcal{C}^{p-var} \left(  [t_{1}, t_{2}];
\mathbb{R} \right) $. Given a 2D control $\omega$, let $f \in\mathcal{C}%
^{p-var}\left(  [s_{1}, s_{2}] \times[t_{1}, t_{2}]; \mathbb{R} \right) $ have
finite 2D $p$-variation controlled by $\omega$. In addition, assume that
$f(s_{1}, t) = f(s, t_{1}) = 0$ for all $s, t$ in $[s_{1}, s_{2}] \times
[t_{1}, t_{2}]$. Then the 2D $p^{th}$-variation of $f(u, v) g_{1}(u) g_{2}(v)$
over $[s_{1}, s_{2}] \times[t_{1}, t_{2}]$ is controlled by
\begin{align*}
4^{p-1} \omega\left( [s_{1}, s_{2}] \times[t_{1}, t_{2}] \right)  \left(
\left\|  g_{1}\right\| ^{p}_{\infty} + \left\|  g_{1}\right\| ^{p}_{p-var;
[s_{1}, s_{2}]} \right)  \left(  \left\|  g_{2}\right\| ^{p}_{\infty} +
\left\|  g_{2}\right\| ^{p}_{p-var; [t_{1}, t_{2}]} \right) .
\end{align*}

\end{lemma}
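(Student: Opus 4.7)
The plan is to decompose the rectangular increment of $F(u,v) := f(u,v) g_1(u) g_2(v)$ into four pieces, each of which factorizes as a bounded prefactor in $(g_1, g_2)$ times a rectangular increment of $f$ controlled by $\omega$, and then sum over a partition using super-additivity.

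First I would perform a double telescoping in $g_1$ and $g_2$. Writing $g_1(b) = g_1(a) + g_1(a,b)$ and $g_2(d) = g_2(c) + g_2(c,d)$ and expanding $F(a,c)+F(b,d)-F(a,d)-F(b,c)$, elementary algebra produces
\begin{align*}
F\begin{pmatrix} a & b \\ c & d \end{pmatrix}
&= g_1(a) g_2(c)\, f\begin{pmatrix} a & b \\ c & d \end{pmatrix}
+ g_1(a,b)\, g_2(c)\,[f(b,d)-f(b,c)] \\
&\qquad + g_1(a)\, g_2(c,d)\,[f(b,d)-f(a,d)]
+ g_1(a,b)\, g_2(c,d)\, f(b,d).
\end{align*}

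Next I would exploit the vanishing boundary condition $f(s_1,\cdot) \equiv f(\cdot,t_1) \equiv 0$ to promote every bare evaluation of $f$ to a rectangular increment rooted at $(s_1, t_1)$: indeed $f(\alpha,\beta) = f\begin{pmatrix} s_1 & \alpha \\ t_1 & \beta \end{pmatrix}$. Thus $f(b,d) - f(b,c) = f\begin{pmatrix} s_1 & b \\ c & d \end{pmatrix}$, $f(b,d) - f(a,d) = f\begin{pmatrix} a & b \\ t_1 & d \end{pmatrix}$, and $f(b,d) = f\begin{pmatrix} s_1 & b \\ t_1 & d \end{pmatrix}$, so each of the four terms is now a product of a $g$-prefactor with an $f$-rectangular increment whose $p$-th power is bounded by the corresponding value of $\omega$.

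Applying the power-mean inequality $\bigl|\sum_{k=1}^4 x_k\bigr|^p \leq 4^{p-1}\sum_k |x_k|^p$ and summing over a partition $\{(u_i, v_j)\}$ of $[s_1, s_2] \times [t_1, t_2]$ yields four sums. The pure-increment term gives $\|g_1\|_\infty^p \|g_2\|_\infty^p \,\omega([s_1,s_2]\times[t_1,t_2])$ by plain super-additivity of $\omega$ on the grid. For each mixed term, for example
\[
\sum_{i,j} |g_1(u_i,u_{i+1})|^p |g_2(v_j)|^p\, \omega([s_1,u_{i+1}]\times[v_j,v_{j+1}]),
\]
I would sum in $j$ first so that super-additivity collapses $\sum_j \omega([s_1,u_{i+1}]\times[v_j,v_{j+1}])$ into $\omega([s_1,u_{i+1}]\times[t_1,t_2]) \leq \omega([s_1,s_2]\times[t_1,t_2])$; the outer $i$-sum then contributes $\|g_1\|_{p-var;[s_1,s_2]}^p$, while $|g_2(v_j)|^p$ is absorbed into $\|g_2\|_\infty^p$. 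The fully-extended term is immediate, since $|f\begin{pmatrix} s_1 & u_{i+1} \\ t_1 & v_{j+1} \end{pmatrix}|^p \leq \omega([s_1,s_2]\times[t_1,t_2])$ can be pulled outside and the remaining $\sum_{i,j}|g_1(u_i,u_{i+1})|^p |g_2(v_j,v_{j+1})|^p$ factorizes into the two $p$-variation norms. Adding the four contributions and recognizing the factorization $(\|g_1\|_\infty^p+\|g_1\|_{p-var}^p)(\|g_2\|_\infty^p+\|g_2\|_{p-var}^p)$ produces the stated bound, with the constant $4^{p-1}$ coming exactly from the power-mean inequality.

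I do not anticipate any serious obstacle; the only point requiring care is the bookkeeping in the two mixed terms, namely summing first in the variable along which the $f$-increment has already been stretched to the full side, so that super-additivity of $\omega$ collapses each partial sum into a single copy of $\omega([s_1,s_2]\times[t_1,t_2])$.
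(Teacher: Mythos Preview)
Your proposal is correct and essentially matches the paper's argument. The only organizational difference is that the paper performs the decomposition iteratively---first bounding the $p$-variation of $fg_1$ via a two-term split (giving one factor of $2^{p-1}$), then repeating with $fg_1$ in place of $f$ and $g_2$ in place of $g_1$---whereas you expand all four terms at once; the underlying algebra and the resulting constant $4^{p-1}$ are identical.
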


\begin{proof}
Let $\{ (u_i, v_j) \}$ be any partition of $[s_1, s_2] \times[t_1, t_2]$. We have
\begin{align*}
&\sum_{i,j} \abs{ fg_1 \begin{pmatrix}
u_i & u_{i+1} \\
v_j & v_{j+1}
\end{pmatrix}}^p \\
&= \sum_{i, j} \abs{ \big( f(u_i, v_j) - f(u_i, v_{j+1}) \big) g_1(u_i) + \big( f(u_{i+1}, v_{j+1}) - f(u_{i+1}, v_j) \big) g_1(u_{i+1})}^p \\
&\leq 2^{p-1} \left[ \sum_{i, j} \abs {f \begin{pmatrix}
u_i & u_{i+1} \\
v_j & v_{j+1}
\end{pmatrix} g_1(u_i)}^p
+ \sum_i \abs{g_1(u_{i+1}) - g_1(u_i)}^p \sum_j \abs{f \begin{pmatrix}
s_1 & u_{i+1} \\
v_j & v_{j+1}
\end{pmatrix}}^p \right] \\
&\leq 2^{p-1} \left[ \norm{g_1}^p_{\infty} \norm{f}^p_{p-var} + \norm{f}^p_{p-var} \sum_i \abs{g_1(u_{i+1}) - g_1(u_i)}^p \right],
\end{align*}
which tells us that the 2D $p^{th}$-variation of $fg_1$ is controlled by
\begin{align} \label{newCont}
2^{p-1} \omega \left( [s_1, s_2] \times [t_1, t_2] \right) \left( \norm{g_1}^p_{\infty} + \norm{g_1}^p_{p-var; [s_1, s_2]} \right).
\end{align}
Repeating the same procedure with $fg_1$ (controlled by \eqref{newCont}) in place of $f$ and $g_2$ in place of $g_1$ completes the proof.
\end{proof}

\begin{proposition}
\label{H1tensorH1equiv1} Let $g(s, t) = \mathds{1}_{[0, t)}(s) \tilde{g}_{1}
(t) \tilde{g}_{2} (s)$, where \newline$\tilde{g}_{1}, \tilde{g}_{2}
\in\mathcal{C}_{pw}^{p-var} \left(  [0, T];\mathbb{R}^{d} \otimes
\mathbb{R}^{d} \right) $, and let $g^{\pi}$ be defined as in \eqref{gApprox}.
Let $R$ be of finite 2D $\rho$-variation, $\rho\in\left[  1, \frac{3}{2}
\right) $, and we assume that $\frac{1}{p} + \frac{1}{\rho} > 1$. Then $g
\in\mathcal{H}_{1}^{d} \otimes\mathcal{H}_{1}^{d}$, with norm given by
\begin{align}
\label{tensorNorm}\left\|  g\right\| _{\mathcal{H}_{1}^{d} \otimes
\mathcal{H}_{1}^{d}} = \sqrt{\int_{[0, T]^{2}} \left(  \int_{[0, T]^{2}}
\left\langle g(u, s), \, g(v, t) \right\rangle _{\mathbb{R}^{d} \otimes
\mathbb{R}^{d}} \, \mathrm{d} R(u, v) \right)  \, \mathrm{d} R(s, t)},
\end{align}
and
\begin{align}
\label{tensorVanishing}\left\|  g^{\pi} - g\right\| _{\mathcal{H}_{1}^{d}
\otimes\mathcal{H}_{1}^{d}} \rightarrow0
\end{align}
as $\left\|  \pi\right\|  \rightarrow0$.
\end{proposition}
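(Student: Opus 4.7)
The plan is to mirror Proposition \ref{W1eqH1} at the tensor level, with Proposition \ref{keyProp2} playing the role that Lemma \ref{keyProp1} played in the scalar argument. The architecture is a two-stage approximation: first approximate $g^{\pi}$ by a fully discretised step function which is manifestly in $\mathcal{H}_1^d \otimes \mathcal{H}_1^d$, and then let $\pi$ refine to recover $g$ in the limit. The main obstacle will not be the analytic core (which is handled entirely by Proposition \ref{keyProp2}), but rather the \emph{double identification} of abstract Hilbert-space limits with the concrete functions $g^\pi$ and $g$ on $[0,T]^2$.

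For fixed $\pi=\{r_i\}$ and an auxiliary partition $\pi'=\{r'_j\}$, I would introduce
\begin{align*}
g^{\pi,\pi'}(s,t) := \sum_{i,j} g(r'_j, r_i) \, \mathds{1}_{[r'_j, r'_{j+1})}(s) \, \mathds{1}_{[r_i, r_{i+1})}(t).
\end{align*}
After decomposing each $g(r'_j, r_i) \in \mathbb{R}^d \otimes \mathbb{R}^d$ into basis components, this is a finite linear combination of simple tensors $\mathds{1}_{[r'_j, r'_{j+1})}^{(u)} \otimes \mathds{1}_{[r_i, r_{i+1})}^{(v)}$, and is therefore manifestly an element of $\mathcal{H}_1^d \otimes \mathcal{H}_1^d$. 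Expanding the squared norm using the defining inner product yields the Riemann sum
\begin{align*}
\left\| g^{\pi,\pi'} \right\|^2_{\mathcal{H}_1^d \otimes \mathcal{H}_1^d}
= \sum_{i,j,k,l} \left\langle g(r'_j, r_i), \, g(r'_l, r_k) \right\rangle_{\mathbb{R}^d \otimes \mathbb{R}^d}
R \begin{pmatrix} r'_j, r'_{j+1} \\ r'_l, r'_{l+1} \end{pmatrix}
R \begin{pmatrix} r_i, r_{i+1} \\ r_k, r_{k+1} \end{pmatrix},
\end{align*}
which is exactly a Riemann sum for the iterated 2D Young--Stieltjes integral on the right of \eqref{tensorNorm}.

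Next I would fix $\pi$ and let $\|\pi'\| \to 0$. Since $g^\pi$ retains the product form $\mathds{1}_{[0,t)}(s)\,\tilde g_1^\pi(t)\,\tilde g_2(s)$ with $\tilde g_1^\pi$ the piecewise-constant interpolant of $\tilde g_1$ (still an element of $\mathcal{C}_{pw}^{p-var}$), Proposition \ref{keyProp2} applies with $g^\pi$ in place of $g$, giving
\begin{align*}
\int_{[0,T]^2} \int_{[0,T]^2} \left\langle (g^\pi - g^{\pi,\pi'})(u,s), \, (g^\pi - g^{\pi,\pi'})(v,t) \right\rangle \mathrm{d}R(u,v) \, \mathrm{d}R(s,t) \xrightarrow{\|\pi'\| \to 0} 0.
\end{align*}
By the norm computation above applied to the difference, this quantity coincides with $\|g^\pi - g^{\pi,\pi'}\|_{\mathcal{H}_1^d \otimes \mathcal{H}_1^d}^2$, so $\{g^{\pi,\pi'}\}_{\pi'}$ is Cauchy. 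I would identify its abstract Hilbert-space limit with $g^\pi$ by testing against the dense family $\mathds{1}_{[0,a)}^{(u)} \otimes \mathds{1}_{[0,b)}^{(v)}$, on which the pairings are explicit 2D Young--Stieltjes integrals that converge by Theorem \ref{2Dintegral}. This places $g^\pi$ in $\mathcal{H}_1^d \otimes \mathcal{H}_1^d$, and passing to the limit in the Riemann sum yields the norm formula \eqref{tensorNorm} for $g^\pi$.

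Finally, letting $\|\pi\|\to 0$, Proposition \ref{keyProp2} gives \eqref{HSNorm} for $g^\pi - g$ directly. An elementary bilinear expansion combined with the norm formula just established for $g^\pi - g^{\pi''}$ shows that $\{g^\pi\}$ is Cauchy in $\mathcal{H}_1^d \otimes \mathcal{H}_1^d$. The same basis-testing argument identifies the limit with $g$, simultaneously proving $g \in \mathcal{H}_1^d \otimes \mathcal{H}_1^d$, the norm formula \eqref{tensorNorm}, and the convergence \eqref{tensorVanishing}.
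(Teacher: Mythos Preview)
Your overall architecture is sound and tracks the paper's proof closely, but there is a concrete slip in the middle step. You claim that $g^{\pi}$ ``retains the product form $\mathds{1}_{[0,t)}(s)\,\tilde g_1^{\pi}(t)\,\tilde g_2(s)$'' and then invoke Proposition \ref{keyProp2} for the $\pi'$-limit. This is false: for $t\in[r_i,r_{i+1})$ one has $g^{\pi}(s,t)=\mathds{1}_{[0,r_i)}(s)\,\tilde g_1(r_i)\,\tilde g_2(s)$, whereas your claimed form gives $\mathds{1}_{[0,t)}(s)\,\tilde g_1(r_i)\,\tilde g_2(s)$; these differ on $\{r_i\le s<t\}$. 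So $g^{\pi}$ is not of the structural type to which Proposition \ref{keyProp2} applies, and your appeal to it for the inner limit is unjustified as written.

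The fix is easy and in fact collapses your two-stage scheme into the paper's one-stage argument. For fixed $\pi$, $g^{\pi}$ is a \emph{finite} sum $\sum_i g(\cdot,r_i)\,\mathds{1}_{[r_i,r_{i+1})}(t)$ with each slice $s\mapsto g(s,r_i)$ lying in $\mathcal{C}^{p-var}_{pw}$. By Proposition \ref{W1eqH1} (equivalently Lemma \ref{keyProp1}) each slice is already an element of $\mathcal{H}_1^d$, so under the canonical identification \eqref{canonID} one has $g^{\pi}\in\mathcal{W}_\rho^d\otimes\mathcal{H}_1^d\subset\mathcal{H}_1^d\otimes\mathcal{H}_1^d$ directly, with no need for the auxiliary $\pi'$-discretisation. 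The paper proceeds exactly this way: the norm of $g^{\pi}$ is computed from the finite-sum representation, and then Proposition \ref{keyProp2} is used \emph{once}, for the $\pi$-limit, to show $\{g^{\pi}\}$ is Cauchy and identify $g$ with its limit. Your final paragraph is essentially correct and matches the paper; it is only the intermediate $\pi'$-stage that is both unnecessary and incorrectly justified.
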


\begin{proof}
Given a $d$-by-$d$ matrix function $A(s)$, let $a^{(i)}_j (s)$ denote the $(i, j)^{th}$ entry of $A(s)$. Using the canonical identification
\begin{align} \label{canonID}
A(s) \mathds{1}_{[a, b)} (t) \simeq \sum_{j=1}^d \sum_{k=1}^d a^{(k)}_j (s) e_k \otimes \mathds{1}^{(j)}_{[a, b)} (t), \quad a, b \in [0, T],
\end{align}
we see that $g^{\pi}$ is a member of $\mathcal{W}^d_{\rho} \otimes \mathcal{H}_1^d$, and thus lies in $\mathcal{H}_1^d \otimes \mathcal{H}_1^d$ by Proposition \ref{W1eqH1}. Furthermore, we can compute the square of its norm
\begin{align*}
\norm{g^{\pi}}^2_{\mathcal{H}_1^d \otimes \mathcal{H}_1^d}
&= \sum_{k, l} \int_{[0, T]^2} \sum_{j=1}^d \left\langle g_j (u, r_k), g_j (v, r_l) \right\rangle_{\mathbb{R}^d} \wrt{R(u, v)} R \begin{pmatrix}
r_k & r_{k+1} \\
r_l & r_{l+1}
\end{pmatrix} \\
&= \int_{[0, T]^2} \left( \int_{[0, T]^2} \left\langle g^{\pi} (u, s), g^{\pi} (v, t) \right\rangle_{\mathbb{R}^d \otimes \mathbb{R}^d} \wrt{R(u, v)} \right) \wrt{R(s, t)}.
\end{align*}
Taking any sequence of partitions $\pi(n)$ with vanishing mesh, we know that $g^{\pi(n)}$ is Cauchy as $n \rightarrow \infty$ by Proposition \ref{keyProp2}, and we identify $g$ with its limit in $\mathcal{H}_1^d \otimes \mathcal{H}_1^d$. Invoking Proposition \ref{keyProp2} again gives us \eqref{tensorNorm} and \eqref{tensorVanishing}.
\end{proof}

\subsection{The It\^{o}-Skorohod isometry revisited}

\label{itoSkorohodIsometry}

\begin{theorem}
\label{isometry1} Let $X$ be a continuous, centered Gaussian process in
$\mathbb{R}^{d}$ with i.i.d. components, and assume that its continuous
covariance function satisfies $\left\|  R\right\| _{\rho-var; [0, T]^{2}} <
\infty$ for some $\rho\in\left[ 1, \frac{3}{2} \right) $. Given $p$ satisfying
$\frac{1}{p} + \frac{1}{\rho} > 1$, let $Y$ be a random variable which
satisfies, almost surely,

\begin{enumerate}
[(i)]

\item $Y \in\mathcal{C}^{p-var}_{pw} \left( [0, T]; \mathbb{R}^{d} \right) $,

\item $\mathcal{D}Y: [0, T]^{2} \rightarrow\mathbb{R}^{d} \otimes
\mathbb{R}^{d}$ is of the form $\mathds{1}_{[0, t)}(s) \tilde{g}_{1} (t)
\tilde{g}_{2} (s)$, where $\tilde{g}_{1}, \tilde{g}_{2}$ are both in
$\mathcal{C}_{pw}^{p-var} \left(  [0, T];\mathbb{R}^{d} \otimes\mathbb{R}^{d}
\right) $.
\end{enumerate}

Then $\lim_{\left\|  \pi\right\|  \rightarrow0} Y^{\pi} = Y$ in $\mathbb{D}%
^{1, 2} (\mathcal{H}_{1}^{d})$ if and only if
\begin{align*}
\mathbb{E} \left[  \int_{[0, T]^{2}} \left\langle Y^{\pi}_{s} - Y_{s}, \,
Y^{\pi}_{t} - Y_{t} \right\rangle _{\mathbb{R}^{d}} \, \mathrm{d}
R(s,t)\right]  \rightarrow0
\end{align*}
and
\begin{align*}
\mathbb{E} \left[  \int_{[0, T]^{2}} \left(  \int_{[0, T]^{2}} \left\langle
\mathcal{D}_{r} \left(  Y^{\pi}_{t} - Y_{t} \right) , \, \mathcal{D}_{q}
\left(  Y^{\pi}_{s} - Y_{s} \right)  \right\rangle _{\mathbb{R}^{d}
\otimes\mathbb{R}^{d}} \, \mathrm{d} R(r, q) \right)  \, \mathrm{d} R(s,
t)\right]  \rightarrow0
\end{align*}
as $\left\|  \pi\right\|  \rightarrow0$, in which case $\lim_{\left\|
\pi\right\|  \rightarrow0} \mathbb{E} \left[ \delta^{X}\left( Y^{\pi} - Y
\right) ^{2}\right]  = 0$ and $\mathbb{E} \left[  \delta^{X}\left(  Y \right)
^{2}\right] $ is equal to
\begin{align*}
\mathbb{E} \left[ \int_{[0, T]^{2}} \left\langle Y_{s}, \, Y_{t} \right\rangle
_{\mathbb{R}^{d}} \, \mathrm{d} R(s,t) \right]  + \mathbb{E} \left[  \int_{[0,
T]^{4}} \mathrm{tr} \left(  \mathcal{D}_{r} Y_{t} \, \mathcal{D}_{q} Y_{s}
\right)  \, \mathrm{d} R(s, r) \, \mathrm{d} R(t, q) \right] .
\end{align*}

\end{theorem}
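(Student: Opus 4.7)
The plan is to show that the $\mathbb{D}^{1,2}(\mathcal{H}_1^d)$-norm of $Y^\pi - Y$ coincides, pathwise, with the two Young--Stieltjes expressions in the hypothesis, so that the ``iff'' reduces to this identification of norms; the $L^2$-convergence and variance formula then follow from the classical Skorohod isometry combined with the characterizations of Propositions \ref{W1eqH1} and \ref{H1tensorH1equiv1}.

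First I would check that $Y^\pi$ itself lies in $\mathbb{D}^{1,2}(\mathcal{H}_1^d)$. Since each $Y_{r_i}\in\mathbb{D}^{\infty}(\mathbb{R}^d)$ and
\begin{align*}
Y^\pi = \sum_i Y_{r_i} \otimes \mathds{1}_{[r_i,r_{i+1})}
\end{align*}
is a finite sum of smooth functionals tensored with deterministic Cameron--Martin elements, this is immediate; moreover the Malliavin derivative commutes with the sum, giving
\begin{align*}
\mathcal{D}_s Y^\pi_t = \sum_i \mathcal{D}_s Y_{r_i}\, \mathds{1}_{[r_i, r_{i+1})}(t) = (\mathcal{D}Y)^\pi(s,t),
\end{align*}
exactly the approximation considered in Proposition \ref{H1tensorH1equiv1}. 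The two regularity conditions on $Y$ then place $Y(\omega)-Y^\pi(\omega)$ in $\mathcal{W}_\rho^d$ and $\mathcal{D}Y(\omega)-\mathcal{D}Y^\pi(\omega)$ in the product class of Proposition \ref{H1tensorH1equiv1}, for $\mathbb{P}$-a.e.\ $\omega$. Applying Proposition \ref{W1eqH1} pathwise yields
\begin{align*}
\|Y^\pi - Y\|^2_{\mathcal{H}_1^d} = \int_{[0,T]^2} \langle Y^\pi_s - Y_s,\, Y^\pi_t - Y_t \rangle_{\mathbb{R}^d}\, \mathrm{d}R(s,t),
\end{align*}
and Proposition \ref{H1tensorH1equiv1} applied to $\mathcal{D}Y^\pi - \mathcal{D}Y$ produces the analogous 4D identity.

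Taking expectations, the sum of the two identities is $\|Y^\pi - Y\|_{\mathbb{D}^{1,2}(\mathcal{H}_1^d)}^2$, so $Y^\pi \to Y$ in $\mathbb{D}^{1,2}(\mathcal{H}_1^d)$ precisely when the two hypothesized limits hold. Since $\delta^X: \mathbb{D}^{1,2}(\mathcal{H}_1^d) \to L^2(\Omega)$ is bounded (an immediate consequence of the Skorohod isometry), $\delta^X(Y^\pi) \to \delta^X(Y)$ in $L^2(\Omega)$. For the variance identity, applying the classical It\^o--Skorohod isometry
\begin{align*}
\mathbb{E}[\delta^X(Y)^2] = \mathbb{E}\|Y\|^2_{\mathcal{H}_1^d} + \mathbb{E} \langle \mathcal{D}Y,\, \tau(\mathcal{D}Y) \rangle_{\mathcal{H}_1^d \otimes \mathcal{H}_1^d},
\end{align*}
where $\tau$ swaps the two tensor factors, and re-expressing each term via Propositions \ref{W1eqH1} and \ref{H1tensorH1equiv1}, one obtains the 2D and 4D Young--Stieltjes integrals with the stated trace integrand.

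The main obstacle is bookkeeping: verifying that the swap $\tau$ inside the tensor-norm identity of Proposition \ref{H1tensorH1equiv1} produces precisely $\mathrm{tr}(\mathcal{D}_r Y_t\, \mathcal{D}_q Y_s)$ under the measure $\mathrm{d}R(s,r)\mathrm{d}R(t,q)$ rather than some other contraction. I would handle this by expanding $\mathcal{D}Y$ in the basis $e_i \otimes e_j$, with $i$ indexing the $Y$-component and $j$ the Malliavin $X$-direction, and directly checking the matrix pairing using the product form in condition (ii). Once the indexing is consistent, the rest is a formal translation of the $\mathbb{D}^{1,2}$-structure into the dense-subspace identifications built in the preceding sections.
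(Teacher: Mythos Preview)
Your proposal is correct and follows essentially the same approach as the paper: both reduce the ``iff'' to the pathwise norm identifications from Propositions \ref{W1eqH1} and \ref{H1tensorH1equiv1}, then invoke the classical It\^o--Skorohod isometry for the $L^2$ convergence and variance formula. The only cosmetic difference is that the paper computes the trace term $\mathbb{E}[\mathrm{trace}(\mathcal{D}Y^\pi \circ \mathcal{D}Y^\pi)]$ by an explicit orthonormal-basis expansion and then passes to the limit, whereas you phrase it via the swap $\tau$ on $\mathcal{H}_1^d \otimes \mathcal{H}_1^d$ and verify the index pairing directly---these are equivalent bookkeeping exercises for the same object.
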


\begin{proof}
For a Malliavin-smooth real-valued random variable $F$, we will write
\begin{align*}
\mathcal{D} F = \left( \mathcal{D}^{(1)} F, \ldots, \mathcal{D}^{(d)} F \right) \in \mathcal{H}_1^d,
\end{align*}
which means that $\mathcal{D}^{(j)}_s Y^{(i)}_t$ will denote the $(i, j)^{th}$-entry of the matrix $\mathcal{D}_s Y_t$. \par
From Propositions \ref{W1eqH1} and \ref{H1tensorH1equiv1},
\begin{align*}
\exptn{\left\| Y^{\pi} - Y \right\|^2_{\mathcal{H}_1^d}}
&= \exptn{\int_{[0, T]^2} \left\langle Y^{\pi}_s - Y_s, \, Y^{\pi}_t - Y_t \right\rangle_{\mathbb{R}^d} \wrt{R(s, t)} },
\end{align*}
and $\exptn{\norm{\mathcal{D} Y^{\pi} - \mathcal{D} Y}^2_{\mathcal{H}_1^d \otimes \mathcal{H}_1^d}} $ is equal to
\begin{align*}
\exptn{\int_{[0, T]^2} \left( \int_{[0, T]^2} \left\langle \mathcal{D}_r \left( Y^{\pi}_t - Y_t \right), \, \mathcal{D}_q \left( Y^{\pi}_s - Y_s \right) \right\rangle_{\mathbb{R}^d \otimes \mathbb{R}^d} \wrt{R(r, q)} \right) \wrt{R(s, t)}}.
\end{align*}
Furthermore, It\^{o}-Skorohod isometry (see \cite{nualart2006}) gives us
\begin{align*}
\exptn{\delta^X\left( Y \right)^2}
&= \exptn{\norm{Y}^2_{\mathcal{H}_1^d}} + \exptn{\mathrm{trace} \left( \mathcal{D} Y \circ \mathcal{D} Y \right)} \\
&= \lim_{\norm{\pi} \rightarrow 0} \exptn{\norm{Y^{\pi}}^2_{\mathcal{H}_1^d}} + \lim_{\norm{\pi} \rightarrow 0} \exptn{\mathrm{trace} \left( \mathcal{D} Y^{\pi} \circ \mathcal{D} Y^{\pi} \right)},
\end{align*}
since
\begin{align*}
\exptn{\delta^X\left(Y^{\pi} - Y \right)^2}
\leq \exptn{\norm{ Y^{\pi} - Y }^2_{\mathcal{H}_1^d}} +
\exptn{\norm{ \mathcal{D} Y^{\pi} - \mathcal{D} Y }^2_{\mathcal{H}_1^d \otimes \mathcal{H}_1^d}}.
\end{align*}
Recall that the trace term is given by
\begin{align*}
\mathrm{trace} \left( \mathcal{D} Y \circ \mathcal{D} Y \right)
= \sum_{m=1}^{\infty} \left\langle \mathcal{D} Y (h_m), \, \left( \mathcal{D} Y \right)^* (h_m) \right\rangle_{\mathcal{H}^d_1},
\end{align*}
where $\{h_m\}$ denotes any orthonormal basis for $\mathcal{H}_1^d$ and
\begin{align*}
&\mathcal{D} Y (h_m) (r)
= \sum_{k=1}^d \left[ \left\langle \mathcal{D}_{\cdot} Y^{(k)}_r, h_m (\cdot) \right\rangle_{\mathcal{H}_1^d} \right] e_k, \\
&\left( \mathcal{D} Y \right)^* (h_m) (r)
= \sum_{k=1}^d \left[ \left\langle \mathcal{D}^{(k)}_r Y_{\cdot}, h_m (\cdot) \right\rangle_{\mathcal{H}_1^d} \right] e_k, \quad r \in [0, T], \, m = 1, \ldots.
\end{align*}
For the first term, we have
\begin{align*}
\lim_{\norm{\pi} \rightarrow 0} \exptn{\norm{Y^{\pi}}^2_{\mathcal{H}_1^d}}
&= \lim_{\norm{\pi} \rightarrow 0} \exptn{\sum_{j, k} \left\langle Y_{t_j}, Y_{t_k} \right\rangle_{\mathbb{R}^d} R \begin{pmatrix}
t_j & t_{j+1} \\
t_k & t_{k+1}
\end{pmatrix} } \\
&= \exptn{\int_{[0, T]^2} \left\langle Y_s, Y_t \right\rangle_{\mathbb{R}^d} \wrt{R(s,t)}},
\end{align*}
and for the second term, we need to compute
\begin{align*}
\exptn{\mathrm{trace} \left( \mathcal{D} Y^{\pi} \circ \mathcal{D} Y^{\pi} \right)}
= \exptn{\sum_{m=1}^{\infty} \left\langle \mathcal{D} Y^{\pi} (h_m), \, \left( \mathcal{D} Y^{\pi} \right)^* (h_m) \right\rangle_{\mathcal{H}^d_1}}.
\end{align*}
We have
\begin{align*}
&\mathcal{D} Y^{\pi} (h_m) (r)
= \sum_{k=1}^d \left[ \sum_i \left\langle \mathcal{D}_{\cdot} Y^{(k)}_{t_i}, h_m (\cdot) \right\rangle_{\mathcal{H}_1^d} \mathds{1}_{\Delta_i} (r) \right] e_k, \quad \mathrm{and} \\
&\left( \mathcal{D} Y^{\pi} \right)^* (h_m) (r)
= \sum_{k=1}^d \left[ \sum_j \left\langle \mathcal{D}^{(k)}_r Y_{t_j} \mathds{1}_{\Delta_j} (\cdot), h_m (\cdot) \right\rangle_{\mathcal{H}_1^d} \right] e_k, \\
&\qquad r \in [0, T], \, m = 1, \ldots,
\end{align*}
which yields
\begin{align*}
&\exptn{\sum_{m=1}^{\infty} \left\langle \mathcal{D} Y^{\pi} h_m, \, \left( \mathcal{D} Y^{\pi} \right)^* h_m \right\rangle_{\mathcal{H}^d_1}} \\
&\quad = \exptn{\sum_{i,j} \sum_{k=1}^d \sum_{m=1}^{\infty} \left\langle \left\langle \mathcal{D}^{(k)}_r Y_{t_j} \mathds{1}_{\Delta_j} (\cdot), h_m(\cdot) \right\rangle_{\mathcal{H}_1^d}, \left\langle \mathcal{D}_{\cdot} Y^{(k)}_{t_i}, h_m(\cdot) \right\rangle_{\mathcal{H}_1^d} \mathds{1}_{\Delta_i} (r) \right\rangle_{\mathcal{H}_1}} \\
&\quad = \exptn{\sum_{i,j} \sum_{k=1}^d \left\langle \sum_{m=1}^{\infty} \left\langle \mathcal{D}^{(k)}_r Y_{t_j} \mathds{1}_{\Delta_j} (\cdot), h_m(\cdot) \right\rangle_{\mathcal{H}_1^d} \left\langle \mathcal{D}_{\cdot} Y^{(k)}_{t_i}, h_m(\cdot) \right\rangle_{\mathcal{H}_1^d}, \mathds{1}_{\Delta_i} (r) \right\rangle_{\mathcal{H}_1}} \\
&\quad = \exptn{\sum_{i,j} \sum_{k=1}^d \left\langle \left\langle \mathcal{D}_r^{(k)} Y_{t_j} \mathds{1}_{\Delta_j} (\cdot), \mathcal{D}_{\cdot} Y^{(k)}_{t_i} \right\rangle_{\mathcal{H}_1^d}, \mathds{1}_{\Delta_i} (r) \right\rangle_{\mathcal{H}_1}},
\end{align*}
where $r$ is the variable for the outer $\mathcal{H}_1$-inner product. \par
Since
\begin{align*}
\left\langle \mathcal{D}_r^{(k)} Y_{t_j} \mathds{1}_{\Delta_j} (\cdot), \mathcal{D}_{\cdot} Y^{(k)}_{t_i} \right\rangle_{\mathcal{H}_1^d}
&= \sum_{l=1}^d \left\langle \mathcal{D}_r^{(k)} Y^{(l)}_{t_j} \mathds{1}_{\Delta_j} (\cdot), \mathcal{D}^{(l)}_{\cdot} Y^{(k)}_{t_i} \right\rangle_{\mathcal{H}_1} \\
&= \sum_{l=1}^d \mathcal{D}_r^{(k)} Y^{(l)}_{t_j} \left\langle \mathds{1}_{\Delta_j} (\cdot), \mathcal{D}^{(l)}_{\cdot} Y^{(k)}_{t_i} \right\rangle_{\mathcal{H}_1},
\end{align*}
with $R(\Delta_i, \mathrm{d}r)$ denoting $R(t_{i+1}, \mathrm{d}r) - R(t_i, \mathrm{d}r)$, cf. \eqref{2dInc}, we obtain
\begin{align} \label{traceComp}
\begin{split}
&\exptn{\mathrm{trace} \left( \mathcal{D} Y^{\pi} \circ \mathcal{D} Y^{\pi} \right)} \\
&\qquad= \exptn{\sum_{i, j} \sum_{k,l=1}^d \left\langle \mathcal{D}_{\cdot}^{(k)} Y^{(l)}_{t_j}, \mathds{1}_{\Delta_i} (\cdot) \right\rangle_{\mathcal{H}_1} \left\langle \mathcal{D}^{(l)}_{\cdot} Y^{(k)}_{t_i}, \mathds{1}_{\Delta_j} (\cdot) \right\rangle_{\mathcal{H}_1} } \\
&\qquad= \exptn{\sum_{i, j} \sum_{k,l=1}^d \int_0^T \mathcal{D}^{(k)}_r Y^{(l)}_{t_j} R(\Delta_i, \mathrm{d} r) \int_0^T \mathcal{D}_q^{(l)} Y^{(k)}_{t_i} R(\Delta_j, \mathrm{d}q) } \\
&\qquad\rightarrow \exptn{\int_{[0, T]^4} \mathrm{tr} \left( \mathcal{D}_r Y_t \, \mathcal{D}_q Y_s \right) \wrt{R(s, r)} \wrt{R(t, q)} } \quad \mathrm{as} \; \norm{\pi} \rightarrow 0.
\end{split}
\end{align}
\end{proof}

\begin{remark}
In the case of standard Brownian motion, if we use the fact that $\,
\mathrm{d} R(s, t) = \delta_{\{s=t\}} \, \mathrm{d} s \, \mathrm{d} t$ in
Theorem \ref{isometry1}, we recover the usual It\^{o}-Skorohod isometry
\begin{align*}
\mathbb{E} \left[  \delta^{X} (Y)^{2}\right]   & = \mathbb{E} \left[  \int%
_{0}^{T} \left|  Y_{t}\right| ^{2} \, \mathrm{d} t\right]  + \mathbb{E}
\left[  \int_{[0, T]^{2}} \mathrm{tr} \left(  \mathcal{D}_{t} Y_{s} \,
\mathcal{D}_{s} Y_{t} \right)  \, \mathrm{d} s \, \mathrm{d} t\right] .
\end{align*}

\end{remark}

\subsection{Riemann sum approximation to the Skorohod integral}

\begin{proposition}
\label{skorohodLimit2ndLevel} Let $X$ be a continuous, centered Gaussian
process in $\mathbb{R}^{d}$ with i.i.d. components, and assume that its
continuous covariance function satisfies $\left\|  R\right\| _{\rho-var; [0,
T]^{2}} < \infty$ for some $\rho\in\left[ 1, \frac{3}{2} \right) $. For $p
\in[1, 3)$, let $\mathbf{X} \in\mathcal{C}^{0, p-var} \left( [0, T];
G^{\lfloor p \rfloor} \left( \mathbb{R}^{d}\right) \right) $ denote the
geometric rough path constructed from the limit of the piecewise-linear
approximations of $X$.

Furthermore, let $Y \in\mathcal{C}^{p-var} \left(  [0, T]; \mathbb{R}^{d}
\right) $ denote the path-level solution to
\begin{align*}
\mathrm{d} Y_{t} = V(Y_{t}) \circ\mathrm{d} \mathbf{X}_{t}, \quad Y_{0} =
y_{0},
\end{align*}
where $V \in\mathcal{C}^{\lfloor p \rfloor+ 1}_{b} \left(  \mathbb{R}^{d};
\mathbb{R}^{d} \otimes\mathbb{R}^{d} \right) $. Then $Y \in\mathbb{D}^{1,2}
(\mathcal{H}_{1}^{d})$ and
\begin{align}
\label{skorohodLimit1}\int_{0}^{T} Y_{r} \, \mathrm{d} X_{r}  & =
\lim_{\left\|  \pi= \{ r_{i} \} \right\|  \rightarrow0} \sum_{i} \left[
Y_{r_{i}} \left(  X_{r_{i}, r_{i+1}} \right)  - \int_{0}^{r_{i}} \mathrm{tr}
\, \left[  J^{\mathbf{X}}_{r_{i} \leftarrow s} V(Y_{s}) \right]  \,
R(\Delta_{i}, \, \mathrm{d} s) \right] ,
\end{align}
where the limit is taken in $L^{2}\left( \Omega\right) $.
\end{proposition}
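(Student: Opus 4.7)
The plan is to verify the hypotheses of Theorem \ref{isometry1} for $Y$ and, separately, to identify the divergence of the Riemann approximants $Y^{\pi}$ explicitly with the right-hand side of \eqref{skorohodLimit1}. For any partition $\pi=\{r_i\}$, the approximant $Y^{\pi}=\sum_i Y_{r_i}\mathds{1}_{[r_i,r_{i+1})}$ is a finite $\mathcal{H}_1^d$-linear combination with Malliavin-smooth coefficients, hence automatically in $\mathbb{D}^{1,2}(\mathcal{H}_1^d)$. By the integration-by-parts formula for the divergence (Proposition 1.3.3 in \cite{nualart2006}) applied componentwise,
\begin{align*}
\delta^X\bigl(Y^{\pi}\bigr)=\sum_{i}\sum_{k=1}^{d}\Bigl[Y_{r_i}^{(k)}\,X_{r_i,r_{i+1}}^{(k)}-\bigl\langle\mathcal{D}Y_{r_i}^{(k)},\mathds{1}^{(k)}_{[r_i,r_{i+1})}\bigr\rangle_{\mathcal{H}_1^d}\Bigr].
\end{align*}
Substituting formula \eqref{dd} for $\mathcal{D}Y_{r_i}$ and using Proposition \ref{W1eqH1} to represent the $\mathcal{H}_1$ inner product as a Young--Stieltjes integral against $R$, each inner product becomes $\int_0^{r_i}[J^{\mathbf{X}}_{r_i\leftarrow s}V(Y_s)]^{(k,k)}\,R(\Delta_i,\mathrm{d}s)$, which upon summing over $k$ reassembles the Riemann sum on the right-hand side of \eqref{skorohodLimit1}.

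It then remains to prove that $Y^{\pi}\to Y$ in $\mathbb{D}^{1,2}(\mathcal{H}_1^d)$, for this automatically gives $Y\in\mathbb{D}^{1,2}(\mathcal{H}_1^d)$ and $\delta^X(Y^{\pi})\to\delta^X(Y)$ in $L^2(\Omega)$ by the It\^o--Skorohod isometry of Theorem \ref{isometry1}. To invoke that isometry I first verify that $Y$ satisfies its pathwise conditions (i) and (ii) almost surely: (i) is Theorem \ref{YBound}, while (ii) holds with $\tilde g_1(t):=J^{\mathbf{X}}_t$ and $\tilde g_2(s):=(J^{\mathbf{X}}_s)^{-1}V(Y_s)$, both of which are in $\mathcal{C}^{p-var}([0,T])$ almost surely by Theorem \ref{JBoundThm}, equation \eqref{jacobianInverseRDE}, and the smoothness of $V$.

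The two $L^2(\Omega)$ vanishings required by Theorem \ref{isometry1} would then be obtained by dominated convergence on top of the pathwise limits of Lemma \ref{keyProp1} (applied to $Y$) and Proposition \ref{keyProp2} (applied to $\mathcal{D}Y$). The crucial observation is that the proofs of those two results already furnish dominating bounds: via 2D Young's inequality \eqref{2DYoungIneq}, the integrands in \eqref{newlim} and \eqref{HSNorm} are controlled, uniformly in $\pi$, by polynomial expressions in $\|Y\|_{p-var;[0,T]}$, $\|J^{\mathbf{X}}\|_{p-var;[0,T]}$, $\|(J^{\mathbf{X}})^{-1}\|_{p-var;[0,T]}$ and the deterministic quantity $\|R\|_{\rho-var;[0,T]^2}$. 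Each random variation norm belongs to $L^q(\Omega)$ for every $q\ge1$ by Theorem \ref{YBound} and Theorem \ref{JBoundThm} (with the inverse Jacobian handled identically via its linear RDE \eqref{jacobianInverseRDE}), so dominated convergence applies.

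The main obstacle I anticipate is the tensor-norm convergence, because $\mathcal{D}Y$ has the nontrivial product-of-jumps structure $\mathds{1}_{[0,t)}(s)\,\tilde g_1(t)\tilde g_2(s)$, and the pathwise vanishing in Proposition \ref{keyProp2} only emerges after decomposing the four-dimensional integrand into the four pieces $I_1^{i,j},\ldots,I_4^{i,j}$ and bounding each via a different combination of 2D $p$-variation and the control $\omega_{R,\rho+\varepsilon}$. Checking that the pathwise dominants produced there remain integrable in $\omega$ uniformly in $\pi$, which is precisely what the Gaussian tail of $N^{\mathbf{X}}_{1;[0,T]}$ and the bound \eqref{JBound} on $\|J^{\mathbf{X}}\|_{p-var}$ are designed to deliver, is the most delicate part of the argument.
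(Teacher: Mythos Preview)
Your proposal is correct and follows essentially the same route as the paper's proof: compute $\delta^X(Y^\pi)$ via integration by parts to obtain the Riemann-sum expression, then verify the two $L^2(\Omega)$ vanishings required by Theorem~\ref{isometry1} using the pathwise limits from Lemma~\ref{keyProp1} and Proposition~\ref{keyProp2} together with dominated convergence, the dominants being the polynomials in $\|Y\|_{p\text{-var}}$, $\|J^{\mathbf{X}}\|_{p\text{-var}}$, $\|(J^{\mathbf{X}})^{-1}\|_{p\text{-var}}$ that those results furnish. The paper records the uniform dominants slightly more explicitly (e.g.\ $\|\tilde g_1\|_{\mathcal{V}^p}^2\|\tilde g_2\|_{\mathcal{V}^p}^2\|R\|_{\rho\text{-var}}^2$ for the tensor piece), but the substance is identical.
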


\begin{proof}
Using integration-by-parts, we have
\begin{align*}
\delta^X(Y^{\pi})
&= \sum_i \left[ \left\langle Y_{r_i}, \, X_{r_i, r_{i+1}} \right\rangle_{\mathbb{R}^d} - \sum_{k=1}^d \left\langle \mathcal{D}_{\cdot} Y_{r_i}^{(k)}, \mathds{1}^{(k)}_{[r_i, r_{i+1})} (\cdot) \right\rangle_{\mathcal{H}_1^d} \right] \\
&= \sum_i \left[ \left\langle Y_{r_i}, \, X_{r_i, r_{i+1}} \right\rangle_{\mathbb{R}^d} - \int_0^{r_i} \mathrm{tr} \, \left[ J^{\mathbf{X}}_{r_i \leftarrow s} V(Y_s) \right] \, R(\Delta_i, \wrt{s}) \right].
\end{align*}
From Propositions \ref{W1eqH1} and \ref{H1tensorH1equiv1}, we have $Y \in \mathcal{H}^d_1$ and \\
$\mathcal{D} Y = \mathds{1}_{[0, t)} (s) J^{\mathbf{X}}_{t \leftarrow s} V(Y_s) \in \mathcal{H}_1^d \otimes \mathcal{H}_1^d$ almost surely. So in light of Theorem \ref{isometry1}, we need to show that
\begin{align} \label{estimate1}
\exptn{\int_{[0, T]^2} \left\langle Y^{\pi}_s - Y_s, \, Y^{\pi}_t - Y_t \right\rangle_{\mathbb{R}^{d}} \wrt{R(s,t)}} \rightarrow 0,
\end{align}
and
\begin{align} \label{estimate2}
\exptn{\int_{[0, T]^2} \left( \int_{[0, T]^2} \left\langle \mathcal{D}_u \left( Y^{\pi}_s - Y_s \right), \, \mathcal{D}_v \left( Y^{\pi}_t - Y_t \right) \right\rangle_{\mathbb{R}^{d} \otimes \mathbb{R}^d} \wrt{R(u, v)} \right) \wrt{R(s, t)}} \rightarrow 0.
\end{align}
as $\norm{\pi} \rightarrow 0$. \par
For \eqref{estimate1}, we have
\begin{align} \label{1stTerm}
\begin{split}
&\exptn{\int_{[0, T]^2} \left\langle Y^{\pi}_s - Y_s, \, Y^{\pi}_t - Y_t \right\rangle_{\mathbb{R}^{d}} \wrt{R(s,t)}} \\
&\fqquad= \exptn{ \sum_{i, j} \int_{r_i}^{r_{i+1}} \int_{r_j}^{r_{j+1}} \left\langle Y_s - Y_{r_i}, \, Y_t - Y_{r_j} \right\rangle_{\mathbb{R}^{d}} \wrt{R(s, t)}},
\end{split}
\end{align}
and from Lemma \ref{keyProp1} with $\omega$ and $\theta$ defined as in \eqref{omegaDefn},
\begin{align*}
&\sum_{i, j} \left| \int_{r_i}^{r_{i+1}} \int_{r_j}^{r_{j+1}} \left\langle Y_s - Y_{r_i}, \, Y_t - Y_{r_j} \right\rangle_{\mathbb{R}^{d}} \wrt{R(s, t)} \right| \\
&\fqquad\leq C_{p, \rho} \sum_{i, j} \omega^{\theta} ([r_i, r_{i+1}] \times [r_j, r_{j+1}]),
\end{align*}
which tends to zero almost surely as the mesh of the partition goes to zero and is also bounded above uniformly for all partitions by the random variable (up to multiplication by a non-random constant)
\begin{align*}
\norm{Y}_{p-var; [0, T]}^2 \norm{R}_{\rho-var; [0, T]^2}.
\end{align*}
This is in $L^1(\Omega)$ by Theorem \ref{YBound}, and thus the limit of \eqref{1stTerm} vanishes by dominated convergence theorem. \par
We will use Proposition \ref{keyProp2} to show \eqref{estimate2}. We have
\begin{align}
\label{estimate2a}
\begin{split}
&\int_{[0, T]^2} \left( \int_{[0, T]^2} \left\langle \mathcal{D}_u \left( Y^{\pi}_s - Y_s \right), \, \mathcal{D}_v \left( Y^{\pi}_t - Y_t \right) \right\rangle_{\mathbb{R}^{d} \otimes \mathbb{R}^d} \wrt{R(u, v)} \right) \wrt{R(s, t)} \\
&= \sum_{i,j} \int_{U_{i,j}} \int_{[0, T]^2} \left\langle \mathcal{D}_u Y_s - \mathcal{D}_u Y_{r_i}, \mathcal{D}_v Y_t - \mathcal{D}_v Y_{r_j} \right\rangle_{\mathbb{R}^{d} \otimes \mathbb{R}^d} \wrt{R\left( u, v \right)}  \wrt{R\left( s, t\right)}.
\end{split}
\end{align}
where $U_{i,j} := [r_i, r_{i+1}] \times [r_j, r_{j + 1}]$.
With $g(s, t) := \mathcal{D}_s Y_t = \mathds{1}_{[0, t)} (s) \tilde{g}_1(t) \tilde{g}_2(s)$, where
\begin{align*}
\tilde{g}_1(t) := J_t^{\mathbf{X}}, \quad \tilde{g}_2 (s) := \left( J_s^{\mathbf{X}} \right)^{-1} V\left( Y_s \right),
\end{align*}
we see that $g$ satisfies the conditions of Proposition \ref{keyProp2} almost surely.
Hence, from \eqref{4Dvanish}, the expression in \eqref{estimate2a} vanishes almost surely as the mesh of the partition goes to zero. \par
Furthermore, it is bounded above uniformly for all partitions by the random variable (up to multiplication by a non-random constant)
\begin{align} \label{integrableRV}
\norm{\tilde{g}_1}_{\mathcal{V}^p; [0, T]}^2 \norm{\tilde{g}_2}_{\mathcal{V}^p; [0, T]}^2 \norm{R}^2_{\rho-var; [0, T]^2}.
\end{align}
As with the case of $\norm{Y}_{p-var}$, $\norm{J^{\mathbf{X}}}_{p-var}$ and $\norm{\left( J^{\mathbf{X}} \right)^{-1}}_{p-var}$ have finite moments of all orders by Theorem \ref{JBoundThm}. This in conjunction with the fact that $V(Y)$ is bounded almost surely ensures that \eqref{integrableRV} is integrable, and thus we can apply dominated convergence theorem again to complete the proof.
\end{proof}

\section{Appending the Riemann sum approximation to the Skorohod Integral}

The main purpose of this section is to show that the usual Riemann-sum
approximation to the Skorohod integral can be augmented with suitably
corrected second-level rough path terms which vanish in $L^{2}(\Omega)$ as the
mesh of the partition goes to zero.

We will use $\pi(n) := \left\{  t^{n}_{i} \right\} $ to denote the $n^{th}$
dyadic partition of $[0, T]$, i.e. $t^{n}_{i} = \frac{iT}{2^{n}}$ for $i = 0,
\ldots, 2^{n}$, and $\Delta^{n}_{i}$ to denote the interval $\left[ t^{n}_{i},
t^{n}_{i+1}\right] $.

In addition, $\rho^{\prime}$ will denote the H\"{o}lder conjugate of $\rho$,
i.e. $\frac{1}{\rho} + \frac{1}{\rho^{\prime}} = 1$.

\begin{proposition}
\label{2ndlevel} Let $X$ be a continuous, centered Gaussian process in
$\mathbb{R}^{d}$ with i.i.d. components, and for $p \in[2, 4)$, let
$\mathbf{X} \in\mathcal{C}^{0, p-var} \left( [0, T]; G^{\lfloor p \rfloor}
\left( \mathbb{R}^{d}\right) \right) $ denote the geometric rough path
constructed from the limit of the piecewise-linear approximations of $X$.

Let $\rho$ and $q$ be such that $\rho\in\left[ 1, 2 \right) $ and $\frac{1}{p}
+ \frac{1}{q} > 1$. We assume that the covariance function of $X$ satisfies

\begin{enumerate}
[(a)]

\item $\left\|  R\right\| _{\rho-var; [0, T]^{2}} < \infty$,

\item $\left\|  R(t, \cdot) - R (s, \cdot)\right\| _{q-var; [0, T]} \leq C
\left|  t - s\right| ^{\frac{1}{\rho}}$, for all $s, t \in[0, T]$.
\end{enumerate}

Now let $\psi: \Omega\times[0, T] \rightarrow\mathbb{R}^{d} \otimes
\mathbb{R}^{d}$ be a stochastic process satisfying $\displaystyle \psi_{t} =
\sum_{a, b = 1}^{d} \psi_{t}^{(a, b)} \mathrm{d} e_{a} \otimes\mathrm{d} e_{b}
\in\mathbb{D}^{4,2} (\mathbb{R}^{d} \otimes\mathbb{R}^{d})$ for all $t \in[0,
T]$. Furthermore, assume there exists $C < \infty$ such that for all $s, t
\in[0, T]$ and $a, b = 1, \ldots, d$, we have
\begin{align}
\left|  \mathbb{E} \left[  \psi^{(a, b)}_{s} \psi^{(a, b)}_{t}\right] \right|
\leq C,
\end{align}
and for $k = 2, 4$, we have
\begin{align}
\label{propCond}\left|  \mathbb{E} \left[  \mathcal{D}^{k}_{h_{1}, \ldots,
h_{k}} \left(  \psi^{(a, b)}_{s} \psi^{(a, b)}_{t} \right)  \right] \right|
\leq C \prod_{i=1}^{k} \left\|  \Phi(h_{i})\right\| _{q-var; [0, T]},
\end{align}
for all $h_{1}, \ldots, h_{k} \in\mathcal{H}_{1}^{d}$.

Then
\begin{align}
\lim_{n \rightarrow\infty} \left\|  \sum_{i=0}^{2^{n} - 1} \psi_{t^{n}_{i}}
\left(  \mathbf{X}^{2}_{t^{n}_{i}, t^{n}_{i+1}} - \frac{1}{2} \sigma^{2}
\left(  t^{n}_{i}, t^{n}_{i+1} \right)  \mathcal{I}_{d} \right)  \right\|
_{L^{2}(\Omega)} = 0.
\end{align}

\end{proposition}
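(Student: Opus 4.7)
The plan is to identify each
$\xi^{(a,b)}_i := \mathbf{X}^{2,(a,b)}_{t^n_i,t^n_{i+1}} - \tfrac{1}{2}\sigma^2(t^n_i,t^n_{i+1})\delta_{ab}$
as an element of the second Wiener chaos and then to reduce $\|S_n\|_{L^2(\Omega)}^2$ to a combination of multi-dimensional Young-Stieltjes integrals against $R$ using the product formula and Malliavin duality. The geometric identity $\mathbf{X}^{2,(a,b)} + \mathbf{X}^{2,(b,a)} = X^{(a)}X^{(b)}$, which is built into the Stratonovich lift, shows that the symmetric part of $\xi^{(a,b)}_i$ is a Wick square of an indicator-type increment and its antisymmetric part is a L\'evy-area increment; both lie in the second chaos. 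Hence $\xi^{(a,b)}_i = I_2(K^{(a,b)}_i)$ for a symmetric kernel $K^{(a,b)}_i \in (\mathcal{H}_1^d)^{\otimes 2}$ involving only the components $a,b$ and time-supported on $\Delta^n_i \times \Delta^n_i$.

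Applying the product formula \eqref{productFormula} to each $I_2(K^{(a,b)}_i)\,I_2(K^{(c,d)}_j)$ and the duality identity \eqref{dualityFormula} to absorb the $\psi\psi$ prefactor into Malliavin derivatives (legitimate since $\psi \in \mathbb{D}^{4,2}$), one obtains, with $F^{abcd}_{ij} := \psi^{(a,b)}_{t^n_i}\psi^{(c,d)}_{t^n_j}$,
\begin{align*}
\|S_n\|_{L^2(\Omega)}^2 &= \sum_{i,j,a,b,c,d} \Bigl\{\, \mathbb{E}\bigl[\langle \mathcal{D}^4 F^{abcd}_{ij},\; K^{(a,b)}_i \tilde{\otimes} K^{(c,d)}_j\rangle\bigr] \\
&\qquad\;+ 4\,\mathbb{E}\bigl[\langle \mathcal{D}^2 F^{abcd}_{ij},\; (K^{(a,b)}_i \otimes_1 K^{(c,d)}_j)_{\mathrm{sym}}\rangle\bigr] \\
&\qquad\;+ 2\,\langle K^{(a,b)}_i, K^{(c,d)}_j\rangle\,\mathbb{E}[F^{abcd}_{ij}] \,\Bigr\}.
\end{align*}
I would then convert each of the $(\mathcal{H}_1^d)^{\otimes k}$-pairings into rectangular-increment sums of $R$ over $\Delta^n_i \times \Delta^n_j$ via the isometries in Propositions \ref{W1eqH1} and \ref{H1tensorH1equiv1}, so that all contractions are controlled by appropriate powers of $\|R\|_{\rho-var;\Delta^n_i \times \Delta^n_j}$. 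The Malliavin-moment assumption \eqref{propCond}, extended from the diagonal $(a,b)=(c,d)$ to mixed indices via Cauchy-Schwarz, bounds the expectation side in a $q$-variation norm compatible with \eqref{CMembedding}.

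The key quantitative estimate is then $\|R\|_{\rho-var;[s,t]^2} \leq C|t-s|^{1/\rho}$, which follows from condition \eqref{R1DBound} by the standard slice-wise argument. The principal obstacle is showing all three families above vanish as $n \to \infty$. Diagonal contributions ($i=j$) reduce to $\sum_i \|R\|^{2}_{\rho-var;\Delta^n_i \times \Delta^n_i} \lesssim 2^n \cdot (2^{-n})^{2/\rho} = 2^{n(1-2/\rho)} \to 0$ since $\rho < 2$. Off-diagonal contributions are handled via super-additivity of the control $\omega_{R,\rho+\varepsilon}$ from \eqref{controlledVar}: for any $\alpha > \rho$,
$$\sum_{i,j}\omega_{R,\rho+\varepsilon}(\Delta^n_i \times \Delta^n_j)^{\alpha/(\rho+\varepsilon)} \leq \bigl(\max_{i,j}\omega_{R,\rho+\varepsilon}(\Delta^n_i \times \Delta^n_j)\bigr)^{\alpha/(\rho+\varepsilon)-1} \cdot \omega_{R,\rho+\varepsilon}([0,T]^2),$$
which vanishes by uniform continuity of $R$ provided $\alpha > \rho + \varepsilon$. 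The most delicate piece is the scalar family (the third term): it has no Malliavin-derivative cushion and must be controlled using only the boundedness of $\mathbb{E}[\psi\psi]$ coming from \eqref{propCond} with $k=0$, combined with the iid-component structure (which kills $\langle K^{(a,b)}_i, K^{(c,d)}_j\rangle$ when $\{a,b\}\cap\{c,d\}=\emptyset$) and the super-additivity argument above. Here the role of \eqref{R1DBound} in forcing the summable mesh decay is essential.
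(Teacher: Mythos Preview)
Your overall architecture---split into symmetric and antisymmetric parts, recognise each as second-chaos, apply the product formula and duality, then estimate the three resulting families---is exactly the paper's, and your treatment of the symmetric part would go through essentially as written (it matches the paper's estimates \eqref{sym1}--\eqref{sym3}).

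The genuine gap is in the antisymmetric part. You write $\xi^{(a,b)}_i = I_2(K^{(a,b)}_i)$ for an abstract kernel $K^{(a,b)}_i \in (\mathcal{H}_1^d)^{\otimes 2}$ and then assert that the pairings $\langle \mathcal{D}^4 F, K_i \tilde{\otimes} K_j\rangle$, $\langle \mathcal{D}^2 F, K_i \otimes_1 K_j\rangle$ can be ``converted into rectangular-increment sums of $R$'' and bounded by powers of $\|R\|_{\rho\text{-var};\Delta^n_i\times\Delta^n_j}$. But condition \eqref{propCond} only controls $\mathbb{E}[\mathcal{D}^k_{h_1,\ldots,h_k} F]$ for \emph{specific} directions $h_1,\ldots,h_k$, via $\prod\|\Phi(h_l)\|_{q\text{-var}}$. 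To exploit it you must realise $K_i \tilde{\otimes} K_j$ as (a sum of) simple tensors $h_1\otimes\cdots\otimes h_4$ with controlled $\|\Phi(h_l)\|_{q\text{-var}}$. The L\'evy-area kernel is not a single indicator tensor, and Propositions \ref{W1eqH1}--\ref{H1tensorH1equiv1} do not furnish such a decomposition; they compute $\mathcal{H}_1$-norms of finite-$p$-variation functions, not of abstract second-chaos kernels.

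The paper closes this gap by a dyadic telescoping: since $\big(\mathbf{X}^2_{t^n_i,t^n_{i+1}}\big)^A(\pi(n))=0$, one writes
\[
\big(\mathbf{X}^2_{t^n_i,t^n_{i+1}}\big)^A = \sum_{k\geq 1}\big(\mathbf{X}^2_{t^n_i,t^n_{i+1}}\big)^A(\Delta_{n+k}),
\qquad
\big(\mathbf{X}^2\big)^A(\Delta_{n+k}) = \tfrac{1}{2}\sum_{u}\big[X_{\Delta^i_{u^L}},X_{\Delta^i_{u^R}}\big],
\]
so that each increment is an explicit finite sum of products of first-level increments with indicator kernels $\mathds{1}^{(a)}_{\Delta^i_{u^L}}$, $\mathds{1}^{(b)}_{\Delta^i_{u^R}}$. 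Now \eqref{propCond} applies directly, and the scale-$k$ contribution picks up a factor $2^{-k(2/\rho-1)}$ (from $\|\Phi(\mathds{1}_{\Delta})\|_{q\text{-var}}\leq C\,2^{-(n+k)/\rho}$ and the number of terms), which is summable in $k$ because $\rho<2$. This dyadic step is the missing idea in your proposal; without it, the $\mathcal{D}^4$ and $\mathcal{D}^2$ pairings against the L\'evy-area kernel are not reduced to anything the hypotheses control.
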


\begin{proof}
First note that
\begin{align} \label{mainQuant1}
\begin{split}
&\left\| \sum_{i=0}^{2^n - 1} \psi_{t^n_i} \left( \mathbf{X}^2_{t^n_i, t^n_{i+1}} -  \frac{1}{2} \sigma^2 \left( t^n_i, t^n_{i+1} \right) \mathcal{I}_d \right) \right\|_{L^2(\Omega)} \\
&\leq \left\| \sum_{i=0}^{2^n - 1} \psi_{t^n_i} \left( \left(\mathbf{X}^2_{t^n_i, t^n_{i+1}} \right)^S -  \frac{1}{2} \sigma^2 \left( t^n_i, t^n_{i+1} \right) \mathcal{I}_d \right) \right\|_{L^2(\Omega)}
+ \left\| \sum_{i=0}^{2^n - 1} \psi_{t^n_i} \left( \left( \mathbf{X}^2_{t^n_i, t^n_{i+1}} \right)^A \right) \right\|_{L^2(\Omega)},
\end{split}
\end{align}
where $\left( \mathbf{X}^2 \right)^S$ denotes the symmetric part of $\mathbf{X}^2$ and $\left( \mathbf{X}^2 \right)^A$ denotes the anti-symmetric part. The two parts will be tackled separately, and since
\begin{align*}
\norm{\sum_{i=0}^{2^n - 1} \psi_{t^n_i} \left( \left( \mathbf{X}^2_{t^n_i, t^n_{i+1}} \right)^A \right)}_{L^2(\Omega)}
\leq \sum_{a, b = 1}^d \norm{ \sum_{i=0}^{2^n - 1} \psi_{t^n_i}^{(a, b)} \left( \left( \mathbf{X}^2_{t^n_i, t^n_{i+1}} \right)^A \right)^{(a, b)} }_{L^2(\Omega)},
\end{align*}
and similarly for the symmetric part, we can study the convergence of each fixed $(a, b)^{th}$ tensor component individually. For simplicity, we will henceforth suppress the notation for the component in the superscript of $\psi$. \par
Let $h_1, h_2, g_1, g_2 \in \mathcal{H}_1^d$ be such that $\left\langle h_i, g_j \right\rangle_{\mathcal{H}_1^d} = 0$ for all $i, j = 1, 2$. Then from the product formula \eqref{productFormula}, we have the following identities
\begin{subequations} \label{hermiteIdentities}
\begin{align}
\label{identity1}
&I_1 (h_1) I_1 (h_2) = I_2 (h_1 \tilde{\otimes} h_2) + \left\langle h_1, h_2 \right\rangle_{\mathcal{H}_1^d}, \\
\label{identity2}
&I_2(h_1 \tilde{\otimes} h_2) I_2 (g_1 \tilde{\otimes} g_2)
= I_4 (h_1 \tilde{\otimes} h_2 \tilde{\otimes} g_1 \tilde{\otimes} g_2), \\
\mathrm{and} \nonumber \\
\label{identity3}
&I_2(h_1 \otimes h_1) I_2 (h_2 \otimes h_2) \\
&\qquad= I_4 (h_1 \otimes h_1 \tilde{\otimes} h_2 \otimes h_2) + 4 I_2(h_1 \tilde{\otimes} h_2) \left\langle h_1, h_2 \right\rangle_{\mathcal{H}_1^d} + 2 \left\langle h_1, h_2 \right\rangle_{\mathcal{H}_1^d}^2.
\end{align}
\end{subequations}
Following \cite{nnt2010}, the idea of the proof is to rewrite \eqref{mainQuant1} in such a way that the summands take the form
\begin{align*}
\exptn{\psi_{t^n_i} \psi_{t^n_j} X^{(a)}_{u_1, u_2} X^{(b)}_{u_3, u_4} X^{(a)}_{v_1, v_2} X^{(b)}_{v_3, v_4}},
\end{align*}
where $[u_1, u_2], [u_3, u_4] \subset \left[ t^n_i, t^n_{i+1} \right]$ and $[v_1, v_2], [v_3, v_4] \subset \left[t^n_j, t^n_{j+1}\right]$, or
\begin{align*}
\exptn{\psi_{t^n_i} \psi_{t^n_j} \left( \left( X^{(a)}_{t^n_i, t^n_{i+1}} \right)^2 - \sigma^2 \left( t^n_i, t^n_{i+1} \right) \right) \left( \left( X^{(a)}_{t^n_j, t^n_{j+1}} \right)^2 - \sigma^2 \left( t^n_j, t^n_{j+1} \right) \right) }
\end{align*}
as appearing in the symmetric part.
After applying the identities in \eqref{hermiteIdentities} and using the duality formula \eqref{dualityFormula}, \eqref{propCond} will be used to bound the summands. \par
(a) For the symmetric part of the second level rough path, we have
\begin{align*}
\left( \left( \mathbf{X}^2_{t^n_i, t^n_{i+1}} \right)^S - \frac{1}{2} \sigma^2 \left( t^n_i, t^n_{i+1} \right) \mathcal{I}_d \right)^{(a, b)} = \frac{1}{2} \left( X^{(a)}_{t^n_i, t^n_{i+1}} X^{(b)}_{t^n_i, t^n_{i+1}} - \delta_{ab} \, \sigma^2 \left( t^n_i, t^n_{i+1} \right) \right).
\end{align*}
In the case where $a = b$, we need to estimate
\begin{align} \label{threeTermEstimate}
\begin{split}
&\exptn{\left( \sum_{i=0}^{2^n-1} \psi_{t^n_i} \left( \left( X^{(a)}_{t_i, t_{i+1}} \right)^2 - \sigma^2 \left( t^n_i, t^n_{i+1} \right) \right) \right)^2} \\
&\qquad = \sum_{i, j = 0}^{2^n-1} \exptn{ \psi_{t^n_i} \psi_{t^n_j} \left( \left( X^{(a)}_{t^n_i, t^n_{i+1}} \right)^2 - \sigma^2 \left( t^n_i, t^n_{i+1} \right) \right) \left( \left( X^{(a)}_{t^n_j, t^n_{j+1}} \right)^2 - \sigma^2 \left( t^n_j, t^n_{j+1} \right) \right) } \\
&\qquad = \sum_{i, j = 0}^{2^n-1} \exptn{\psi_{t_i^n} \psi_{t_j^n} I_2 \left( \mathds{1}^{(a)}_{\Delta^n_i} \otimes \mathds{1}^{(a)}_{\Delta^n_i} \right) I_2 \left( \mathds{1}^{(a)}_{\Delta^n_j} \otimes \mathds{1}^{(a)}_{\Delta^n_j} \right) },
\end{split}
\end{align}
where the last line follows from \eqref{identity1}.
Using \eqref{identity3} with $h_1 = \mathds{1}^{(a)}_{\Delta^n_i}(\cdot)$ and $h_2 = \mathds{1}^{(a)}_{\Delta^n_j}(\cdot)$ and applying the duality formula \eqref{dualityFormula}, the expression above is equal to
\begin{align*}
\sum_{i, j = 0}^{2^n-1} \exptn{\mathcal{D}^4_{ h_1, h_1, h_2, h_2} \psi_{t^n_i} \psi_{t^n_j}}
+ 4 \exptn{\mathcal{D}^2_{h_1, h_2} \psi_{t^n_i} \psi_{t^n_j} } &R\begin{pmatrix}
t^n_i & t^n_{i+1} \\
t^n_j & t^n_{j+1}
\end{pmatrix} \\
&+ 2\exptn{\psi_{t^n_i} \psi_{t^n_j}} R \begin{pmatrix}
t^n_i & t^n_{i+1} \\
t^n_j & t^n_{j+1}
\end{pmatrix}^2.
\end{align*}
For the first term, we have
\begin{align} \label{sym1}
\begin{split}
\sum_{i, j = 0}^{2^n-1} \exptn{\mathcal{D}^4_{h_1, h_1, h_2, h_2} \psi_{t^n_i} \psi_{t^n_j} }
&\leq C \sum_{i, j = 0}^{2^n-1} \norm{R \left( \Delta^n_i, \cdot \right)}_{q-var; [0, T]}^2 \norm{R \left( \Delta^n_j, \cdot \right) }_{q-var; [0, T]}^2 \\
&\leq \frac{C}{2^{2n \left( \frac{2}{\rho} - 1 \right)}} \rightarrow 0
\end{split}
\end{align}
since $\rho < 2$. \par
For the second term, we have
\begin{align} \label{sym2}
\begin{split}
&\sum_{i, j = 0}^{2^n-1} \exptn{\mathcal{D}^2_{h_1, h_2} \psi_{t^n_i} \psi_{t^n_j} } R \begin{pmatrix}
t^n_i & t^n_{i+1} \\
t^n_j & t^n_{j+1}
\end{pmatrix} \\
&\qquad\qquad\leq \left( \sum_{i, j = 0}^{2^n-1} \exptn{\mathcal{D}^2_{h_1, h_2} \psi_{t^n_i} \psi_{t^n_j} }^{\rho'} \right)^{\frac{1}{\rho'}} \left( \sum_{i,j=0}^{2^n-1} \left| R \begin{pmatrix}
t^n_i & t^n_{i+1} \\
t^n_j & t^n_{j+1}
\end{pmatrix} \right|^{\rho} \right)^{\frac{1}{\rho}} \\
&\qquad\qquad\leq C \, 2^{-2n \left( \frac{1}{\rho} - \frac{1}{\rho'} \right)} \norm{R}_{\rho-var; [0, T]^2}
= C \, 2^{-2n \left( \frac{2}{\rho} - 1 \right)} \norm{R}_{\rho-var; [0, T]^2},
\end{split}
\end{align}
which also vanishes as $n$ tends to infinity. \par
For the third term, we have
\begin{align} \label{sym3}
\begin{split}
\sum_{i, j=0}^{2^n-1} \exptn{\psi_{t^n_i} \psi_{t^n_j}} R \begin{pmatrix}
t^n_i & t^n_{i+1} \\
t^n_j & t^n_{j+1}
\end{pmatrix}^2
&\leq C \sum_{i,j=0}^{2^n-1} \abs{R \begin{pmatrix}
t_i^n & t^n_{i+1} \\
t_j^n & t^n_{j+1}
\end{pmatrix}}^{2 - \rho} \abs{R \begin{pmatrix}
t_i^n & t^n_{i+1} \\
t_j^n & t^n_{j+1}
\end{pmatrix}}^{\rho} \\
&\leq \frac{C}{2^{\frac{n (2 - \rho)}{\rho}}} \norm{R}^{\rho}_{\rho-var; [0, T]^2},
\end{split}
\end{align}
which vanishes as $n \rightarrow \infty$ since $\rho < 2$. \par
In the case where $a \neq b$, we let
\begin{align*}
h_1 := \mathds{1}^{(a)}_{\Delta^n_i} (\cdot), \quad h_2 := \mathds{1}^{(a)}_{\Delta^n_j} (\cdot), \quad
g_1 := \mathds{1}^{(b)}_{\Delta^n_i} (\cdot), \quad \mathrm{and} \:\: g_2 := \mathds{1}^{(b)}_{\Delta^n_j} (\cdot).
\end{align*}
We obtain
\begin{align*}
&\exptn{\psi_{t^n_i} \psi_{t^n_j} X^{(a)}_{\Delta^n_i} X^{(b)}_{\Delta^n_i} X^{(a)}_{\Delta^n_j} X^{(b)}_{\Delta^n_j}} \\
&\qquad = \exptn{\psi_{t^n_i} \psi_{t^n_j} \left( I_2(h_1 \tilde{\otimes} h_2) + \left\langle h_1, h_2 \right\rangle_{\mathcal{H}_1^d} \right) \left( I_2(g_1 \tilde{\otimes} g_2) + \left\langle g_1, g_2 \right\rangle_{\mathcal{H}_1^d} \right)} \\
&\qquad= \exptn{\psi_{t^n_i} \psi_{t^n_j} I_4 (h_1 \tilde{\otimes} h_2 \tilde{\otimes} g_1 \tilde{\otimes} g_2)} + \exptn{\psi_{t^n_i} \psi_{t^n_j} I_2 (h_1 \tilde{\otimes} h_2)} \left\langle g_1, g_2 \right\rangle_{\mathcal{H}_1^d} \\
&\qquad \qquad + \exptn{\psi_{t^n_i} \psi_{t^n_j} I_2 (g_1 \tilde{\otimes} g_2)} \left\langle h_1, h_2 \right\rangle_{\mathcal{H}_1^d} + \exptn{\psi_{t^n_i} \psi_{t^n_j}} \left\langle h_1, h_2 \right\rangle_{\mathcal{H}_1^d} \left\langle g_1, g_2 \right\rangle_{\mathcal{H}_1^d} \\
&\qquad = \exptn{\mathcal{D}^4_{h_1, h_2, g_1, g_2} \psi_{t^n_i} \psi_{t^n_j}}
+ \exptn{\psi_{t^n_i} \psi_{t^n_j}} R \begin{pmatrix}
t^n_i & t^n_{i+1} \\
t^n_j & t^n_{j+1}
\end{pmatrix}^2 \\
&\qquad \qquad + \left( \exptn{\mathcal{D}^2_{h_1, h_2} \psi_{t^n_i} \psi_{t^n_j}} + \exptn{\mathcal{D}^2_{g_1, g_2} \psi_{t^n_i} \psi_{t^n_j}} \right) R \begin{pmatrix}
t^n_i & t^n_{i+1} \\
t^n_j & t^n_{j+1}
\end{pmatrix}.
\end{align*}
Similar to the case where $a = b$, the sum over all $i, j$ of the first, second and third terms in the above expression can be bounded by \eqref{sym1}, \eqref{sym3} and \eqref{sym2} respectively, and hence vanish as $n \rightarrow \infty$. \par
(b) We will now handle the anti-symmetric part. We will use $\left( \mathbf{X}^2_{s,t} \right)^A (\pi(k))$ to denote the L\'{e}vy area of $X^{\pi(k)}$, the piece-wise linear approximation of $X$ over $\pi(k)$, i.e.
\begin{align*}
\left( \mathbf{X}^2_{s,t} \right)^A (\pi(k)) = \pi_2 \left( \log \left( S_2 \left( X^{\pi(k)}\right)_{s,t} \right)\right),
\end{align*}
where $\pi_2$ denotes projection onto the second level. Next, we define
\begin{align*}
\left( \mathbf{X}^2_{s,t} \right)^A (\Delta_{l+1}) := \left( \mathbf{X}^2_{s,t} \right)^A \left( \pi (l+1) \right)  - \left( \mathbf{X}^2_{s,t} \right)^A \left( \pi (l) \right),
\end{align*}
and noticing that $\left( \mathbf{X}^2_{t_i^n, t_{i+1}^n} \right)^A \left( \pi(n) \right) = 0$, we can use Theorem \ref{gaussianRP} to see that
\begin{align*}
\left( \mathbf{X}^2_{t_i^n, t_{i+1}^n} \right)^A = \lim_{m\rightarrow\infty}\sum_{k=1}^m
\left( \mathbf{X}^2_{t_i^n, t_{i+1}^n} \right)^A \left( \Delta_{n+k} \right)  \text{ for every }n\in \mathbb{N} \; \mathrm{and} \; i=0, 1, \ldots, 2^n - 1,
\end{align*}
where the limit is taken in $L^2(\Omega)$. \par
We want to show that
\begin{align*}
\norm{\sum_{i=0}^{2^n - 1} \psi_{t^n_i} \left( \left( \mathbf{X}^2_{t_i^n, t_{i+1}^n} \right)^A (\pi (n+m)) \right)^{(a, b)}}_{L^2(\Omega)} \rightarrow 0
\end{align*}
uniformly for all $m$ as $n \rightarrow \infty$. To begin, let
\begin{align} \label{sDefn}
s_u^{k, i} := t_i^n + \frac{u}{2^{n+k}} = t_{u + i 2^k}^{n+k},
\end{align}
and we will denote the intervals
\begin{align} \label{LRDefn}
\begin{split}
&\Delta_{u^L}^i := \left[ s^{k, i}_{2u}, s^{k, i}_{2u+1} \right], \quad \Delta_{u^R}^i := \left[ s^{k, i}_{2u+1}, s^{k, i}_{2u+2} \right], \\
&\Delta_u^i := \Delta_{u^L}^i \cup \Delta_{u^R}^i \subseteq \left[ t^n_i, t^n_{i+1} \right], \quad \forall u = 0, \ldots, 2^{k-1} - 1.
\end{split}
\end{align}
Note that we suppress the dependence on $k$ and $n$ in the notation for the variables on the left, and we will also use $X_{[s,t]}$ and $X_{s,t}$ ($s, t \in [0, T]$) interchangeably.
Continuing, we have
\begin{align*}
&\bigotimes_{u = 0}^{2^{k-1} - 1} \exp \left(X_{\Delta^i_{u^L}} \right) \otimes \exp \left(X_{\Delta^i_{u^R}} \right) - \bigotimes_{u = 0}^{2^{k-1} - 1} \exp\left(X_{\Delta^i_u}\right) \\
&\qquad \qquad= \bigotimes_{u = 0}^{2^{k-1} - 1} \left(1, X_{\Delta_u^i}, \frac{\left(X_{\Delta_u^i}\right)^{\otimes 2}}{2} \right) + \left( 0, 0, \frac{1}{2} \left[X_{\Delta_{u^L}^i}, X_{\Delta_{u^R}^i} \right] \right) \\
&\fqquad- \bigotimes_{u = 0}^{2^{k-1} - 1} \left(1, X_{\Delta_u^i}, \frac{\left(X_{\Delta_u^i}\right)^{\otimes 2}}{2} \right) \\
&\qquad \qquad= \sum_{u=0}^{2^{k-1} - 1} \left(0, 0, \frac{1}{2} \left[ X_{\Delta_{u^L}^i}, X_{\Delta_{u^R}^i} \right] \right),
\end{align*}
which means that
\begin{align*}
\left( \mathbf{X}^2_{t_i^n, t_{i+1}^n} \right)^A \left( \Delta_{n+k} \right)
= \sum_{u=0}^{2^{k-1}-1} \frac{1}{2} \left[ X_{\Delta_{u^L}^i}, X_{\Delta_{u^R}^i} \right]
\end{align*}
since only anti-symmetric terms are left in the difference. \par
Thus, we obtain
\begin{align} \label{2ndQuant}
\begin{split}
&\exptn{\left( \sum_{i=0}^{2^n - 1} \psi_{t^n_i} \left( \left( \mathbf{X}^2_{t_i^n, t_{i+1}^n} \right)^A (\pi (n+m)) \right)^{(a, b)} \right)^2} \\
&\quad= \exptn{\left( \sum_{i=0}^{2^n - 1} \psi_{t^n_i} \sum_{k=1}^m \left( \left( \mathbf{X}^2_{t_i^n, t_{i+1}^n} \right)^A (\Delta_{n+k}) \right)^{(a, b)} \right)^2} \\
&\quad= \sum_{i, j=0}^{2^n - 1} \exptn{ \psi_{t^n_i} \psi_{t^n_j} \sum_{k=1}^m \left( \left( \mathbf{X}^2_{t_i^n, t_{i+1}^n} \right)^A \left( \Delta_{n+k} \right) \right)^{(a, b)}  \sum_{l=1}^m \left( \left( \mathbf{X}^2_{t_j^n, t_{j+1}^n} \right)^A \left( \Delta_{n+l} \right) \right)^{(a, b)}} \\
&\quad= \frac{1}{4} \sum_{i, j=0}^{2^n - 1}\sum_{k,l=1}^m \sum_{u, v} \exptn{ \psi_{t^n_i} \psi_{t^n_j} \Big[ X_{\Delta_{u^L}^i}, X_{\Delta_{u^R}^i} \Big]^{(a, b)} \Big[X_{\Delta_{v^L}^j}, X_{\Delta_{v^R}^j} \Big]^{(a, b)} }.
\end{split}
\end{align}
Since the $(a, b)^{th}$ entry of $\left[ X_{\Delta_{u^L}^i}, X_{\Delta_{u^R}^i} \right] \in so(d)$, where $a \neq b$, is given by
\begin{align*}
X^{(a)}_{\Delta_{u^L}^i} X^{(b)}_{\Delta_{u^R}^i} - X^{(b)}_{\Delta_{u^L}^i} X^{(a)}_{\Delta_{u^R}^i},
\end{align*}
each summand in the last line of \eqref{2ndQuant} is of the form
\begin{align} \label{summands}
\begin{split}
&\exptn{\psi_{t^n_i} \psi_{t^n_j} X^{(a)}_{\Delta_{u^L}^i} X^{(b)}_{\Delta_{u^R}^i} X^{(a)}_{\Delta_{v^L}^j} X^{(b)}_{\Delta_{v^R}^j}}
- \exptn{\psi_{t^n_i} \psi_{t^n_j} X^{(a)}_{\Delta_{u^L}^i} X^{(b)}_{\Delta_{u^R}^i} X^{(b)}_{\Delta_{v^L}^j} X^{(a)}_{\Delta_{v^R}^j}} \\
&\quad - \exptn{\psi_{t^n_i} \psi_{t^n_j} X^{(b)}_{\Delta_{u^L}^i} X^{(a)}_{\Delta_{u^R}^i} X^{(a)}_{\Delta_{v^L}^j} X^{(b)}_{\Delta_{v^R}^j}}
\quad + \exptn{\psi_{t^n_i} \psi_{t^n_j} X^{(b)}_{\Delta_{u^L}^i} X^{(a)}_{\Delta_{u^R}^i} X^{(b)}_{\Delta_{v^L}^j} X^{(a)}_{\Delta_{v^R}^j}}.
\end{split}
\end{align}
Proceeding, we will use the first term in the above expression for the proof and omit the other terms as the result remains the same with trivial modifications to the notation. We first denote
\begin{align} \label{RSum4Quad}
\begin{split}
R_{\Delta_u^i \times \Delta_v^j} : = \abs{R \begin{pmatrix}
s^{k, i}_{2u} & s^{k, i}_{2u+1} \\
s^{l, j}_{2v} & s^{l, j}_{2v+1}
\end{pmatrix}} +
&\abs{R \begin{pmatrix}
s^{k, i}_{2u+1} & s^{k, i}_{2u+2} \\
s^{l, j}_{2v} & s^{l, j}_{2v+1}
\end{pmatrix}} \\
&+ \abs{R \begin{pmatrix}
s^{k, i}_{2u} & s^{k, i}_{2u+1} \\
s^{l, j}_{2v+1} & s^{l, j}_{2v+2}
\end{pmatrix}} +
\abs{R \begin{pmatrix}
s^{k, i}_{2u+1} & s^{k, i}_{2u+2} \\
s^{l, j}_{2v+1} & s^{l, j}_{2v+2}
\end{pmatrix}},
\end{split}
\end{align}
and note that
\begin{align*}
\sum_{i, j=0}^{2^n-1} \sum_{u, v}
R_{\Delta_u^i \times \Delta_v^j}^{\rho}
\leq 4^{\rho} \norm{R}_{\rho-var; [0, T]^2} ^{\rho}.
\end{align*}
Next we let
\begin{align*}
h_1 := \mathds{1}^{(a)}_{\Delta_{u^L}^i} (\cdot), \quad h_2 := \mathds{1}^{(a)}_{\Delta_{v^L}^j} (\cdot), \quad
g_1 := \mathds{1}^{(b)}_{\Delta_{u^R}^i} (\cdot), \quad \mathrm{and} \:\: g_2 := \mathds{1}^{(b)}_{\Delta_{v^R}^j} (\cdot),
\end{align*}
and applying \eqref{identity2}, we get
\begin{align*}
&\exptn{\psi_{t^n_i} \psi_{t^n_j} X^{(a)}_{\Delta_{u^L}^i} X^{(b)}_{\Delta_{u^R}^i} X^{(a)}_{\Delta_{v^L}^j} X^{(b)}_{\Delta_{v^R}^j}} \\
&\qquad = \exptn{\psi_{t^n_i} \psi_{t^n_j} \left( I_2(h_1 \tilde{\otimes} h_2) + \left\langle h_1, h_2 \right\rangle_{\mathcal{H}_1^d} \right) \left( I_2(g_1 \tilde{\otimes} g_2) + \left\langle g_1, g_2 \right\rangle_{\mathcal{H}_1^d} \right)} \\
&\qquad = \exptn{\psi_{t^n_i} \psi_{t^n_j} I_4 (h_1 \tilde{\otimes} h_2 \tilde{\otimes} g_1 \tilde{\otimes} g_2)} + \exptn{\psi_{t^n_i} \psi_{t^n_j} I_2 (h_1 \tilde{\otimes} h_2)} \left\langle g_1, g_2 \right\rangle_{\mathcal{H}_1^d} \\
&\qquad \qquad + \exptn{\psi_{t^n_i} \psi_{t^n_j} I_2 (g_1 \tilde{\otimes} g_2)} \left\langle h_1, h_2 \right\rangle_{\mathcal{H}_1^d} + \exptn{\psi_{t^n_i} \psi_{t^n_j}} \left\langle h_1, h_2 \right\rangle_{\mathcal{H}_1^d} \left\langle g_1, g_2 \right\rangle_{\mathcal{H}_1^d} \\
&\qquad =: A_1 + A_2 + A_3 + A_4.
\end{align*}
\begin{enumerate}[(i)]
\item
Terms of type $A_1$:
\begin{align*}
&\abs{\exptn{ \mathcal{D}_{h_1, h_2, g_1, g_2}^4 \left( \psi_{t^n_i} \psi_{t_j^n} \right) }} \\
&\leq C \norm{\Phi(h_1)}_{q-var; \left[  0,T\right]} \norm{\Phi(h_2)}_{q-var;\left[ 0, T\right] } \norm{\Phi(g_1)}_{q-var;\left[ 0,T \right] } \norm{\Phi(g_2)}_{q-var;\left[ 0,T\right] } \\
&\leq C \norm{ R\left(  s_{2u+1}^{k,i},\cdot\right) - R\left(s_{2u}^{k, i},\cdot\right) }_{q-var;\left[  0,T\right] } \norm{ R\left( s_{2v+1}^{l, j},\cdot\right) - R\left( s_{2v}^{l, j},\cdot\right) }_{q-var;\left[  0,T\right] } \\
&\qquad \times \norm{ R\left( s_{2u+2}^{k, i},\cdot\right) -R\left(s_{2u+1}^{k, i},\cdot\right) }_{q-var;\left[  0,T\right] } \norm{ R\left(  s_{2v + 2}^{l, j},\cdot\right) - R\left(  s_{2v+1}^{l, j},\cdot\right) }_{q-var;\left[  0,T\right] } \\
&\leq C \, 2^{\frac{- 2\left( n+k\right)}{\rho}} 2^{\frac{-2\left( n+l \right)}{\rho} },
\end{align*}
and thus we have
\begin{align*}
\sum_{i,j=0}^{2^n-1}\sum_{k,l=1}^m \sum_{u, v} \abs{ \exptn{\mathcal{D}_{h_1, h_2, g_1, g_2}^4 \left( \psi_{t_i^n} \psi_{t_j^n} \right) } }
&\leq C \, 2^{-2n \left( \frac{2}{\rho} - 1 \right)} \sum_{k, l=1}^{\infty} 2^{-k\left( \frac{2}{\rho} - 1 \right)} 2^{-l\left( \frac{2}{\rho} - 1 \right) } \\
&\leq C \, 2^{-2n\left( \frac{2}{\rho} - 1\right) } \rightarrow 0 \; \text{as} \; n \rightarrow\infty.
\end{align*}
\item
Terms of type $A_{2}$ and $A_{3}$: We only detail the argument for the $A_{2}$ terms; the $A_{3}$ terms can be dealt with in the same way.
Using H\"{o}lder's inequality and by exploiting the upper bound
\begin{align*}
\abs{ \exptn{\mathcal{D}_{h_1, h_2}^{2}\left( \psi_{t^n_i} \psi_{t^n_j} \right)  } }
\leq C \abs{ s_{2u + 1}^{k, i} - s_{2u}^{k, i} }^{\frac{1}{\rho}} \abs{ s_{2v + 1}^{l, j} - s_{2v}^{l, j} }^{\frac{1}{\rho}}
\leq C \, 2^{-\frac{( n+k)}{\rho}} 2^{-\frac{(n+l)}{\rho}},
\end{align*}
we obtain
\begin{align*}
&  \sum_{i, j=0}^{2^n - 1} \sum_{k, l=1}^m \sum_{u, v} \exptn{ \mathcal{D}_{h_1, h_2}^2 \left(  \psi_{t^n_i} \psi_{t_j^n} \right) }  \exptn{ X^{(b)}_{\Delta_{u^R}^i} X^{(b)}_{\Delta_{v^R}^j} } \\
&\qquad \leq C \sum_{k, l=1}^m \left( \sum_{i,j=0}^{2^n - 1} \sum_{u, v} \abs{ \exptn{ \mathcal{D}_{h_1, h_2}^2 \left( \psi_{t^n_i} \psi_{t_j^n} \right) }}^{\rho'} \right)^{\frac{1}{\rho'}} \left( \sum_{i,j=0}^{2^n - 1} \sum_{u, v} R_{\Delta^i_u \times \Delta_v^j}^{\rho}\right)^{\frac{1}{\rho}} \\
&\qquad \leq C \sum_{k, l=1}^{\infty} 2^{-2n \left( \frac{1}{\rho} - \frac{1}{\rho'} \right)}2^{-k\left( \frac{1}{\rho} - \frac{1}{\rho'} \right)}2^{-l\left( \frac{1}{\rho} - \frac{1}{\rho'} \right) } \norm{R}_{\rho\text{-var};\left[  0,T\right]^2}.
\end{align*}
Since $\frac{1}{\rho} - \frac{1}{\rho'} = \frac{2}{\rho} - 1 > 0$, we may sum over $k$ and $l$ to get
\begin{align*}
\sum_{i, j=0}^{2^n - 1} \sum_{k, l=1}^m \sum_{u, v} \exptn{ \mathcal{D}_{h_1, h_2}^2 \left(  \psi_{t^n_i} \psi_{t_j^n} \right) }  \exptn{ X^{(b)}_{\Delta_{u^R}^i} X^{(b)}_{\Delta_{v^R}^j} }
\leq C \, 2^{-2n \left( \frac{2}{\rho} - 1 \right) } \norm{R}_{\rho\text{-var};\left[  0,T\right]^2},
\end{align*}
which tends to zero as $n\rightarrow\infty.$
\item
Terms of type $A_4$: We have
\begin{align*}
&\exptn{\psi_{t^n_i} \psi_{t^n_j}} \left\langle h_1, h_2 \right\rangle_{\mathcal{H}_1^d} \left\langle g_1, g_2 \right\rangle_{\mathcal{H}_1^d} \\
&\qquad= \exptn{ \psi_{t^n_i} \psi_{t^n_j} }
R \begin{pmatrix}
s^{k, i}_{2u} & s^{k, i}_{2u+1} \\
s^{l, j}_{2v} & s^{l, j}_{2v+1}
\end{pmatrix}
R \begin{pmatrix}
s^{k, i}_{2u + 1} & s^{k, i}_{2u+2} \\
s^{l, j}_{2v+1} & s^{l, j}_{2v+2}
\end{pmatrix} \\
&\qquad\leq C R_{\Delta^i_u \times \Delta_v^j}^2.
\end{align*}
Using the fact that
\begin{align*}
&R_{\Delta^i_u \times \Delta^j_v}
\leq 2\norm{R \left(s^{k,i}_{2u+1}, \cdot\right) - R\left(s^{k,i}_{2u}, \cdot\right)}_{q-var;[0,T]} \\
&\fqquad+ 2\norm{R \left(s^{k,i}_{2u+2}, \cdot\right) - R\left(s^{k,i}_{2u+1}, \cdot\right)}_{q-var;[0,T]} \\
&\mathrm{and} \\
&R_{\Delta^i_u \times \Delta^j_v}
\leq 2 \norm{R \left(s^{l,j}_{2v+1}, \cdot\right) - R\left(s^{l,j}_{2v}, \cdot\right)}_{q-var;[0,T]} \\
&\fqquad+ 2\norm{R \left(s^{l,j}_{2v+2}, \cdot\right) - R\left(s^{l,j}_{2v+1}, \cdot\right)}_{q-var;[0,T]},
\end{align*}
we have
\begin{align*}
\sum_{k,l=1}^m \sum_{i,j=0}^{2^n-1} \sum_{u,v}
R_{\Delta_u^i \times \Delta^j_v}^2
&\leq C \sum_{k,l=1}^m 2^{-(n+k)\frac{2-\rho}{2\rho}} 2^{-(n+l)\frac{2-\rho}{2\rho}} \sum_{i.j=0}^{2^n-1} \sum_{u,v}
R_{\Delta_u^i \times \Delta_v^j}^{\rho} \\
&\leq C \, 2^{-2n \left(\frac{1}{\rho} - \frac{1}{2}\right)} \sum_{k,l=1}^{\infty} 2^{-k \left(\frac{1}{\rho} - \frac{1}{2} \right)} 2^{-l \left(\frac{1}{\rho} - \frac{1}{2} \right)} \norm{R}_{\rho-var;[0,T]^2}^{\rho},
\end{align*}
which converges to $0$ since $\frac{1}{\rho} - \frac{1}{2} > 0$.
\end{enumerate}
\end{proof}

Given the preceding proposition, the following corollary is straightforward.

\begin{corollary}
\label{skorodhodLimitEnhanced} For $2 \leq p < 4$, let $Y \in\mathcal{C}
^{p-var} \left(  [0, T]; \mathbb{R}^{d} \right) $ denote the path-level
solution to
\begin{align*}
\mathrm{d} Y_{t} = V(Y_{t}) \circ\mathrm{d} \mathbf{X}_{t}, \quad Y_{0} =
y_{0},
\end{align*}
where $\mathbf{X} \in\mathcal{C}^{0, p-var} \left(  [0, T]; G^{\lfloor p
\rfloor} \left( \mathbb{R}^{d} \right) \right) $ satisfies the same conditions
as in Proposition \ref{2ndlevel}. Then if $V \in\mathcal{C}^{\lfloor p
\rfloor+ 4}_{b} \left(  \mathbb{R}^{d}; \mathbb{R}^{d} \otimes\mathbb{R}^{d}
\right) $, we have
\begin{align}
\label{enhancedSkoEq}\lim_{\left\|  \pi(n)\right\|  \rightarrow0} \left\|
\sum_{i} V(Y_{t^{n}_{i}}) \left(  \mathbf{X}^{2}_{t^{n}_{i}, t^{n}_{i+1}} -
\frac{1}{2} \sigma^{2} \left(  t^{n}_{i}, t^{n}_{i+1} \right)  \mathcal{I}_{d}
\right) \right\| _{L^{2}(\Omega)} = 0.
\end{align}

\end{corollary}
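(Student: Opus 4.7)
The plan is to apply Proposition \ref{2ndlevel} with $\psi_t := V(Y_t)$, and the bulk of the work lies in verifying the two conditions on the Malliavin derivatives stated there. The claim that $\psi_t \in \mathbb{D}^{4,2}(\mathbb{R}^d \otimes \mathbb{R}^d)$ will follow from the assumption $V \in \mathcal{C}^{\lfloor p \rfloor + 4}_b$, the fact established earlier in the paper that $Y_t \in \mathbb{D}^{\infty}(\mathbb{R}^d)$, and the chain rule for Malliavin derivatives (or equivalently, the Kusuoka--Stroock characterization in Theorem \ref{sugitaThm}). Since $V$ and all its derivatives up to order $4$ are bounded, the uniform bound $|\mathbb{E}[\psi^{(a,b)}_s \psi^{(a,b)}_t]| \leq \|V\|_{\infty}^2$ is immediate.

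For the nontrivial condition \eqref{propCond}, I would first expand $\mathcal{D}^k_{h_1,\ldots,h_k}(\psi^{(a,b)}_s \psi^{(a,b)}_t)$ using the Leibniz rule for directional derivatives: it becomes a sum, indexed by partitions of $\{h_1,\ldots,h_k\}$ into two blocks, of terms of the form $\mathcal{D}^i_{\pi_1} \psi^{(a,b)}_s \cdot \mathcal{D}^{k-i}_{\pi_2} \psi^{(a,b)}_t$. Each individual factor $\mathcal{D}^i_{\pi} V(Y_t)^{(a,b)}$ is then handled by the Fa\`a di Bruno formula, producing a sum over set partitions of $\pi$ of terms in which $\nabla^{|\sigma|} V(Y_t)$ is contracted against tensors built from the high-order directional derivatives $\mathcal{D}^{|\sigma_j|}_{\sigma_j} Y_t$.

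At this point Proposition \ref{dir der est} applies directly to each $\mathcal{D}^{|\sigma_j|}_{\sigma_j}Y_t$, giving a pathwise bound of the form
\begin{align*}
\bigl|\mathcal{D}^{|\sigma_j|}_{\sigma_j} Y_t\bigr|
\;\leq\; P\bigl(\|\mathbf{X}\|_{p\text{-}var;[0,T]}, \exp(C N^{\mathbf{X}}_{1;[0,T]})\bigr)\!\!\prod_{h \in \sigma_j}\!\|\Phi(h)\|_{q\text{-}var;[0,T]}.
\end{align*}
Combining these over all blocks in the partition and using the boundedness of $\nabla^i V$, every term in the expansion of $\mathcal{D}^k_{h_1,\ldots,h_k}(\psi^{(a,b)}_s \psi^{(a,b)}_t)$ is bounded pointwise by some polynomial in $\|\mathbf{X}\|_{p\text{-}var;[0,T]}$ and $\exp(C N^{\mathbf{X}}_{1;[0,T]})$, multiplied by $\prod_{i=1}^k \|\Phi(h_i)\|_{q\text{-}var;[0,T]}$.

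The main obstacle is then the integrability step: one must show this polynomial times exponential has finite expectation. This is exactly where the Cass--Litterer--Lyons tail estimate enters: $N^{\mathbf{X}}_{1;[0,T]}$ has Gaussian tails (Theorem stated before Theorem \ref{JBoundThm}), hence $\exp(C N^{\mathbf{X}}_{1;[0,T]})$ has moments of all orders, and Corollary~66 of \cite{fv2010a} gives the same for $\|\mathbf{X}\|_{p\text{-}var;[0,T]}$. H\"older's inequality then yields a finite constant $C$ for which \eqref{propCond} holds for $k=2,4$. Having verified both hypotheses, Proposition \ref{2ndlevel} applies to $\psi = V(Y)$ and delivers \eqref{enhancedSkoEq}.
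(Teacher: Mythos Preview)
Your proposal is correct and follows essentially the same approach as the paper: set $\psi_t = V(Y_t)$, verify the hypotheses of Proposition~\ref{2ndlevel} by combining Proposition~\ref{dir der est} with the product and chain rules for Malliavin derivatives, then invoke the moment bounds on $\|\mathbf{X}\|_{p\text{-}var}$ and $\exp(C N^{\mathbf{X}}_{1;[0,T]})$. Your write-up is in fact more explicit than the paper's about the Leibniz and Fa\`a di Bruno expansions, but the underlying argument is identical.
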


\begin{proof}
Since $V \in \mathcal{C}^1_b$, $\abs{\exptn{V^{(a, b)}_s V^{(a, b)}_t}}$ is bounded for all $s, t, a$ and $b$. Now we have to show that bound \eqref{propCond} in Proposition \ref{2ndlevel} is satisfied with
\begin{align*}
\psi_t = V(Y_t) \in \mathbb{R}^d \otimes \mathbb{R}^d,
\end{align*}
to show \eqref{enhancedSkoEq}.
To do so, recall Proposition \ref{dir der est}, which states that almost surely we have
\begin{align}
\norm{\mathcal{D}_{h_1, \ldots, h_n}^n Y_{\cdot}}_{\infty}
\leq P_{d(n)} \left( \norm{\mathbf{X}}_{p-var; [0, T]}, \exp \left( C \, N^{\mathbf{X}}_{1; [0, T]} \right)\right) \prod\limits_{i=1}^{n} \left\| \Phi(h_{i})\right\|_{q-var; [0, T]}.
\end{align}
As both $\norm{\mathbf{X}}_{p-var; [0, T]}$ and $\exp \left( C \, N^{\mathbf{X}}_{1; [0, T]} \right)$ belong to $\bigcap_{r>0}L^{r}\left( \Omega \right)$, we have
\begin{align} \label{bound}
\norm{\mathcal{D}_{h_1, \ldots, h_n}^n Y_t}_{L^{r}\left( \Omega \right) }
\leq C_{n, q} \prod \limits_{i=1}^{n} \norm{\Phi(h_i)}_{q-var; \left[ 0,T\right] }
\end{align}
for any $r>0$.
Now we simply use the product and chain rule of Malliavin differentiation in conjunction with the fact that $V$ has bounded derivatives up to the appropriate order.
\end{proof}

\section{Conversion formula}

We are now ready to prove the main result of the paper. As before, $\pi(n) :=
\left\{  t^{n}_{i} \right\} , t^{n}_{i} := \frac{iT}{2^{n}}$, denotes the
sequence of dyadic partitions on $[0, T]$.

\begin{theorem}
\label{mainThm2ndLevel} For $1 \leq p < 3$, let $Y \in\mathcal{C}^{p-var}
\left(  [0, T]; \mathbb{R}^{d} \right) $ denote the path-level solution to
\begin{align*}
\mathrm{d} Y_{t} = V(Y_{t}) \circ\mathrm{d} \mathbf{X}_{t}, \quad Y_{0} =
y_{0},
\end{align*}
where $\mathbf{X} \in\mathcal{C}^{0, p-var} \left(  [0, T]; G^{\lfloor p
\rfloor} \left( \mathbb{R}^{d} \right) \right) $ denotes the geometric rough
path constructed from the limit of the piecewise-linear approximations of $X$,
a continuous, centered Gaussian process in $\mathbb{R}^{d}$ with i.i.d.
components and continuous covariance function satisfying $\left\|  R\right\|
_{\rho-var;[0, T]^{2}} < \infty$ for some $\rho\in\left[  1, \frac{3}%
{2}\right) $. In addition, we have the following assumptions:

\begin{enumerate}
[(i)]

\item If $1 \leq p < 2$, assume $V \in\mathcal{C}^{2}_{b} \left(
\mathbb{R}^{d}; \mathbb{R}^{d} \otimes\mathbb{R}^{d} \right) $, $\sigma^{2}(s,
t) \leq C \left|  t -s\right| ^{\theta}$ for some $\theta> 1$ and $\left\|
R(\cdot)\right\| _{q-var; [0, T]} < \infty$, where $\frac{1}{p} + \frac{1}{q}
> 1$.

\item If $2 \leq p < 3$, assume that $V \in\mathcal{C}^{6}_{b} \left(
\mathbb{R}^{d}; \mathbb{R}^{d} \otimes\mathbb{R}^{d} \right) $, and the
covariance function satisfies
\begin{align}
\left\|  R(t, \cdot) - R (s, \cdot) \right\| _{\rho-var; [0, T]} \leq C
\left|  t - s\right| ^{\frac{1}{\rho}},
\end{align}
for all $s, t \in[0, T]$.
\end{enumerate}

In either case, almost surely we have
\begin{align*}
\int_{0}^{T} Y_{t} \circ\mathrm{d} \mathbf{X}_{t}  & = \int_{0}^{T} Y_{t} \,
\mathrm{d} X_{t} + \frac{1}{2} \int_{0}^{T} \mathrm{tr} \left[  V(Y_{s})
\right]  \, \mathrm{d} R(s)\\
& \qquad+ \int_{[0, T]^{2}} \mathds{1}_{[0, t)} (s) \mathrm{tr} \left[
J^{\mathbf{X}}_{t \leftarrow s} V(Y_{s}) - V(Y_{t}) \right]  \, \mathrm{d}
R(s,t).
\end{align*}

\end{theorem}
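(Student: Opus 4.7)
The strategy is to combine, along an almost sure subsequence of dyadic partitions $\pi(n)=\{t^n_i\}$, the Riemann approximations of the rough and Skorohod integrals. The Universal Limit Theorem (Theorem \ref{univThm}) provides the a.s.\ expansion
\begin{align*}
\int_0^T\langle Y_t,\circ\mathrm{d}\mathbf{X}_t\rangle=\lim_n\sum_i\left[\langle Y_{t^n_i},X_{\Delta^n_i}\rangle+V(Y_{t^n_i})(\mathbf{X}^2_{\Delta^n_i})\right],
\end{align*}
while Proposition \ref{skorohodLimit2ndLevel} gives the $L^2(\Omega)$-expansion
\begin{align*}
\int_0^T\langle Y_t,\mathrm{d}X_t\rangle=\lim_n\sum_i\left[\langle Y_{t^n_i},X_{\Delta^n_i}\rangle-\int_0^{t^n_i}\mathrm{tr}[J^{\mathbf{X}}_{t^n_i\leftarrow s}V(Y_s)]\,R(\Delta^n_i,\mathrm{d}s)\right].
\end{align*}
By Corollary \ref{skorodhodLimitEnhanced}, the term $V(Y_{t^n_i})(\mathbf{X}^2_{\Delta^n_i})$ may be replaced by $\tfrac{1}{2}\sigma^2(\Delta^n_i)\mathrm{tr}[V(Y_{t^n_i})]$ modulo an $L^2(\Omega)$-vanishing error (using $V(Y)(\mathcal{I}_d)=\mathrm{tr}[V(Y)]$). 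Extracting an almost surely convergent subsequence and subtracting yields, almost surely,
\begin{align*}
\int_0^T\langle Y_t,\circ\mathrm{d}\mathbf{X}_t\rangle-\int_0^T\langle Y_t,\mathrm{d}X_t\rangle=\lim_n\sum_i\left[\tfrac{1}{2}\sigma^2(\Delta^n_i)\mathrm{tr}[V(Y_{t^n_i})]+\int_0^{t^n_i}\mathrm{tr}[J^{\mathbf{X}}_{t^n_i\leftarrow s}V(Y_s)]\,R(\Delta^n_i,\mathrm{d}s)\right].
\end{align*}

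I would then apply the elementary identity $\tfrac{1}{2}\sigma^2(\Delta^n_i)=\tfrac{1}{2}[R(t^n_{i+1})-R(t^n_i)]-R(t^n_i,\Delta^n_i)$, with $R(t^n_i,\Delta^n_i):=R(t^n_i,t^n_{i+1})-R(t^n_i)$, to split the right-hand side as
\begin{align*}
\tfrac{1}{2}\sum_i\mathrm{tr}[V(Y_{t^n_i})][R(t^n_{i+1})-R(t^n_i)]+\sum_i\left(\int_0^{t^n_i}\mathrm{tr}[J^{\mathbf{X}}_{t^n_i\leftarrow s}V(Y_s)]\,R(\Delta^n_i,\mathrm{d}s)-\mathrm{tr}[V(Y_{t^n_i})]R(t^n_i,\Delta^n_i)\right).
\end{align*}
The first sum is a Riemann-Stieltjes approximation of the one-dimensional Young-Stieltjes integral $\tfrac{1}{2}\int_0^T\mathrm{tr}[V(Y_t)]\,\mathrm{d}R(t)$, which is well-defined since $V(Y)$ has finite $p$-variation and $R(\cdot)$ has finite $\rho$-variation by Lemma \ref{RdotRhoVar}. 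For the second sum, approximating the inner one-dimensional Young integral by its own Riemann sum (with error controlled by Young's inequality) and then swapping the order of summation, one obtains precisely the Riemann-Stieltjes approximation $\sum_{i<j}\mathrm{tr}[J^{\mathbf{X}}_{t^n_j\leftarrow t^n_i}V(Y_{t^n_i})-V(Y_{t^n_j})]\,R\begin{pmatrix}t^n_i & t^n_{i+1}\\ t^n_j & t^n_{j+1}\end{pmatrix}$ of the 2D Young-Stieltjes integral $\int_{[0,T]^2}\mathds{1}_{[0,t)}(s)\mathrm{tr}[J^{\mathbf{X}}_{t\leftarrow s}V(Y_s)-V(Y_t)]\,\mathrm{d}R(s,t)$.

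The main obstacle is to justify that this 2D Young-Stieltjes integral exists, i.e., that the integrand has finite 2D $p$-variation complementary to the $\rho$-variation of $R$, so that Theorem \ref{2Dintegral} applies. The key observation is the factorization
\begin{align*}
J^{\mathbf{X}}_{t_2\leftarrow s_2}-J^{\mathbf{X}}_{t_1\leftarrow s_2}-J^{\mathbf{X}}_{t_2\leftarrow s_1}+J^{\mathbf{X}}_{t_1\leftarrow s_1}=J^{\mathbf{X}}_{t_1,t_2}\cdot\left((J^{\mathbf{X}}_{s_1})^{-1}-(J^{\mathbf{X}}_{s_2})^{-1}\right),
\end{align*}
which, combined with the moment bounds on $\|J^{\mathbf{X}}\|_{p-var}$ and $\|(J^{\mathbf{X}})^{-1}\|_{p-var}$ from Theorem \ref{JBoundThm} and the controlled-path Leibniz estimates of Section \ref{controlledRP} applied to $V(Y)$, delivers finite 2D $p$-variation of $f(s,t):=\mathds{1}_{[0,t)}(s)\mathrm{tr}[J^{\mathbf{X}}_{t\leftarrow s}V(Y_s)-V(Y_t)]$. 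The case $1\leq p<2$ is handled similarly within the simpler Young integration framework, the analogue of Corollary \ref{skorodhodLimitEnhanced} being direct under the stronger condition $\sigma^2(s,t)\leq C|t-s|^\theta$ with $\theta>1$.
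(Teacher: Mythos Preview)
Your proposal is correct and follows essentially the same route as the paper: both proofs combine the rough-integral expansion with the Riemann-sum representation of the Skorohod integral from Proposition~\ref{skorohodLimit2ndLevel}, append the compensated second-level term via Corollary~\ref{skorodhodLimitEnhanced} (or the trivial estimate under $\sigma^2(s,t)\le C|t-s|^\theta$ when $p<2$), extract an a.s.\ subsequence, subtract, and then use the identity $\tfrac12\sigma^2(\Delta^n_i)=\tfrac12[R(t^n_{i+1})-R(t^n_i)]-R(\Delta^n_i,t^n_i)$ to split the remainder into a 1D and a 2D Young--Stieltjes piece.

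The only noteworthy difference is in how the convergence of the 2D piece is argued. The paper writes $F^{(i)}=\int_0^T h(s,t^n_i)\,R(\Delta^n_i,\mathrm{d}s)$ with $h(s,t)=\mathds{1}_{[0,t)}(s)\,\mathrm{tr}[J^{\mathbf X}_{t\leftarrow s}V(Y_s)-V(Y_t)]$, observes that $h$ is continuous on $[0,T]^2$ because it vanishes on the diagonal, invokes Theorem~\ref{2Dintegral} for existence, and then matches $\sum_i F^{(i)}$ to the 2D Riemann sums directly. You instead reduce the inner integral to a Riemann sum and recognise the double sum; this is equivalent. Your explicit verification of the 2D $p$-variation of the integrand via the factorisation of the rectangular increment of $(s,t)\mapsto J^{\mathbf X}_t(J^{\mathbf X}_s)^{-1}$ into a product of 1D increments is exactly the mechanism behind the paper's (unstated) application of Theorem~\ref{2Dintegral}, and makes explicit a point the paper leaves implicit. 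Note a harmless sign slip in your factorisation: the right-hand side should read $J^{\mathbf X}_{t_1,t_2}\bigl((J^{\mathbf X}_{s_2})^{-1}-(J^{\mathbf X}_{s_1})^{-1}\bigr)$.
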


\begin{proof}
Using regular Riemann-Stieltjes integration when $1 \leq p <2$ and Theorem \ref{controlledThm1} when $2 \leq p < 3$, $\int_0^T Y_t \strato{\textbf{X}_t} $ is equal almost surely to
\begin{align*}
\lim_{n \rightarrow \infty} \sum_i
\begin{cases}
\, Y_{t^n_i} \left(X_{t^n_i, t^n_{i+1}} \right), \quad &1 \leq p < 2, \\
\, Y_{t^n_i} \left(X_{t^n_i, t^n_{i+1}} \right) + V(Y_{t^n_i}) \left( \mathbf{X}^2_{t^n_i, t^n_{i+1}} \right), \quad &2 \leq p < 3.
\end{cases}
\end{align*}
We now apply Proposition \ref{skorohodLimit2ndLevel} in conjunction with Corollary \ref{skorodhodLimitEnhanced}. Upon extracting a subsequence (and reusing the index for notational simplicity), the Skorohod integral is given almost surely by
\begin{align*}
\int_0^T Y_t \wrt{X_t}
&= \lim_{n \rightarrow \infty} \sum_i \left[ Y_{t^n_i} \left( X_{t^n_i, t^n_{i+1}} \right) - \int_0^{t^n_i} \mathrm{tr} \left[J^{\mathbf{X}}_{t^n_i \leftarrow s} V(Y_s)\right] R\left( \Delta^n_i, \mathrm{d} s \right) + A^{(i)} \right],
\end{align*}
where
\begin{align*}
A^{(i)} := \begin{cases}
\, V(Y_{t^n_i}) \left( -\frac{1}{2} \sigma^2 \left( t^n_i, t^n_{i+1} \right) \mathcal{I}_d \right), \quad &1 \leq p < 2, \\
\, V(Y_{t^n_i}) \left( \left( \mathbf{X}^2_{t^n_i, t^n_{i+1}} \right) - \frac{1}{2} \sigma^2 \left( t^n_i, t^n_{i+1} \right) \mathcal{I}_d \right), \quad &2 \leq p < 3.
\end{cases}
\end{align*}
Note that when $1 \leq p < 2$, we can append $\sum_i V(Y_{t^n_i}) \left( -\frac{1}{2} \sigma^2 \left( t^n_i, t^n_{i+1} \right) \mathcal{I}_d \right)$ to the Riemann sum approximants of the Skorohod integral  because
\begin{align} \label{additionalZeroTerm}
\sum_{i} \mathrm{tr} \left[V \left( Y_{t^n_i} \right) \right] \sigma^2 \left( t^n_i, t^n_{i+1} \right)
\leq C \sum_{i} |t^n_{i+1} - t^n_i|^{\theta},
\end{align}
which vanishes as $n \rightarrow \infty$. \par
In both cases, subtracting the two integrals gives us
\begin{align} \label{diff1}
\begin{split}
&\int_0^T Y_t \strato{\textbf{X}_t} - \int_0^T Y_t \wrt{X_t} \\
&\qquad= \lim_{n \rightarrow \infty} \sum_i \int_0^{t^n_i} \mathrm{tr} \left[ J^{\mathbf{X}}_{t^n_i \leftarrow s} V(Y_s) \right] R(\Delta^n_i, \wrt{s} ) + \frac{1}{2} \sigma^2 \left( t^n_i, t^n_{i+1} \right) \mathrm{tr} \left[ V(Y_{t^n_i}) \right].
\end{split}
\end{align}
Subtracting $\mathrm{tr} \left[V(Y_{t^n_i})\right] R(\Delta^n_i, t^n_i)$ from the first term on the right of \eqref{diff1} and adding it to the second term gives us
\begin{align*}
\int_0^{t^n_i} \mathrm{tr} \left[ J^{\mathbf{X}}_{t^n_i \leftarrow s} V(Y_s) \right] R(\Delta^n_i, \wrt{s} ) + \frac{1}{2} \sigma^2 \left( t^n_i, t^n_{i+1} \right) \mathrm{tr} \left[ V(Y_{t^n_i})\right]  = F^{(i)} + G^{(i)},
\end{align*}
where
\begin{align*}
&F^{(i)} := \int_0^{t^n_i} \mathrm{tr} \left[  J^{\mathbf{X}}_{t^n_i \leftarrow s} V(Y_s) \right] R(\Delta^n_i, \mathrm{d}s) - \mathrm{tr} \left[V(Y_{t^n_i})\right] R(\Delta^n_i, t^n_i), \\
&G^{(i)} := \frac{1}{2} \sigma^2 \left( t^n_i, t^n_{i+1} \right) \mathrm{tr} \left[V(Y_{t^n_i})\right] + \mathrm{tr} \left[V(Y_{t^n_i})\right] R(\Delta^n_i, t^n_i).
\end{align*}
We have
\begin{align*}
F^{(i)}
&= \int_0^{t^n_i} \mathrm{tr} \left[  J^{\mathbf{X}}_{t^n_i \leftarrow s} V(Y_s) - V(Y_{t^n_i})\right] \, R\left( \Delta^n_i, \mathrm{d}s \right) \\
&= \int_0^T h (s, t^n_i) R \left(\Delta^n_i, \mathrm{d}s \right),
\end{align*}
where we denote
\begin{align*}
h(s, t) &:= \mathds{1}_{[0, t)} (s) \, \mathrm{tr} \left[  J^{\mathbf{X}}_{t \leftarrow s} V(Y_s) - V(Y_t) \right].
\end{align*}
Since $h(s, t)$ vanishes on the diagonal, it is continuous almost surely on $[0, T]^2$. Furthermore, we have complementary regularity since $\frac{1}{p} + \frac{1}{\rho} > 1$, in which case Theorem \ref{2Dintegral} tells us that
\begin{align*}
\int_{[0, T]^2} h(s, t) \wrt{R(s, t)},
\end{align*}
exists. Thus, we have some partition $\pi' = \{ s_k \} \times \{ t^n_i \}$ such that
\begin{align*}
\abs{\int_{[0, T]^2} h(s, t) \wrt{R(s, t)} - \sum_{i, k} h(s_k, t^n_i) R \begin{pmatrix}
s_k & s_{k+1} \\
t^n_i & t^n_{i+1}
\end{pmatrix}} < \frac{\varepsilon}{2}.
\end{align*}
Refining $\{ s_k \}$ if necessary, we also have for each $i$
\begin{align*}
\abs{\int_0^T h(s, t^n_i) R\left(\Delta^n_i, \mathrm{d}s\right) - \sum_k h(s_k, t^n_i) R \begin{pmatrix}
s_k & s_{k+1} \\
t^n_i & t^n_{i+1}
\end{pmatrix}} < \frac{\varepsilon}{2} \left( \frac{1}{2^n} \right),
\end{align*}
and note that these estimates hold for all $\pi = \pi_1 \times \pi_2$ where $\norm{\pi} \leq \norm{\pi'}$ and $\norm{\pi_2} \leq \norm{\pi(n)}$. Thus
\begin{align*}
\sum_{i} F^{(i)} \rightarrow \int_{[0, T]^2} h (s,t) \wrt{R(s, t)}.
\end{align*}
For the $G$ terms we have
\begin{align*}
\sum_i G^{(i)}
&= \sum_i \mathrm{tr} \left[V(Y_{t^n_i})\right] \left( R(t^n_{i+1}, t^n_i) -  R(t^n_i, t^n_i) + \frac{1}{2} \sigma^2 \left( t^n_i, t^n_{i+1} \right) \right) \\
&= \frac{1}{2} \sum_i \mathrm{tr} \left[V(Y_{t^n_i})\right] \big( R\left( t^n_{i+1}, t^n_{i+1} \right) - R \left(t^n_i, t^n_i \right) \big),
\end{align*}
which converges to $\frac{1}{2} \int_0^T \mathrm{tr} \left[V(Y_t)\right] \wrt{R(t)}$ as $Y$ and $R(\cdot)$ have complementary regularity.
\end{proof}

The limit in \eqref{diff1} necessarily exists almost surely because it is the
difference of almost sure convergent sequences. However, we add and subtract
$\mathrm{tr} \left[ V(Y_{t^{n}_{i}})\right]  R(\Delta^{n}_{i}, t^{n}_{i})$
because in general, if considered separately, neither term can be expected to
be a convergent sequence.

Consider the case when $R(s, t)$ is the covariance function of fractional
Brownian motion where $\frac{1}{3} < H < \frac{1}{2}$. For the first term of
\eqref{diff1}, formally one would expect convergence to the Young integral
\begin{align*}
\int_{[0, T]^{2}} \mathds{1}_{[0, t)}(s) \, \mathrm{tr} \left[  J^{\mathbf{X}%
}_{t \leftarrow s} V(Y_{s}) \right]  \, \mathrm{d} R(s, t)
\end{align*}
since we have complementary regularity. However the discontinuity of the
integrand at the diagonal poses a problem, as can be illustrated by the
following simple example; if we take the sequence of square partitions
$\left\{  \left(  t^{n}_{i}, t^{n}_{j} \right)  \right\} $, the
Riemann-Stieltjes sums of $\int_{[0, T]^{2}} \mathds{1}_{[0, t)}(s) \,
\mathrm{d} R(s, t)$ are given by
\begin{align*}
\sum_{j} \sum_{i < j} R(\Delta_{i}, \Delta_{j}) = \sum_{j} R \left(  t^{n}%
_{j}, t^{n}_{j+1} \right)  - R\left(  t^{n}_{j}, t^{n}_{j} \right)
\rightarrow-\infty
\end{align*}
and thus $\int_{[0, T]^{2}} \mathds{1}_{[0, t)}(s) \, \mathrm{d} R(s, t)$ does
not exist as a Young-Stieltjes integral. For the second term of \eqref{diff1},
if $V$ is bounded from below, we have
\begin{align*}
\sum_{i} \frac{1}{2} \sigma^{2} \left(  t^{n}_{i}, t^{n}_{i+1} \right)
\mathrm{tr} \left[ V \left(  Y_{t^{n}_{i}} \right)  \right]  \geq C \sum_{i}
\left|  t^{n}_{i+1} - t^{n}_{i}\right| ^{2H},
\end{align*}
which also diverges.

Now consider the following theorem from \cite{d2000}.

\begin{theorem}
\label{orig} Let $X$ be fractional Brownian motion with Hurst parameter $H<
\frac{1}{2}$. \newline If $u\in\mathbb{D}^{1,2}\left( I_{0^{+}}^{\frac{3}%
{2}-H}(L^{2})\right) $, then $\delta^{X}(\mathcal{K}^{*} u)$ and
$\mathrm{trace}(\mathcal{D} u)$ are well defined, and the sequence
\begin{align*}
\sum_{i}\frac{1}{t_{i+1}-t_{i}}\int_{t_{i}}^{t_{i+1}}u_{t} \, \mathrm{d} t
\left(  X_{t_{i+1}}-X_{t_{i}} \right)
\end{align*}
converges in $L^{2}(\Omega)$ to $\delta^{X}(\mathcal{K}^{*}u) + \mathrm{trace}%
(\mathcal{D}u)$.
\end{theorem}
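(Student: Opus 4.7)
The plan is to apply Malliavin integration by parts to each summand, splitting the Riemann sum into a divergence piece and a trace-type remainder, and then to identify the limit of each piece separately.

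Let $\bar{u}_i := \frac{1}{t_{i+1}-t_i}\int_{t_i}^{t_{i+1}} u_t \, \mathrm{d}t$, and let $u^{\pi}_s := \sum_i \bar{u}_i \mathds{1}_{[t_i,t_{i+1})}(s)$ denote the associated piecewise-constant approximation of $u$. By the assumption $u \in \mathbb{D}^{1,2}(I_{0^+}^{3/2-H}(L^2))$, each $\bar{u}_i$ is an element of $\mathbb{D}^{1,2}$. The first step is to apply the duality relation $F \, \delta^X(h) = \delta^X(Fh) + \langle \mathcal{D}F, h\rangle_{\mathcal{H}_H}$ with $F = \bar{u}_i$ and $h$ the appropriate preimage under $\mathcal{K}^*$ of $\mathds{1}_{[t_i, t_{i+1})}$ (so that $\delta^X(h) = X_{t_{i+1}} - X_{t_i}$). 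Summing yields the decomposition
\[
\sum_i \bar{u}_i (X_{t_{i+1}} - X_{t_i}) \;=\; \delta^X(\mathcal{K}^{*} u^{\pi}) \;+\; \sum_i \langle \mathcal{D}\bar{u}_i, \mathds{1}_{[t_i,t_{i+1})}\rangle_{\mathcal{H}_H},
\]
which isolates the two limits that must be understood separately.

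Next I would pass to the limit in the trace-type sum. Unpacking the $\mathcal{H}_H$-inner product through $\mathcal{K}^*$, each summand is a weighted average of $\mathcal{D}_s u_t$ over $(s,t) \in [t_i,t_{i+1}]^2$ against the fBm covariance structure. The hypothesis that $u$ lies in $\mathbb{D}^{1,2}(I_{0^+}^{3/2-H}(L^2))$ provides exactly the fractional-Sobolev smoothness on $\mathcal{D}u$ required to (i) make the diagonal trace $\mathrm{trace}(\mathcal{D}u)$ a well-defined element of $L^2(\Omega)$, and (ii) show that the Riemann-type sum is a genuine Riemann approximation converging in $L^2(\Omega)$ to $\mathrm{trace}(\mathcal{D}u)$.

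For the divergence term, I would appeal to the Skorohod isometry in the underlying Brownian filtration (through the transfer operator $\mathcal{K}^*$) to bound
\[
\mathbb{E}\!\left[(\delta^X(\mathcal{K}^{*}u^{\pi}) - \delta^X(\mathcal{K}^{*}u))^2\right] \leq \|\mathcal{K}^{*}(u^{\pi}-u)\|_{L^2(\Omega;L^2([0,T]))}^2 + \text{(trace term)}.
\]
Convergence to zero of both right-hand side terms follows from continuity of $\mathcal{K}^*$ on $I_{0^+}^{3/2-H}(L^2)$ combined with the density statement $u^{\pi} \to u$ in $I_{0^+}^{3/2-H}(L^2)$ for elements of that space, applied both to $u$ and to its Malliavin derivative. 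The main obstacle lies precisely here: for $H < 1/2$ the Cameron-Martin space of fBm does not contain indicator functions in a useful way, so one cannot hope for $u^{\pi} \to u$ in $\mathcal{H}_H$ directly; the remedy is to pass through $\mathcal{K}^*$ to the underlying Brownian filtration and measure convergence in $L^2([0,T])$, which is exactly why the hypothesis must be formulated in terms of the fractional-integral space $I_{0^+}^{3/2-H}(L^2)$ — the exponent $3/2 - H$ is the precise amount of extra regularity compensating for the loss caused by the Volterra kernel of fBm.
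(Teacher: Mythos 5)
First, a point of order: the paper does not prove this statement. Theorem \ref{orig} is quoted verbatim from Decreusefond \cite{d2000} purely for comparison with the paper's own main result (to emphasize that \cite{d2000} requires Young-complementary regularity of the integrand, while Theorem \ref{mainThm2ndLevel} does not). So there is no in-paper proof to compare against; I can only assess your argument on its own terms and against the strategy of the cited source.

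Your overall architecture --- average $u$ over each partition interval, apply the integration-by-parts identity $F\,\delta^{X}(h)=\delta^{X}(Fh)+\langle\mathcal{D}F,h\rangle$ to split the Riemann sum into $\delta^{X}(\mathcal{K}^{*}u^{\pi})$ plus a trace-type sum, and then pass to the limit in each piece --- is the standard and correct route, and it is essentially what Decreusefond does. However, there is a genuine flaw in your treatment of the divergence term. You propose to conclude $\delta^{X}(\mathcal{K}^{*}u^{\pi})\rightarrow\delta^{X}(\mathcal{K}^{*}u)$ from ``continuity of $\mathcal{K}^{*}$ on $I_{0^{+}}^{3/2-H}(L^{2})$ combined with the density statement $u^{\pi}\rightarrow u$ in $I_{0^{+}}^{3/2-H}(L^{2})$.'' This cannot work as stated: $u^{\pi}$ is piecewise constant, hence discontinuous, whereas $I_{0^{+}}^{3/2-H}(L^{2})$ embeds continuously into $\mathcal{C}^{0,1-H}$ (as the paper itself notes after the theorem). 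Therefore $u^{\pi}$ does not belong to $I_{0^{+}}^{3/2-H}(L^{2})$ at all, and the convergence $u^{\pi}\rightarrow u$ in that space is vacuous. The correct argument must bypass the Besov--Liouville space for the approximants: one extends $\mathcal{K}^{*}$ to step functions via its explicit Volterra-kernel representation (for $H<\tfrac{1}{2}$ this involves a fractional derivative of order $\tfrac{1}{2}-H$, e.g. $(\mathcal{K}^{*}\phi)(s)=K(T,s)\phi(s)+\int_{s}^{T}(\phi(r)-\phi(s))\,\partial_{r}K(r,s)\,\mathrm{d}r$), and then proves directly that $\|\mathcal{K}^{*}(u^{\pi}-u)\|_{L^{2}([0,T])}\rightarrow 0$ using the $(1-H)$-H\"{o}lder continuity of $u$ supplied by the embedding. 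This is precisely where the exponent $\tfrac{3}{2}-H$ earns its keep, and the same direct kernel estimate is needed again for $\mathcal{D}u$ to control the second term of the isometry bound. A secondary, smaller gap: you assert rather than show that the trace-type sums $\sum_{i}\langle\mathcal{D}\bar{u}_{i},\mathds{1}_{[t_{i},t_{i+1})}\rangle$ converge to $\mathrm{trace}(\mathcal{D}u)$; since for $H<\tfrac{1}{2}$ the inner product with an indicator is itself a singular integral against $\partial_{r}K$, the well-definedness of the trace and the convergence of these sums require the same quantitative H\"{o}lder estimates near the diagonal, not merely ``fractional-Sobolev smoothness.''
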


Formally, $\mathcal{K}^{*}$ is the operator $K^{*} \circ D^{1}_{T^{-}}$, where
$D^{1}_{T^{-}}$ is the adjoint of the derivative operator; see
\cite{dfond2005}. It is well-known that the Besov-Liouville space $I_{0^{+}%
}^{\frac{3}{2}-H}(L^{2})$ can be embedded continuously in $\mathcal{C}^{0, 1-
H}$ (see \cite{nualart2006}, \cite{skm1993}, \cite{dfond2005}), the space of
$(1-H)$ H\"{o}lder continuous paths starting at zero. This imposes a strong
condition on the integrand as one essentially requires Young-complementary
regularity of $u$ and $X$.

Thus, when the integrand solves an RDE, Theorem \ref{mainThm2ndLevel} extends
this theorem to cases where the integrand and integrator do not have
complementary regularity. Furthermore, when $1 \leq p < 2$, although
\newline$\int_{0}^{t^{n}_{i}} \mathrm{tr} \left[ J^{\mathbf{X}}_{t^{n}_{i}
\leftarrow s} V(Y_{s})\right]  R\left(  \Delta^{n}_{i}, \mathrm{d} s \right) $
in general converges, by augmenting the Skorohod integral with $A_{i}$ and
re-balancing the terms, we can identify the trace term in Theorem \ref{orig}
more precisely.

\subsection{Application of the correction formula to fractional Brownian
motion}

We now apply the correction formula to fractional Brownian motion with $H >
\frac{1}{3}$.

\begin{theorem}
For $1 \leq p < 3$, let $Y \in\mathcal{C}^{p-var} \left(  [0, T];
\mathbb{R}^{d} \right) $ denote the path-level solution to
\begin{align*}
\mathrm{d} Y_{t} = V(Y_{t}) \circ\mathrm{d} \mathbf{X}_{t}, \quad Y_{0} =
y_{0}.
\end{align*}
We assume that $V \in\mathcal{C}^{k}_{b} \left(  \mathbb{R}^{d};
\mathbb{R}^{d} \otimes\mathbb{R}^{d} \right) $, with
\begin{align}
\label{kDefn}k =
\begin{cases} \, 2, \quad 1 \leq p < 2, \\ \, 6, \quad 2 \leq p < 3, \end{cases}
\end{align}
and $\mathbf{X} \in\mathcal{C}^{0, p-var} \left(  [0, T]; G^{\lfloor p
\rfloor} \left( \mathbb{R}^{d} \right) \right) $ is the geometric rough path
constructed from the limit of the piecewise-linear approximations of
fractional Brownian motion with Hurst parameter $H > \frac{1}{3}$, and
covariance function
\begin{align*}
R(s, t) = \frac{1}{2} \left( s^{2H} + t^{2H} - \left|  t - s\right| ^{2H}
\right) .
\end{align*}
Then almost surely, we have
\begin{align*}
\int_{0}^{T} Y_{t} \circ\mathrm{d} \mathbf{X}_{t}  & = \int_{0}^{T} Y_{t} \,
\mathrm{d} X_{t} + H \int_{0}^{T} \mathrm{tr} \left[  V(Y_{s}) \right]  \,
s^{2H - 1} \, \mathrm{d} s\\
& \qquad+ \int_{[0, T]^{2}} \mathds{1}_{[0, t)} (s) \mathrm{tr} \left[
J^{\mathbf{X}}_{t \leftarrow s} V(Y_{s}) - V(Y_{t}) \right]  \, \mathrm{d}
R(s, t).
\end{align*}

\end{theorem}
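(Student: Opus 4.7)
The plan is to deduce this result as a direct corollary of Theorem \ref{mainThm2ndLevel}, so the work reduces to verifying the hypotheses for fractional Brownian motion (fBm) with $H > \frac{1}{3}$ and then computing the explicit form of the trace integral against $\mathrm{d}R$.

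First I would choose the regularity parameters. For fBm with Hurst parameter $H$, it is classical that the covariance $R(s,t) = \tfrac{1}{2}\bigl(s^{2H}+t^{2H}-|t-s|^{2H}\bigr)$ has finite 2D $\rho$-variation for $\rho = \frac{1}{2H}$ (see \cite{fv2010a}). Since $H > \frac{1}{3}$, we have $\rho = \frac{1}{2H} < \frac{3}{2}$, so the rough path lift exists with any $p > 2\rho$, and we may choose $p \in [1,2)$ when $H > \frac{1}{2}$ and $p \in [2,3)$ when $\frac{1}{3} < H \le \frac{1}{2}$. The vector field regularity $V \in \mathcal{C}^k_b$ with $k$ as in \eqref{kDefn} matches precisely the hypothesis of Theorem \ref{mainThm2ndLevel} in each regime.

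Next, I would verify the two integrability conditions. In the case $2 \le p < 3$ the only condition to check is
\begin{align*}
\|R(t,\cdot) - R(s,\cdot)\|_{\rho\text{-var}; [0,T]} \le C |t-s|^{\frac{1}{\rho}},
\end{align*}
which follows from the elementary estimate $|R(t,u) - R(s,u) - R(t,v) + R(s,v)| \le C |t-s|^{2H\vartheta}|u-v|^{2H(1-\vartheta)}$ for suitable $\vartheta$ and a standard partitioning argument based on $2H = 1/\rho$. In the case $1 \le p < 2$ (so $H > \tfrac{1}{2}$) we have $\sigma^2(s,t) = |t-s|^{2H}$ with $\theta = 2H > 1$, and $R(t) = t^{2H}$ is of bounded $1$-variation, hence of bounded $q$-variation for any $q \geq 1$, so complementary regularity with $p<2$ holds trivially.

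Finally I would compute the explicit trace term. Since $R(t) = t^{2H}$ is $C^1$ on $(0,T]$ and absolutely continuous on $[0,T]$ with $R'(t) = 2H t^{2H-1}$, the Young-Stieltjes integral against $R(\cdot)$ reduces to a Lebesgue integral:
\begin{align*}
\tfrac{1}{2} \int_0^T \mathrm{tr}\bigl[V(Y_s)\bigr] \, \mathrm{d}R(s) = H \int_0^T \mathrm{tr}\bigl[V(Y_s)\bigr] \, s^{2H-1}\,\mathrm{d}s.
\end{align*}
Substituting this into the conclusion of Theorem \ref{mainThm2ndLevel} yields the claimed identity. The only mildly delicate point is the hypothesis verification at the boundary regime $H = \tfrac{1}{2}$ (Brownian motion), where one can either invoke the Brownian case directly or appeal to the bound $\|R\|_{\rho\text{-var}}$ with $\rho = 1$; in all other cases the ingredients are standard and the conclusion follows immediately.
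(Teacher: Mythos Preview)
Your proposal is correct and follows essentially the same approach as the paper: both verify that fractional Brownian motion with $H>\tfrac13$ satisfies the hypotheses of Theorem~\ref{mainThm2ndLevel} (taking $\rho=\tfrac{1}{2H}$), then read off the formula. The paper is slightly more terse in that it simply cites \cite{fv2010a}, \cite{fv2011} for $\|R\|_{\rho\text{-var};[0,T]^2}<\infty$ and Example~1 of \cite{fv2011} for the bound $\|R(t,\cdot)-R(s,\cdot)\|_{\rho\text{-var};[0,T]}\le C|t-s|^{1/\rho}$, rather than sketching the rectangular-increment interpolation you outline; conversely, you make explicit the trivial computation $\tfrac12\,\mathrm{d}R(t)=H\,t^{2H-1}\,\mathrm{d}t$, which the paper leaves implicit.
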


\begin{proof}
We will show that fractional Brownian motion fulfills all the requirements needed to apply Theorem \ref{mainThm2ndLevel} when $H > \frac{1}{3}$.
Let $\rho := \frac{1}{2H}$ and $q := \frac{1}{\rho} \vee 1$. The proof that $\norm{R}_{\rho-var; [0, T]^2} < \infty$ can be found in \cite{fv2010a}; see also \cite{fv2011}. Note also that $R(t) = t^{2H}$ is of bounded variation, and thus has finite q-variation. \par
In the case $1 \leq p < 2$, or $H > \frac{1}{2}$, the geometric rough path is simply $\left( 1, B^H_t \right)$, and for $H \leq \frac{1}{2}$, one can invoke Theorem \ref{gaussianRP} to construct the geometric rough path. \par
Finally, it is proved in Example 1 of \cite{fv2011} that
\begin{align*}
\norm{R(t, \cdot) - R (s, \cdot) }_{\rho-var; [0, T]}  \leq C \left| t - s\right|^{\frac{1}{\rho}}, \quad \forall \, s, t \in [0, T].
\end{align*}
\end{proof}

\begin{appendices}
\section{}
\contThm*
\begin{proof}
Let $0 \leq u < s < v \leq t$ and define
\begin{align*}
\Xi_{u, v} := \phi_u x_{u, v} + \phi'_u \mathbf{x}^2_{u, v},
\end{align*}
which yields the defect of additivity,
\begin{align*}
\abs{\Xi_{u, s} + \Xi_{s, v} - \Xi_{u, v}}
\leq \abs{R^{\phi}_{u, s} x_{s, v}} + \abs{\phi'_{u, s} \mathbf{x}^2_{s, v}}.
\end{align*}
Now let $\theta := \frac{3}{p}$. Then the following function
\begin{align*}
\omega(u, v) := \norm{R^{\phi}}^{\frac{1}{\theta}}_{\frac{p}{2}-var; [u, v]} \norm{x}^{\frac{1}{\theta}}_{p-var; [u, v]} + \norm{\phi'}^{\frac{1}{\theta}}_{p-var; [u, v]} \norm{\mathbf{x}^2}^{\frac{1}{\theta}}_{\frac{p}{2}-var; [u, v]}
\end{align*}
is a control by Lemma \ref{controlLem1} as $\frac{3}{p} \geq 1$. Moreover, following the proof for Young integration (see \cite{fh2014}), for any partition $\pi = \{r_i\}$ of $[u, v]$ with $k$ sub-intervals, there necessarily exists some $r_j \in \pi$ such that
\begin{align*}
\abs{\Xi_{r_{j-1}, r_j} + \Xi_{r_j, r_{j+1}} - \Xi_{r_{j-1}, r_{j+1}}}
&\leq \abs{R^{\phi}_{r_{j-1}, r_j} x_{r_j, r_{j+1}}} + \abs{\phi'_{r_{j-1}, r_j} \mathbf{x}^2_{r_j, r_{j+1}}} \\
&\leq 2 \, \omega(r_{j-1}, r_{j+1})^{\theta}
\leq 2 \left( \frac{2}{k-1} \right)^{\theta} \omega(u, v)^{\theta}.
\end{align*}
Extracting $r_j$ leaves one with $k - 1$ sub-intervals, and we can repeat this procedure until only $[u, v]$ remains. Since $\theta > 1$, we obtain the sub-maximal inequality (cf. \cite{lyons98}, \cite{fh2014})
\begin{align} \label{controlledOldBound1}
\abs{\int_{\pi} \phi_r \strato{\mathbf{x}_r} - \left( \phi_u x_{u, v} + \phi'_u \mathbf{x}^2_{u, v} \right)}
\leq C \, \zeta(\theta) \, \omega(u, v)^{\theta},
\end{align}
where $\zeta$ is the Riemann zeta function and
\begin{align*}
\int_{\pi} \phi_r \strato{\mathbf{x}_r}
:= \sum_i \phi_{r_i} x_{r_i, r_{i+1}} + \phi'_{r_i} \mathbf{x}^2_{r_i, r_{i+1}}.
\end{align*}
Proving \eqref{controlledRPdefn1} is equivalent to showing that
\begin{align*}
\sup_{\norm{\pi} \vee \norm{\pi'} < \varepsilon} \abs{\int_{\pi} \phi_r \strato{\mathbf{x}_r} - \int_{\pi'} \phi_r \strato{\mathbf{x}_r}} \rightarrow 0 \quad \mathrm{as} \, \varepsilon \rightarrow 0,
\end{align*}
where the supremum is taken over all partitions of $[0, t]$. Without loss of generality, we can assume $\pi'$ refines $\pi$, in which case $\norm{\pi} \vee \norm{\pi'} = \norm{\pi}$ and
\begin{align*}
\abs{\int_{\pi} \phi_r \strato{\mathbf{x}_r} - \int_{\pi'} \phi_r \strato{\mathbf{x}_r}}
&= \abs{\sum_{[u, v] \in \pi} \left( \phi_u x_{u, v} + \phi'_u \mathbf{x}^2_{u, v} - \int_{\pi' \cap [u, v]} \phi_r \strato{\mathbf{x}_r} \right)} \\
&\leq C_p \sum_{[u, v] \in \pi} \omega(u, v)^{\theta},
\end{align*}
which vanishes as $\norm{\pi} \rightarrow 0$. \par
Continuing, we define
\begin{align} \label{remainder1}
R^z_{s,t} := \int_s^t \phi_r \strato{x_r} - \phi_s x_{s,t},
\end{align}
and using \eqref{controlledOldBound1}, we obtain
\begin{align*}
\abs{z_{s,t}}^p, \abs{z'_{s,t}}^p
&\leq C_p \left( \norm{\phi}_{\mathcal{V}^p; [0, T]} + \norm{\phi'}_{\mathcal{V}^p; [0, T]} + \norm{R^{\phi}}_{\frac{p}{2}-var; [0, T]} \right)^p \\
&\fqquad\times \left( \norm{x}^p_{p-var; [s,t]} + \norm{\mathbf{x}^2}^p_{\frac{p}{2}-var; [s,t]} \right), \\
\abs{R^z_{s,t}}^{\frac{p}{2}}
&\leq C_p \left( \norm{\phi'}^{\frac{p}{2}}_{\mathcal{V}^p; [0, T]} \norm{\mathbf{x}^2}^{\frac{p}{2}}_{\frac{p}{2}-var; [s,t]} +  \norm{x}^{\frac{p}{2}}_{p-var;[0, T]} \norm{R^{\phi}}^{\frac{p}{2}}_{\frac{p}{2}-var; [s,t]} \right).
\end{align*}
From the super-additivity of the quantities on the right side in the above expression, the fact that $(z, z')$ is controlled with norm \eqref{controlledBound1} follows immediately.
\end{proof}
\contSM*
\begin{proof}
See Lemma 7.3 in \cite{fh2014} for the proof in H\"{o}lder topology; the $p$-variation estimates will be derived similarly. Using the mean-value theorem, \eqref{csmEstimateA} can be obtained easily. To show \eqref{csmEstimateB} and that $(\phi(y), \nabla(y) y')$ is controlled by $\mathbf{x}$, we first use Taylor's theorem to obtain
\begin{align} \label{z1}
(\phi(y))_{s,t}=\nabla \phi \left( y_{s}\right) y_{s,t} + R_{s,t}^{Taylor}
\end{align}
for all $s < t$ in $\left[ 0,T\right]$, where $\abs{R_{s,t}^{Taylor}} \leq \norm{\phi}_{\mathcal{C}_b^2} \abs{y_{s,t}}^2$. From this it follows that
\begin{align} \label{Tremainder1}
\norm{R^{Taylor}}_{\frac{p}{2}-var; \left[ 0,T\right] }
\leq \norm{\phi}_{\mathcal{C}_b^2} \norm{y}_{p-var; \left[ s,t\right] }^2.
\end{align}
We next use the fact that $(y, y')$ is controlled by $\mathbf{x}$ in equation \eqref{z1}, which yields
\begin{align*}
(\phi(y))_{s,t}
&= \underset{\displaystyle =: (\phi(y))'_s}{\underbrace{\nabla \phi \left( y_{s}\right) y'_s }} \, x_{s,t} + \underset{\displaystyle =: R^{\phi(y)}_{s,t}}{\underbrace{\nabla \phi \left( y_{s}\right) R_{s,t}^{y} + R_{s,t}^{Taylor}}},
\end{align*}
and also gives
\begin{align*}
\norm{R^{\phi(y)}}_{\frac{p}{2}-var; [0, T]}
\leq \norm{\nabla \phi (y)}_{\infty} \norm{R^y}_{\frac{p}{2}-var; [0, T]} + \norm{R^{Taylor}}_{\frac{p}{2}-var; [0, T]}.
\end{align*}
\end{proof}
\contLeibniz*
\begin{proof}
The statement can be seen as a corollary to the previous proposition if we consider the smooth map $\Phi (\phi, \psi) = \phi \psi$. However, we will prove it directly to get the precise bounds \eqref{leibnizBound1a} and \eqref{leibnizBound1b}. \par
For the first part, it is trivial to see that $\norm{\phi\psi}$ and $\norm{\phi'\psi + \phi\psi'}_{p-var;[0, T]}$ are bounded by the right side of \eqref{leibnizBound1a}. For the remainder term, we note that
\begin{align*}
\left( \phi \psi \right)_{s,t} - \left( \phi'_s \psi_s + \phi_s \psi'_s \right) x_{s,t}
&= \phi_t \psi_t - \phi_s \psi_s - \phi_{s,t} \psi_s - \phi_s \psi_{s,t} + R^{\phi}_{s,t} \psi_s + \phi_s R^{\psi}_{s,t} \\
&= \phi_t \psi_t - \phi_t \psi_s - \phi_s \psi_t + \phi_s \psi_s + R^{\phi}_{s,t} \psi_s + \phi_s R^{\psi}_{s,t} \\
&= \phi_{s,t} \psi_{s,t} + R^{\phi}_{s,t} \psi_s + \phi_s R^{\psi}_{s,t},
\end{align*}
and thus
\begin{align*}
\norm{R^{\phi\psi}}_{\frac{p}{2}-var; [0,T]}
&\leq \norm{\phi}_{p-var; [0, T]} \norm{\psi}_{p-var; [0, T]} + \norm{\psi}_{\infty} \norm{R^{\phi}}_{\frac{p}{2}-var; [0, T]} \\
&\fqquad+ \norm{\phi}_{\infty} \norm{R^{\psi}}_{\frac{p}{2}-var; [0, T]}.
\end{align*}
For the second part, note that $\norm{\phi\psi}$ and $\norm{\phi' \psi}_{p-var; [0, T]}$ are bounded by the right side of \eqref{leibnizBound1b}. Moreover, we have
\begin{align*}
\left( \phi \psi \right)_{s,t} - \left( \phi'_s \psi_s \right) x_{s,t}
&= \phi_t \psi_t - \phi_s\psi_s - \phi_{s,t} \psi_s + R^{\phi}_{s,t} \psi_s \\
&= \phi_t \psi_{s,t} + R^{\phi}_{s,t} \psi_s =: R^{\phi\psi}_{s,t},
\end{align*}
which gives
\begin{align*}
\norm{ R^{\phi\psi}}_{\frac{p}{2}-var; [0, T]}
\leq \norm{\phi}_{\infty} \norm{\psi}_{\frac{p}{2}-var; [0, T]} + \norm{\psi}_{\infty} \norm{R^{\phi}}_{\frac{p}{2}-var; [0, T]}.
\end{align*}
\end{proof}
\end{appendices}

\section*{Acknowledgment}

The authors wish to thank Samy Tindel for helpful discussion during the course
of this work. Part of this work was undertaken on a trip of the first-named
author to the National University of Singapore (NUS).

%\bibliographystyle{amsalpha}
%\bibliography{ssi}

\end{document}